\documentclass[a4paper, 11pt, twoside]{article}
\usepackage{amsthm}
\usepackage{fancyhdr}
\usepackage{amssymb}
\usepackage{mathtools}
\usepackage{indentfirst}
\usepackage{color}
\usepackage{bm}
\usepackage{extarrows}
\usepackage[pagewise]{lineno}

\def \dive {\,{\rm div}\,}
\theoremstyle{plain}
\newtheorem{proposition}{Proposition}
\newtheorem{lemma}[proposition]{Lemma}
\newtheorem{theorem}[proposition]{Theorem}

\theoremstyle{definition}

\newtheorem{remark}[proposition]{Remark}

\theoremstyle{definition}

\theoremstyle{plain}

\numberwithin{equation}{section}
\numberwithin{proposition}{section}
\numberwithin{conj}{section}

\usepackage{amsmath}

\usepackage{geometry}
\geometry{
	paper=a4paper,
	top=3cm,
	inner=2.54cm,
	outer=2.54cm,
	bottom=3cm,
	headheight=5ex,
	headsep=5ex,
}

\pagestyle{fancy}
\fancyhead{}

\begin{document}

\title{Local existence of classical solution to the chemotaxis-shallow water system with vacuum in $\mathbb{R}^2$}
\author{
\sl{Li Chen}\\
   \small\emph{School of Business Informatics and Mathematics, University of Mannheim,} \\
   \small\emph{Mannheim, 68159, Germany}\\
\sl{Zhen Luo}\\
   \small\emph{School of Mathematical Sciences, Xiamen University,} \\
   \small\emph{Xiamen, 361005, China}\\
\sl{Yucheng Wang}\\
  \small\emph{School of Business Informatics and Mathematics, University of Mannheim,} \\
   \small\emph{Mannheim, 68159, Germany}\\
    }
\date{}
\footnotetext[1]{2020MSC: 35Q35; 35Q92; 92C17; 76N10.}
\footnotetext[2]{Keywords: Chemotaxis; Shallow water; Classical solution; Vacuum; Local existence.}
\footnotetext[3]{E-mail addresses: li.chen@uni-mannheim.de (L.Chen), zluo@xmu.edu.cn (Z.Luo),\\
yucheng.wang@uni-mannheim.de (Y.Wang).}

\maketitle

\section*{Abstract}

In this paper, we consider the chemotaxis-shallow water system in $\mathbb{R}^2$. We establish the local existence of classical solution without assuming the initial height is small or has a small perturbation near a constant. The far field behavior of the height is a constant which could be either vacuum or non-vacuum. The initial data is allowed vacuum and the spatial measure of the set of vacuum can be arbitrarily large.

\section{Introduction}

In this paper, we consider the chemotaxis-shallow water system which was proposed in \cite{C2016}
\begin{equation}\label{A}
\begin{cases}
n_{t}+\dive(n\mathbf{u})=D_{n}\Delta n-\nabla\cdot(n\chi(c)\nabla c),\\
c_{t}+\dive(c\mathbf{u})=D_{c}\Delta c-nf(c),\\
h_{t}+\dive(h\mathbf{u})=0,\\
h\mathbf{u}_{t}+h\mathbf{u}\cdot\nabla\mathbf{u}+h^2\nabla n+\frac{1}{2}(1+n)\nabla h^2=\mu\Delta\mathbf{u}+(\mu+\lambda)\nabla(\dive\mathbf{u}),
\end{cases}
\end{equation}
where $x\in\mathbb{R}^2,\ t\geq 0$. Here $n, c, h, \mathbf{u}$ represent the bacterial density, substrate concentration, the fluid height and the fluid velocity respectively. The constants $\mu$ and $\lambda$ are the shear viscosity and the bulk viscosity coefficients respectively which satisfy $\mu>0,\ \mu+\lambda\geq 0$. The two constants $D_{n}$ and $D_{c}$ are the corresponding diffusion coefficients for the cells and substrate. The constant $\chi(c)$ represents the chemotactic sensitivity and the given function $f(c)$ represents the consumption rate of the substrate by the cells. 

Chemotaxis is a well-known biological phenomenon which describes the movement of cells, bacteria or microorganism in response to the chemical signal. Mathematically, the best-studied model to describe the chemotaxis is the Keller-Segel system \cite{K1970, K1971}. This system is very interesting as it has two different mechanisms, namely attraction and repulsion. For the attractive Keller-Segel system, as the competition between the dissipation and non-local aggregation, the solutions have different behavior depending on the space dimension. For one dimension, \cite{O2001} established the global classical solution. For two and three dimensions, a threshold of critical mass between the global existence and finite-time blow-up has been conjectured in \cite{H1996, H1997, H2001, N1997, W2010}. For the repulsive Keller-Segel system, the global existence of the solution was established in \cite{C2008}.

In real nature, the chemotaxis process often occurs in a fluid environment. Recently, the chemotaxis coupled with fluid equations have attracted many mathematicians. Based on some rational hypotheses, we can derive the system (\ref{A}) from the three-dimensional chemotaxis-incompressible Navier-Stokes system. The details of the derivation can refer to \cite{C2016}. The chemotaxis-Navier-Stokes system was first proposed in \cite{T2005} to describe the interaction between bacteria and the surrounding fluid in which their nutrient is dissolved. \cite{L2010} constructed the local-in-time weak solutions in a bounded domain in $\mathbb{R}^{2}$ and $\mathbb{R}^{3}$. For the Cauchy problem in two dimensions, \cite{D2010} established the global weak solutions to the chemotaxis-Stokes system i.e. the nonlinear convective term $\mathbf{u}\cdot\nabla\mathbf{u}$ does not appear in the fluid equation of chemotaxis-Navier-Stokes system. \cite{D2017} and \cite{L2011} obtained the global weak solutions to the chemotaxis-Navier-Stokes systems. \cite{D2017} also established the global existence of classical solution. For some small initial data, the global existence of smooth solution was established in \cite{C2014}. For the Cauchy problem in three dimensions, the global existence of classical solution near constant steady state was established in \cite{D2010}. For the initial boundary value problem, in \cite{W2012}, the global existence of classical solution was established in two dimensions. Later, \cite{W2014} proved that this solution stabilizes to the equilibrium $(\overline{n}_{0}, 0, \mathbf{0})$ with respect to $L^{\infty}$ norm. \cite{W2016, W2017} established the global existence and stabilization of this system in three dimensions. 

For the chemotaxis-shallow water system (\ref{A}), \cite{C2016} established the local existence of solutions and the blow-up criterion. Later, \cite{T2018, W2019, Y2021} established the global existence and decay estimates of the Cauchy problem. For the initial boundary value problem, the global existence of the solution can refer to \cite{W2020, W2022}.

In this paper, we consider the case
\begin{equation*}
D_{n}=D_{c}=1,\ \chi(c)=1,\ f(c)=c,
\end{equation*}
and establish the existence of classical solution with far field behavior
\begin{equation}\label{A100B}
(n, c, \mathbf{u})(x, t)\to (0, 0, \mathbf{0}),\ h(x, t)\to\widetilde{h}\geq 0,\ \text{as}\ |x|\to\infty.
\end{equation}
The initial data
\begin{equation}\label{A2}
(n, c, h, \mathbf{u})(x, 0)=(n_{0}, c_{0}, h_{0}, \mathbf{u}_{0})(x),
\end{equation}
where $n_{0}\geq 0,\ c_{0}\geq 0,\ h_{0}\geq 0$ satisfy the compatibility condition
\begin{equation}\label{A3}
-\mu\Delta\mathbf{u}_{0}-(\mu+\lambda)\nabla(\dive\mathbf{u}_{0})+h_{0}^2\nabla n_{0}+\frac{1}{2}(1+n_{0})\nabla h_{0}^2=h_{0}g,
\end{equation}
for some $g\in D^1$ with $h_{0}^{1/2}g\in L^2$. Before stating the main theorem, we introduce the notations and conventions used throughout this paper. For $\Omega\subset\mathbb{R}^2$, we write
\begin{equation*}
\int f\, dx=\int_{\Omega} f\, dx.
\end{equation*}
For $1<r<\infty, k\in\mathbb{N}$, we denote the standard homogeneous and inhomogeneous Sobolev spaces as follows:
\begin{equation*}
\begin{cases}
L^r=L^r(\Omega), D^{k, r}=\{\mathbf{u}\in L^{1}_{loc}(\Omega)|\|\nabla^k\mathbf{u}\|_{L^r}<\infty\},\ \|\mathbf{u}\|_{D^{k, r}}:=\|\nabla^k\mathbf{u}\|_{L^r},\\
W^{k, r}=L^r\cap D^{k, r},\ H^k=W^{k, 2},\ D^{k}=D^{k, 2},\ D_{0}^{k}=\{\mathbf{u}\in D^k\big|\mathbf{u}|_{\partial\Omega}=\mathbf{0}\}.
\end{cases}
\end{equation*}
Moreover, the material derivative is defined as
\begin{equation*}
\dot{f}:=f_{t}+\mathbf{u}\cdot\nabla f.
\end{equation*}
The aim of this paper is to establish the local existence of classical solution without any restriction on the smallness of initial data or perturbation in the sense of $\|h_{0}-\widetilde{h}\|_{H^3}$ which allows the vacuum for the initial height. The main result in this paper can be stated as follows:

\begin{theorem}\label{B}
For $\widetilde{h}\geq 0$ and $\Omega=\mathbb{R}^2$, assume that the initial data $(n_{0}, c_{0}, h_{0}, \mathbf{u}_{0})$ satisfy
\begin{equation}\label{303}
n_{0}\in H^3,\ c_{0}\in H^3,\ h_{0}-\widetilde{h}\in H^3,\ \mathbf{u}_{0}\in D^1\cap D^3,\ \sqrt{h_{0}}\mathbf{u}_{0}\in L^2,
\end{equation}
and the compatibility condition (\ref{A3}). Moreover, if $\widetilde{h}=0$, in addition to (\ref{A3}) and (\ref{303}), suppose that
\begin{equation}\label{204}
|x|^{\alpha/2}n_{0}\in L^2,\ |x|^{\alpha/2}c_{0}\in L^2,\ |x|^{\alpha/2}h_{0}^2\in L^2,\ |x|^{\alpha/2}\sqrt{h_{0}}g\in L^2
\end{equation}
and
\begin{equation}\label{304}
|x|^{\alpha/2}\nabla n_{0}\in L^2,\ |x|^{\alpha/2}\nabla c_{0}\in L^2,\ |x|^{\alpha/2}\nabla^2 n_{0}\in L^2,\ |x|^{\alpha/2}\nabla^2 c_{0}\in L^2,\ |x|^{\alpha/2}\nabla\mathbf{u}_{0}\in L^2,
\end{equation}
where
\begin{equation}\label{200}
\alpha:=\mu/(4(2\mu+\lambda))\in (0, 1/8].
\end{equation}
Then there exists a small time $T^*>0$ and a unique strong solution $(n, c, h, \mathbf{u})$ to the Cauchy problem (\ref{A})-(\ref{A2}) on $\mathbb{R}^2\times[0, T^*]$ such that for any $\tau\in(0, T^*)$,
\begin{equation*}
\begin{cases}
n\in C([0, T^*]; H^3),\ c\in C([0, T^*]; H^3),\\
(h-\widetilde{h}, h^2-\widetilde{h}^2)\in C([0, T^*]; H^3),\\
\mathbf{u}\in C([0, T^*]; L^q\cap D^1\cap D^3)\cap L^2(0, T^*; D^4)\cap L^{\infty}(\tau, T^*; D^4),\\
\mathbf{u}_{t}\in L^{\infty}(0, T^*; D^1)\cap L^2(0, T^*; L^q\cap D^2),\\
\mathbf{u}_{t}\in L^{\infty}(\tau, T^*; L^{q_{1}}\cap D^2)\cap H^1(\tau, T^*; D^1),\\
\sqrt{h}\mathbf{u}_{t}\in L^{\infty}(0, T^*; L^2),\ \sqrt{h}\mathbf{u}_{tt}\in L^2(0, T^*; L^2),\\
\sqrt{h}\mathbf{u}_{tt}\in L^{\infty}(\tau, T^*; L^2),
\end{cases}
\end{equation*}
where
\begin{equation*}
\begin{cases}
q=q_{1}=2,\ &\text{if}\ \widetilde{h}>0,\\
q=4/\alpha,\ q_{1}=8/\alpha,\ &\text{if}\ \widetilde{h}=0.
\end{cases}
\end{equation*}
\end{theorem}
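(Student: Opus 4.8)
The plan is to construct the solution via a linearization–iteration scheme and to close uniform a priori estimates on a short time interval independent of the regularization parameters. The degeneracy caused by vacuum (both in the far field when $\widetilde h=0$ and on the possibly large vacuum set) forces us to work with the \emph{effective viscous flux} and material derivatives rather than with $\mathbf u$ and its time derivatives directly, and—when $\widetilde h=0$—to carry the spatial weights $|x|^{\alpha/2}$ through every estimate, which is why the weighted hypotheses \eqref{204}–\eqref{304} and the exponent $\alpha$ in \eqref{200} are imposed.

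\textbf{Step 1: Linearized problem and iteration.} First I would fix a reference velocity $\mathbf v$ in a suitable function class and solve the decoupled linear problems: the two parabolic equations for $n$ and $c$ (with $\mathbf v$ in the transport term and lower–order coupling treated as given source terms), the linear transport equation for $h-\widetilde h$ (and, separately, for $h^2-\widetilde h^2$, which solves a transport equation with the extra sink $-h^2\dive\mathbf v$), and then the linear momentum equation for $\mathbf u$ with coefficient $h$ frozen from the transport step. Standard parabolic theory, the method of characteristics, and elliptic regularity for the Lamé operator $\mu\Delta+(\mu+\lambda)\nabla\dive$ give a well-defined solution map $\mathbf v\mapsto\mathbf u$; one must be careful to propagate the $H^3$ regularity of $n,c,h-\widetilde h$ and the $D^1\cap D^3$ regularity of $\mathbf u$, using the compatibility condition \eqref{A3} to bound $\sqrt h\,\dot{\mathbf u}$ at $t=0$.

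\textbf{Step 2: Uniform a priori estimates.} This is the heart of the argument and the main obstacle. Working on the linearized (or a further regularized, $h_0\to h_0+\delta$) system, I would establish bounds on a short interval $[0,T^*]$, with $T^*$ depending only on the initial norms, for the quantities appearing in the statement. The scheme is: (i) basic energy estimate for $\sqrt h\,\mathbf u$, $n$, $c$ in $L^2$, with the weight $|x|^{\alpha/2}$ inserted when $\widetilde h=0$ so that the far-field vacuum does not destroy integrability—here the precise value $\alpha=\mu/(4(2\mu+\lambda))$ is dictated by balancing the weighted dissipation term against the pressure and convective contributions; (ii) estimates for $\nabla\mathbf u$ and $\sqrt h\,\dot{\mathbf u}$ in $L^\infty_tL^2$ by testing the momentum equation with $\dot{\mathbf u}$ and controlling $\int h^2\nabla n\cdot\dot{\mathbf u}$ and the pressure-type term $\int(1+n)\nabla h^2\cdot\dot{\mathbf u}$ after integration by parts, together with the companion bound for $\sqrt h\,\mathbf u_{tt}$ in $L^2_tL^2$ obtained by differentiating in time; (iii) higher-order estimates: $\nabla^2\mathbf u$, $\nabla^3\mathbf u$, and eventually $\nabla^4\mathbf u\in L^2_t$ via elliptic regularity applied to $\mu\Delta\mathbf u+(\mu+\lambda)\nabla\dive\mathbf u=h\dot{\mathbf u}+h^2\nabla n+\tfrac12(1+n)\nabla h^2$, each time feeding in the $H^3$ bounds on $n,c,h$ that come from propagating regularity along the flow of $\mathbf u$ (Gronwall in the transport estimates) and from maximal parabolic regularity for $n,c$. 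The time-weighted spaces $L^\infty(\tau,T^*;\cdot)$ for $\mathbf u_t,\mathbf u_{tt}$ are obtained by multiplying the corresponding differential inequalities by $t$ (or $\sigma(t)=\min\{1,t\}$) before integrating, which trades one order of compatibility for an interior-in-time gain.

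\textbf{Step 3: Convergence, existence, uniqueness.} With uniform bounds in hand I would show the iteration map is a contraction in a lower-order norm (e.g. $\sup_t(\|\mathbf u-\mathbf u'\|_{L^2_{\mathrm{loc}}}^2+\|n-n'\|_{L^2}^2+\|c-c'\|_{L^2}^2+\|h-h'\|_{L^2}^2)$ plus $\int\|\nabla(\mathbf u-\mathbf u')\|_{L^2}^2$), so that the sequence converges strongly; the uniform higher-order bounds then upgrade the limit to the claimed regularity class by weak-$*$ compactness and lower semicontinuity, and they also pass to the limit $\delta\to0$ in the artificial vacuum regularization. Uniqueness follows from essentially the same difference estimate applied to two solutions, using that both lie in the regularity class of the theorem so that all the norms entering the Gronwall argument are finite. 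Finally the time-continuity statements $n,c,h-\widetilde h\in C([0,T^*];H^3)$ come from the standard argument that a function which is weakly continuous into $H^3$ and has constant (in the limit) $H^3$ norm is strongly continuous; continuity of $\mathbf u$ into $D^1\cap D^3$ and of $\mathbf u_t$ into $D^1$ follows similarly once the estimates in (ii)–(iii) are established. I expect the weighted estimates in the $\widetilde h=0$ case—specifically, controlling the commutator between $|x|^{\alpha/2}$ and the second-order operators, and ensuring the chosen $\alpha$ closes all the inequalities simultaneously—to be the most delicate part of the argument.
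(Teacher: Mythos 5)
Your proposal captures the essential architecture of the paper's argument (linearize around a reference velocity, derive weighted a priori estimates using $|x|^{\alpha/2}$ and the Caffarelli--Kohn--Nirenberg inequality because Lemma \ref{Q1} fails when $\widetilde h=0$, test the momentum equation against $\dot{\mathbf u}$ and its time/material derivative, use elliptic regularity for the Lam\'e operator to gain spatial derivatives, insert time weights $t$ for the $L^\infty(\tau,T^*;\cdot)$ claims, and finally pass $\delta\to0$). The a~priori-estimate program in your Step~2 is essentially the same as Lemmas \ref{c}--\ref{j} and Section~4 of the paper.

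Where you diverge is in the existence mechanism of Step~3. The paper does \emph{not} set up a contraction in a lower-order norm; instead it works on a bounded ball $B_R$, defines the map $(m,\mathbf v)\mapsto(n,\mathbf u)=(\mathcal T_1,\mathcal T_2)$ (note: the fixed point is in the pair $(n,\mathbf u)$, since the $c$-equation must be fed a frozen density $m$), shows that $\mathfrak R=\{\Phi(m,\mathbf v,T^*)<M\}$ is a convex compact subset of $L^2(0,T^*;H_0^1)^2$ via Aubin--Lions (Lemma \ref{G1}), proves continuity of $\mathcal T$ by extracting subsequences and using uniqueness of weak solutions to the linearized equations, and then invokes the Schauder fixed point theorem (Lemma 2.6). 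Only afterwards does it send $R\to\infty$ and $\delta\to0$. Schauder trades the need for a quantitative Lipschitz estimate for a qualitative continuity-plus-compactness argument; in a setting where the coefficient $h$ degenerates this is convenient, because the difference of two solutions is hard to control in $L^2$ when $h$ can vanish on a large set (the natural energy controls $\sqrt h(\mathbf u-\mathbf u')$, not $\mathbf u-\mathbf u'$, so your suggested metric $\|\mathbf u-\mathbf u'\|_{L^2_{\mathrm{loc}}}$ is not obviously closed by the available a~priori bounds). A contraction argument as you sketch it is plausible but would have to be carried out in a carefully chosen $h$-weighted or $|x|^\alpha$-weighted norm, and you would need to check that all the coupling terms close there, which is a nontrivial extra step the Schauder route avoids. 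Two further small remarks: (i) the effective viscous flux you invoke is a standard device for compressible Navier--Stokes, but the paper never needs it here---the elliptic estimate is applied directly to $L\mathbf u=-h\dot{\mathbf u}-h^2\nabla n-\tfrac12(1+n)\nabla h^2$; (ii) uniqueness in the paper is a consequence of Lemma \ref{x}, where it is obtained on the bounded domain and then inherited in the limit, not re-proved by a fresh difference estimate for two solutions on $\mathbb R^2$.
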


\begin{remark}
The two-dimensional case is the critical case for the standard Sobolev embedding theorem. Thus, it is difficult to get the $L^p$-norm of the velocity $\mathbf{u}$ and the material derivative $\dot{\mathbf{u}}$ depending on the gradient of it. When the far field state $\widetilde{h}>0$, in \cite{C2016}, the authors established the local existence of strong solutions to the system (\ref{A}) in $\mathbb{R}^2$, where the $L^2$-norm of $\mathbf{u}$ was achieved through the following Lemma \ref{Q1}. In this paper, we consider the case that includes $\widetilde{h}=0$, where Lemma \ref{Q1} is no longer true. How to establish the $L^p$-norm is the main difficulty of our paper. We need some new elaborate estimates to overcome this difficulty.
\end{remark}

\begin{remark}
If the initial height is not allowed to vanish, for the Cauchy problem in $\mathbb{R}^2$, the global existence of strong solutions to the system (\ref{A}) are established in \cite{T2018, W2019, Y2021}. The solution in our main result can be expected to exist globally if the initial height is allowed to vanish.
\end{remark}

The rest of this paper is organized as follows: In Section 2, we list some known results and inequalities which will be used in latter analysis. Section 3 establishes necessary the a priori estimates on strong solutions for $\widetilde{h}=0$ and obtains the local strong solution to the system (\ref{A}) in $\mathbb{R}^2$. Finally, the main result, Theorem \ref{B} is proved in Section 4.

\section {Preliminaries}

In this section, we recall some inequalities, Aubin-Lions lemma and fixed point theorem which will be used later. When the far field state is $\widetilde{h}>0$, it is easy to get the $L^2$-norm for $\mathbf{u}$ through the following lemma.

\begin{lemma}(see \cite{D2012, L2012})\label{Q1}
If $\widetilde{h}>0$, then there exists some constant $C(\widetilde{h})>0$ such that the following estimate holds for $h-\widetilde{h}\in L^2, \mathbf{u}\in D^1, h^{1/2}\mathbf{u}\in L^2$,
\begin{equation}\label{173}
\|\mathbf{u}\|_{L^2}^2\leq C(\int h|\mathbf{u}|^2\, dx+\|h-\widetilde{h}\|_{L^2}^2\|\nabla\mathbf{u}\|_{L^2}^2).
\end{equation}
\end{lemma}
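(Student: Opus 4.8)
The plan is to convert the constant weight $\widetilde h$ into $h$ by adding and subtracting, to absorb the resulting error term using Ladyzhenskaya's inequality, and to close by solving a quadratic inequality. Since $\mathbf u\in L^2(\mathbb R^2)$ is \emph{not} among the hypotheses, I would carry out the computation on balls $B_R$ first and only afterwards let $R\to\infty$.

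\emph{Step 1 (localized splitting).} Fix $R\ge 1$. Since $\widetilde h$ is constant and $h\ge 0$,
\[
\widetilde h\int_{B_R}|\mathbf u|^2\,dx=\int_{B_R}(\widetilde h-h)|\mathbf u|^2\,dx+\int_{B_R}h|\mathbf u|^2\,dx\le\|h-\widetilde h\|_{L^2(\mathbb R^2)}\,\|\mathbf u\|_{L^4(B_R)}^2+\int_{\mathbb R^2}h|\mathbf u|^2\,dx,
\]
the first term being bounded by Cauchy--Schwarz. Here $\|\mathbf u\|_{L^4(B_R)}<\infty$ because a $D^1$-function in two dimensions lies in $L^p_{loc}$ for every $p<\infty$, and $\int h|\mathbf u|^2<\infty$ by the assumption $h^{1/2}\mathbf u\in L^2$.

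\emph{Step 2 (Ladyzhenskaya on a ball).} I would rescale the Gagliardo--Nirenberg inequality $\|f\|_{L^4(B_1)}^2\le C\big(\|f\|_{L^2(B_1)}\|\nabla f\|_{L^2(B_1)}+\|f\|_{L^2(B_1)}^2\big)$ from the unit ball to $B_R$, which gives, with the \emph{same} constant $C$ for all $R\ge 1$,
\[
\|\mathbf u\|_{L^4(B_R)}^2\le C\Big(\|\mathbf u\|_{L^2(B_R)}\|\nabla\mathbf u\|_{L^2(B_R)}+R^{-1}\|\mathbf u\|_{L^2(B_R)}^2\Big).
\]
Writing $Y_R:=\|\mathbf u\|_{L^2(B_R)}\in[0,\infty)$ and inserting this into Step 1 yields
\[
\Big(\widetilde h-CR^{-1}\|h-\widetilde h\|_{L^2}\Big)\,Y_R^2\le C\|h-\widetilde h\|_{L^2}\|\nabla\mathbf u\|_{L^2}\,Y_R+\int_{\mathbb R^2}h|\mathbf u|^2\,dx.
\]

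\emph{Step 3 (closing and limit).} Choose $R_0=R_0\big(\widetilde h,\|h-\widetilde h\|_{L^2}\big)$ so large that $CR_0^{-1}\|h-\widetilde h\|_{L^2}\le\widetilde h/2$; then for every $R\ge R_0$ the coefficient of $Y_R^2$ is $\ge\widetilde h/2$, and solving the quadratic inequality in the nonnegative number $Y_R$ gives
\[
Y_R^2\le C(\widetilde h)\Big(\|h-\widetilde h\|_{L^2}^2\|\nabla\mathbf u\|_{L^2}^2+\int_{\mathbb R^2}h|\mathbf u|^2\,dx\Big),
\]
with a right-hand side independent of $R$. Letting $R\to\infty$ and using monotone convergence then shows $\mathbf u\in L^2(\mathbb R^2)$ and establishes (\ref{173}).

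The only genuine obstacle is precisely that $\mathbf u\in L^2(\mathbb R^2)$ is not assumed a priori: this is what forces the localization and what makes it necessary to track and absorb (for large $R$) the otherwise harmless $R^{-1}\|\mathbf u\|_{L^2(B_R)}^2$ term coming from the ball version of Ladyzhenskaya's inequality; everything else is a direct computation. Alternatively one could first prove (\ref{173}) for $\mathbf u\in C_c^\infty(\mathbb R^2)$ by the plain computation on $\mathbb R^2$ and then argue by density, but identifying the closure of the test functions in the relevant (non-standard) space is itself delicate, so the truncation argument above is cleaner; see also \cite{D2012, L2012}.
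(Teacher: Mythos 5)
The paper does not prove Lemma~\ref{Q1}; it simply cites \cite{D2012, L2012}, so there is no in-text argument to compare against. Your proof is correct: splitting $\widetilde h\int_{B_R}|\mathbf u|^2$ into $\int_{B_R}h|\mathbf u|^2+\int_{B_R}(\widetilde h-h)|\mathbf u|^2$, controlling the second term via Cauchy--Schwarz and the scaled Ladyzhenskaya inequality on $B_R$ (the $R$-scaling you record is right, giving the extra $R^{-1}\|\mathbf u\|_{L^2(B_R)}^2$ term), and then absorbing both the $R^{-1}$ term (for $R$ large) and the cross term by a quadratic inequality yields a bound on $\|\mathbf u\|_{L^2(B_R)}^2$ that is uniform in $R$, after which monotone convergence finishes the job. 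You have correctly identified and handled the only genuine subtlety, namely that $\mathbf u\in L^2(\mathbb R^2)$ is not assumed, which is precisely why the truncation is needed; without it the usual one-line computation on $\mathbb R^2$ would be formal.
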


When the far field state is $\widetilde{h}=0$, we can use the following Caffarelli-Kohn-Nirenberg inequality to establish the $L^p$ estimate of $\mathbf{u}$.

\begin{lemma}(see \cite{CL1984, CF2001})\label{f}
For $\alpha\in (0, 2)$, the following estimates hold for all $\mathbf{u}\in C_{0}^{\infty}(\mathbb{R}^2)$,
\begin{equation*}
\int |x|^{\alpha-2}|\mathbf{u}|^2\, dx\leq\frac{\alpha^2}{4}\int |x|^{\alpha}|\nabla\mathbf{u}|^2\, dx,\ \|\mathbf{u}\|_{L^{4/\alpha}}^2\leq C(\alpha)\int |x|^{\alpha}|\nabla\mathbf{u}|^2\, dx.
\end{equation*}
\end{lemma}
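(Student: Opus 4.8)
The plan is to prove both estimates by elementary means: an integration by parts for the weighted Hardy inequality, and the two-dimensional Sobolev embedding $W^{1,1}(\mathbb{R}^2)\hookrightarrow L^2(\mathbb{R}^2)$ combined with a weighted H\"older splitting and an absorption argument for the weighted Sobolev inequality; these are the classical first-order interpolation inequalities with power weights of Caffarelli--Kohn--Nirenberg. Since $\mathbf{u}\in C_0^\infty(\mathbb{R}^2)$ and $\alpha>0$, the weights $|x|^{\alpha-2}$ and $|x|^{\alpha}$ are locally integrable in the plane, so every integral below is finite; and it suffices to treat a real scalar function, since for vector-valued $\mathbf{u}$ one applies the scalar result to $|\mathbf{u}|$ using $|\nabla|\mathbf{u}||\le|\nabla\mathbf{u}|$.

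For the first inequality I would begin from the identity $\dive(|x|^{\alpha-2}x)=\alpha|x|^{\alpha-2}$ in $\mathbb{R}^2$, write $\int|x|^{\alpha-2}|u|^2\,dx=\tfrac1\alpha\int\dive(|x|^{\alpha-2}x)\,|u|^2\,dx$, and integrate by parts; the flux through a sphere of radius $\varepsilon$ about the origin is $O(\varepsilon^{\alpha})\to0$, so there is no boundary term. This turns the left-hand side into $-\tfrac2\alpha\int|x|^{\alpha-2}u\,(x\cdot\nabla u)\,dx$, which after $|x\cdot\nabla u|\le|x|\,|\nabla u|$ and Cauchy--Schwarz is bounded by $\tfrac2\alpha\big(\int|x|^{\alpha-2}|u|^2\big)^{1/2}\big(\int|x|^{\alpha}|\nabla u|^2\big)^{1/2}$. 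Dividing by the finite first factor gives $\int|x|^{\alpha-2}|u|^2\,dx\le\tfrac{4}{\alpha^2}\int|x|^{\alpha}|\nabla u|^2\,dx$ (with the sharp constant $4/\alpha^2$); in the sequel only the qualitative form $\int|x|^{\alpha-2}|u|^2\le C(\alpha)\int|x|^{\alpha}|\nabla u|^2$ is used.

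For the second inequality I would set $p:=4/\alpha$ and apply $W^{1,1}(\mathbb{R}^2)\hookrightarrow L^2(\mathbb{R}^2)$ to $|u|^{p/2}$, obtaining $\|u\|_{L^p}^{p/2}\le Cp\int|u|^{p/2-1}|\nabla u|\,dx$. The key step is to bound this last integral by $(CA)^{a}A^{1/2}\|u\|_{L^p}^{b}$, where $A:=\int|x|^{\alpha}|\nabla u|^2\,dx$, by factoring the integrand as $\big(|x|^{\alpha-2}|u|^2\big)^{a}\cdot\big(|x|^{\alpha/2}|\nabla u|\big)\cdot|u|^{b}$ with $a:=\tfrac1{p-2}$ chosen so that the powers of $|x|$ cancel and $b:=\tfrac p2-pa$, applying H\"older with exponents $(1/a,2,p/b)$, and estimating the first factor by the Hardy inequality just proved. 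Feeding this back and absorbing $\|u\|_{L^p}^{b}$ (legitimate since $\|u\|_{L^p}$ is finite and $\tfrac p2-b=pa>0$) gives $\|u\|_{L^p}^{pa}\le CA^{a+1/2}$; since $\tfrac{a+1/2}{pa}=\tfrac12$ by the choice of $a$, this is exactly $\|u\|_{L^{4/\alpha}}^2\le C(\alpha)\int|x|^{\alpha}|\nabla u|^2\,dx$.

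The main obstacle---and the point I would check most carefully---is the admissibility of the H\"older decomposition in the second step: the exponents $1/a$, $2$, $p/b$ must all be $\ge1$ and $b$ must be positive, and one verifies that $a+\tfrac12+\tfrac bp=1$ and $2a+b=\tfrac p2-1$ hold for this choice; admissibility forces $p>4$, i.e.\ $\alpha\in(0,1)$. This range contains the interval $(0,1/8]$ fixed in \eqref{200}, so it is enough for every use of Lemma \ref{f} in this paper; the remaining range $\alpha\in[1,2)$ is the general Caffarelli--Kohn--Nirenberg inequality and is contained in \cite{CL1984, CF2001}. The only other thing to be careful about is justifying the integration by parts and the two absorptions using the compact support of $\mathbf{u}$ together with the local integrability of the weights near the origin.
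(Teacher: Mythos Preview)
The paper does not supply a proof of this lemma; it merely cites \cite{CL1984, CF2001}. Your self-contained argument is correct. Two points are worth noting. First, your integration by parts yields the Hardy constant $4/\alpha^{2}$, which is the sharp one; the constant $\alpha^{2}/4$ printed in the statement is a misprint in the paper (for small $\alpha$ it is plainly too small to dominate the singular weight $|x|^{\alpha-2}$), and several later absorption steps in Section~3 (e.g.\ the bounds on $F_{4}$, $I_{11}$, $I_{12}$) invoke the constant as printed---you should be aware of this discrepancy even though it does not affect the correctness of \emph{your} proof of the lemma itself. Second, your H\"older splitting for the weighted Sobolev inequality requires $p=4/\alpha>4$, i.e.\ $\alpha\in(0,1)$, exactly as you observe; this suffices here because the paper fixes $\alpha\in(0,1/8]$ in \eqref{200}, and for the full range $\alpha\in(0,2)$ one simply falls back on the cited references.
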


We will use frequently the following Gagliardo-Nirenberg inequality.

\begin{lemma}(see \cite{L1968})\label{a}
For $p\geq 2, q\in (1, \infty)$, and $r\in (2, \infty)$, there exists some generic constant $C>0$ which may depend on $q, r$ such that for $f\in H^1(\mathbb{R}^2)$ and $g\in L^q(\mathbb{R}^2)\cap D^{1, r}(\mathbb{R}^2)$, we have
\begin{equation*}
\begin{split}
\|f\|_{L^p}&\leq C\|f\|_{L^2}^{2/p}\|\nabla f\|_{L^2}^{(p-2)/p},\\
\|g\|_{C(\overline{\mathbb{R}^2})}&\leq C\|g\|_{L^q}^{q(r-2)/(2r+q(r-2))}\|\nabla g\|_{L^r}^{2r/(2r+q(r-2))}.
\end{split}
\end{equation*}
\end{lemma}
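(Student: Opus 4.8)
Both inequalities are classical Gagliardo--Nirenberg-type bounds, and the strategy in each case is the same: reduce to $C_0^\infty(\mathbb{R}^2)$ by density, prove a crude estimate from one-dimensional calculus, and then optimise over a scaling parameter to recover the sharp homogeneous exponents. For the first bound one uses that $C_0^\infty$ is dense in $H^1$; for the second, $C_0^\infty$ is dense in $L^q\cap D^{1,r}$ for the norm $\|\cdot\|_{L^q}+\|\nabla\cdot\|_{L^r}$ (mollify, then truncate with $g\,\chi(\cdot/R)$; the commutator term $R^{-1}g\,\nabla\chi(\cdot/R)$ tends to $0$ in $L^r$ by H\"older on the annulus $\{R\le|x|\le 2R\}$). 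One also records that any $g\in D^{1,r}$ with $r>2$ admits a H\"older-continuous representative by Morrey's embedding, which together with $g\in L^q$ forces $g(x)\to 0$ as $|x|\to\infty$, so that $\|g\|_{C(\overline{\mathbb{R}^2})}=\|g\|_{L^\infty}$ is well defined and is exactly the quantity to be estimated.

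\textbf{First inequality.} For $p=2$ it is trivial (the right-hand side exponent on the gradient is $0$), so take $p>2$. The elementary two-dimensional estimate
\[
\|w\|_{L^2(\mathbb{R}^2)}\le\|\nabla w\|_{L^1(\mathbb{R}^2)},
\]
obtained from $|w(x_1,x_2)|\le\int_{\mathbb{R}}|\partial_1 w|\,dt$, $|w(x_1,x_2)|\le\int_{\mathbb{R}}|\partial_2 w|\,ds$, multiplying and integrating with Fubini, is applied to $w=|f|^{k}$ for integers $k\ge 2$. Using $|\nabla|f|^k|\le k|f|^{k-1}|\nabla f|$ and H\"older gives $\|f\|_{L^{2k}}^{k}\le k\|f\|_{L^{2(k-1)}}^{k-1}\|\nabla f\|_{L^2}$; starting from $k=2$ (the Ladyzhenskaya inequality $\|f\|_{L^4}^2\le 2\|f\|_{L^2}\|\nabla f\|_{L^2}$) and inducting yields $\|f\|_{L^{2k}}\le C_k\|f\|_{L^2}^{1/k}\|\nabla f\|_{L^2}^{(k-1)/k}$, which is precisely the claimed bound at the even exponents $p=2k$. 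For a general $p\in(2,\infty)$, pick $k\ge 1$ with $2k\le p\le 2(k+1)$, interpolate by H\"older, $\|f\|_{L^p}\le\|f\|_{L^{2k}}^{1-\theta}\|f\|_{L^{2(k+1)}}^{\theta}$ with $\tfrac1p=\tfrac{1-\theta}{2k}+\tfrac{\theta}{2(k+1)}$, and substitute the two endpoint estimates; a short computation shows the resulting powers of $\|f\|_{L^2}$ and $\|\nabla f\|_{L^2}$ collapse to $2/p$ and $(p-2)/p$ respectively, giving the assertion. (The case $\nabla f\equiv 0$ is void in $H^1$.)

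\textbf{Second inequality.} Fix $x_0\in\mathbb{R}^2$ and $R>0$. Morrey's inequality on $B_R(x_0)$ (valid since $r>2$, with constant independent of $R$ by dilation) gives $|g(x_0)-\overline{g}_{B_R}|\le C\,R^{1-2/r}\|\nabla g\|_{L^r(B_R)}$, where $\overline{g}_{B_R}$ is the mean of $g$ over $B_R$, while H\"older yields $|\overline{g}_{B_R}|\le|B_R|^{-1/q}\|g\|_{L^q(B_R)}\le C\,R^{-2/q}\|g\|_{L^q}$. Adding,
\[
|g(x_0)|\le C\big(R^{1-2/r}\|\nabla g\|_{L^r}+R^{-2/q}\|g\|_{L^q}\big)\qquad\text{for every }R>0 .
\]
Since $1-2/r>0$ and $2/q>0$, minimising the right-hand side over $R$ --- using $\min_{R>0}\big(AR^{a}+BR^{-b}\big)\le C\,A^{b/(a+b)}B^{a/(a+b)}$ with $a=1-2/r$, $b=2/q$, so that $a+b=\tfrac{2r+q(r-2)}{qr}$ --- produces
\[
|g(x_0)|\le C\,\|g\|_{L^q}^{q(r-2)/(2r+q(r-2))}\,\|\nabla g\|_{L^r}^{2r/(2r+q(r-2))},
\]
and taking the supremum over $x_0\in\mathbb{R}^2$ finishes the proof.

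\textbf{Main difficulty.} None of the individual steps is hard; the only point needing care, and the closest thing to an obstacle, is the bookkeeping that the optimisation over the scaling parameter (the interpolation exponent $\theta$ in the first inequality, the radius $R$ in the second) reproduces \emph{exactly} the exponents in the statement. A dimensional (scaling) analysis of each inequality --- both are invariant under $f\mapsto cf$ and $f\mapsto f(\lambda\cdot)$ in $\mathbb{R}^2$ --- can be used beforehand as a consistency check that fixes these exponents and guards against arithmetic slips.
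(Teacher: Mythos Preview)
Your proof is correct. The paper itself does not supply a proof of this lemma: it is stated as a classical Gagliardo--Nirenberg inequality with a reference to Lady\v{z}enskaja--Solonnikov--Ural'ceva, so there is no ``paper's approach'' to compare against beyond the citation. Your argument is a standard self-contained derivation: the first inequality via the elementary $\|w\|_{L^2(\mathbb{R}^2)}\le\|\nabla w\|_{L^1(\mathbb{R}^2)}$ applied to $w=|f|^k$, induction, and interpolation; the second via the local Morrey--Poincar\'e oscillation bound on balls plus optimisation in the radius. The exponent bookkeeping is right in both cases. One small remark: the density discussion in your preamble is not actually needed for the second inequality, since the Morrey estimate on balls and the H\"older bound on the average hold directly for any $g\in L^q\cap D^{1,r}$ without approximation; this sidesteps the mild circularity of invoking an $L^\infty$ bound during the truncation step when $q<r$.
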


Next, we recall the following Aubin-Lions lemma.

\begin{lemma}(see \cite{SJ1987})\label{G1}
Let $X_{0}, X$, and $X_{1}$ be three Banach spaces with $X_{0}\subset X\subset X_{1}$. Suppose that $X_{0}$ is compactly embedded in $X$ and that $X$ is continuously embedded in $X_{1}$.
\begin{itemize}
\item [(\romannumeral 1)] Let $G$ be bounded in $L^p(0, T; X_{0})$, where $1\leq p<\infty$, and let $\partial G/\partial t$ be bounded in $L^1(0, T; X_{1})$. Then $G$ is relatively compact in $L^p(0, T; X)$.
\item [(\romannumeral 2)] Let $F$ be bounded in $L^{\infty}(0, T; X_{0})$ and $\partial F/\partial t$ be bounded in $L^r(0, T; X_{1})$, where $r>1$. Then $F$ is relatively compact in $C([0, T]; X)$.
\end{itemize}
\end{lemma}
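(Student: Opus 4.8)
The plan is to prove the lemma along the classical Aubin--Lions--Simon lines, which rest on two ingredients: an Ehrling-type interpolation inequality generated by the compact embedding $X_{0}\hookrightarrow\hookrightarrow X$, and a Fréchet--Kolmogorov characterization of precompactness in the Bochner spaces $L^{p}(0,T;X)$ and $C([0,T];X)$. First I would establish Ehrling's lemma: for every $\eta>0$ there is $C_{\eta}>0$ with $\|v\|_{X}\le\eta\|v\|_{X_{0}}+C_{\eta}\|v\|_{X_{1}}$ for all $v\in X_{0}$. This follows by contradiction: a failure would produce $\eta_{0}>0$ and $v_{n}\in X_{0}$ with $\|v_{n}\|_{X}=1$, $\|v_{n}\|_{X_{0}}\le\eta_{0}^{-1}$, $\|v_{n}\|_{X_{1}}\le n^{-1}$; compactness of $X_{0}\hookrightarrow X$ gives $v_{n}\to v$ in $X$ along a subsequence with $\|v\|_{X}=1$, while continuity of $X\hookrightarrow X_{1}$ forces $v_{n}\to v$ in $X_{1}$, hence $v=0$ in $X_{1}$ and therefore in $X$, a contradiction.

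Next I would invoke (or, for a self-contained treatment, reprove by mollification in time together with Arzelà--Ascoli) Simon's criterion: a bounded family $\mathcal{G}\subset L^{p}(0,T;X)$ with $1\le p<\infty$ is relatively compact in $L^{p}(0,T;X)$ if and only if (a) $\{\int_{t_{1}}^{t_{2}}g(t)\,dt:g\in\mathcal{G}\}$ is relatively compact in $X$ for all $0<t_{1}<t_{2}<T$, and (b) $\|\tau_{h}g-g\|_{L^{p}(0,T-h;X)}\to0$ as $h\to0^{+}$ uniformly in $g\in\mathcal{G}$, where $\tau_{h}g(t):=g(t+h)$; likewise a bounded family $\mathcal{F}\subset C([0,T];X)$ is relatively compact there if and only if $\{f(t):f\in\mathcal{F}\}$ is relatively compact in $X$ for each $t$ and $\mathcal{F}$ is uniformly equicontinuous. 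Condition (a) is immediate in both parts of the lemma: in (i), $\|\int_{t_{1}}^{t_{2}}g\,dt\|_{X_{0}}\le(t_{2}-t_{1})^{1-1/p}\|g\|_{L^{p}(0,T;X_{0})}$ is bounded, so the set of integrals is bounded in $X_{0}$ and hence precompact in $X$; in (ii), $\{f(t)\}$ is bounded in $X_{0}$ and hence precompact in $X$.

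The heart of the matter, and the step I expect to be the main obstacle, is condition (b) (respectively the uniform equicontinuity in (ii)). Writing $g(t+h)-g(t)=\int_{t}^{t+h}\partial_{s}g(s)\,ds$ in $X_{1}$ (valid for the absolutely-continuous-in-$X_{1}$ representative, since $\partial_{t}g\in L^{1}(0,T;X_{1})$), Fubini gives $\|\tau_{h}g-g\|_{L^{1}(0,T-h;X_{1})}\le h\,\|\partial_{t}g\|_{L^{1}(0,T;X_{1})}$ and, pointwise, $\|\tau_{h}g(t)-g(t)\|_{X_{1}}\le\|\partial_{t}g\|_{L^{1}(0,T;X_{1})}$ for every $t$. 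The second estimate is the crucial point: although $g$ itself need not lie in $L^{\infty}(0,T;X_{1})$, the increment $\tau_{h}g-g$ is bounded in $L^{\infty}(0,T-h;X_{1})$ uniformly in $h$, so interpolating the $L^{p}$ norm between $L^{1}$ and $L^{\infty}$ yields $\|\tau_{h}g-g\|_{L^{p}(0,T-h;X_{1})}\le h^{1/p}\|\partial_{t}g\|_{L^{1}(0,T;X_{1})}$, which tends to $0$ uniformly over $\mathcal{G}$. Combining with Ehrling's lemma, and setting $M_{0}:=\sup_{\mathcal{G}}\|g\|_{L^{p}(0,T;X_{0})}$, $M_{1}:=\sup_{\mathcal{G}}\|\partial_{t}g\|_{L^{1}(0,T;X_{1})}$,
\[
\|\tau_{h}g-g\|_{L^{p}(0,T-h;X)}
\le \eta\,\|\tau_{h}g-g\|_{L^{p}(0,T-h;X_{0})}
+ C_{\eta}\,\|\tau_{h}g-g\|_{L^{p}(0,T-h;X_{1})}
\le 2\eta M_{0}+C_{\eta}h^{1/p}M_{1},
\]
and letting $h\to0$ and then $\eta\to0$ establishes (b); together with (a) and Simon's criterion this proves (i). For (ii) the same scheme applies: since $\partial_{t}f\in L^{r}(0,T;X_{1})$ with $r>1$, Hölder gives $\|f(t)-f(s)\|_{X_{1}}\le|t-s|^{1-1/r}\|\partial_{t}f\|_{L^{r}(0,T;X_{1})}$, so $\mathcal{F}$ is uniformly Hölder-equicontinuous into $X_{1}$; together with the uniform $L^{\infty}(0,T;X_{0})$ bound (which, upon passing to the $X_{1}$-continuous representative, yields $\sup_{t}\|f(t)\|_{X_{0}}\le\|f\|_{L^{\infty}(0,T;X_{0})}$), Ehrling's lemma turns this into uniform equicontinuity into $X$, and Arzelà--Ascoli in $C([0,T];X)$ concludes.
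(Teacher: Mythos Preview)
Your proof sketch is essentially the standard Aubin--Lions--Simon argument and is correct in outline. However, you should note that the paper does not prove this lemma at all: it is stated in the Preliminaries section as a known result with a citation to Simon~\cite{SJ1987}, and is used only as a black box (for compactness in the fixed-point argument of Lemma~\ref{x}). So there is no ``paper's own proof'' to compare against; you have supplied a proof where the authors simply invoke the literature.

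One small point worth tightening in your sketch of part~(ii): the step ``upon passing to the $X_{1}$-continuous representative, yields $\sup_{t}\|f(t)\|_{X_{0}}\le\|f\|_{L^{\infty}(0,T;X_{0})}$'' is not automatic, since an $L^{\infty}(0,T;X_{0})$ function need only take values in the $X_{0}$-ball almost everywhere, and the continuous-in-$X_{1}$ representative may lie outside $X_{0}$ at exceptional times. The clean way around this is to observe that the closed ball in $X_{0}$ is precompact in $X$, its $X$-closure is compact in $X$ and hence compact (thus closed) in $X_{1}$, so the continuous representative stays in that $X$-compact set for \emph{all} $t$; this gives both the pointwise precompactness and, via Ehrling applied on a dense set of times and passed to the limit, the uniform equicontinuity into $X$. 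With that caveat, your argument is sound.
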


The local existence of strong solution in a bounded domain will be obtained by the following Schauder fixed point theorem.

\begin{lemma}(see \cite{GD1983})
Let $\mathfrak{R}$ be a compact convex set in a Banach space $\mathcal{B}$ and let $\mathcal{T}$ be a continuous mapping of $\mathfrak{R}$ into itself. Then $\mathcal{T}$ has a fixed point, that is $\mathcal{T} x=x$ for some $x\in\mathfrak{R}$.
\end{lemma}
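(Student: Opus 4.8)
The plan is to prove this Schauder fixed point statement by reducing it to the Brouwer fixed point theorem via a finite-rank approximation of $\mathcal{T}$, and then passing to the limit using the compactness of $\mathfrak{R}$ and the continuity of $\mathcal{T}$. Fix $\varepsilon>0$. Since $\mathfrak{R}$ is compact it is totally bounded, so there are finitely many points $x_{1},\dots,x_{N}\in\mathfrak{R}$ (depending on $\varepsilon$) with $\mathfrak{R}\subset\bigcup_{i=1}^{N}B(x_{i},\varepsilon)$. Set $\mathfrak{R}_{\varepsilon}:=\mathrm{conv}\{x_{1},\dots,x_{N}\}$; because $\mathfrak{R}$ is convex we have $\mathfrak{R}_{\varepsilon}\subset\mathfrak{R}$, and $\mathfrak{R}_{\varepsilon}$ is a compact convex subset of the finite-dimensional subspace spanned by the $x_{i}$. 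Define the Schauder projection $P_{\varepsilon}\colon\mathfrak{R}\to\mathfrak{R}_{\varepsilon}$ by
\[
P_{\varepsilon}y:=\frac{\sum_{i=1}^{N}\varphi_{i}(y)\,x_{i}}{\sum_{i=1}^{N}\varphi_{i}(y)},\qquad \varphi_{i}(y):=\max\{0,\ \varepsilon-\|y-x_{i}\|\}.
\]
The denominator is strictly positive on $\mathfrak{R}$ since the balls $B(x_{i},\varepsilon)$ cover $\mathfrak{R}$, and each $\varphi_{i}$ is continuous, so $P_{\varepsilon}$ is continuous. Moreover $\varphi_{i}(y)>0$ forces $\|y-x_{i}\|<\varepsilon$, so $P_{\varepsilon}y$ is a convex combination of points lying in the ball $B(y,\varepsilon)$; by convexity of that ball we obtain the key estimate $\|P_{\varepsilon}y-y\|<\varepsilon$ for every $y\in\mathfrak{R}$.

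Next consider $\mathcal{T}_{\varepsilon}:=P_{\varepsilon}\circ\mathcal{T}\colon\mathfrak{R}_{\varepsilon}\to\mathfrak{R}_{\varepsilon}$ (well defined because $\mathfrak{R}_{\varepsilon}\subset\mathfrak{R}$, so $\mathcal{T}$ acts on it). It is continuous as a composition of continuous maps, and $\mathfrak{R}_{\varepsilon}$ is a compact convex subset of a finite-dimensional normed space; hence the Brouwer fixed point theorem (in the form valid for compact convex sets, obtained from the ball version by retraction) gives a fixed point $y_{\varepsilon}\in\mathfrak{R}_{\varepsilon}$, i.e. $P_{\varepsilon}(\mathcal{T}y_{\varepsilon})=y_{\varepsilon}$. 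Applying the key estimate with $y=\mathcal{T}y_{\varepsilon}\in\mathfrak{R}$ yields
\[
\|y_{\varepsilon}-\mathcal{T}y_{\varepsilon}\|=\|P_{\varepsilon}(\mathcal{T}y_{\varepsilon})-\mathcal{T}y_{\varepsilon}\|<\varepsilon .
\]

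Finally, take $\varepsilon=1/k$ to get a sequence $y_{k}\in\mathfrak{R}$ with $\|y_{k}-\mathcal{T}y_{k}\|<1/k$. By compactness of $\mathfrak{R}$ there is a subsequence $y_{k_{j}}\to x$ for some $x\in\mathfrak{R}$; by continuity of $\mathcal{T}$ we have $\mathcal{T}y_{k_{j}}\to\mathcal{T}x$; and since $\|y_{k_{j}}-\mathcal{T}y_{k_{j}}\|\to0$, passing to the limit gives $x=\mathcal{T}x$. Thus $\mathcal{T}$ has a fixed point in $\mathfrak{R}$.

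The genuinely nontrivial ingredient — the step I regard as the main obstacle — is the Brouwer fixed point theorem in finite dimensions, to which the whole argument is reduced; if that is taken as known, the only points requiring care are that the Schauder projection $P_{\varepsilon}$ is well defined (strict positivity of the denominator on $\mathfrak{R}$), that it satisfies the uniform approximation estimate $\|P_{\varepsilon}y-y\|<\varepsilon$ on $\mathfrak{R}$, and that $\mathfrak{R}_{\varepsilon}\subset\mathfrak{R}$ so that $\mathcal{T}_{\varepsilon}$ is genuinely a self-map of $\mathfrak{R}_{\varepsilon}$.
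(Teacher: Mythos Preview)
Your proof is correct and is precisely the classical Schauder-projection argument reducing to Brouwer's theorem. Note, however, that the paper does not give its own proof of this lemma: it is stated as a preliminary result with a citation to Gilbarg--Trudinger \cite{GD1983}, so there is no ``paper's proof'' to compare against beyond observing that your argument is the standard one found in that reference.
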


\section{Local existence of strong solution}

In this section, we establish the local existence of strong solution to the system (\ref{A}) with the far field behavior (\ref{A100B}) and initial data (\ref{A2}) in $\mathbb{R}^2$. In order to do this, we should first derive some uniform local (in time) a priori estimates for the linearized systems in a bounded domain $\Omega\subset\mathbb{R}^2$ as the specific order. Beginning with the equation for the fluid height $h$,
\begin{equation}\label{1}
\begin{cases}
h_{t}+\dive(h\mathbf{v})=0, &x\in\Omega, t>0,\\
h(x, 0)=h_{0}, &x\in\Omega,
\end{cases}
\end{equation}
we then consider the equation for $c$,
\begin{equation}\label{2}
\begin{cases}
c_{t}+\dive(c\mathbf{v})=\Delta c-mc, &x\in\Omega, t>0,\\
c=0, &x\in\partial\Omega, t>0,\\
c(x, 0)=c_{0}, &x\in\Omega.
\end{cases}
\end{equation}
Based on the estimates of (\ref{2}), we investigate the equation for the bacterial density $n$,
\begin{equation}\label{3}
\begin{cases}
n_{t}+\dive(n\mathbf{v})=\Delta n-\nabla\cdot(n\nabla c), &x\in\Omega, t>0,\\
n=0, &x\in\partial\Omega, t>0,\\
n(x, 0)=n_{0}, &x\in\Omega.
\end{cases}
\end{equation}
Finally, using the estimates of (\ref{1}) and (\ref{3}), we consider the equation for the fluid velocity $\mathbf{u}$,
\begin{equation}\label{4}
\begin{cases}
h\mathbf{u}_{t}+h\mathbf{v}\cdot\nabla\mathbf{u}+L\mathbf{u}+h^2\nabla n+\frac{1}{2}(1+n)\nabla h^2=0, & x\in\Omega, t>0,\\
\mathbf{u}=\mathbf{0}, &x\in\partial\Omega, t>0,\\
\mathbf{u}(x, 0)=\mathbf{u}_{0}, &x\in\Omega.
\end{cases}
\end{equation}
Here $L:=-\mu\Delta-(\mu+\lambda)\nabla\dive$ is a strongly elliptic operator. The given functions $\mathbf{v}$ and $m$ satisfy
\begin{equation}\label{100}
\mathbf{v}\in C([0, T]; H_{0}^1\cap H^3)\cap L^2(0, T; H^4),\ \mathbf{v}_{t}\in L^{\infty}(0, T; H_{0}^1)\cap L^2(0, T; H^2)
\end{equation}
and
\begin{equation}\label{B1011}
m\in C([0, T]; H_{0}^1\cap H^3)\cap L^2(0, T; H^4),\ m_{t}\in L^{\infty}(0, T; H_{0}^1)\cap L^2(0, T; H^2).
\end{equation}
Based on the a priori estimates that we established and the Schauder fixed point theorem, we obtain the local strong solution to the system (\ref{A}) in a bounded domain. Then, let the size of the domain tends to infinity and the lower bound of the fluid height tends to zero, we can obtain the local strong solution of the system (\ref{A}) in the whole space $\mathbb{R}^2$. Before starting to prove, we give the following lemma.

\begin{lemma}\label{G2}
Assume that the initial data $(n_{0}, c_{0}, h_{0}-\widetilde{h}, \mathbf{u}_{0})$ satisfy (\ref{303}) and $n_{0}\geq0, c_{0}\geq 0, h_{0}\geq\delta$ on $\Omega\times[0, T]$ for some $\delta>0$. $\mathbf{v}$ and $m$ satisfy the conditions (\ref{100}) and (\ref{B1011}). Then there exists a unique strong solution $(h, c, n, \mathbf{u})$ to (\ref{1})-(\ref{4}) such that for any $T>0$, we have
\begin{equation}\label{B101}
\begin{cases}
n\in C([0, T]; H_{0}^3),\ n_{t}\in L^{\infty}(0, T; H^1)\cap L^2(0, T; H^2),\\
c\in C([0, T]; H_{0}^3),\ c_{t}\in L^{\infty}(0, T; H^1)\cap L^2(0, T; H^2),\\
h-\widetilde{h}\in C([0, T]; H^3),\ h_{t}\in C(0, T; H^2),\\
\mathbf{u}\in C([0, T]; H_{0}^3),\ \mathbf{u}_{t}\in L^2(0, T; H^2).
\end{cases}
\end{equation}
\end{lemma}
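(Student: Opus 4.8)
The plan is to solve the linearized system \eqref{1}--\eqref{4} sequentially, exploiting its triangular structure: first solve the transport equation \eqref{1} for $h$ given $\mathbf{v}$, then the linear parabolic equation \eqref{2} for $c$ given $\mathbf{v}$ and $m$, then the linear parabolic equation \eqref{3} for $n$ given $\mathbf{v}$ and the already-constructed $c$, and finally the linear equation \eqref{4} for $\mathbf{u}$ given the already-constructed $h$ and $n$. Each of these is a standard linear problem, so existence and uniqueness in the stated regularity classes follows from classical theory; the work is in propagating the $H^3$-type regularity and in the last step.

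\medskip

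\noindent\textbf{Step 1 (the height $h$).} Since \eqref{1} is a linear transport equation with given velocity $\mathbf{v}$ satisfying \eqref{100}, I would solve it along the flow map $X(t,x)$ generated by $\mathbf{v}$, or equivalently via the explicit formula $h(t,x)=h_0(X^{-1})\exp\bigl(-\int_0^t \dive\mathbf{v}\,ds\bigr)$ along characteristics. Because $\mathbf{v}\in C([0,T];H^3)\hookrightarrow C([0,T];C^{1,\gamma})$, the flow is $C^1$ in $x$ and one gets $h-\widetilde h\in C([0,T];H^3)$ by differentiating the transport equation up to third order and using Gronwall in $H^3$ (the bootstrap uses $\|\nabla\dive\mathbf{v}\|_{L^\infty}$-type bounds, which are controlled by \eqref{100} and Lemma \ref{a}). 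The same ODE-along-characteristics argument gives the lower bound $h\ge\delta e^{-\int_0^t\|\dive\mathbf{v}\|_{L^\infty}\,ds}=:\delta_T>0$ on $[0,T]$, which is essential for the velocity step. The regularity $h_t\in C([0,T];H^2)$ then follows directly from $h_t=-\dive(h\mathbf{v})$ and the product estimates in $H^2$.

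\medskip

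\noindent\textbf{Step 2 (the concentrations $c$ and $n$).} Equation \eqref{2} is a linear uniformly parabolic equation for $c$ with drift $\mathbf{v}$ and zeroth-order coefficient $\dive\mathbf{v}+m$, both lying in $C([0,T];H^3)\cap L^2(0,T;H^4)$. Standard parabolic $L^2$-theory with Dirichlet data, combined with energy estimates differentiated up to order three in space and using Lemma \ref{a} to close the nonlinear-looking products of coefficients, gives $c\in C([0,T];H^3_0)$ with $c_t\in L^\infty(0,T;H^1)\cap L^2(0,T;H^2)$; non-negativity $c\ge0$ follows from the maximum principle since $c_0\ge0$ and the zeroth-order term has the right sign after writing the equation as $c_t+\mathbf{v}\cdot\nabla c-\Delta c=-(\dive\mathbf{v}+m)c$. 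With $c$ now a known function of the stated regularity, \eqref{3} is again a linear parabolic equation for $n$: rewrite $\nabla\cdot(n\nabla c)=\nabla c\cdot\nabla n+n\Delta c$, so the equation is $n_t+\dive(n\mathbf{v})=\Delta n-\nabla c\cdot\nabla n-n\Delta c$, a linear parabolic problem whose coefficients involve $\mathbf{v},\nabla c,\Delta c$, all in $C([0,T];H^2)$ at least. The same parabolic regularity theory and maximum principle give $n\in C([0,T];H^3_0)$, $n_t\in L^\infty(0,T;H^1)\cap L^2(0,T;H^2)$, and $n\ge0$.

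\medskip

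\noindent\textbf{Step 3 (the velocity $\mathbf{u}$) --- the main obstacle.} This is where the real work lies. With $h$ (bounded below by $\delta_T$) and $n$ now fixed, \eqref{4} reads $h\mathbf{u}_t+h\mathbf{v}\cdot\nabla\mathbf{u}+L\mathbf{u}=\mathbf{F}$ with $\mathbf{F}=-h^2\nabla n-\tfrac12(1+n)\nabla h^2\in C([0,T];H^2)$. Since $h$ is strictly positive this is a (degenerate-free on the bounded domain $\Omega$) linear parabolic-type system; I would construct the solution by a Galerkin scheme in the basis of the Dirichlet eigenfunctions of $L$, deriving uniform estimates by testing successively against $\mathbf{u}$, against $L\mathbf{u}$, and against $\mathbf{u}_t$ and $L^2\mathbf{u}$ to reach $H^3_0$ regularity. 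The key difficulty is that the coefficient $h$ multiplying $\mathbf{u}_t$ is only as regular as $H^3$ and has only a time-dependent (not uniform-in-$T$) positive lower bound, so the highest-order energy estimates must be organized carefully: one differentiates the equation in space, commutes $\nabla^k$ past the variable-coefficient terms $h\mathbf{u}_t$ and $h\mathbf{v}\cdot\nabla\mathbf{u}$, and absorbs the commutators using $\|\nabla h\|_{L^\infty}$, $\|\nabla^2 h\|_{L^4}$, $\|\nabla^3 h\|_{L^2}$ bounds from Step 1 together with Lemma \ref{a}; the elliptic regularity of $L$ on $\Omega$ converts $\|L\mathbf{u}\|_{H^k}$-control into $\|\mathbf{u}\|_{H^{k+2}}$-control. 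The compatibility condition \eqref{A3} is used exactly to make the $t=0$ value of $\mathbf{u}_t$, namely $\mathbf{u}_t(0)=h_0^{-1}(\mathbf{F}(0)-L\mathbf{u}_0-h_0\mathbf{v}\cdot\nabla\mathbf{u}_0)=g-\mathbf{v}\cdot\nabla\mathbf{u}_0$, lie in $L^2$, which is what is needed to start the $\mathbf{u}_t$-energy estimate; this yields $\mathbf{u}_t\in L^2(0,T;H^2)$. Uniqueness of the full tuple $(h,c,n,\mathbf{u})$ follows from the linearity of each equation via standard energy estimates on differences, done in the same sequential order. I expect the bookkeeping of the top-order commutator estimates for $\mathbf{u}$ with the variable density $h$ to be the most delicate part; everything else is a careful but routine assembly of classical linear theory.
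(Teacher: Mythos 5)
Your proposal matches the paper's proof exactly in structure: the paper also solves the triangular system sequentially --- $h$ by the method of characteristics (its formula (\ref{104}) is exactly your flow-map representation), then $c$ and $n$ by standard linear parabolic theory, then $\mathbf{u}$ by a semi-discrete Galerkin method or the method of continuity using that $L$ is strongly elliptic --- and the paper explicitly omits the details you supply. One small imprecision worth noting: at $t=0$ one has $\mathbf{u}_{t}(0)=-g-\mathbf{v}(0)\cdot\nabla\mathbf{u}_{0}$ (not $g-\mathbf{v}\cdot\nabla\mathbf{u}_{0}$), and in this particular lemma $h_{0}\geq\delta>0$ on a bounded domain, so $\mathbf{u}_{t}(0)\in L^{2}$ already follows from the regularity hypotheses on the data without invoking (\ref{A3}); the compatibility condition only becomes essential in the later $\delta$-uniform a priori estimates and the passage $\delta\to 0$.
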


\begin{proof}
The solution $h$ is represented by the formula
\begin{equation}\label{104}
h(x, t)=h_{0}(U(0; x, t))\exp\Big\{-\int_{0}^{t}\dive\mathbf{v}(U(s; x, t), s)\, ds\Big\},
\end{equation}
where $U\in C([0, T]; \Omega\times[0, T])$ is the solution to the initial value problem
\begin{equation*}
\begin{cases}
\frac{\partial}{\partial s}U(s; x, t)=\mathbf{v}(s; U(s; x, t)), & 0\leq s\leq T,\\
U(t; x, t)=x, & 0\leq s\leq T, x\in\overline\Omega.
\end{cases}
\end{equation*}
By the characteristic method, we can obtain the existence and regularity of the unique solution to the equation (\ref{1}) (see \cite{c2006}). Next, solving the linear parabolic equations (\ref{2}) and (\ref{3}), the unique solution which satisfies the regularity in (\ref{B101}) can be established. Finally, note that $L$ is a strong elliptic operator (see \cite{C2004}), by standard method such as a semi-discrete Galerkin method or the method of continuity, together with the regularity of $h$ and $n$, we can obtain the unique solution to (\ref{4}). We omit the details.
\end{proof}

\subsection{Uniform a priori estimates for the linearized problem}

In this subsection, we derive some uniform local (in time) a priori estimates for strong solution to the linear problem (\ref{1})-(\ref{4}). These estimates are independent of the lower bound $\delta$ of $h_{0}$ and the size of the domain $\Omega$. According to the Lemma \ref{Q1}, it is easy to get the $L^2$ norm of $\mathbf{u}$ when the far field state $\widetilde{h}>0$. For the far field state $\widetilde{h}=0$, the main difficulty is how to obtain the $L^p$ norm for $\mathbf{u}$ itself and the material derivatives of the velocity $\dot{\mathbf{u}}$ for some $p\geq 2$. In the rest of this subsection, we establish some local (in time) a priori estimates in a bounded domain under the assumption that $\widetilde{h}=0$. Set
\begin{equation}\label{102}
\begin{split}
c_{0}:=&1+\|n_{0}\|_{H^3}^2+\|c_{0}\|_{H^3}^2+\|(h_{0}, h_{0}^2)\|_{H^3}^2+\|h_{0}^{1/2}\mathbf{u}_{0}\|_{L^2}^2+\|\mathbf{u}_{0}\|_{D^1\cap D^3}^2\\
&+\||x|^{\alpha/2}(n_{0}, c_{0}, h_{0}^2, \sqrt{h}_{0}g)\|_{L^2}^2+\||x|^{\alpha/2}(\nabla^2 n_{0},\nabla^2 c_{0})\|_{L^2}^2\\
&+\||x|^{\alpha/2}(\nabla n_{0}, \nabla c_{0}, \nabla\mathbf{u}_{0})\|_{L^2}^2+\|g\|_{D^1}^2,
\end{split}
\end{equation}
where
\begin{equation*}
\alpha:=\mu/(4(2\mu+\lambda))\in (0, 1/8].
\end{equation*}
We define
\begin{equation}\label{103}
\begin{split}
\Phi(\mathbf{v}, m, t):=&1+\sup_{0\leq s\leq t}(\|\nabla\mathbf{v}\|_{H^1}^2+\||x|^{\alpha/2}\nabla\mathbf{v}\|_{L^2}^2+\|m\|_{H^1}^2+\|m_{t}\|_{L^2}^2)\\
&+\int_{0}^{t}(\|\mathbf{v}_{t}\|_{L^{4/\alpha}}^2+\|\nabla\mathbf{v}_{t}\|_{L^2}^2+\|\nabla\mathbf{v}\|_{W^{1, {4/\alpha}}\cap H^1}^2+\|\nabla m\|_{H^1}^2+\|\nabla m_{t}\|_{L^2}^2)\, ds.
\end{split}
\end{equation}
From now on, we always assume that $0\leq t\leq T\leq 1$ and $C$ are various constants depending on $\mu$ and $\lambda$ but not depending on the lower bound $\delta$ of $h_{0}$ and the size of the domain $\Omega$. We begin with the estimates of $h$.
\begin{lemma}\label{c}
Let $h$ be a smooth solution of (\ref{1}). For $q=4/\alpha$, we have
\begin{equation}\label{105}
\sup_{0\leq s\leq t}\|(h, h^2)\|_{L^2\cap L^{\infty}}^2\leq Cc_{0}\exp\{C\Phi(\mathbf{v}, m, t)t^{1/2}\}
\end{equation}
and
\begin{equation}\label{106}
\sup_{0\leq s\leq t}(\|\nabla h\|_{L^p}^2+\|\nabla h^2\|_{L^p}^2+\|h_{t}\|_{L^p}^2)\leq Cc_{0}\Phi(\mathbf{v}, m, t)\exp\{C\Phi(\mathbf{v}, m, t)t^{1/2}\},
\end{equation}
for all $p\in[2, q]$, where $c_{0}$ and $\Phi$ are defined in (\ref{102}) and (\ref{103}).
\end{lemma}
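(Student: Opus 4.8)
The plan is to estimate $h$ and $h^2$ directly from the representation formula \eqref{104}, since both quantities satisfy transport equations with the same velocity field $\mathbf{v}$: indeed $h$ solves \eqref{1} and, multiplying \eqref{1} by $2h$, one sees $(h^2)_t + \dive(h^2\mathbf{v}) + h^2\dive\mathbf{v} = 0$, so $h^2$ obeys the analogous formula with $h_0^2$ in place of $h_0$ and an extra factor $\exp\{-\int_0^t \dive\mathbf{v}\,ds\}$. The first step is therefore to record the pointwise bounds
\[
\exp\Big\{-\!\int_0^t \|\dive\mathbf{v}(\cdot,s)\|_{L^\infty}\,ds\Big\} \le \frac{h(x,t)}{h_0(U(0;x,t))}, \frac{h^2(x,t)}{h_0^2(U(0;x,t))} \le \exp\Big\{C\!\int_0^t \|\dive\mathbf{v}(\cdot,s)\|_{L^\infty}\,ds\Big\},
\]
and to control $\int_0^t \|\dive\mathbf{v}\|_{L^\infty}\,ds$ by $\Phi(\mathbf{v},m,t)^{1/2} t^{1/2}$ using Cauchy–Schwarz in time together with the Gagliardo–Nirenberg / Sobolev embedding $\|\dive\mathbf{v}\|_{L^\infty} \lesssim \|\nabla\mathbf{v}\|_{W^{1,4/\alpha}}$ (this is where the $q = 4/\alpha$ exponent and the $L^2(0,t;W^{1,4/\alpha})$ control of $\nabla\mathbf{v}$ built into $\Phi$ enter). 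Since the flow map $U(0;\cdot,t)$ is a volume-quasi-preserving diffeomorphism with Jacobian pinched between $\exp\{\mp C\int_0^t\|\dive\mathbf{v}\|_{L^\infty}\}$, composing with it changes $L^p$ norms only by the same exponential factor; this gives \eqref{105} for the $L^2$ and $L^\infty$ norms after absorbing everything into $Cc_0 \exp\{C\Phi^{1/2}t^{1/2}\}$.

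For \eqref{106} the second step is to differentiate. Applying $\nabla$ to \eqref{1} gives a transport equation for $\nabla h$ with a forcing term $-\nabla h\cdot\nabla\mathbf{v} - h\nabla\dive\mathbf{v}$; running the $L^p$ energy estimate (multiply by $|\nabla h|^{p-2}\nabla h$, integrate, use Hölder) yields
\[
\frac{d}{dt}\|\nabla h\|_{L^p} \le C\|\nabla\mathbf{v}\|_{L^\infty}\|\nabla h\|_{L^p} + C\|h\|_{L^\infty}\|\nabla\dive\mathbf{v}\|_{L^p},
\]
and similarly for $\nabla h^2$. Integrating in time, bounding $\int_0^t\|\nabla\dive\mathbf{v}\|_{L^p}\,ds \le \int_0^t\|\nabla^2\mathbf{v}\|_{L^{4/\alpha}}\,ds \le t^{1/2}\big(\int_0^t\|\nabla\mathbf{v}\|_{W^{1,4/\alpha}}^2\big)^{1/2} \le t^{1/2}\Phi^{1/2}$ and using \eqref{105} for $\|h\|_{L^\infty}$, Grönwall gives $\sup_{[0,t]}\|\nabla h\|_{L^p}^2 \le Cc_0\Phi\exp\{C\Phi^{1/2}t^{1/2}\}$, with the stray factor $\Phi$ coming from the inhomogeneous term. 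Finally $h_t = -\mathbf{v}\cdot\nabla h - h\dive\mathbf{v}$, so $\|h_t\|_{L^p} \le \|\mathbf{v}\|_{L^\infty}\|\nabla h\|_{L^p} + \|h\|_{L^\infty}\|\dive\mathbf{v}\|_{L^p}$; the first factor $\|\mathbf{v}\|_{L^\infty}$ is controlled via the second Caffarelli–Kohn–Nirenberg estimate of Lemma \ref{f} (or the Gagliardo–Nirenberg embedding into $C(\overline{\mathbb{R}^2})$ in Lemma \ref{a}) by $\|\nabla\mathbf{v}\|$-type quantities inside $\Phi$, and $\|\dive\mathbf{v}\|_{L^p} \le \|\nabla\mathbf{v}\|_{L^{4/\alpha}}$, so the bound \eqref{106} for $\|h_t\|_{L^p}$ follows from what we already have.

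The main obstacle I expect is not any single estimate but the bookkeeping needed to keep all constants independent of the lower bound $\delta$ of $h_0$ and of the size of $\Omega$: the representation formula \eqref{104} is exactly what makes this possible, since it never divides by $h$ and never uses boundary conditions for $h$ (the transport equation \eqref{1} has no boundary condition), so the estimates are genuinely uniform. A secondary technical point is that the forcing terms in the $\nabla h$ and $\nabla h^2$ equations must be bounded in $L^p$ \emph{uniformly for all} $p\in[2,q]$ simultaneously; this is handled by noting that $\|\nabla\dive\mathbf{v}\|_{L^p} \le C\|\nabla^2\mathbf{v}\|_{L^2}^{1-\theta}\|\nabla^2\mathbf{v}\|_{L^{4/\alpha}}^{\theta}$ by interpolation, so every exponent in the range is dominated by the two endpoint quantities already present in $\Phi$, and the interpolation exponent only affects the implicit constant $C$, not the structure $Cc_0\Phi\exp\{C\Phi^{1/2}t^{1/2}\}$.
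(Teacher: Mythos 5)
Your proof is correct and follows essentially the same route as the paper's: both rely on the $L^p$ energy estimate obtained by applying $\nabla$ to \eqref{1} and multiplying by $p|\nabla h|^{p-2}\nabla h$, and both read off $\|h_t\|_{L^p}$ directly from the transport equation, with the quantities $\|\nabla\mathbf{v}\|_{L^\infty}$, $\|\nabla^2\mathbf{v}\|_{L^p}$ controlled via $\Phi$ exactly as you describe. The only difference is cosmetic: for the $L^2$ part of \eqref{105} you use the representation formula \eqref{104} together with the Jacobian of the flow map, while the paper instead uses a Grönwall estimate from $\frac{d}{dt}\|(h,h^2)\|_{L^2}^2\leq C\|\nabla\mathbf{v}\|_{L^\infty}\|(h,h^2)\|_{L^2}^2$ and reserves \eqref{104} for the $L^\infty$ bound; both yield the same conclusion (your $\exp\{C\Phi^{1/2}t^{1/2}\}$ is even slightly sharper than the stated $\exp\{C\Phi t^{1/2}\}$ since $\Phi\geq 1$).
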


\begin{proof}
The equation $(\ref{1})_{1}$ implies that $h^2$ satisfies the equation
\begin{equation}\label{5}
h^2_{t}+\mathbf{v}\cdot\nabla h^2+2h^2\dive\mathbf{v}=0.
\end{equation}
It is easy to check that
\begin{equation*}
\frac{d}{dt}\|(h, h^2)\|_{L^2}^2\leq C\|\nabla\mathbf{v}\|_{L^{\infty}}\|(h, h^2)\|_{L^2}^2,
\end{equation*}
which together with (\ref{104}) gives that
\begin{equation}\label{107}
\sup_{0\leq s\leq t}\big(\|(h, h^2)\|_{L^2}^2+\|(h, h^2)\|_{L^{\infty}}^2\big)\leq Cc_{0}\exp\{C\Phi(\mathbf{v}, m, t)t^{1/2}\}.
\end{equation}
For any $p\in [2, q]$, operating $\nabla$ on both sides of the equation $(\ref{1})_{1}$, multiplying the resulting equation by $p|\nabla h|^{p-2}\nabla h$ and integrating over $\Omega$ to get
\begin{equation*}
\frac{d}{dt}\int |\nabla h|^p\, dx\leq C(\|\dive\mathbf{v}\|_{L^{\infty}}\|\nabla h\|_{L^p}^p+\|h\|_{L^{\infty}}\|\nabla^2\mathbf{v}\|_{L^p}\|\nabla h\|_{L^p}^{p-1}).
\end{equation*}
We deduce from the Gronwall's inequality that
\begin{equation}\label{108}
\sup_{0\leq s\leq t}\|\nabla h\|_{L^p}^2\leq Cc_{0}\exp\{C\Phi(\mathbf{v}, m, t)t^{1/2}\}.
\end{equation}
Following the estimate of (\ref{108}), we have
\begin{equation}\label{109}
\sup_{0\leq s\leq t}\|\nabla h^2\|_{L^p}^2\leq Cc_{0}\exp\{C\Phi(\mathbf{v}, m, t)t^{1/2}\}.
\end{equation}
Finally, the equation $(\ref{1})_{1}$ gives that
\begin{equation}\label{110}
\sup_{0\leq s\leq t}\|h_{t}\|_{L^p}^2=\sup_{0\leq s\leq t}\|-\dive(h\mathbf{v})\|_{L^p}^2\leq Cc_{0}\Phi(\mathbf{v}, m, t)\exp\{C\Phi(\mathbf{v}, m, t)t^{1/2}\}.
\end{equation}
The estimates (\ref{108})-(\ref{110}) yield (\ref{106}).
\end{proof}

\begin{lemma}\label{b}
Let $c$ be a smooth solution of the equation (\ref{2}). Then
\begin{equation}\label{111}
\sup_{0\leq s\leq t}(\|c\|_{H^1}^2+\|c_{t}\|_{L^2}^2)+\int_{0}^{t}(\|\nabla c_{t}\|_{L^2}^2+\|\nabla^2 c\|_{L^2\cap L^4}^2)\, ds\leq C c_{0}\exp\{C\Phi^2(\mathbf{v}, m, t)t^{1/4}\}
\end{equation}
and
\begin{equation}\label{112}
\begin{split}
&\sup_{0\leq s\leq t}(\|\nabla^2 c\|_{L^2\cap L^4}^2+\|\nabla c_{t}\|_{L^2}^2)+\int_{0}^{t}(\|c_{tt}\|_{L^2}^2+\|\nabla^2 c_{t}\|_{L^2}^2)\, ds\\
\leq&Cc_{0}\Phi^2(\mathbf{v}, m, t)\exp\{C\Phi^2(\mathbf{v}, m, t)t^{1/4}\}.
\end{split}
\end{equation}
\end{lemma}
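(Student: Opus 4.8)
The plan is to treat equation (\ref{2}) as a linear parabolic equation for $c$ with a transport term and a zeroth-order term, and to run the standard hierarchy of energy estimates: first an $L^2$-based estimate yielding control of $\|c\|_{H^1}$ and $\|c_t\|_{L^2}$ together with the parabolic gain $\int_0^t(\|\nabla c_t\|_{L^2}^2+\|\nabla^2 c\|_{L^2\cap L^4}^2)\,ds$, and then a higher-order estimate (differentiating in $t$, or equivalently applying $\nabla^2$ and using elliptic regularity for $-\Delta$) to get the bounds in (\ref{112}). Throughout, every coefficient appearing — namely $\mathbf v$ through $\|\nabla\mathbf v\|_{H^1}$, $\|\nabla\mathbf v\|_{W^{1,4/\alpha}}$, $\|\mathbf v_t\|_{L^{4/\alpha}}$, etc., and $m$ through $\|m\|_{H^1}$, $\|m_t\|_{L^2}$, $\|\nabla m\|_{H^1}$ — is dominated by $\Phi(\mathbf v,m,t)$ by its very definition (\ref{103}), and initial quantities are absorbed into $c_0$ via (\ref{102}); the factor $t^{1/4}$ (and the square on $\Phi$) is the expected outcome of estimating time integrals of $\Phi$ against $t$ by Hölder, exactly as the $t^{1/2}$ appeared for $h$ in Lemma \ref{c}.

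The concrete steps I would carry out, in order. \emph{Step 1 ($L^2$ estimate).} Multiply $(\ref{2})_1$ by $c$, integrate over $\Omega$, integrate the $\Delta c$ term by parts (the boundary term vanishes by $c|_{\partial\Omega}=0$), and handle $\int\dive(c\mathbf v)\,c\,dx$ and $\int mc^2\,dx$ by Hölder and Gagliardo–Nirenberg (Lemma \ref{a}); this gives $\frac{d}{dt}\|c\|_{L^2}^2+\|\nabla c\|_{L^2}^2\le C\Phi\|c\|_{L^2}^2$, hence a Gronwall bound. \emph{Step 2 ($H^1$ estimate).} Multiply $(\ref{2})_1$ by $-\Delta c$ (or by $c_t$), integrate, and estimate $\int\dive(c\mathbf v)\Delta c\,dx$ and $\int mc\,\Delta c\,dx$ using $\|c\|_{L^\infty}$-type bounds from Gagliardo–Nirenberg and the coefficient bounds from $\Phi$; this produces $\|\nabla c\|_{L^2}^2$ and the parabolic term $\int_0^t\|\nabla^2c\|_{L^2}^2\,ds$, and $\|c_t\|_{L^2}^2$ is then read off directly from the equation: $c_t=\Delta c-\dive(c\mathbf v)-mc$. \emph{Step 3 ($\nabla c_t$ and higher integrability).} Differentiate $(\ref{2})_1$ in $t$ to get an equation for $c_t$, multiply by $c_t$, integrate, and control the new terms $\int\dive(c_t\mathbf v)c_t\,dx$, $\int\dive(c\mathbf v_t)c_t\,dx$, $\int(m_tc+mc_t)c_t\,dx$; the term with $\mathbf v_t$ is handled in $L^{4/\alpha}$ exactly because $\Phi$ controls $\int_0^t\|\mathbf v_t\|_{L^{4/\alpha}}^2\,ds$, and $m_t$ is controlled in $L^2$. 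This gives $\sup_t\|\nabla c_t\|_{L^2}^2$ and $\int_0^t\|c_{tt}\|_{L^2}^2\,ds$. \emph{Step 4 (elliptic regularity for $\nabla^2c$ in $L^2\cap L^4$).} Rewrite $(\ref{2})_1$ as $-\Delta c = -c_t-\dive(c\mathbf v)-mc$ and apply $W^{2,p}$ elliptic estimates for $p=2$ and $p=4$; the right-hand side is controlled in $L^4$ by Steps 1–3 together with $\|\nabla\mathbf v\|_{W^{1,4/\alpha}}$ from $\Phi$ (note $4<4/\alpha$ since $\alpha\le 1/8$), yielding $\sup_t\|\nabla^2c\|_{L^2\cap L^4}^2$ and, after differentiating once more, $\int_0^t\|\nabla^2c_t\|_{L^2}^2\,ds$.

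The main obstacle is the interplay between the critical two-dimensional Sobolev embedding and the degenerate weight: one cannot simply bound $\|\mathbf v\|_{L^\infty}$ by $\|\nabla\mathbf v\|_{L^2}$, so every place where an $L^\infty$ or high-$L^p$ norm of $\mathbf v$ or $\mathbf v_t$ is needed must be routed through the $L^{4/\alpha}$ and $W^{1,4/\alpha}$ norms that $\Phi$ actually controls (via Lemma \ref{f} and the second inequality of Lemma \ref{a}), rather than through naive $H^1$ estimates. Keeping the exponents consistent across Steps 3 and 4 — in particular making sure the product $\dive(c\mathbf v_t)$ lands in $L^2$ and $\dive(c\mathbf v)$ lands in $L^4$ with only the quantities bounded by $\Phi$ and $c_0$ — is the delicate bookkeeping; once that is arranged, each estimate closes by Gronwall and the time integrals contribute the powers $t^{1/4}$ and the square of $\Phi$ as in the statement.
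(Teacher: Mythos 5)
Your overall strategy is the paper's: an $L^2$ energy estimate, an $H^1$ estimate, time differentiation of the equation, and elliptic ($L^p$) regularity to convert $c_t$-bounds into $\nabla^2 c$- and $\nabla^2 c_t$-bounds, with everything routed through $\Phi$ and $c_0$ and Gronwall. However, there are two bookkeeping slips that make the proposal, as written, not quite close.

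First, in Step 2 you say $\|c_t\|_{L^2}^2$ ``is then read off directly from the equation $c_t=\Delta c-\dive(c\mathbf v)-mc$.'' But $\sup_s\|c_t\|_{L^2}$ requires $\sup_s\|\Delta c\|_{L^2}$, which is itself obtained from $\sup_s\|c_t\|_{L^2}$ by elliptic regularity; this is circular. The non-circular route, which the paper takes, is to derive $\sup_s\|c_t\|_{L^2}^2$ directly from the time-differentiated equation $c_{tt}+\dive(c\mathbf v)_t=\Delta c_t-(mc)_t$ tested with $c_t$, after which the elliptic estimate gives $\sup_s\|\nabla^2 c\|_{L^2}^2$.

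Second, in Step 3 you test the $c_t$-equation with $c_t$ and claim this yields $\sup_s\|\nabla c_t\|_{L^2}^2$ and $\int_0^t\|c_{tt}\|_{L^2}^2\,ds$. Testing with $c_t$ produces $\frac{d}{dt}\|c_t\|_{L^2}^2+\|\nabla c_t\|_{L^2}^2\lesssim\cdots$, i.e. $\sup_s\|c_t\|_{L^2}^2$ and $\int_0^t\|\nabla c_t\|_{L^2}^2\,ds$ --- the quantities needed for (\ref{111}). To obtain $\sup_s\|\nabla c_t\|_{L^2}^2$ and $\int_0^t\|c_{tt}\|_{L^2}^2\,ds$, which are what (\ref{112}) requires and what feeds the $L^p$ estimate for $\int_0^t\|\nabla^2 c_t\|_{L^2}^2\,ds$ in your Step 4, you must additionally test the $c_t$-equation with $c_{tt}$, as the paper does. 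Without that second multiplier, (\ref{112}) and the final elliptic step have no input. Once these two slips are corrected --- add the $c_{tt}$-test and reorder the $\sup\|c_t\|$ and $\sup\|\nabla^2 c\|$ bounds so neither presupposes the other --- the plan coincides with the paper's proof.
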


\begin{proof}
Multiplying the equation $(\ref{2})_{1}$ by $c$ and integrating over $\Omega$ to get
\begin{equation*}
\frac{d}{dt}\int c^2\, dx+\int |\nabla c|^2\, dx\leq C(\|\dive\mathbf{v}\|_{L^{\infty}}+\|m\|_{L^{\infty}})\|c\|_{L^2}^2.
\end{equation*}
Using the Gronwall's inequality leads that
\begin{equation}\label{113}
\sup_{0\leq s\leq t}\|c\|_{L^2}^2+\int_{0}^{t}\|\nabla c\|_{L^2}^2\, ds\leq Cc_{0}\exp\{C\Phi(\mathbf{v}, m, t)t^{1/2}\}.
\end{equation}
Multiplying the equation $(\ref{2})_{1}$ by $c_{t}$ and integrating over $\Omega$ gives that
\begin{equation*}
\begin{split}
&\frac{1}{2}\frac{d}{dt}\int |\nabla c|^2\, dx+\int c_{t}^2\, dx\\
\leq&C(\|\mathbf{v}\|_{L^{\infty}}\|\nabla c\|_{L^2}\|c_{t}\|_{L^2}+\|c\|_{L^4}\|\dive\mathbf{v}\|_{L^4}\|c_{t}\|_{L^2}+\|m\|_{L^4}\|c\|_{L^4}\|c_{t}\|_{L^2})\\
\leq&\varepsilon\|c_{t}\|_{L^2}^2+C_{\varepsilon}(\|\mathbf{v}\|_{L^{\infty}}^2\|\nabla c\|_{L^2}^2+\|c\|_{L^4}^2\|\dive\mathbf{v}\|_{L^4}^2+\|m\|_{L^4}^2\|c\|_{L^4}^2).
\end{split}
\end{equation*}
Choosing $\varepsilon=1/2$ and using Gronwall's inequality, we have
\begin{equation}\label{114}
\sup_{0\leq s\leq t}\|\nabla c\|_{L^2}^2+\int_{0}^{t}\|c_{t}\|_{L^2}^2\, ds\leq Cc_{0}\exp\{C\Phi^2(\mathbf{v}, m, t)t^{1/2}\}.
\end{equation}
In order to estimate $\sup_{0\leq s\leq t}\|c_{t}\|_{L^2}^2$, we differentiate the equation $(\ref{2})_{1}$ with respect to $t$
\begin{equation}\label{70}
c_{tt}+\dive(c\mathbf{v})_{t}=\Delta c_{t}-(mc)_{t}.
\end{equation}
Multiplying the equation (\ref{70}) by $c_{t}$ and integrating over $\Omega$ gives that
\begin{equation}\label{175}
\begin{split}
&\frac{1}{2}\frac{d}{dt}\int c_{t}^2\, dx+\int |\nabla c_{t}|^2\, dx\\
=&-\frac{1}{2}\int\dive\mathbf{v}c_{t}^2\, dx-\int\mathbf{v}_{t}\cdot\nabla cc_{t}\, dx-\int c\dive\mathbf{v}_{t}c_{t}\, dx-\int m_{t}cc_{t}\, dx-\int mc_{t}^2\, dx\\
\leq&C(\|\dive\mathbf{v}\|_{L^{\infty}}\|c_{t}\|_{L^2}^2+\|\mathbf{v}_{t}\|_{L^{q}}\|\nabla c\|_{L^2}\|c_{t}\|_{L^p}+\|c\|_{L^4}\|c_{t}\|_{L^4}\|\dive\mathbf{v}_{t}\|_{L^2}\\
&+\|m_{t}\|_{L^4}\|c\|_{L^4}\|c_{t}\|_{L^2}+\|m\|_{L^{\infty}}\|c_{t}\|_{L^2}^2)\\
\leq&\varepsilon\|\nabla c_{t}\|_{L^2}^2+C_{\varepsilon}(\|\dive\mathbf{v}\|_{L^{\infty}}+\|\dive\mathbf{v}_{t}\|_{L^2}+\|m\|_{L^{\infty}}+1)\|c_{t}\|_{L^2}^2\\
&+C(\|\mathbf{v}_{t}\|_{L^q}^2\|\nabla c\|_{L^2}^2+\|c\|_{L^2}\|\nabla c\|_{L^2}\|\dive\mathbf{v}_{t}\|_{L^2}^{3/2}+\|m_{t}\|_{L^2}^2\|\nabla c\|_{L^2}^2+\|\nabla m_{t}\|_{L^2}^2\|c\|_{L^2}^2),
\end{split}
\end{equation}
where $q=4/\alpha, 1/p+1/q=1/2$. Choosing $\varepsilon=1/2$ and applying Gronwall's inequality to (\ref{175}), we have
\begin{equation}\label{115}
\sup_{0\leq s\leq t}\|c_{t}\|_{L^2}^2+\int_{0}^{t}\|\nabla c_{t}\|_{L^2}^2\, ds\leq Cc_{0}\exp\{C\Phi^2(\mathbf{v}, m, t)t^{1/4}\}.
\end{equation}
The classical $L^p$ theory asserts that
\begin{equation}\label{60}
\begin{split}
\|\nabla^2 c\|_{L^2}^2\leq&C(\|c_{t}\|_{L^2}^2+\|\mathbf{v}\cdot\nabla c\|_{L^2}^2+\|c\dive\mathbf{v}\|_{L^2}^2+\|mc\|_{L^2}^2)\\
\leq&C(\|c_{t}\|_{L^2}^2+\|\mathbf{v}\|_{L^\infty}^2\|\nabla c\|_{L^2}^2+\|c\|_{L^4}^2\|\dive\mathbf{v}\|_{L^4}^2+\|m\|_{L^4}^2\|c\|_{L^4}^2),
\end{split}
\end{equation}
which together with (\ref{113}), (\ref{114}), and (\ref{115}) implies that 
\begin{equation}\label{177}
\int_{0}^{t}\|\nabla^2 c\|_{L^2}^2\, ds\leq Cc_{0}\exp\{C\Phi^2(\mathbf{v},m, t)t^{1/4}\}
\end{equation}
and
\begin{equation}\label{116}
\sup_{0\leq s\leq t}\|\nabla^2 c\|_{L^2}^2\leq Cc_{0}\Phi^2(\mathbf{v}, m, t)\exp\{C\Phi^2(\mathbf{v},m, t)t^{1/4}\}.
\end{equation}
In order to estimate $\sup_{0\leq s\leq t}\|\nabla c_{t}\|_{L^2}^2$, we multiply the equation (\ref{70}) by $c_{tt}$ and integrate over $\Omega$ to get
\begin{equation}\label{176}
\begin{split}
&\frac{1}{2}\frac{d}{dt}\int |\nabla c_{t}|^2\, dx+\int c_{tt}^2\, dx\\
=&-\int\mathbf{v}\cdot\nabla c_{t}c_{tt}\, dx-\int c_{t}\dive\mathbf{v} c_{tt}\, dx-\int \mathbf{v}_{t}\cdot\nabla c c_{tt}\, dx-\int c\dive\mathbf{v}_{t}c_{tt}\, dx\\
&-\int m_{t}c c_{tt}\, dx-\int m c_{t}c_{tt}\, dx\\
\leq&\varepsilon\|c_{tt}\|_{L^2}^2+C_{\varepsilon}(\|\mathbf{v}\|_{L^{\infty}}^2+\|c_{t}\|_{L^2}^2)\|\nabla c_{t}\|_{L^2}^2\\
&+C_{\varepsilon}(\|\dive\mathbf{v}\|_{L^2}^2\|\nabla^2\mathbf{v}\|_{L^2}^2+\|\mathbf{v}_{t}\|_{L^q}^2\|\nabla c\|_{L^p}^2+\|c\|_{L^{\infty}}^2\|\dive\mathbf{v}_{t}\|_{L^2}^2\\
&+\|m_{t}\|_{L^2}^2\|\nabla m_{t}\|_{L^2}^2+\|c\|_{L^2}^2\|\nabla c\|_{L^2}^2+\|m\|_{L^2}^2\|\nabla m\|_{L^2}^2),
\end{split}
\end{equation}
where $q=4/\alpha, 1/p+1/q=1/2$. Choosing $\varepsilon=1/2$ and applying Gronwall's inequality to (\ref{176}), combining with (\ref{113}), (\ref{114}), (\ref{115}), and (\ref{116}), we have
\begin{equation}\label{117}
\sup_{0\leq s\leq t}\|\nabla c_{t}\|_{L^2}^2+\int_{0}^{t}\|c_{tt}\|_{L^2}^2\, ds\leq Cc_{0}\Phi^2(\mathbf{v}, m, t)\exp\{C\Phi^2(\mathbf{v}, m, t)t^{1/4}\}.
\end{equation}
Using the classical $L^p$ theory and equation (\ref{70}), together with (\ref{113}), (\ref{114}), (\ref{115}), and (\ref{117}), we have
\begin{equation}\label{178}
\begin{split}
&\int_{0}^{t}\|\nabla^2 c_{t}\|_{L^2}^2\, ds\\
\leq&C\int_{0}^{t}(\|c_{tt}\|_{L^2}^2+\|\mathbf{v}\|_{L^{\infty}}^2\|\nabla c_{t}\|_{L^2}^2+\|c_{t}\|_{L^4}^2\|\dive\mathbf{v}\|_{L^4}^2)\, ds\\
&+C\int_{0}^t(\|\mathbf{v}_{t}\|_{L^q}^2\|\nabla c\|_{L^p}^2+\|c\|_{L^{\infty}}^2\|\dive\mathbf{v}_{t}\|_{L^2}^2+\|m_{t}\|_{L^2}^2\|c\|_{L^\infty}^2+\|m\|_{L^4}^2\|c_{t}\|_{L^4}^2)\, ds\\
\leq&Cc_{0}\Phi^2(\mathbf{v}, m, t)\exp\{C\Phi^2(\mathbf{v}, m, t)t^{1/4}\},
\end{split}
\end{equation}
where $q=4/\alpha, 1/p+1/q=1/2$. According to the equation $(\ref{2})_{1}$ and elliptic estimates, we have
\begin{equation*}
\begin{split}
\|\nabla^2 c\|_{L^4}^2\leq&C(\|c_{t}\|_{L^4}^2+\|\mathbf{v}\cdot\nabla c\|_{L^4}^2+\|c\dive\mathbf{v}\|_{L^4}^2+\|mc\|_{L^4}^2)\\
\leq&C(\|c_{t}\|_{L^2}\|\nabla c_{t}\|_{L^2}+\|\mathbf{v}\|_{L^{\infty}}^2\|c\|_{L^2}^{\frac{1}{2}}\|\nabla^2 c\|_{L^2}^{\frac{3}{2}})\\
&+C(\|c\|_{L^{\infty}}^2\|\mathbf{v}\|_{L^2}^{\frac{1}{2}}\|\nabla^2\mathbf{v}\|_{L^2}^{\frac{3}{2}}+\|m\|_{L^2}^{\frac{1}{2}}\|\nabla m\|_{L^2}^{\frac{3}{2}}\|c\|_{L^2}^{\frac{1}{2}}\|\nabla c\|_{L^2}^{\frac{3}{2}}).
\end{split}
\end{equation*}
Using the estimate (\ref{113}), (\ref{114}), (\ref{115}), (\ref{177}), (\ref{116}), and (\ref{117}), we have
\begin{equation}\label{119}
\int_{0}^{t}\|\nabla^2 c\|_{L^4}^2\, ds\leq Cc_{0}\exp\{C\Phi^2(\mathbf{v},m, t)t^{1/4}\}
\end{equation}
and
\begin{equation}\label{120}
\sup_{0\leq s\leq t}\|\nabla^2 c\|_{L^4}^2\leq Cc_{0}\Phi^2(\mathbf{v}, m, t)\exp\{C\Phi^2(\mathbf{v}, m, t)t^{1/4}\}.
\end{equation}
The estimate (\ref{113}), (\ref{114}), (\ref{115}), (\ref{177}), and (\ref{119}) yield (\ref{111}). The estimate (\ref{112}) is given by (\ref{116}), (\ref{117}), (\ref{178}), and (\ref{120}).
\end{proof}

\begin{lemma}\label{d}
Let $n$ be a smooth solution of the equation (\ref{3}). Then
\begin{equation}\label{128}
\sup_{0\leq s\leq t}(\|n\|_{H^1}^2+\|n_{t}\|_{L^2}^2)+\int _{0}^{t}\|\nabla n_{t}\|_{L^2}^2\, ds\leq Cc_{0}\exp\big\{Cc_{0}^2\exp\{C\Phi^3(\mathbf{v}, m, t)t^{1/4}\}\big\},
\end{equation}
\begin{equation}\label{129}
\int_{0}^{t}\|\nabla^2 n\|_{L^2\cap L^4}^2\, ds\leq Cc_{0}^2\exp\{Cc_{0}^2\exp\big\{C\Phi^3(\mathbf{v}, m, t)t^{1/4}\}\big\},
\end{equation}
and
\begin{equation}\label{130}
\sup_{0\leq s\leq t}\|\nabla^2 n\|_{L^2}^2\leq Cc_{0}^2\Phi^2(\mathbf{v}, m, t)\exp\big\{Cc_{0}^2\exp\{C\Phi^3(\mathbf{v}, m, t)t^{1/4}\}\big\}.
\end{equation}
\end{lemma}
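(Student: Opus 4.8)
The plan is to run the scheme used for the linear parabolic equation (\ref{2}) in Lemma \ref{b}, with the chemotaxis term $-\nabla\cdot(n\nabla c)$ now playing the role of the absorption term $-mc$ there. What makes this feasible is that the estimates (\ref{111})--(\ref{112}) for $c$ are already closed and do not involve $n$, so throughout the argument $c$ is frozen as a known coefficient: the norms $\sup_{0\le s\le t}(\|c\|_{H^1}^2+\|c_t\|_{H^1}^2+\|\nabla^2 c\|_{L^2\cap L^4}^2)$ and $\int_0^t(\|c_{tt}\|_{L^2}^2+\|\nabla^2 c_t\|_{L^2}^2)\,ds$ are bounded by $Cc_0\Phi^2\exp\{C\Phi^2 t^{1/4}\}$, and via Lemma \ref{a} (with $q=2,r=4$) so is $\sup_{0\le s\le t}\|\nabla c\|_{L^{\infty}}^2$. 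The extra power of $c_0$ and the extra exponential appearing in (\ref{128})--(\ref{130}) relative to the $c$-estimates come from substituting these already-exponential bounds into the Gronwall coefficients and the $L^1$-in-time source terms of the successive energy estimates for $n$.

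\emph{Step 1 (basic energy estimates).} Testing $(\ref{3})_1$ with $n$, the chemotaxis contribution is $\int n\nabla c\cdot\nabla n\,dx\le\varepsilon\|\nabla n\|_{L^2}^2+C_\varepsilon\|\nabla c\|_{L^{\infty}}^2\|n\|_{L^2}^2$; absorbing and Gronwall give $\sup_{0\le s\le t}\|n\|_{L^2}^2+\int_0^t\|\nabla n\|_{L^2}^2\,ds$. Testing $(\ref{3})_1$ with $n_t$, the $\Delta n$ term contributes $\tfrac12\tfrac{d}{dt}\|\nabla n\|_{L^2}^2$, while the chemotaxis term is estimated in expanded (not integrated-by-parts) form, since only $\|n_t\|_{L^2}^2$ is available on the left: $-\int(\nabla n\cdot\nabla c+n\Delta c)n_t\,dx\le\varepsilon\|n_t\|_{L^2}^2+C_\varepsilon(\|\nabla c\|_{L^{\infty}}^2\|\nabla n\|_{L^2}^2+\|n\|_{L^4}^2\|\nabla^2 c\|_{L^4}^2)$. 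Using $\|n\|_{L^4}^2\le C\|n\|_{L^2}\|\nabla n\|_{L^2}$ together with the previous bound, Gronwall yields $\sup_{0\le s\le t}\|\nabla n\|_{L^2}^2+\int_0^t\|n_t\|_{L^2}^2\,ds$.

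\emph{Step 2 (estimate of $n_t$).} Differentiating $(\ref{3})_1$ in $t$ gives $n_{tt}+\dive(n\mathbf{v})_t=\Delta n_t-\nabla\cdot(n\nabla c)_t$. Testing with $n_t$ (now $\|\nabla n_t\|_{L^2}^2$ sits on the left), the only terms beyond those already treated for (\ref{2}) come from $\int(n_t\nabla c+n\nabla c_t)\cdot\nabla n_t\,dx$: the first is $\le\varepsilon\|\nabla n_t\|_{L^2}^2+C_\varepsilon\|\nabla c\|_{L^{\infty}}^2\|n_t\|_{L^2}^2$, a Gronwall coefficient; the second is $\le\varepsilon\|\nabla n_t\|_{L^2}^2+C_\varepsilon\|n\|_{L^4}^2\|\nabla c_t\|_{L^4}^2$, and it is an $L^1$-in-time source because $\|n\|_{L^4}^2\le C\|n\|_{L^2}\|\nabla n\|_{L^2}$ is controlled by Step 1 while $\|\nabla c_t\|_{L^4}^2\le C\|\nabla c_t\|_{L^2}\|\nabla^2 c_t\|_{L^2}$ with $\|\nabla c_t\|_{L^2}$ bounded and $\|\nabla^2 c_t\|_{L^2}\in L^2(0,t)$ by (\ref{112}). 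This gives $\tfrac12\tfrac{d}{dt}\|n_t\|_{L^2}^2+\tfrac12\|\nabla n_t\|_{L^2}^2\le A(t)\|n_t\|_{L^2}^2+B(t)$ with $\int_0^t A,\int_0^t B$ controlled by $c_0,\Phi$ and the Step 1 bounds; with $\|n_t(0)\|_{L^2}\le Cc_0$ (read off from $(\ref{3})_1$ at $t=0$ using $n_0,c_0\in H^3$ and the regularity of $\mathbf{v}$) and Gronwall, we obtain $\sup_{0\le s\le t}\|n_t\|_{L^2}^2+\int_0^t\|\nabla n_t\|_{L^2}^2\,ds$, which is (\ref{128}).

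\emph{Step 3 (elliptic estimates; the main obstacle).} Rewriting $(\ref{3})_1$ as $\Delta n=n_t+\dive(n\mathbf{v})+\nabla\cdot(n\nabla c)$ and using $L^2$ elliptic regularity, $\|\nabla^2 n\|_{L^2}^2\le C\big(\|n_t\|_{L^2}^2+\|\mathbf{v}\|_{L^{\infty}}^2\|\nabla n\|_{L^2}^2+\|n\|_{L^4}^2\|\dive\mathbf{v}\|_{L^4}^2+\|\nabla c\|_{L^{\infty}}^2\|\nabla n\|_{L^2}^2+\|n\|_{L^4}^2\|\nabla^2 c\|_{L^4}^2\big)$, every right-hand term being supremum-controlled by Steps 1--2 and (\ref{111})--(\ref{112}) (crucially $\|n\|_{L^4}^2\le C\|n\|_{L^2}\|\nabla n\|_{L^2}$ involves no second derivative of $n$); this is (\ref{130}), and time integration gives the $L^2$ part of (\ref{129}). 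The $L^4$ part follows from $L^4$ elliptic regularity applied to the same identity, with $\|n_t\|_{L^4}^2\le C\|n_t\|_{L^2}\|\nabla n_t\|_{L^2}\in L^1(0,t)$ by Step 2 and the chemotaxis term handled by $\|\nabla n\|_{L^4}^2\le C\|\nabla n\|_{L^2}\|\nabla^2 n\|_{L^2}$ and $\|n\|_{L^{\infty}}\le C\|n\|_{H^2}$, both now available. The main obstacle is a genuinely quasilinear version of the difficulty in the proof of Lemma \ref{b}: in dimension two the terms $\nabla\cdot(n\nabla c)$ and $\nabla\cdot(n\nabla c)_t$ cannot be dominated by low-order norms of $n$ alone, so at each step one must borrow a small fraction of the available dissipation ($\|\nabla n\|_{L^2}$, $\|\nabla n_t\|_{L^2}$, or $\|\nabla^2 n\|_{L^2\cap L^4}$) through Gagliardo--Nirenberg interpolation while keeping every $c$-factor on the data side --- legitimate only because (\ref{111})--(\ref{112}) are already closed --- and tracking how those exponential $c$-bounds propagate through the chain of Gronwall inequalities is exactly what yields (\ref{128})--(\ref{130}).
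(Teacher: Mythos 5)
Your proposal matches the paper's proof in structure and substance: both freeze $c$ as a known coefficient controlled by Lemma \ref{b}, run the energy hierarchy (test $(\ref{3})_1$ with $n$, then with $n_t$, then the $t$-differentiated equation with $n_t$), and close with $L^p$ elliptic regularity for $p=2$ and $p=4$, with the chemotaxis terms absorbed via Gagliardo--Nirenberg and a small portion of the dissipation. The few places where you bound the chemotaxis contribution by $\|\nabla c\|_{L^{\infty}}$ where the paper uses $\|\Delta c\|_{L^2}$ (after an extra integration by parts) are interchangeable given (\ref{111})--(\ref{112}), so this is the same argument.
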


\begin{proof}
To prove the estimate (\ref{128}), we can use the same method to get (\ref{111}). We remain to deal with the term $-\int\nabla\cdot(n\nabla c)n\, dx, -\int\nabla\cdot(n\nabla c)n_{t}\, dx, -\int\nabla\cdot(n\nabla c)_{t}n_{t}\, dx$. A straightforward computation shows that
\begin{equation*}
\begin{split}
-\int\nabla\cdot(n\nabla c)n\, dx\leq&C\|\Delta c\|_{L^2}\|n\|_{L^2}\|\nabla n\|_{L^2}\leq\varepsilon\|\nabla n\|_{L^2}^2+C_{\varepsilon}\|\Delta c\|_{L^2}^2\|n\|_{L^2},\\
-\int\nabla\cdot(n\nabla c)n_{t}\, dx\leq&C(\|\nabla n\|_{L^2}\|\nabla c\|_{L^{\infty}}\|n_{t}\|_{L^2}+\|n\|_{L^4}\|\Delta c\|_{L^4}\|n_{t}\|_{L^2})\\
\leq&\varepsilon\|n_{t}\|_{L^2}^2+C_{\varepsilon}(\|\nabla n\|_{L^2}^2\|\nabla c\|_{L^{\infty}}^2+\|n\|_{L^4}^2\|\Delta c\|_{L^4}^2),\\
-\int\nabla\cdot(n\nabla c)_{t}n_{t}\, dx\leq&C(\|\Delta c\|_{L^2}\|n_{t}\|_{L^2}\|\nabla n_{t}\|_{L^2}+\|n\|_{L^4}\|\nabla c_{t}\|_{L^4}\|\nabla n_{t}\|_{L^2})\\
\leq&\varepsilon\|\nabla n_{t}\|_{L^2}^2+C_{\varepsilon}(\|\Delta c\|_{L^2}^2\|n_{t}\|_{L^2}^2+\|n\|_{L^4}^2\|c_{t}\|_{L^2}^{\frac{1}{2}}\|\nabla^2 c_{t}\|_{L^2}^{\frac{3}{2}}).
\end{split}
\end{equation*}
The above estimates and Lemma \ref{b} give (\ref{128}). Finally, for any $p\in[2, 4]$, the $L^p$ theory asserts
\begin{equation}\label{181}
\begin{split}
&\|\nabla^2 n\|_{L^p}^2\\
\leq&C(\|n_{t}\|_{L^p}^2+\|\mathbf{v}\cdot\nabla n\|_{L^p}^2+\|n\dive\mathbf{v}\|_{L^p}^2+\|\nabla n\cdot\nabla c\|_{L^p}^2+\|n\Delta c\|_{L^p}^2)\\
\leq&C\big(\|n_{t}\|_{L^2}^{\frac{4}{p}}\|\nabla n_{t}\|_{L^2}^{2-\frac{4}{p}}+(\|\mathbf{v}\|_{L^{\infty}}^2+\|\nabla c\|_{L^{\infty}}^2)\|\nabla n\|_{L^p}^2+\|n\|_{L^{\frac{4p}{4-p}}}^2(\|\dive\mathbf{v}\|_{L^4}^2+\|\Delta c\|_{L^4}^2)\big).
\end{split}
\end{equation}
Choosing $p=2$ in (\ref{181}), together with (\ref{128}) and Lemma \ref{b}, we obtain
\begin{equation}\label{125}
\int_{0}^{t}\|\nabla^2 n\|_{L^2}^2\, ds\leq Cc_{0}^2\exp\big\{Cc_{0}^2\exp\{C\Phi^3(\mathbf{v},m, t)t^{1/4}\}\big\}
\end{equation}
and
\begin{equation}\label{126}
\sup_{0\leq s\leq t}\|\nabla^2 n\|_{L^2}^2\leq Cc_{0}^2\Phi^2(\mathbf{v}, m, t)\exp\big\{Cc_{0}^2\exp\{C\Phi^3(\mathbf{v},m , t)t^{1/4}\}\big\}.
\end{equation}
Choosing $p=4$ in (\ref{181}), together with (\ref{128}), (\ref{125}), and Lemma \ref{b} gives
\begin{equation}\label{127}
\int_{0}^{t}\|\nabla^2 n\|_{L^4}^2\, ds\leq Cc_{0}^2\exp\big\{Cc_{0}^2\exp\{C\Phi^3(\mathbf{v}, m, t)t^{1/4}\}\big\}.
\end{equation}
The inequalities (\ref{129}) and (\ref{130}) follow directly from (\ref{125})-(\ref{127}).
\end{proof}

\begin{lemma}\label{g}
Let $\mathbf{u}$ be a smooth solution of the equation (\ref{4}). Then
\begin{equation}\label{131}
\sup_{0\leq s\leq t}(\|\sqrt{h}\mathbf{u}\|_{L^2}^2+\|\nabla\mathbf{u}\|_{L^2}^2)+\int_{0}^{t}\|\sqrt{h}\dot{\mathbf{u}}\|_{L^2}^2\, ds\leq Cc_{0}\exp\big\{Cc_{0}^2\exp\{C\Phi^2(\mathbf{v}, m, t)t^{1/4}\}\big\}.
\end{equation}
\end{lemma}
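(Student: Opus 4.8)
The plan is to perform the standard energy hierarchy for the linearized momentum equation (\ref{4}), in the same spirit as the proofs of Lemmas \ref{b} and \ref{d}, but now paying careful attention to the degeneracy caused by $\widetilde{h}=0$. First I would test (\ref{4}) against $\mathbf{u}$ and integrate over $\Omega$. The term $\int h\mathbf{u}_t\cdot\mathbf{u}\,dx$ combines with $\int h\mathbf{v}\cdot\nabla\mathbf{u}\cdot\mathbf{u}\,dx$ and the transport equation (\ref{1}) for $h$ to produce $\frac12\frac{d}{dt}\int h|\mathbf{u}|^2\,dx$ up to a term controlled by $\|\dive\mathbf{v}\|_{L^\infty}$. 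The operator term $\int L\mathbf{u}\cdot\mathbf{u}\,dx$ gives, after integration by parts and using $\mu>0,\ \mu+\lambda\ge0$, a coercive contribution $\mu\|\nabla\mathbf{u}\|_{L^2}^2+(\mu+\lambda)\|\dive\mathbf{u}\|_{L^2}^2$. The pressure-type terms $\int h^2\nabla n\cdot\mathbf{u}\,dx$ and $\frac12\int(1+n)\nabla h^2\cdot\mathbf{u}\,dx$ must be handled by integration by parts to throw the derivative onto $\mathbf{u}$, bounding e.g. $\int h^2\nabla n\cdot\mathbf{u}\,dx$ by $\|h^2\|_{L^\infty}\|\nabla n\|_{L^2}\|\mathbf{u}\|_{L^2}$ — and here is the crucial point: we cannot use Lemma \ref{Q1} to control $\|\mathbf{u}\|_{L^2}$, so instead we invoke the Caffarelli--Kohn--Nirenberg inequality of Lemma \ref{f} to estimate $\|\mathbf{u}\|_{L^{4/\alpha}}\le C\||x|^{\alpha/2}\nabla\mathbf{u}\|_{L^2}$; the weighted gradient norm of $\mathbf{u}$ will have to be propagated as a separate quantity.

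Second, I would derive the weighted estimate for $\||x|^{\alpha/2}\nabla\mathbf{u}\|_{L^2}$ (and for $\|\sqrt{h}\dot{\mathbf{u}}\|_{L^2}$, which is what the stated inequality (\ref{131}) actually claims appears under the time integral). The natural route is to test (\ref{4}) against $\dot{\mathbf{u}}=\mathbf{u}_t+\mathbf{v}\cdot\nabla\mathbf{u}$ — note the linearized material derivative uses $\mathbf{v}$, matching the convection term in (\ref{4}). This produces $\int h|\dot{\mathbf{u}}|^2\,dx$ on one side, and on the other side $-\int L\mathbf{u}\cdot\dot{\mathbf{u}}\,dx$ which, after integration by parts and commuting $L$ past the convective part of $\dot{\mathbf{u}}$, yields $\frac12\frac{d}{dt}\big(\mu\|\nabla\mathbf{u}\|_{L^2}^2+(\mu+\lambda)\|\dive\mathbf{u}\|_{L^2}^2\big)$ plus commutator terms of the schematic form $\int\nabla\mathbf{u}\,\nabla\mathbf{v}\,\nabla\mathbf{u}\,dx$, controlled using $\|\nabla\mathbf{v}\|_{W^{1,4/\alpha}}$ from $\Phi$ together with $\|\nabla\mathbf{u}\|_{L^2}$ and the Gagliardo--Nirenberg inequality of Lemma \ref{a}. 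The pressure terms become $-\int h^2\nabla n\cdot\dot{\mathbf{u}}\,dx-\frac12\int(1+n)\nabla h^2\cdot\dot{\mathbf{u}}\,dx$, absorbed by $\varepsilon\|\sqrt{h}\dot{\mathbf{u}}\|_{L^2}^2$ after writing $\nabla h^2\cdot\dot{\mathbf{u}}=(\nabla h^2/\sqrt h)\cdot(\sqrt h\,\dot{\mathbf{u}})$ using $\|h_0\ge\delta\|$-independent bounds — here one uses $\|h^{-1/2}\nabla h^2\|$-type quantities that are in fact $2\|h^{1/2}\nabla h\|$, controlled by Lemma \ref{c}, so no $\delta$-dependence enters. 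The compatibility condition (\ref{A3}) together with $h_0^{1/2}g\in L^2$ and $|x|^{\alpha/2}\sqrt{h_0}g\in L^2$ supplies the initial value $\|\sqrt{h_0}\dot{\mathbf{u}}(\cdot,0)\|_{L^2}^2=\|\sqrt{h_0}g\|_{L^2}^2\le Cc_0$.

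Third, assembling the two differential inequalities, one gets a closed system for $E(t):=\|\sqrt h\mathbf{u}\|_{L^2}^2+\|\nabla\mathbf{u}\|_{L^2}^2$ (and auxiliary weighted quantities) of the form $E'(t)+c\|\sqrt h\dot{\mathbf{u}}\|_{L^2}^2\le C\Phi\cdot P(E) + \text{(lower order)}$, where $P$ grows at most polynomially and the $\Phi$-dependent coefficients are integrable in time with the characteristic $t^{1/2}$ or $t^{1/4}$ factors appearing after Hölder in time (using $\int_0^t\Phi\le t^{1/2}(\int_0^t\Phi^2)^{1/2}$ etc.). A Gronwall argument then yields the bound $Cc_0\exp\{Cc_0^2\exp\{C\Phi^2 t^{1/4}\}\}$; the double exponential reflects that the $n$-estimates of Lemma \ref{d} already carry $\exp\{Cc_0^2\exp\{\cdots\}\}$, and these feed in through $\|\nabla n\|,\|\nabla^2 n\|$ in the pressure terms, so one simply inherits that structure.

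The main obstacle, as the authors' own remark after Theorem \ref{B} flags, is the loss of Lemma \ref{Q1}: every place where the non-vacuum proof would bound $\|\mathbf{u}\|_{L^2}$ or $\|\dot{\mathbf{u}}\|_{L^2}$ directly must be rerouted through the weighted Caffarelli--Kohn--Nirenberg estimate, which forces us to simultaneously track $\||x|^{\alpha/2}\nabla\mathbf{u}\|_{L^2}$ and therefore also weighted norms of the data (this is exactly why (\ref{204})--(\ref{304}) are imposed, and why $\alpha$ is tied to $\mu/(4(2\mu+\lambda))$ in (\ref{200}) — that particular value is what makes the CKN constant $\alpha^2/4$ small enough to be absorbed by the coercivity constant of $L$). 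Closing the weighted estimate requires commuting $|x|^\alpha$ through $L$, and the error terms $\int\nabla(|x|^\alpha)\cdot(\cdots)$ are controlled precisely because $|\nabla|x|^\alpha|=\alpha|x|^{\alpha-1}$ and the first CKN inequality of Lemma \ref{f} lets $\||x|^{\alpha-2}|\mathbf{u}|^2\|$-type quantities be dominated by $\||x|^\alpha|\nabla\mathbf{u}|^2\|$ with a small constant; I expect the bookkeeping here to be the delicate part, but conceptually it is a direct substitute for (\ref{173}) and the remaining structure of the argument is the routine energy method used already in Lemmas \ref{b}--\ref{d}.
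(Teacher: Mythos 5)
Your overall narrative (energy method, CKN inequality as the substitute for Lemma~\ref{Q1} once $\widetilde{h}=0$) is right for the paper \emph{as a whole}, but it misidentifies where that machinery is actually needed: it is not needed in Lemma~\ref{g}. You write that the pressure terms ``must be handled by integration by parts to throw the derivative onto $\mathbf{u}$'' and that one is then forced to control $\|\mathbf{u}\|_{L^2}$ or $\|\mathbf{u}\|_{L^{4/\alpha}}$, so that $\||x|^{\alpha/2}\nabla\mathbf{u}\|_{L^2}$ must be propagated simultaneously. Neither step is correct. The paper never integrates the pressure terms by parts here, and never invokes any $L^p$ norm of $\mathbf{u}$ (or of $\dot{\mathbf{u}}$). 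The trick is much more elementary: the time-derivative term produces $\frac{d}{dt}\int h|\mathbf{u}|^2\,dx$ (resp.\ the test function $\dot{\mathbf{u}}$ produces $\int h|\dot{\mathbf{u}}|^2\,dx$), so the natural controlled quantity already carries a factor $\sqrt{h}$. One then distributes the powers of $h$ in the source terms accordingly, e.g.\
\begin{equation*}
\Big|\int h^2\mathbf{u}\cdot\nabla n\,dx\Big|\le\|\sqrt{h}\mathbf{u}\|_{L^2}\,\|h\|_{L^\infty}^{3/2}\,\|\nabla n\|_{L^2},\qquad
\Big|\int(1+n)h\,\mathbf{u}\cdot\nabla h\,dx\Big|\le\|\sqrt{h}\mathbf{u}\|_{L^2}\,\|h\|_{L^\infty}^{1/2}\bigl(\|\nabla h\|_{L^2}+\|\nabla h\|_{L^{4/\alpha}}\|n\|_{L^p}\bigr),
\end{equation*}
and identically with $\dot{\mathbf{u}}$ in place of $\mathbf{u}$. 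All factors on the right are controlled by Lemmas~\ref{c} and~\ref{d}, and the coefficients multiplying $\|\sqrt{h}\mathbf{u}\|_{L^2}^2$ (resp.\ $\|\sqrt{h}\dot{\mathbf{u}}\|_{L^2}^2$ absorbed by $\varepsilon$) close the Gronwall argument directly.

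The consequence of the misdiagnosis is significant: if you tried to prove Lemma~\ref{g} the way you describe, you would need $\||x|^{\alpha/2}\nabla\mathbf{u}\|_{L^2}$, which is precisely the quantity established in Lemma~\ref{j} --- and Lemma~\ref{j}'s proof uses Lemmas~\ref{g} and~\ref{h} as inputs (for instance via the $L^p$ elliptic estimate of $\nabla^2\mathbf{u}$ and the $\|\dot{\mathbf{u}}\|_{L^q}$ bound). So your route would, at best, require re-engineering the lemma ordering into a simultaneous coupled Gronwall system, and at worst be circular. Related secondary slips: the initial value $\|\sqrt{h_0}\dot{\mathbf{u}}(\cdot,0)\|_{L^2}=\|\sqrt{h_0}g\|_{L^2}$ coming from (\ref{A3}) is relevant only for Lemma~\ref{h}, where $\sup_t\|\sqrt{h}\dot{\mathbf{u}}\|_{L^2}^2$ is estimated --- in (\ref{131}) the material-derivative term sits only under the time integral, so no initial datum for $\dot{\mathbf{u}}$ enters; and the interplay between $\alpha=\mu/(4(2\mu+\lambda))$ and the coercivity of $L$, and the commutation of $|x|^\alpha$ through $L$, are all features of the proof of Lemma~\ref{j}, not of this lemma.
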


\begin{proof}
Multiplying the equation $(\ref{4})_{1}$ by $\mathbf{u}$ and integrating over $\Omega$ to get
\begin{equation}\label{183}
\begin{split}
&\frac{1}{2}\frac{d}{dt}\int h\mathbf{u}^2\, dx+\mu\int |\nabla\mathbf{u}|^2\, dx+(\mu+\lambda)\int (\dive\mathbf{u})^2\, dx\\
=&-\int h^2\mathbf{u}\cdot\nabla n\, dx-\frac{1}{2}\int (1+n)\mathbf{u}\cdot\nabla h^2\, dx\\
\leq&C(\|\sqrt{h}\mathbf{u}\|_{L^2}\|\nabla n\|_{L^2}\|h\|_{L^{\infty}}^{\frac{3}{2}}+\|\sqrt{h}\mathbf{u}\|_{L^2}\|\nabla h\|_{L^2}\|h\|_{L^{\infty}}^{\frac{1}{2}}+\|\sqrt{h}\mathbf{u}\|_{L^2}\|\nabla h\|_{L^q}\|n\|_{L^p})\\
\leq&C(\|h\|_{L^{\infty}}^3+\|\nabla h\|_{L^2}^2+\|\nabla h\|_{L^q}^2)\|\sqrt{h}\mathbf{u}\|_{L^2}^2+C(\|\nabla n\|_{L^2}^2+\|n\|_{L^2}^{2-\frac{4}{p}}\|\nabla n\|_{L^2}^{\frac{4}{p}}+\|h\|_{L^{\infty}}),
\end{split}
\end{equation}
where $q=4/\alpha, 1/p+1/q=1/2$. Applying the Gronwall's inequality to (\ref{183}), together with Lemmas \ref{c} and \ref{d}, we have
\begin{equation}\label{133}
\sup_{0\leq s\leq t}\|\sqrt{h}\mathbf{u}\|_{L^2}^2+\int_{0}^{t}\|\nabla\mathbf{u}\|_{L^2}^2\, ds\leq Cc_{0}\exp\big\{Cc_{0}^2\exp\{C\Phi^2(\mathbf{v}, m, t)t^{1/4}\}\big\}.
\end{equation}
Multiplying the equation $(\ref{4})_{1}$ by $\dot{\mathbf{u}}$ and integrating over $\Omega$ to get
\begin{equation}\label{184}
\begin{split}
&\frac{\mu}{2}\frac{d}{dt}\int |\nabla\mathbf{u}|^2\, dx+\frac{(\mu+\lambda)}{2}\frac{d}{dt}\int(\dive\mathbf{u})^2\, dx+\int h\dot{\mathbf{u}}^2\, dx\\
\leq&C(\|\sqrt{h}\dot{\mathbf{u}}\|_{L^2}\|\nabla n\|_{L^2}\|h\|_{L^{\infty}}^{\frac{3}{2}}+\|\sqrt{h}\dot{\mathbf{u}}\|_{L^2}\|\nabla h\|_{L^2}\|h\|_{L^{\infty}}^{\frac{1}{2}}\\
&+\|\sqrt{h}\dot{\mathbf{u}}\|_{L^2}\|\nabla h\|_{L^q}\|n\|_{L^p}\|h\|_{L^{\infty}}^{\frac{1}{2}}+\|\dive\mathbf{v}\|_{L^{\infty}}\|\nabla\mathbf{u}\|_{L^2}^2)\\
\leq&\varepsilon\|\sqrt{h}\dot{\mathbf{u}}\|_{L^2}^2+C_{\varepsilon}\|h\|_{L^{\infty}}(\|\nabla n\|_{L^2}^2\|h\|_{L^{\infty}}^2+\|\nabla h\|_{L^2}^2+\|\nabla h\|_{L^q}^2\|n\|_{L^p}^2)\\
&+C\|\dive\mathbf{v}\|_{L^{\infty}}\|\nabla\mathbf{u}\|_{L^2}^2,
\end{split}
\end{equation}
where $q=4/\alpha, 1/p+1/q=1/2$. Using the Gronwall's inequality to (\ref{184}), together with Lemmas \ref{c} and \ref{d}, we have
\begin{equation}\label{185}
\sup_{0\leq s\leq t}\|\nabla\mathbf{u}\|_{L^2}^2+\int_{0}^{t}\|\sqrt{h}\dot{\mathbf{u}}\|_{L^2}^2\, ds\leq Cc_{0}\exp\big\{Cc_{0}^2\exp\{C\Phi^2(\mathbf{v}, m, t)t^{1/4}\}\big\}.
\end{equation}
Combining the estimates (\ref{133}) and (\ref{185}), we complete the proof of (\ref{131}).
\end{proof}

\begin{lemma}\label{h}
Let $(h, \mathbf{u})$ be a smooth solution to the equation (\ref{1}) and (\ref{4}). Then
\begin{equation}\label{B300}
\sup_{0\leq s\leq t}\|\sqrt{h}\dot{\mathbf{u}}\|_{L^2}^2+\int_{0}^{t}\|\nabla\dot{\mathbf{u}}\|_{L^2}^2\, ds\leq Cc_{0}\exp\big\{Cc_{0}^2\exp\{C\Phi^3(\mathbf{v}, m, t)t^{1/4}\}\big\}
\end{equation}
and
\begin{equation}\label{B301}
\sup_{0\leq s\leq t}\|\nabla^2\mathbf{u}\|_{L^2}^2+\int_{0}^{t}\|\nabla\mathbf{u}_{t}\|_{L^2}^2\, ds\leq Cc_{0}^2\exp\big\{Cc_{0}^2\exp\{C\Phi^3(\mathbf{v}, m, t)t^{1/4}\}\big\}.
\end{equation}
\end{lemma}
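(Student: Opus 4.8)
The plan is to obtain the two estimates by the standard energy method applied to the material derivative $\dot{\mathbf{u}}$ of the momentum equation $(\ref{4})_1$. First I would rewrite $(\ref{4})_1$ as $h\dot{\mathbf{u}} + L\mathbf{u} = -h^2\nabla n - \tfrac12(1+n)\nabla h^2 =: F$, apply the operator $\partial_t + \dive(\mathbf{v}\,\cdot)$ (the "$\dot{\;}$-with-$\mathbf{v}$" commutator used here) to this identity, multiply the result by $\dot{\mathbf{u}}$ and integrate over $\Omega$. Using the continuity-type equation $(\ref{1})_1$ for $h$, the time-differentiated terms combine into $\tfrac12\frac{d}{dt}\int h|\dot{\mathbf{u}}|^2\,dx$, and the elliptic part, after integration by parts, produces the good term $\int\big(\mu|\nabla\dot{\mathbf{u}}|^2 + (\mu+\lambda)(\dive\dot{\mathbf{u}})^2\big)\,dx$ plus commutator terms of the schematic form $\int \nabla\mathbf{u}\,\nabla\mathbf{v}\,\nabla\dot{\mathbf{u}}\,dx$ and $\int \nabla\mathbf{u}\,\nabla^2\mathbf{v}\,\dot{\mathbf{u}}\,dx$; the right-hand side contributes $\int \dot{F}\cdot\dot{\mathbf{u}}\,dx$ where $\dot F$ involves $h_t, n_t, \nabla h, \nabla n$ and the spatial derivatives already controlled in Lemmas \ref{c} and \ref{d}.

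The key steps, in order, are: (i) estimate the commutator terms — here is where the two-dimensional criticality bites, since $\dot{\mathbf{u}}$ vanishes on $\partial\Omega$ one may use $\|\dot{\mathbf{u}}\|_{L^{4/\alpha}}^2 \le C\||x|^{\alpha/2}\nabla\dot{\mathbf{u}}\|_{L^2}^2$ from Lemma \ref{f}, but more straightforwardly one controls $\|\dot{\mathbf{u}}\|_{L^4}\le C\|\nabla\dot{\mathbf{u}}\|_{L^2}$ via Gagliardo–Nirenberg/Poincaré on the bounded domain and absorbs into the dissipation; (ii) bound $\int\dot F\cdot\dot{\mathbf{u}}\,dx$ using $\|\sqrt h\dot{\mathbf{u}}\|_{L^2}$, $\|\nabla\dot{\mathbf{u}}\|_{L^2}$ and the $h,n,c$ estimates, again splitting into an $\varepsilon\|\nabla\dot{\mathbf{u}}\|_{L^2}^2$ piece and a Gronwall-integrable remainder; (iii) choose $\varepsilon$ small, apply Gronwall in time together with the bound on $\sup\|\sqrt h\dot{\mathbf{u}}\|_{L^2}^2$ already available at $t=0$ from the compatibility condition (\ref{A3}) (which gives $\sqrt{h_0}\dot{\mathbf{u}}(0)=\sqrt{h_0}\,g\in L^2$), obtaining (\ref{B300}); (iv) feed (\ref{B300}) into the elliptic estimate for $L\mathbf{u} = h(\dot{\mathbf{u}}) - F$, i.e. $\|\nabla^2\mathbf{u}\|_{L^2}\le C(\|h\dot{\mathbf{u}}\|_{L^2} + \|h^2\nabla n\|_{L^2} + \|(1+n)\nabla h^2\|_{L^2})$, and bound $\|h\dot{\mathbf{u}}\|_{L^2}\le \|h\|_{L^\infty}^{1/2}\|\sqrt h\dot{\mathbf{u}}\|_{L^2}$, yielding the $\sup$-bound in (\ref{B301}); (v) finally write $\nabla\mathbf{u}_t = \nabla\dot{\mathbf{u}} - \nabla(\mathbf{v}\cdot\nabla\mathbf{u})$ and integrate in time to get the $\int_0^t\|\nabla\mathbf{u}_t\|_{L^2}^2\,ds$ term in (\ref{B301}), using the $L^2_t$ bound on $\nabla\dot{\mathbf{u}}$ from (\ref{B300}) and the control of $\nabla\mathbf{v}, \nabla^2\mathbf{v}$ through $\Phi$.

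The main obstacle I expect is step (i)–(ii): controlling the commutator and $\dot F$ terms without a usable $L^2$ bound on $\mathbf{u}$ itself (Lemma \ref{Q1} fails when $\widetilde h=0$). Terms such as $\int (1+n)\dive\mathbf{v}_t\,\nabla h^2\cdot\dot{\mathbf{u}}\,dx$ or $\int h^2 \nabla n_t\cdot\dot{\mathbf{u}}\,dx$ must be handled purely through $\|\sqrt h\dot{\mathbf{u}}\|_{L^2}$, $\|\nabla\dot{\mathbf{u}}\|_{L^2}$ and the weighted norms in $\Phi$ and $c_0$; the weight $|x|^{\alpha/2}$ and the Caffarelli–Kohn–Nirenberg inequality of Lemma \ref{f} are exactly what make the borderline $2$D integrals close, and matching the powers of $\Phi$ and $c_0$ so that the final bound has the stated triple-exponential form $Cc_0\exp\{Cc_0^2\exp\{C\Phi^3 t^{1/4}\}\}$ requires care in tracking which estimates from Lemmas \ref{c}, \ref{d}, \ref{g} are invoked and with what powers. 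Once the energy inequality is assembled, the Gronwall and elliptic-regularity steps are routine.
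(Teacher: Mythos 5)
Your overall strategy — apply $\partial_t+\partial_i(\mathbf{v}_i\,\cdot\,)$ to the momentum equation, test against $\dot{\mathbf{u}}$, close via Gronwall, then use elliptic regularity and the identity $\nabla\mathbf{u}_t=\nabla\dot{\mathbf{u}}-\nabla(\mathbf{v}\cdot\nabla\mathbf{u})$ for (\ref{B301}) — is exactly the paper's. But step (i) contains a genuine error that would break the whole construction: you propose bounding $\|\dot{\mathbf{u}}\|_{L^4}\le C\|\nabla\dot{\mathbf{u}}\|_{L^2}$ ``via Gagliardo–Nirenberg/Poincar\'e on the bounded domain.'' That inequality requires Poincar\'e, whose constant depends on the diameter of $\Omega$, while the entire point of Section 3.1 (stated explicitly before Lemma~\ref{c}) is that the a priori estimates must be \emph{independent} of the size of $\Omega$, since one later sends $R\to\infty$ in Proposition~\ref{l}. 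Gagliardo–Nirenberg alone gives only $\|\dot{\mathbf{u}}\|_{L^4}^2\le C\|\dot{\mathbf{u}}\|_{L^2}\|\nabla\dot{\mathbf{u}}\|_{L^2}$, and $\|\dot{\mathbf{u}}\|_{L^2}$ is precisely what is unavailable when $h$ may vanish. The paper sidesteps the issue not by importing an $L^p$ bound on $\dot{\mathbf{u}}$ at all, but by pairing the right-hand-side terms ($E_1$ with $E_5$, $E_2$ with $E_3$, $E_4$ with $E_6$, the viscous commutators $E_7+E_9$ and $E_8+E_{10}$) and exploiting $h_t=-\dive(h\mathbf{v})$ plus integration by parts so that every integral is controlled by $\|\sqrt{h}\dot{\mathbf{u}}\|_{L^2}$ and $\|\nabla\dot{\mathbf{u}}\|_{L^2}$ alone, with the extra factor of $h$ or a derivative landing on $\dot{\mathbf{u}}$ always present. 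This cancellation structure is the non-routine content of the lemma and is missing from your plan.

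A smaller inaccuracy: you invoke the weight $|x|^{\alpha/2}$ and the CKN inequality of Lemma~\ref{f} as ``exactly what make the borderline 2D integrals close'' here, but the paper's proof of Lemma~\ref{h} is entirely unweighted; the weighted multiplier $|x|^{\alpha}\dot{\mathbf{u}}$ and the CKN inequality enter only in Lemma~\ref{j}, where $\|\dot{\mathbf{u}}\|_{L^q}$, $\|\nabla^2\mathbf{u}\|_{L^q}$ and $\|\nabla\mathbf{u}\|_{L^\infty}$ are needed. Keeping the two steps separate is important for tracking the powers of $\Phi$ that produce the stated $\Phi^3 t^{1/4}$ exponent.
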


\begin{proof}
Taking the operator $\partial_{t}+\partial_{i}(\mathbf{v}_{i}\cdot)$ on both sides of equation $(\ref{4})_{1}$, we have
\begin{equation}\label{B211}
\begin{split}
&h_{t}\dot{\mathbf{u}}+h\dot{\mathbf{u}}_{t}+h_{t}^2\nabla n+h^2\nabla n_{t}+\frac{1}{2}n_{t}\nabla h^2+\frac{1}{2}(1+n)\nabla h_{t}^2+\dive(h\dot{\mathbf{u}}\otimes\mathbf{v})\\
&+\dive(h^2\nabla n\otimes\mathbf{v})+\frac{1}{2}\dive\big((1+n)\nabla h^2\otimes\mathbf{v}\big)\\
=&\mu\Delta\mathbf{u}_{t}+(\mu+\lambda)\nabla(\dive\mathbf{u})_{t}+\mu\dive(\Delta\mathbf{u}\otimes\mathbf{v})+(\mu+\lambda)\dive\big(\nabla(\dive\mathbf{u})\otimes\mathbf{v}\big).
\end{split}
\end{equation}
Multiplying the equation (\ref{B211}) by $\dot{\mathbf{u}}$ and integrating over $\Omega$ to get
\begin{equation*}
\begin{split}
&\frac{1}{2}\frac{d}{dt}\int h\dot{\mathbf{u}}^2\, dx\\
=&-\int h_{t}^2\dot{\mathbf{u}}\cdot\nabla n\, dx-\int h^2\dot{\mathbf{u}}\cdot\nabla n_{t}\, dx-\frac{1}{2}\int n_{t}\dot{\mathbf{u}}\cdot\nabla h^2\, dx-\frac{1}{2}\int (1+n)\dot{\mathbf{u}}\cdot\nabla h_{t}^2\, dx\\
&-\int\dive(h^2\nabla n\otimes\mathbf{v})\dot{\mathbf{u}}\, dx-\frac{1}{2}\int\dive\big((1+n)\nabla h^2\otimes\mathbf{v}\big)\dot{\mathbf{u}}\, dx+\mu\int\Delta\mathbf{u}_{t}\dot{\mathbf{u}}\, dx\\
&+(\mu+\lambda)\int\nabla(\dive\mathbf{u})_{t}\dot{\mathbf{u}}\, dx+\mu\int\dive(\Delta\mathbf{u}\otimes\mathbf{v})\dot{\mathbf{u}}\, dx+(\mu+\lambda)\int\dive\big(\nabla(\dive\mathbf{u})\otimes\mathbf{v}\big)\dot{\mathbf{u}}\, dx\\
:=&\sum_{i=1}^{10}E_{i}.
\end{split}
\end{equation*}
In the following, we will estimate $E_{i}$ for each $i\ (i=1,\cdots,10)$. First we consider $E_{1}+E_{5}$. By H\"older's inequality and equation $(\ref{1})_{1}$, we have
\begin{equation*}
\begin{split}
E_{1}+E_{5}=&2\int h\dive(h\mathbf{v})\dot{\mathbf{u}}\cdot\nabla n\, dx+\int h^2\mathbf{v}\cdot\nabla n\dive\dot{\mathbf{u}}\, dx\\
=&\int h^2\dive\mathbf{v}\dot{\mathbf{u}}\cdot\nabla n\, dx-\int\mathbf{v} h^2\dot{\mathbf{u}}\cdot\nabla^2 n\, dx\\
\leq&C(\|\sqrt{h}\dot{\mathbf{u}}\|_{L^2}\|\nabla n\|_{L^2}\|h\|_{L^{\infty}}^{\frac{3}{2}}\|\dive\mathbf{v}\|_{L^{\infty}}+\|\sqrt{h}\dot{\mathbf{u}}\|_{L^2}\|\nabla^2 n\|_{L^2}\|h^{3/2}\mathbf{v}\|_{L^{\infty}}).
\end{split}
\end{equation*}
Then we estimate $E_{2}+E_{3}$. Using the integrating by parts and H\"older's inequality, we obtain
\begin{equation*}
\begin{split}
E_{2}+E_{3}=&\frac{1}{2}\int n_{t}\dot{\mathbf{u}}\cdot\nabla h^2\, dx+\int h^2\dive\dot{\mathbf{u}}n_{t}\, dx\\
\leq&C(\|\sqrt{h}\dot{\mathbf{u}}\|_{L^2}\|\nabla h\|_{L^q}\|n_{t}\|_{L^p}\|h\|_{L^{\infty}}^{\frac{1}{2}}+\|\dive\dot{\mathbf{u}}\|_{L^2}\|n_{t}\|_{L^2}\|h\|_{L^{\infty}}^2),
\end{split}
\end{equation*}
where $q=4/\alpha, 1/p+1/q=1/2$. Then we consider $E_{7}+E_{9}$. A straightforward computation shows that
\begin{equation*}
\begin{split}
E_{7}+E_{9}=&-\frac{\mu}{2}\int|\nabla\dot{\mathbf{u}}|^2\, dx+\mu\int\nabla(\mathbf{v}\cdot\nabla\mathbf{u})\nabla\dot{\mathbf{u}}\, dx-\mu\int\Delta\mathbf{u}\mathbf{v}\dive\dot{\mathbf{u}}\, dx\\
=&-\frac{\mu}{2}\int |\nabla\dot{\mathbf{u}}|^2\, dx+\mu\int\nabla\mathbf{v}\nabla\mathbf{u}\nabla\dot{\mathbf{u}}\, dx\\
\leq&-\frac{\mu}{2}\int |\nabla\dot{\mathbf{u}}|^2\, dx+C\|\nabla\mathbf{v}\|_{L^{\infty}}\|\nabla\mathbf{u}\|_{L^2}\|\nabla\dot{\mathbf{u}}\|_{L^2}.
\end{split}
\end{equation*}
Through the similar calculation as $E_{7}+E_{9}$, we can get the estimate of $E_{8}+E_{10}$. That is
\begin{equation*}
E_{8}+E_{10}\leq-\frac{(\mu+\lambda)}{2}\int(\dive\dot{\mathbf{u}})^2\, dx+C\|\nabla\mathbf{v}\|_{L^{\infty}}\|\nabla\mathbf{u}\|_{L^2}\|\nabla\dot{\mathbf{u}}\|_{L^2}.
\end{equation*}
Finally, we consider $E_{4}+E_{6}$. A straightforward computation shows that
\begin{equation*}
\begin{split}
E_{4}+E_{6}=&\int \dot{\mathbf{u}}\cdot\nabla n hh_{t}\, dx-\int (1+n)\dive\dot{\mathbf{u}}h^2\dive\mathbf{v}\, dx\\
=&-\int\dot{\mathbf{u}}\cdot\nabla nh\mathbf{v}\cdot\nabla h\, dx-\int\dot{\mathbf{u}}\cdot\nabla n h^2\dive\mathbf{v}\, dx-\int(1+n)\dive\dot{\mathbf{u}}h^2\dive\mathbf{v}\, dx\\
\leq&C(\|\sqrt{h}\dot{\mathbf{u}}\|_{L^2}\|\nabla h\|_{L^q}\|\nabla n\|_{L^p}\|\sqrt{h}\mathbf{v}\|_{L^{\infty}}+\|\sqrt{h}\dot{\mathbf{u}}\|_{L^2}\|\nabla n\|_{L^2}\|\dive\mathbf{v}h^{3/2}\|_{L^{\infty}}\\
&+\|\dive\dot{\mathbf{u}}\|_{L^2}\|\dive\mathbf{v}\|_{L^2}\|h^2(1+n)\|_{L^{\infty}}),
\end{split}
\end{equation*}
where $q=4/\alpha, 1/p+1/q=1/2$. Combining all these estimates about $E_{i}\ (i=1,\cdots,10)$, we have
\begin{equation}\label{186}
\begin{split}
&\frac{d}{dt}\|\sqrt{h}\dot{\mathbf{u}}\|_{L^2}^2+\mu\|\nabla\dot{\mathbf{u}}\|_{L^2}^2+(\mu+\lambda)\|\dive\dot{\mathbf{u}}\|_{L^2}^2\\
\leq&C(\|h\|_{L^{\infty}}^3\|\dive\mathbf{v}\|_{L^{\infty}}^2+\|h\|_{L^{\infty}}^3\|\mathbf{v}\|_{L^{\infty}}^2+\|h\|_{L^{\infty}}+\|h\|_{L^{\infty}}\|\mathbf{v}\|_{L^{\infty}}^2)\|\sqrt{h}\dot{\mathbf{u}}\|_{L^2}^2\\
&+C(\|\nabla n\|_{L^2}^2+\|\nabla^2 n\|_{L^2}^2+\|\nabla h\|_{L^q}^2\|n_{t}\|_{L^p}^2+\|\nabla h\|_{L^q}^2\|\nabla n\|_{L^p}^2)\\
&+\varepsilon\|\nabla\dot{\mathbf{u}}\|_{L^2}^2+C_{\varepsilon}(\|\dive\mathbf{v}\|_{L^2}^2\|h^2(1+n)\|_{L^{\infty}}^2+\|n_{t}\|_{L^2}^2\|h\|_{L^{\infty}}^4+\|\nabla\mathbf{u}\|_{L^2}^2\|\nabla\mathbf{v}\|_{L^{\infty}}^2).
\end{split}
\end{equation}
Choosing $\varepsilon=\mu/2$ and applying Gronwall's inequality to (\ref{186}), together with Lemmas \ref{c} and \ref{d}, we can get (\ref{B300}). According to the $L^p$ theory, Lemma \ref{c}, \ref{d}, equation (\ref{4}), and (\ref{B300}), we have
\begin{equation}\label{B303}
\begin{split}
\|\nabla^2\mathbf{u}\|_{L^2}^2\leq&C(\|h\|_{L^{\infty}}\|\sqrt{h}\dot{\mathbf{u}}\|_{L^2}^2+\|h\|_{L^{\infty}}^4\|\nabla n\|_{L^2}^2+\|(1+n)h\|_{L^{\infty}}^2\|\nabla h\|_{L^2}^2)\\
\leq&Cc_{0}^2\exp\big\{Cc_{0}^2\exp\{C\Phi^3(\mathbf{v}, m, t)t^{1/4}\}\big\}.
\end{split}
\end{equation}
Through the definition of material derivation, together with the estimate (\ref{B300}), (\ref{B303}), and Lemma \ref{g}, we have
\begin{equation*}
\begin{split}
\int_{0}^{t}\|\nabla\mathbf{u}_{t}\|_{L^2}^2\, ds\leq&\int_{0}^{t}\|\nabla\dot{\mathbf{u}}\|_{L^2}^2\, ds+\int_{0}^{t}\|\nabla\mathbf{v}\|_{L^{\infty}}^2\|\nabla\mathbf{u}\|_{L^2}^2\, ds+\int_{0}^{t}\|\mathbf{v}\|_{L^{\infty}}^2\|\nabla^2\mathbf{u}\|_{L^2}^2\, ds\\
\leq&Cc_{0}^2\exp\big\{Cc_{0}^2\exp\{C\Phi^3(\mathbf{v}, m, t)t^{1/4}\}\big\}.
\end{split}
\end{equation*}
The above estimate combined with (\ref{B303}) yields (\ref{B301}).
\end{proof}

\begin{lemma}\label{i}
Let $h$ be as in Lemma \ref{c}. Then
\begin{equation}\label{136}
\||x|^{\alpha/2}h^2\|_{L^2}^2\leq Cc_{0}\exp\{C\Phi(\mathbf{v}, m, t)t^{1/2}\}.
\end{equation}
\end{lemma}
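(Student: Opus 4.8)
The plan is to derive a transport-type equation for the weighted quantity $|x|^{\alpha/2}h^2$ and run a standard energy estimate in $L^2$, exactly paralleling the proof of the unweighted bound (\ref{107}) in Lemma \ref{c}. Starting from (\ref{5}), i.e. $h^2_t+\mathbf{v}\cdot\nabla h^2+2h^2\dive\mathbf{v}=0$, I would multiply by $|x|^{\alpha}h^2$ and integrate over $\Omega$. The first term gives $\frac12\frac{d}{dt}\int |x|^{\alpha}|h^2|^2\,dx$; the diffusion-free structure means there are only two source terms to control.

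The key step is handling the convection term $\int |x|^{\alpha}h^2\,\mathbf{v}\cdot\nabla h^2\,dx$. I would integrate by parts to move the gradient off $h^2$, producing $-\frac12\int \dive(|x|^{\alpha}\mathbf{v})|h^2|^2\,dx = -\frac12\int |x|^{\alpha}\dive\mathbf{v}\,|h^2|^2\,dx - \frac{\alpha}{2}\int |x|^{\alpha-2}x\cdot\mathbf{v}\,|h^2|^2\,dx$. The first piece is bounded by $C\|\dive\mathbf{v}\|_{L^\infty}\||x|^{\alpha/2}h^2\|_{L^2}^2$. For the second piece, the weight $|x|^{\alpha-2}|x| = |x|^{\alpha-1}$ is the obstacle since it is more singular at the origin than $|x|^{\alpha}$; I would write $|x|^{\alpha-1}|x\cdot\mathbf{v}| \le |x|^{\alpha}\cdot\frac{|\mathbf{v}|}{|x|}$ and then invoke the Hardy-type inequality from Lemma \ref{f} (the first estimate there, $\int|x|^{\alpha-2}|\mathbf{v}|^2\,dx\le\frac{\alpha^2}{4}\int|x|^{\alpha}|\nabla\mathbf{v}|^2\,dx$). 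Applying it to $|x|^{\alpha-2}|\mathbf{v}|^2|h^2|^2$ after a Cauchy-Schwarz split, or more directly bounding $\||x|^{-1}\mathbf{v}\|$ against $\|\nabla\mathbf{v}\|$ in the appropriate weighted norm, yields a bound of the form $C(\|\nabla\mathbf{v}\|_{L^\infty}+\|\nabla\mathbf{v}\|_{L^2}^2+\dots)\||x|^{\alpha/2}h^2\|_{L^2}^2$, i.e. still proportional to the quantity being estimated. Alternatively, since we only need the final exponential bound, one can absorb the singular weight by noting $|x|^{\alpha-1}\le |x|^{\alpha}+|x|^{\alpha-2}$ on the regions $|x|\ge 1$ and $|x|\le 1$ respectively and handle the near-origin part with the Hardy inequality. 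The remaining source term $-2\int|x|^{\alpha}|h^2|^2\dive\mathbf{v}\,dx$ from the $2h^2\dive\mathbf{v}$ piece is immediately controlled by $C\|\dive\mathbf{v}\|_{L^\infty}\||x|^{\alpha/2}h^2\|_{L^2}^2$.

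Collecting these estimates gives a differential inequality
\begin{equation*}
\frac{d}{dt}\||x|^{\alpha/2}h^2\|_{L^2}^2\le C\big(\|\nabla\mathbf{v}\|_{L^\infty}+\|\nabla\mathbf{v}\|_{L^2}^2+1\big)\||x|^{\alpha/2}h^2\|_{L^2}^2,
\end{equation*}
and by Gronwall's inequality together with the initial bound $\||x|^{\alpha/2}h_0^2\|_{L^2}^2\le c_0$ from (\ref{102}) (guaranteed by assumption (\ref{204})), and the fact that $\int_0^t(\|\nabla\mathbf{v}\|_{L^\infty}+\|\nabla\mathbf{v}\|_{L^2}^2+1)\,ds\le C\Phi(\mathbf{v},m,t)t^{1/2}$ via Hölder in time (using the control of $\|\nabla\mathbf{v}\|_{W^{1,4/\alpha}\cap H^1}$ inside $\Phi$ and Sobolev embedding $W^{1,4/\alpha}\hookrightarrow L^\infty$), one obtains (\ref{136}).

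The main obstacle is the singular weight $|x|^{\alpha-1}$ arising from $\nabla|x|^{\alpha}$ in the integration by parts; the point is that it is exactly Lemma \ref{f} (the Caffarelli–Kohn–Nirenberg/Hardy inequality, valid precisely because $\alpha\in(0,2)$) that converts this singular weighted norm of $\mathbf{v}$ into a weighted norm of $\nabla\mathbf{v}$, which is controlled by $\Phi$. Everything else is a routine repetition of the argument already used for (\ref{107}).
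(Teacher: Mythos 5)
Your approach is essentially the same as the paper's: multiply (\ref{5}) by $|x|^{\alpha}h^2$, integrate by parts so that $\nabla|x|^{\alpha}=\alpha|x|^{\alpha-2}x$ produces the singular weight $|x|^{\alpha-1}$, use Lemma \ref{f} (the Hardy/CKN inequality, valid since $\alpha\in(0,2)$) to convert the resulting weighted norm of $\mathbf{v}$ into $\||x|^{\alpha/2}\nabla\mathbf{v}\|_{L^2}$, and conclude by Gronwall. You correctly identified the key obstacle and the key lemma.

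One step in your write-up is not achievable as stated: you claim the differential inequality can be made purely multiplicative, i.e.
\begin{equation*}
\frac{d}{dt}\||x|^{\alpha/2}h^2\|_{L^2}^2\le C\big(\|\nabla\mathbf{v}\|_{L^\infty}+\|\nabla\mathbf{v}\|_{L^2}^2+1\big)\||x|^{\alpha/2}h^2\|_{L^2}^2.
\end{equation*}
That would require the singular-weight term $\int|x|^{\alpha-1}|\mathbf{v}|\,h^4\,dx$ to be dominated by a multiple of $\int|x|^{\alpha}h^4\,dx$, which in turn would need a pointwise bound $|\mathbf{v}(x)|\le C|x|$; nothing in $\Phi$ gives this. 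What actually happens, as in the paper, is that after H\"older (using the $L^\infty$ bound on $h^2$ from (\ref{105})) and the Hardy inequality one obtains
\begin{equation*}
\Big|\int|x|^{\alpha-2}(x\cdot\mathbf{v})h^4\,dx\Big|\le C\|h^2\|_{L^\infty}\||x|^{\alpha/2}\nabla\mathbf{v}\|_{L^2}\,\||x|^{\alpha/2}h^2\|_{L^2},
\end{equation*}
and Young's inequality produces an \emph{inhomogeneous} Gronwall inequality with source term $C\||x|^{\alpha/2}\nabla\mathbf{v}\|_{L^2}^2$. Since $\sup_{0\le s\le t}\||x|^{\alpha/2}\nabla\mathbf{v}\|_{L^2}^2$ appears in the definition of $\Phi$ in (\ref{103}), this source integrates to something $\lesssim\Phi\, t$, and with $\Phi\ge 1$, $t\le 1$, the conclusion (\ref{136}) follows all the same. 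Your domain-splitting alternative for $|x|^{\alpha-1}$ is unnecessary and in fact introduces complications (the near-origin piece still needs a Hardy-type control on $h^2$, not just on $\mathbf{v}$); the direct Hardy application is cleaner and is what the paper does.
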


\begin{proof}
Multiplying the equation (\ref{5}) by $|x|^{\alpha}h^2$ and integrating over $\Omega$, using Lemma \ref{f} to get
\begin{equation}\label{187}
\begin{split}
\frac{1}{2}\frac{d}{dt}\int |x|^{\alpha}h^4\, dx=&-\alpha\int |x|^{\alpha-2}x_{i}\cdot\mathbf{v}h^4\, dx-\int |x|^{\alpha}\dive\mathbf{v}h^4\, dx\\
\leq&C(\||x|^{\alpha/2-1}\mathbf{v}\|_{L^2}\||x|^{\alpha/2}h^2\|_{L^2}+\|\dive\mathbf{v}\|_{L^{\infty}}\||x|^{\alpha/2}h^2\|_{L^2}^2)\\
\leq&C(1+\|\dive\mathbf{v}\|_{L^{\infty}})\||x|^{\alpha/2}h^2\|_{L^2}^2+C\||x|^{\alpha/2}\nabla\mathbf{v}\|_{L^2}^2.
\end{split}
\end{equation}
Using the Gronwall's inequality to (\ref{187}), we have
\begin{equation*}
\sup_{0\leq s\leq t}\||x|^{\alpha/2}h^2\|_{L^2}^2\leq Cc_{0}\exp\{C\Phi(\mathbf{v},m, t)t^{1/2}\},
\end{equation*}
which yields the inequality (\ref{136}).
\end{proof}

\begin{lemma}\label{e}
Let $c$ be as in Lemma \ref{b}. Then
\begin{equation}\label{z101}
\sup_{0\leq s\leq t}\||x|^{\alpha/2}c\|_{L^2}^2+\int_{0}^{t}\||x|^{\alpha/2}\nabla c\|_{L^2}^2\, ds\leq Cc_{0}\exp\{C\Phi^2(\mathbf{v}, m, t)t^{1/2}\}.
\end{equation}
\end{lemma}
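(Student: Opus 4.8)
The plan is to mimic the weighted estimate of Lemma \ref{i}, performing the weighted $L^2$ energy estimate for the $c$-equation $(\ref{2})_1$ with weight $|x|^\alpha$, and then closing via Gronwall together with the unweighted bounds already available from Lemma \ref{b}. Concretely, I would multiply $(\ref{2})_1$ by $|x|^\alpha c$ and integrate over $\Omega$. The diffusion term $-\int |x|^\alpha c\,\Delta c\,dx$ produces, after integration by parts, the good term $\int |x|^\alpha|\nabla c|^2\,dx$ plus a cross term $\alpha\int |x|^{\alpha-2} x\cdot\nabla c\, c\,dx$; by Cauchy–Schwarz this cross term is bounded by $\varepsilon\int|x|^\alpha|\nabla c|^2\,dx + C_\varepsilon\int|x|^{\alpha-2}|c|^2\,dx$, and the last integral is controlled — this is where the hypothesis $\alpha\in(0,1/8]\subset(0,2)$ enters — by the Caffarelli–Kohn–Nirenberg inequality of Lemma \ref{f}, namely $\int|x|^{\alpha-2}|c|^2\,dx\le \tfrac{\alpha^2}{4}\int|x|^\alpha|\nabla c|^2\,dx$. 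Choosing $\varepsilon$ small absorbs both contributions into the good term.

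The convective term $-\int |x|^\alpha c\,\dive(c\mathbf{v})\,dx$ is handled by writing $\dive(c\mathbf v)=\mathbf v\cdot\nabla c+c\,\dive\mathbf v$: the part with $c\,\dive\mathbf v$ gives $\int|x|^\alpha c^2\dive\mathbf v\,dx\le\|\dive\mathbf v\|_{L^\infty}\||x|^{\alpha/2}c\|_{L^2}^2$, while the $\mathbf v\cdot\nabla c$ part, after integrating by parts once more, redistributes onto $\nabla(|x|^\alpha c)$ and produces terms of the form $\||x|^{\alpha/2}c\|_{L^2}^2\|\dive\mathbf v\|_{L^\infty}$ plus $\alpha\int|x|^{\alpha-2}x\cdot\mathbf v\, c^2\,dx$; the latter is bounded by $\||x|^{\alpha/2-1}\mathbf v\|_{L^2}\||x|^{\alpha/2}c\|_{L^4}\|c\|_{L^4}$ — or more simply by Lemma \ref{f} applied to $\mathbf v$, giving $\||x|^{\alpha/2-1}\mathbf v\|_{L^2}\le C\||x|^{\alpha/2}\nabla\mathbf v\|_{L^2}$ — times unweighted $L^4$ norms of $c$ that were already estimated in Lemma \ref{b}. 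Finally the reaction term $-\int|x|^\alpha mc^2\,dx\le\|m\|_{L^\infty}\||x|^{\alpha/2}c\|_{L^2}^2$. Assembling everything yields a differential inequality
\begin{equation*}
\frac{d}{dt}\||x|^{\alpha/2}c\|_{L^2}^2+\||x|^{\alpha/2}\nabla c\|_{L^2}^2\le C\big(1+\|\dive\mathbf v\|_{L^\infty}+\|m\|_{L^\infty}\big)\||x|^{\alpha/2}c\|_{L^2}^2+C\big(\||x|^{\alpha/2}\nabla\mathbf v\|_{L^2}^2+\|c\|_{L^4}^4\big),
\end{equation*}
where $\|m\|_{L^\infty}$ and $\|\dive\mathbf v\|_{L^\infty}$ are controlled by $\Phi(\mathbf v,m,t)$ through the embedding $H^1\cap D^{1,4/\alpha}\hookrightarrow L^\infty$ (Lemma \ref{a}) and the definition (\ref{103}), and $\|c\|_{L^4}$ by (\ref{113})–(\ref{114}).

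Then Gronwall's inequality, using $\int_0^t(1+\|\dive\mathbf v\|_{L^\infty}+\|m\|_{L^\infty})\,ds\le C\Phi^2(\mathbf v,m,t)t^{1/2}$ (Hölder in time on the square-root-integrable-in-time quantities appearing in $\Phi$), together with the initial bound $\||x|^{\alpha/2}c_0\|_{L^2}^2\le c_0$ from (\ref{204})–(\ref{102}), produces exactly (\ref{z101}). The main obstacle is the weighted cross term $\alpha\int|x|^{\alpha-2}x\cdot\mathbf v\,c^2\,dx$ coming from the transport term: unlike in Lemma \ref{i}, here $c$ also carries a time-integrable-only derivative structure, so one must be careful to split the weight correctly — pairing one factor $|x|^{\alpha/2-1}$ with $\mathbf v$ (to invoke Lemma \ref{f}) and keeping only unweighted powers of $c$ on the other side — rather than naively putting $|x|^{\alpha/2}$ on $c$, which would not close. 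Once this splitting is arranged, the rest is a routine Gronwall argument paralleling Lemmas \ref{i} and \ref{b}.
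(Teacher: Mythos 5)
Your proposal matches the paper's proof essentially step-for-step: multiply by $|x|^\alpha c$ and integrate, absorb the diffusion cross term via the Caffarelli--Kohn--Nirenberg inequality (Lemma \ref{f}) using $\alpha\le 1/8$, handle the convective cross term $\alpha\int|x|^{\alpha-2}x\cdot\mathbf v\,c^2\,dx$ by another integration by parts and another application of Lemma \ref{f} to $\mathbf v$, then close with Gronwall and the unweighted bounds of Lemma \ref{b}. The only cosmetic difference is the H\"older split in the convective term: the paper uses $(2,2,\infty)$, pairing $|x|^{\alpha/2}c$ in $L^2$ with an unweighted $\|c\|_{L^\infty}$ (giving $\|c\|_{L^\infty}^2$ as the forcing term), whereas your $(2,4,4)$ split produces a weighted $\||x|^{\alpha/2}c\|_{L^4}$ that requires one extra interpolation via Lemma \ref{f} to close -- both routes work and lead to the same Gronwall estimate.
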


\begin{proof}
Multiplying the equation $(\ref{2})_{1}$ by $|x|^{\alpha}c$ and integrating over $\Omega$, together with Lemma \ref{f} to get
\begin{equation}\label{188}
\begin{split}
&\frac{1}{2}\frac{d}{dt}\int |x|^{\alpha}c^2\, dx+\int |x|^{\alpha}|\nabla c|^2\, dx\\
=&-\int |x|^{\alpha}c\dive(c\mathbf{v})\, dx-\alpha\int|x|^{\alpha-2}x_{i}\cdot\nabla c c\, dx-\int |x|^{\alpha}mc^2\, dx\\
\leq&C(\||x|^{\alpha/2-1}\mathbf{v}\|_{L^2}\||x|^{\alpha/2}c\|_{L^2}\|c\|_{L^{\infty}}+\|\dive\mathbf{v}\|_{L^{\infty}}\||x|^{\alpha/2}c\|_{L^2}^2\\
&+\alpha\||x|^{\alpha/2-1}c\|_{L^2}\||x|^{\alpha/2}\nabla c\|_{L^2}+\|m\|_{L^{\infty}}\||x|^{\alpha/2}c\|_{L^2}^2)\\
\leq&C(\|\dive\mathbf{v}\|_{L^{\infty}}+\|m\|_{L^{\infty}}+\||x|^{\alpha/2}\nabla\mathbf{v}\|_{L^2}^2)\||x|^{\alpha/2}c\|_{L^2}^2\\
&+C\|c\|_{L^{\infty}}^2+\frac{\alpha^2}{2}\||x|^{\alpha/2}\nabla c\|_{L^2}^2.
\end{split}
\end{equation}
Using the Gronwall's inequality to (\ref{188}), together with Lemma \ref{b} and (\ref{200}) gives (\ref{z101}).
\end{proof}

\begin{lemma}\label{D1}
Let $n$ be as in Lemma \ref{d}. Then
\begin{equation}\label{190}
\begin{split}
&\sup_{0\leq s\leq t}\||x|^{\alpha/2}n\|_{L^2}^2+\int_{0}^{t}\||x|^{\alpha/2}\nabla n\|_{L^2}^2\, ds\\
\leq&Cc_{0}^2\Phi(\mathbf{v}, m, t)\exp\big\{Cc_{0}^{2}\exp\{C\Phi^3(\mathbf{v}, m, t)t^{1/4}\}\big\}.
\end{split}
\end{equation}
\end{lemma}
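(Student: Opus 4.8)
The plan is to follow the scheme of the proof of Lemma \ref{e}, i.e. to run a weighted $L^{2}$ estimate on the density equation (\ref{3}), now carrying along the extra chemotactic flux $\nabla\cdot(n\nabla c)$. First I would multiply $(\ref{3})_{1}$ by $|x|^{\alpha}n$, integrate over $\Omega$, and integrate the diffusion, transport and chemotaxis terms by parts (using $\nabla(|x|^{\alpha})=\alpha|x|^{\alpha-2}x$). This produces the identity
\begin{align*}
&\tfrac12\tfrac{d}{dt}\||x|^{\alpha/2}n\|_{L^2}^2+\||x|^{\alpha/2}\nabla n\|_{L^2}^2\\
&\quad=-\tfrac12\int|x|^{\alpha}(\dive\mathbf{v})n^2\,dx+\tfrac{\alpha}{2}\int|x|^{\alpha-2}(x\cdot\mathbf{v})n^2\,dx-\alpha\int|x|^{\alpha-2}(x\cdot\nabla n)\,n\,dx\\
&\qquad+\int|x|^{\alpha}n\,\nabla n\cdot\nabla c\,dx+\alpha\int|x|^{\alpha-2}n^2\,(x\cdot\nabla c)\,dx ,
\end{align*}
so there are exactly five terms to control on the right: one Laplacian commutator, two from the convection $\dive(n\mathbf{v})$, and two from the chemotaxis flux.

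Next I would estimate each term. For the commutator I bound $|x\cdot\nabla n|\le|x||\nabla n|$ and use Lemma \ref{f} in its Hardy form $\||x|^{\alpha/2-1}n\|_{L^2}\le\tfrac{\alpha}{2}\||x|^{\alpha/2}\nabla n\|_{L^2}$, which gives $-\alpha\int|x|^{\alpha-2}(x\cdot\nabla n)n\,dx\le\tfrac{\alpha^2}{2}\||x|^{\alpha/2}\nabla n\|_{L^2}^2$; this is absorbed into the dissipation since $\alpha\in(0,1/8]$. The first convective term contributes $\|\dive\mathbf{v}\|_{L^\infty}\||x|^{\alpha/2}n\|_{L^2}^2$ to the Gronwall coefficient. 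For the second convective term I write $|x|^{\alpha-1}|\mathbf{v}|n^2=(|x|^{\alpha/2-1}|\mathbf{v}|)(|x|^{\alpha/2}n)\,n$, apply Lemma \ref{f} to the first factor and $\|n\|_{L^\infty}$ to control $n$, then split by Young's inequality into a coefficient term $\||x|^{\alpha/2}\nabla\mathbf{v}\|_{L^2}^2\||x|^{\alpha/2}n\|_{L^2}^2$ and an inhomogeneous term $\|n\|_{L^\infty}^2$. In the chemotaxis terms the first is $\le\varepsilon\||x|^{\alpha/2}\nabla n\|_{L^2}^2+C_\varepsilon\|\nabla c\|_{L^\infty}^2\||x|^{\alpha/2}n\|_{L^2}^2$, while the second is treated like the $|x|^{\alpha-1}$ term above, again trading the $|x|^{\alpha/2-1}n$ factor for $\||x|^{\alpha/2}\nabla n\|_{L^2}$ via Lemma \ref{f} and absorbing an $\varepsilon$-portion; it creates no new inhomogeneous term. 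Choosing $\varepsilon$ (and using $\alpha\le1/8$) so that all absorbable pieces sum to at most $\tfrac12\||x|^{\alpha/2}\nabla n\|_{L^2}^2$, I arrive at
\begin{align*}
&\tfrac{d}{dt}\||x|^{\alpha/2}n\|_{L^2}^2+\||x|^{\alpha/2}\nabla n\|_{L^2}^2\\
&\quad\le C\big(1+\|\dive\mathbf{v}\|_{L^\infty}+\||x|^{\alpha/2}\nabla\mathbf{v}\|_{L^2}^2+\|\nabla c\|_{L^\infty}^2\big)\||x|^{\alpha/2}n\|_{L^2}^2+C\|n\|_{L^\infty}^2 .
\end{align*}

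Finally I would apply Gronwall's inequality. The coefficient integrates in time to a quantity controlled by $\Phi(\mathbf{v},m,t)$ together with Lemma \ref{b}: using $t\le1$, the Sobolev embedding $W^{1,4/\alpha}\hookrightarrow L^\infty$ for $\|\dive\mathbf{v}\|_{L^\infty}$, $\sup_{0\le s\le t}\||x|^{\alpha/2}\nabla\mathbf{v}\|_{L^2}^2\le\Phi$, and the Gagliardo--Nirenberg bound $\|\nabla c\|_{L^\infty}\le C\|\nabla c\|_{L^2}^{1/3}\|\nabla^2 c\|_{L^4}^{2/3}$ (Lemma \ref{a}) combined with (\ref{111}) for $\int_0^t\|\nabla^2 c\|_{L^4}^2\,ds$. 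The inhomogeneous term is handled by $\|n\|_{L^\infty}^2\le C\|n\|_{H^1}\|n\|_{H^2}$ and Lemma \ref{d} ((\ref{128}) and (\ref{130})), which after integration and $t\le1$ is bounded by $Cc_0^{2}\Phi(\mathbf{v},m,t)\exp\{Cc_0^{2}\exp\{C\Phi^{3}(\mathbf{v},m,t)t^{1/4}\}\}$; together with the contribution $\||x|^{\alpha/2}n_0\|_{L^2}^2\le c_0$ of the initial data this yields (\ref{190}). Since Lemmas \ref{b}, \ref{d}, \ref{f}, \ref{a} are all independent of the lower bound $\delta$ of $h_0$ and of the size of $\Omega$ (for the latter one uses that $n$ has zero boundary trace, so it extends by zero to $\mathbb{R}^2$), so is the resulting estimate. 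The only real technical point — not a deep obstacle — is arranging that every $|x|^{\alpha-2}$- and $|x|^{\alpha-1}$-weighted term (the Laplacian commutator and the boundary-type terms built from $\mathbf{v}$ and $\nabla c$) is swallowed by the Caffarelli--Kohn--Nirenberg dissipation $\||x|^{\alpha/2}\nabla n\|_{L^2}^2$, which is exactly where the smallness $\alpha\le1/8$ is used; once that is secured, the chemotactic coupling is routine.
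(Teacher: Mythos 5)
Your argument is correct, and the overall scaffolding (weighted multiplier $|x|^{\alpha}n$, integrate by parts, absorb the commutator terms into the Caffarelli--Kohn--Nirenberg dissipation, Gronwall) matches the paper's; the difference is in how the chemotaxis flux $\int|x|^{\alpha}n\,\nabla n\cdot\nabla c\,dx$ and its weight commutator are distributed. The paper bounds these as $\||x|^{\alpha/2}\nabla n\|_{L^2}\,\||x|^{\alpha/2}\nabla c\|_{L^2}\,\|n\|_{L^\infty}$ (plus the Hardy-reduced commutator), which after Young puts the contribution $\||x|^{\alpha/2}\nabla c\|_{L^2}^2\|n\|_{L^\infty}^2$ into the inhomogeneous part of the Gronwall inequality and forces one to invoke the weighted $c$-estimate of Lemma~\ref{e} to integrate it in time. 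You instead pair the factors as $\||x|^{\alpha/2}\nabla n\|_{L^2}\,\||x|^{\alpha/2}n\|_{L^2}\,\|\nabla c\|_{L^\infty}$, which pushes $\|\nabla c\|_{L^\infty}^2$ into the Gronwall \emph{coefficient}; you then control $\int_0^t\|\nabla c\|_{L^\infty}^2\,ds$ via the Gagliardo--Nirenberg bound of Lemma~\ref{a} together with the unweighted estimates (\ref{111}) from Lemma~\ref{b}, bypassing Lemma~\ref{e} entirely. Both pairings are compatible with the smallness $\alpha\le 1/8$ and deliver the same double-exponential bound (up to harmless adjustments in the powers of $c_0$ and $\Phi$); yours is marginally more self-contained in that it does not rely on the separate weighted $c$-lemma, at the cost of converting a sub-critical time integral of $\|\nabla^2 c\|_{L^4}^{4/3}$ inside the exponential.
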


\begin{proof}
The proof of (\ref{190}) is similar to (\ref{z101}). It remains to deal with the term $-\int |x|^{\alpha}n\nabla\cdot(n\nabla c)\, dx$. A straightforward computation shows that
\begin{equation}\label{B207}
\begin{split}
&-\int |x|^{\alpha}n\nabla\cdot(n\nabla c)\, dx\\
\leq&\alpha\||x|^{\alpha/2-1}n\|_{L^2}\||x|^{\alpha/2}\nabla c\|_{L^2}\|n\|_{L^{\infty}}+\||x|^{\alpha/2}\nabla n\|_{L^2}\||x|^{\alpha/2}\nabla c\|_{L^2}\|n\|_{L^{\infty}}\\
\leq&(\frac{\alpha^4}{4}+\varepsilon)\||x|^{\alpha/2}\nabla n\|_{L^2}^2+C_{\varepsilon}\||x|^{\alpha/2}\nabla c\|_{L^2}^2\|n\|_{L^{\infty}}^2.
\end{split}
\end{equation}
Through the estimate (\ref{B207}) and Lemma \ref{e}, we obtain (\ref{190}).
\end{proof}

\begin{lemma}\label{A1}
Let $c$ be as in Lemma \ref{b}. Then
\begin{equation}\label{z103}
\begin{split}
&\sup_{0\leq s\leq t}(\||x|^{\alpha/2}\dot{c}\|_{L^2}^2+\||x|^{\alpha/2}\nabla c\|_{L^2}^2)+\int_{0}^{t}\||x|^{\alpha/2}\nabla\dot{c}\|_{L^2}^2\, ds\\
\leq&Cc_{0}\exp\big\{Cc_{0}^2\exp\{C\Phi^2(\mathbf{v}, m, t)t^{1/4}\}\big\}.
\end{split}
\end{equation}
\end{lemma}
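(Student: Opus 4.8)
The strategy is to mimic the weighted estimate for $c$ in Lemma \ref{e} and Lemma \ref{D1}, but now work at the level of the material derivative $\dot c = c_t + \mathbf v\cdot\nabla c$, differentiating the equation $(\ref{2})_1$ in the same way that equation $(\ref{70})$ was produced. Write $\dot c$ in place of $c_t$ and note that $(\ref{2})_1$ gives $\dot c = \Delta c - mc - c\,\dive\mathbf v$, so a weighted $L^2$ bound on $\dot c$ will follow once we control $|x|^{\alpha/2}\Delta c$, i.e. $|x|^{\alpha/2}\nabla^2 c$. The quantity $\||x|^{\alpha/2}\nabla c\|_{L^2}^2$ is already available from Lemma \ref{e}, so the first task is to upgrade that to a pointwise-in-time sup bound together with a bound on $\||x|^{\alpha/2}\nabla^2 c\|_{L^2}$; I would get this by multiplying the $c$-equation (or its $\nabla$-differentiated form) by $|x|^\alpha$ times the appropriate test function and repeatedly invoking the Caffarelli--Kohn--Nirenberg inequality of Lemma \ref{f} to absorb the weight-lowering commutator terms $\alpha|x|^{\alpha-2}x_i(\cdots)$ into the good dissipation term $\int |x|^\alpha|\nabla(\cdot)|^2$, exactly as in $(\ref{188})$ and $(\ref{B207})$.

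The core step is the differential inequality for $\||x|^{\alpha/2}\dot c\|_{L^2}^2$. Apply the operator $\partial_t + \mathbf v\cdot\nabla$ to $(\ref{2})_1$ (equivalently use $(\ref{70})$ rewritten in terms of $\dot c$), multiply by $|x|^\alpha\dot c$, and integrate over $\Omega$. The Laplacian term contributes $-\int|x|^\alpha|\nabla\dot c|^2\,dx$ after integration by parts, up to a commutator $-\alpha\int|x|^{\alpha-2}x_i\partial_i\dot c\,\dot c\,dx$ which, by Cauchy--Schwarz and Lemma \ref{f}, is bounded by $\tfrac{\alpha^2}{2}\||x|^{\alpha/2}\nabla\dot c\|_{L^2}^2$ (harmless since $\alpha\le 1/8$). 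The remaining source terms are, schematically, weighted versions of the right-hand side of $(\ref{175})$: $\int|x|^\alpha\mathbf v_t\cdot\nabla c\,\dot c$, $\int|x|^\alpha c\,\dive\mathbf v_t\,\dot c$, $\int|x|^\alpha m_t c\,\dot c$, $\int|x|^\alpha m\,\dot c^2$, plus terms generated by the commutator $[\partial_t+\mathbf v\cdot\nabla,\,\cdot]$ acting on the transport part, i.e. expressions involving $\nabla\mathbf v\,\nabla c$ and $\nabla\mathbf v\,c$. Each of these I would estimate by Hölder, splitting off an $L^\infty$ or $L^{4/\alpha}$ factor of the $\mathbf v$- or $m$-quantities (controlled by $\Phi$ through $(\ref{103})$ and Lemmas \ref{a}, \ref{f}), a weighted $L^2$ factor chosen among $\||x|^{\alpha/2}\dot c\|_{L^2}$, $\||x|^{\alpha/2}\nabla c\|_{L^2}$, $\||x|^{\alpha/2}c\|_{L^2}$ (from Lemma \ref{e}), and where necessary an unweighted factor from Lemma \ref{b}; the term $\int|x|^\alpha\mathbf v_t\cdot\nabla c\,\dot c$ is handled by putting $\mathbf v_t$ in $L^{4/\alpha}$, $|x|^{\alpha/2}\nabla c$ in $L^2$, and $\dot c$ in the conjugate weighted Lebesgue space via the first inequality of Lemma \ref{f}. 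After choosing $\varepsilon$ small to absorb all $\||x|^{\alpha/2}\nabla\dot c\|_{L^2}^2$ contributions into the dissipation, Gronwall's inequality closes the estimate, with the right-hand side taking the form $Cc_0\exp\{Cc_0^2\exp\{C\Phi^2 t^{1/4}\}\}$ inherited from Lemma \ref{b}.

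The main obstacle, as throughout this section, is the two-dimensional criticality of the Sobolev embedding: one cannot bound $\|\dot c\|_{L^\infty}$ or even $\|\dot c\|_{L^4}$ by $\|\dot c\|_{L^2}$ and $\|\nabla\dot c\|_{L^2}$ without a logarithmic loss, so any estimate of a nonlinear term that would naively want such a bound must instead route the highest-order factor through the \emph{weighted} Caffarelli--Kohn--Nirenberg inequality of Lemma \ref{f}, keeping the weight $|x|^\alpha$ attached to the gradient. This is precisely why the exponent $\alpha=\mu/(4(2\mu+\lambda))\le 1/8$ enters: the commutator constants $\alpha^2/2$, $\alpha^4/4$ produced when the weight interacts with the dissipation must stay strictly below $1$, which $(\ref{200})$ guarantees. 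A secondary technical point is that the bound on $\||x|^{\alpha/2}\nabla^2 c\|_{L^2}$ needed to convert the $\dot c$ bound back into the claimed $\||x|^{\alpha/2}\nabla c\|_{L^2}$ sup bound requires a weighted elliptic estimate for $\Delta c = \dot c + mc + c\,\dive\mathbf v$; this is routine since multiplication by $|x|^\alpha$ is an $A_2$-weight operation, but it must be invoked with care so that the constants remain independent of the domain size and of the lower bound $\delta$ of $h_0$, consistently with the conventions fixed after $(\ref{103})$.
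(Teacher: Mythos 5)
Your core strategy is the right one and matches the paper: multiply $(\ref{2})_1$ by $|x|^{\alpha}\dot{c}$ to produce the weighted $\nabla c$ energy, then apply the material operator $\partial_t+\partial_i(\mathbf v_i\cdot)$ to $(\ref{2})_1$, multiply the result by $|x|^{\alpha}\dot{c}$, absorb the weight-lowering commutator $-\alpha\int|x|^{\alpha-2}x_i\partial_i\dot c\,\dot c\,dx$ into the dissipation via Lemma~\ref{f} (using $\alpha\le 1/8$), and close by adding the two inequalities and applying Gronwall. This is exactly the structure of $(\ref{191})$, $(\ref{z102})$, $(\ref{140})$, $(\ref{192})$.

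However, your opening paragraph contains a genuine misstep. You assert that the weighted $L^2$ bound on $\dot c$ requires controlling $\||x|^{\alpha/2}\nabla^2 c\|_{L^2}$, and your closing remark invokes a weighted elliptic estimate for $\Delta c=\dot c+mc+c\,\dive\mathbf v$ (appealing to $A_2$-weight theory) to pass back to the $\||x|^{\alpha/2}\nabla c\|_{L^2}$ sup bound. Neither is needed, and neither appears in the paper. The sup bound on $\||x|^{\alpha/2}\nabla c\|_{L^2}$ is produced \emph{directly} by the first pairing: in $\int\Delta c\,|x|^{\alpha}\dot c\,dx$, the portion $\int\Delta c\,|x|^{\alpha}c_t\,dx$ integrates by parts to $-\tfrac12\tfrac{d}{dt}\int|x|^{\alpha}|\nabla c|^2\,dx$ plus a commutator, so $\tfrac{d}{dt}\||x|^{\alpha/2}\nabla c\|_{L^2}^2$ sits on the left of $(\ref{191})$ for free, and $\||x|^{\alpha/2}\dot c\|_{L^2}^2$ is the good term. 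Likewise the sup bound on $\||x|^{\alpha/2}\dot c\|_{L^2}$ comes from the time-derivative term of the second (material-derivative) pairing, not from reading the weighted norm off the equation. The detour through $\||x|^{\alpha/2}\nabla^2 c\|_{L^2}$ and $A_2$ elliptic regularity is not only superfluous but would add real work (one would have to justify uniformity in the domain and in $\delta$), so it should be deleted; the two energy estimates alone suffice.
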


\begin{proof}
Multiplying the equation $(\ref{2})_{1}$ by $|x|^{\alpha}\dot{c}$ and integrating over $\Omega$, together with Lemma \ref{f} to get
\begin{equation}\label{191}
\begin{split}
&\frac{1}{2}\frac{d}{dt}\int |x|^{\alpha}|\nabla c|^2\, dx+\int |x|^{\alpha}\dot{c}^2\, dx\\
=&-\int |x|^{\alpha}c\dive\mathbf{v}\dot{c}\, dx-\alpha\int |x|^{\alpha-2}x_{i}\cdot\nabla c\dot{c}\, dx+\frac{\alpha}{2}\int |x|^{\alpha-2}x_{i}\cdot\mathbf{v}|\nabla c|^2\, dx\\
&+\frac{1}{2}\int |x|^{\alpha}\dive\mathbf{v}|\nabla c|^2\, dx-\int |x|^{\alpha}mc\dot{c}\, dx\\
\leq&\||x|^{\alpha/2}\dot{c}\|_{L^2}\||x|^{\alpha/2}c\|_{L^2}\|\dive\mathbf{v}\|_{L^{\infty}}+\alpha\||x|^{\alpha/2-1}\dot{c}\|_{L^2}\||x|^{\alpha/2}\nabla c\|_{L^2}\\
&+\||x|^{\alpha/2-1}\mathbf{v}\|_{L^2}\||x|^{\alpha/2}\nabla c\|_{L^2}\|\nabla c\|_{L^{\infty}}+\|\dive\mathbf{v}\|_{L^{\infty}}\||x|^{\alpha/2}\nabla c\|_{L^2}^2\\
&+\||x|^{\alpha/2}c\|_{L^2}\||x|^{\alpha/2}\dot{c}\|_{L^2}\|m\|_{L^{\infty}}\\
\leq&\varepsilon\||x|^{\alpha/2}\dot{c}\|_{L^2}^2+C_{\varepsilon}\||x|^{\alpha/2}c\|_{L^2}^2(\|\dive\mathbf{v}\|_{L^{\infty}}^2+\|m\|_{L^{\infty}}^2)+(\alpha^4/4)\||x|^{\alpha/2}\nabla\dot{c}\|_{L^2}^2\\
&+C(\||x|^{\alpha/2}\nabla\mathbf{v}\|_{L^2}^2+\|\dive\mathbf{v}\|_{L^{\infty}}+1)\||x|^{\alpha/2}\nabla c\|_{L^2}^2+C\|\nabla c\|_{L^{\infty}}^2.
\end{split}
\end{equation}
Using the operator $\partial_{t}+\partial_{i}(\mathbf{v}_{i}\cdot)$ to equation (\ref{2}), we get
\begin{equation}\label{z102}
\begin{split}
&\dot{c}_{t}+(c\dive\mathbf{v})_{t}+\dive(\dot{c}\otimes\mathbf{v})+\dive(c\dive\mathbf{v}\otimes\mathbf{v})\\
=&\Delta c_{t}-(mc)_{t}+\dive(\Delta c\otimes\mathbf{v})-\dive(mc\otimes\mathbf{v}).
\end{split}
\end{equation}
Multiplying the equation (\ref{z102}) by $|x|^{\alpha}\dot{c}$ and integrating over $\Omega$ to get
\begin{equation*}
\begin{split}
&\frac{1}{2}\frac{d}{dt}\int |x|^{\alpha}\dot{c}^2\, dx+\int |x|^{\alpha}|\nabla\dot{c}|^2\, dx\\
=&-\int |x|^{\alpha}\dot{c}(c\dive\mathbf{v})_{t}\, dx-\int |x|^{\alpha}\dot{c}\dive(\dot{c}\otimes\mathbf{v})\, dx-\int |x|^{\alpha}\dot{c}\dive(c\dive\mathbf{v}\otimes\mathbf{v})\, dx\\
&-\alpha\int |x|^{\alpha-2}x_{i}\cdot\nabla\dot{c}\dot{c}\, dx-\int |x|^{\alpha}\dot{c}\Delta(\mathbf{v}\cdot\nabla c)\, dx-\int |x|^{\alpha}\dot{c}(mc)_{t}\, dx\\
&+\int |x|^{\alpha}\dot{c}\dive(\Delta c\otimes\mathbf{v})\, dx-\int |x|^{\alpha}\dot{c}\dive(mc\otimes\mathbf{v})\, dx\\
:=&\sum_{i=1}^{8}F_{i}.
\end{split}
\end{equation*}
In the following, we estimate $F_{i}\ (i=1, \cdots, 8)$. First, we consider $F_{1}$. By H\"older's inequality, Gagliardo-Nirenberg's inequality and Young's inequality, we have
\begin{equation*}
\begin{split}
F_{1}\leq&\||x|^{\alpha/2}\dot{c}\|_{L^{4}}\||x|^{\alpha/2}\dive\mathbf{v}\|_{L^2}\|c_{t}\|_{L^4}+\||x|^{\alpha/2}\dot{c}\|_{L^4}\||x|^{\alpha/2}c\|_{L^4}\|\dive\mathbf{v}_{t}\|_{L^2}\\
\leq&(\varepsilon/5)\||x|^{\alpha/2}\nabla\dot{c}\|_{L^2}^2+C_{\varepsilon}(\|\nabla c_{t}\|_{L^2}+\|\dive\mathbf{v}_{t}\|_{L^2})\||x|^{\alpha/2}\dot{c}\|_{L^2}^2\\
&+C_{\varepsilon}\||x|^{\alpha/2}\dive\mathbf{v}\|_{L^2}^2\|c_{t}\|_{L^2}\|\nabla c_{t}\|_{L^2}^{\frac{1}{2}}+C\||x|^{\alpha/2}c\|_{L^2}\||x|^{\alpha/2}\nabla c\|_{L^2}\|\dive\mathbf{v}_{t}\|_{L^2}^{\frac{3}{2}}.
\end{split}
\end{equation*}
Then we consider $F_{2}$. A straightforward computation shows that
\begin{equation*}
\begin{split}
F_{2}\leq&\alpha\||x|^{\alpha/2-1}\dot{c}\|_{L^2}\||x|^{\alpha/2}\dot{c}\|_{L^2}\|\mathbf{v}\|_{L^{\infty}}+\||x|^{\alpha/2}\nabla\dot{c}\|_{L^2}\||x|^{\alpha/2}\dot{c}\|_{L^2}\|\mathbf{v}\|_{L^{\infty}}\\
\leq&(\varepsilon/5)\||x|^{\alpha/2}\nabla\dot{c}\|_{L^2}^2+C_{\varepsilon}\||x|^{\alpha/2}\dot{c}\|_{L^2}^2\|\mathbf{v}\|_{L^{\infty}}^2.
\end{split}
\end{equation*}
The nonlinear terms $F_{3}$ and $F_{4}$ can be calculated as follow
\begin{equation*}
\begin{split}
F_{3}\leq&\||x|^{\alpha/2-1}\dot{c}\|_{L^2}\||x|^{\alpha/2}\dive\mathbf{v}\|_{L^2}\|\mathbf{v}c\|_{L^{\infty}}+\||x|^{\alpha/2}\dive\mathbf{v}\|_{L^2}\||x|^{\alpha/2}\nabla\dot{c}\|_{L^2}\|c\mathbf{v}\|_{L^{\infty}}\\
\leq&(\varepsilon/5)\||x|^{\alpha/2}\nabla\dot{c}\|_{L^2}^2+C_{\varepsilon}\||x|^{\alpha/2}\dive\mathbf{v}\|_{L^2}^2\|c\mathbf{v}\|_{L^{\infty}}^2
\end{split}
\end{equation*}
and
\begin{equation*}
F_{4}\leq\alpha\||x|^{\alpha/2-1} \dot{c}\|_{L^2}\||x|^{\alpha/2}\nabla\dot{c}\|_{L^2}\leq\frac{\alpha^2}{2}\||x|^{\alpha/2}\nabla\dot{c}\|_{L^2}^2.
\end{equation*}
Using the H\"older's inequality, Gagliardo-Nirenberg's inequality and Young's inequality, the nonlinear term $F_{6}$ can be estimated as follow
\begin{equation*}
\begin{split}
F_{6}=&-\int |x|^{\alpha}\dot{c}m_{t}c\, dx-\int |x|^{\alpha}\dot{c}^2 m\, dx+\int |x|^{\alpha}\dot{c}m\mathbf{v}\cdot\nabla c\, dx\\
\leq&\||x|^{\alpha/2}\dot{c}\|_{L^2}\||x|^{\alpha/2}c\|_{L^4}\|m_{t}\|_{L^4}+\|m\|_{L^{\infty}}\||x|^{\alpha/2}\dot{c}\|_{L^2}^2+\||x|^{\alpha/2}\dot{c}\|_{L^2}\||x|^{\alpha/2}\nabla c\|_{L^2}\|m\mathbf{v}\|_{L^{\infty}}\\
\leq&C(1+\|m\|_{L^{\infty}}^2+\|\nabla m_{t}\|_{L^2})\||x|^{\alpha/2}\dot{c}\|_{L^2}^2+C\||x|^{\alpha/2}c\|_{L^2}^2\|m_{t}\|_{L^2}\\
&+C(\||x|^{\alpha/2}\nabla c\|_{L^2}^2\|\nabla m_{t}\|_{L^2}+\||x|^{\alpha/2}\nabla c\|_{L^2}^2\|\mathbf{v}\|_{L^{\infty}}^2).
\end{split}
\end{equation*}
For the nonlinear term $F_{8}$, a straightforward computation shows that
\begin{equation*}
F_{8}\leq\||x|^{\alpha/2-1}\dot{c}\|_{L^2}\||x|^{\alpha/2}c\|_{L^{2}}\|m\mathbf{v}\|_{L^{\infty}}\leq(\varepsilon/5)\||x|^{\alpha/2}\nabla\dot{c}\|_{L^2}^2+C_{\varepsilon}\||x|^{\alpha/2}c\|_{L^2}^2\|m\mathbf{v}\|_{L^{\infty}}^2.
\end{equation*}
The nonlinear term $F_{5}+F_{7}$ can be estimated as
\begin{equation*}
\begin{split}
F_{5}+F_{7}\leq&\||x|^{\alpha/2}\nabla\dot{c}\|_{L^2}\||x|^{\alpha/2}\nabla c\|_{L^2}\|\nabla\mathbf{v}\|_{L^{\infty}}\\
\leq&(\varepsilon/5)\||x|^{\alpha/2}\nabla\dot{c}\|_{L^2}^2+C_{\varepsilon}\||x|^{\alpha/2}\nabla c\|_{L^2}^2\|\nabla\mathbf{v}\|_{L^{\infty}}^2.
\end{split}
\end{equation*}
Combining all these estimates about $F_{i}\ (i=1,\cdots, 8)$ to get
\begin{equation}\label{140}
\begin{split}
&\frac{d}{dt}\||x|^{\alpha/2}\dot{c}\|_{L^2}^2+\||x|^{\alpha/2}|\nabla\dot{c}|\|_{L^2}^2\\
\leq&C_{\varepsilon}(\|\nabla c_{t}\|_{L^2}+\|\dive\mathbf{v}_{t}\|_{L^2}+\|\mathbf{v}\|_{L^{\infty}}^2+\|m\|_{L^{\infty}}^2+\|\nabla m_{t}\|_{L^2}+1)\||x|^{\alpha/2}\dot{c}\|_{L^2}^2\\
&+(\varepsilon+\frac{\alpha^2}{2})\||x|^{\alpha/2}\nabla\dot{c}\|_{L^2}^2+\||x|^{\alpha/2}\dive\mathbf{v}\|_{L^2}^2\|c_{t}\|_{L^2}\|\nabla c_{t}\|_{L^2}^{\frac{1}{2}}+\||x|^{\alpha/2}c\|_{L^2}^2\|\dive\mathbf{v}_{t}\|_{L^2}^{\frac{3}{2}}\\
&+\||x|^{\alpha/2}\dive\mathbf{v}\|_{L^2}^2\|c\mathbf{v}\|_{L^{\infty}}^2+\||x|^{\alpha/2}c\|_{L^2}^2\|m_{t}\|_{L^2}^2+\||x|^{\alpha/2}c\|_{L^2}^2\|m\mathbf{v}\|_{L^{\infty}}^2\\
&+(\|\dive\mathbf{v}_{t}\|_{L^2}^{\frac{3}{2}}+\|\nabla m_{t}\|_{L^2}^2+\|\mathbf{v}\|_{L^{\infty}}^2+\|\nabla\mathbf{v}\|_{L^{\infty}}^2)\||x|^{\alpha/2}\nabla c\|_{L^2}^2.
\end{split}
\end{equation}
Adding the inequalities (\ref{191}) and (\ref{140}), choosing $\varepsilon=1/2$, together with the fact
\begin{equation*}
\|\nabla c\|_{L^{\infty}}^2\leq\|c\|_{L^4}^{\frac{1}{2}}\|\nabla^2 c\|_{L^4}^{\frac{3}{2}}\leq\|c\|_{L^2}^{\frac{1}{4}}\|\nabla c\|_{L^2}^{\frac{1}{4}}\|\nabla^2 c\|_{L^4}^{\frac{3}{2}},
\end{equation*}
we have
\begin{equation}\label{192}
\begin{split}
&\frac{d}{dt}\big(\||x|^{\alpha/2}\dot{c}\|_{L^2}^2+\||x|^{\alpha/2}\nabla c\|_{L^2}^2\big)+(\||x|^{\alpha/2}\nabla\dot{c}\|_{L^2}^2+\||x|^{\alpha/2}\dot{c}\|_{L^2}^2)\\
\leq&C(\|\nabla c_{t}\|_{L^2}+\|\dive\mathbf{v}_{t}\|_{L^2}^{\frac{3}{2}}+\|\mathbf{v}\|_{W^{1, \infty}}^2+\|m\|_{L^{\infty}}^2+\|m_{t}\|_{H^1}\\
&+\||x|^{\alpha/2}\nabla\mathbf{v}\|_{L^2}^2+1)(\||x|^{\alpha/2}\dot{c}\|_{L^2}^2+\||x|^{\alpha/2}\nabla c\|_{L^2}^2)\\
&+C(\|\dive\mathbf{v}_{t}\|_{L^2}^{\frac{3}{2}}+\|m_{t}\|_{L^2}+\|m\mathbf{v}\|_{L^{\infty}}^2+\|\dive\mathbf{v}\|_{L^{\infty}}^2)\||x|^{\alpha/2} c\|_{L^2}^2\\
&+C(\|c_{t}\|_{L^2}\|\nabla c_{t}\|_{L^2}^{\frac{1}{2}}+\|c\mathbf{v}\|_{L^{\infty}}^2)\||x|^{\alpha/2}\dive\mathbf{v}\|_{L^2}^2+C\|\nabla c\|_{L^{\infty}}^2.
\end{split}
\end{equation}
Applying Gronwall's inequality to (\ref{192}), together with Lemmas \ref{b} and \ref{e} yields (\ref{z103}).
\end{proof}

\begin{lemma}\label{H}
Let $n$ be as in Lemma \ref{d}. Then
\begin{equation}\label{z106}
\begin{split}
&\sup_{0\leq s\leq t}(\||x|^{\alpha/2}\dot{n}\|_{L^2}^2+\||x|^{\alpha/2}\nabla n\|_{L^2}^2)+\int_{0}^{t}\||x|^{\alpha/2}\nabla\dot{n}\|_{L^2}^2\, ds\\
\leq&Cc_{0}^2\exp\big\{Cc_{0}^2\exp\{C\Phi^3(\mathbf{v}, m, t)t^{1/5}\}\big\}.
\end{split}
\end{equation}
\end{lemma}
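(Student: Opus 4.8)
The plan is to replay, for $\dot n$, the two-tier weighted energy scheme that produced Lemma \ref{A1} for $\dot c$, carrying along the extra chemotactic term $-\nabla\cdot(n\nabla c)$ from $(\ref{3})_{1}$ and closing with Gronwall. \emph{First tier.} I would multiply $(\ref{3})_{1}$ by $|x|^{\alpha}\dot n$ and integrate over $\Omega$, exactly as in the derivation of (\ref{191}), to obtain an identity of the form $\tfrac12\tfrac{d}{dt}\||x|^{\alpha/2}\nabla n\|_{L^2}^2+\||x|^{\alpha/2}\dot n\|_{L^2}^2=(\text{transport and weight terms})-\int|x|^{\alpha}\dot n\,\nabla\cdot(n\nabla c)\,dx$. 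The transport/weight terms are handled verbatim as in Lemma \ref{A1}: the genuinely weighted pieces $\||x|^{\alpha/2-1}\dot n\|_{L^2}$ are converted, via the Caffarelli--Kohn--Nirenberg inequality of Lemma \ref{f}, into a small multiple of $\||x|^{\alpha/2}\nabla\dot n\|_{L^2}^2$, the constraint $\alpha\le 1/8$ from (\ref{200}) being what lets $\alpha^2/2$ and $\alpha^4/4$ be absorbed. For the new term I would integrate by parts once and distribute, estimating it by $\alpha\||x|^{\alpha/2-1}\dot n\|_{L^2}\||x|^{\alpha/2}\nabla c\|_{L^2}\|n\|_{L^{\infty}}+\||x|^{\alpha/2}\nabla\dot n\|_{L^2}\||x|^{\alpha/2}\nabla c\|_{L^2}\|n\|_{L^{\infty}}$ plus a contribution with $\|\nabla n\|_{L^4}\|\nabla^2 c\|_{L^4}$; here $\||x|^{\alpha/2}\nabla c\|_{L^2}$ comes from Lemma \ref{A1}, $\|n\|_{L^{\infty}}$, $\|\nabla n\|_{L^4}$ from Lemma \ref{d} and Gagliardo--Nirenberg (Lemma \ref{a}), and $\|\nabla^2 c\|_{L^4}$ from Lemma \ref{b}.

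\emph{Second tier.} Apply the operator $\partial_{t}+\partial_{i}(\mathbf v_{i}\,\cdot)$ to $(\ref{3})_{1}$ to get, in analogy with (\ref{z102}),
\[
\dot n_{t}+(n\dive\mathbf v)_{t}+\dive(\dot n\otimes\mathbf v)+\dive(n\dive\mathbf v\otimes\mathbf v)=\Delta n_{t}+\dive(\Delta n\otimes\mathbf v)-\big(\nabla\cdot(n\nabla c)\big)_{t}-\dive\big(\mathbf v\,\nabla\cdot(n\nabla c)\big).
\]
Multiplying by $|x|^{\alpha}\dot n$ and integrating, the terms that already occur in the estimate (\ref{z103}) are bounded in the same way (Hölder, Gagliardo--Nirenberg, Young, and Lemma \ref{f} to push weighted $|x|^{\alpha/2-1}$ factors into $\||x|^{\alpha/2}\nabla\dot n\|_{L^2}^2$), using Lemmas \ref{c}, \ref{d} and the weighted bounds of Lemma \ref{D1}. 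The genuinely new contributions are those from $\big(\nabla\cdot(n\nabla c)\big)_{t}$ and $\dive(\mathbf v\,\nabla\cdot(n\nabla c))$; after integration by parts they reduce to integrals of $|x|^{\alpha}\nabla\dot n$ or $|x|^{\alpha}\dot n$ paired with $n_{t}\nabla c$, $n\nabla c_{t}$, $n\nabla c\,\nabla\mathbf v$, $\nabla n\,\nabla c\,\mathbf v$, and similar. The only delicate factor is $n\nabla c_{t}$: I would write $\nabla c_{t}=\nabla\dot c-\nabla\mathbf v\cdot\nabla c-\mathbf v\cdot\nabla^{2}c$ and control it by the weighted bound $\||x|^{\alpha/2}\nabla\dot c\|_{L^2}$ from Lemma \ref{A1} together with $\|\nabla^{2}c\|_{L^4}$, $\|\nabla c_{t}\|_{L^2}$ from Lemma \ref{b}; the $n_{t}$-factors are absorbed using $\|n_{t}\|_{L^2}$, $\|\nabla n_{t}\|_{L^2}$ from Lemma \ref{d}.

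\emph{Closing.} Add the two differential inequalities, choose $\varepsilon$ small, and apply Gronwall's inequality: by the preceding lemmas the coefficient of $\||x|^{\alpha/2}\dot n\|_{L^2}^2+\||x|^{\alpha/2}\nabla n\|_{L^2}^2$ is integrable in time with $L^{1}_{t}$-norm bounded by $Cc_{0}^{2}\exp\{Cc_{0}^{2}\exp\{C\Phi^{3}t^{1/5}\}\}$-type quantities, and the inhomogeneous terms are bounded likewise; this yields (\ref{z106}).

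\emph{Main obstacle.} The hard part is the chemotactic coupling in the second tier, specifically the terms carrying $n\nabla c_{t}$ (equivalently $n\nabla\dot c$) and $\nabla n_{t}\,\nabla c$. These force the full use of Lemma \ref{A1} (weighted control of $\nabla\dot c$) and of the $\nabla n_{t}$, $\nabla^{2}n$ bounds of Lemma \ref{d}, and — because making the resulting products time-integrable requires one extra Hölder split in $t$ compared with the pure-fluid estimate (\ref{B300}) — they are what degrade the inner exponent from $t^{1/4}$ to $t^{1/5}$ in (\ref{z106}).
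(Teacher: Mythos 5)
Your proposal follows the same two-tier weighted energy scheme as the paper's own proof: first multiply $(\ref{3})_{1}$ by $|x|^{\alpha}\dot n$, then apply $\partial_{t}+\partial_{i}(\mathbf v_{i}\,\cdot)$ to $(\ref{3})_{1}$, test against $|x|^{\alpha}\dot n$, add the two identities, absorb the $\||x|^{\alpha/2}\nabla\dot n\|_{L^2}$ pieces via Lemma \ref{f} and smallness of $\alpha$, and close with Gronwall using Lemmas \ref{b}--\ref{d}, \ref{D1}, and \ref{A1}. The only deviations are bookkeeping choices: the paper treats the first-tier chemotactic term $-\int|x|^{\alpha}\dot n\,\nabla\cdot(n\nabla c)\,dx$ by expanding $\nabla\cdot(n\nabla c)=\nabla n\cdot\nabla c+n\Delta c$ directly against $\||x|^{\alpha/2}\dot n\|_{L^2}$ (its eq. (\ref{B209})) rather than integrating by parts, and in the second tier it mostly keeps $\|\nabla c_{t}\|_{L^4}$ where you substitute $\nabla c_{t}=\nabla\dot c-\nabla\mathbf v\cdot\nabla c-\mathbf v\cdot\nabla^{2}c$ — but since the paper's final inequality (\ref{194}) also carries a $\||x|^{\alpha/2}\nabla\dot c\|_{L^2}^2$ source term from Lemma \ref{A1}, both routes are in play there too. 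These are equivalent Hölder/Young splittings and both close the Gronwall argument, so I regard the proposal as essentially the paper's proof.
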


\begin{proof}
Multiplying the equation $(\ref{3})_{1}$ by $|x|^{\alpha}\dot{n}$ and integrating over $\Omega$ to get
\begin{equation}\label{193}
\begin{split}
&\frac{1}{2}\frac{d}{dt}\int |x|^{\alpha}|\nabla n|^2\, dx+\int |x|^{\alpha}\dot{n}^2\, dx\\
=&-\int |x|^{\alpha}\dot{n} n\dive\mathbf{v}\, dx-\alpha\int |x|^{\alpha-2}x_{i}\cdot\nabla n\dot{n}\, dx+\int |x|^{\alpha}\mathbf{v}\cdot\nabla n\Delta n\, dx\\
&-\int |x|^{\alpha}\dot{n}\nabla\cdot(n\nabla c)\, dx.
\end{split}
\end{equation}
The terms $-\int |x|^{\alpha}\dot{n} n\dive\mathbf{v}\, dx, -\alpha\int |x|^{\alpha-2}x_{i}\cdot\nabla n\dot{n}\, dx$ and $\int |x|^{\alpha}\mathbf{v}\cdot\nabla n\Delta n\, dx$ can be estimated as (\ref{191}) in Lemma \ref{A1}. We only give the estimate of the nonlinear term $-\int |x|^{\alpha}\dot{n}\nabla\cdot(n\nabla c)\, dx$. By H\"older's inequality and Cauchy's inequality, we have
\begin{equation}\label{B209}
\begin{split}
&-\int |x|^{\alpha}\dot{n}\nabla\cdot(n\nabla c)\, dx\\
\leq&\||x|^{\alpha/2}\dot{n}\|_{L^2}\||x|^{\alpha/2}\nabla n\|_{L^2}\|\nabla c\|_{L^{\infty}}+\||x|^{\alpha/2}\dot{n}\|_{L^2}\||x|^{\alpha/2}n\|_{L^4}\|\Delta c\|_{L^4}\\
\leq&\frac{\varepsilon}{2}\||x|^{\alpha/2}\dot{n}\|_{L^2}^2+C_{\varepsilon}(\|\nabla c\|_{L^{\infty}}^2+\|\Delta c\|_{L^4}^2)\||x|^{\alpha/2}\nabla n\|_{L^2}^2+C\||x|^{\alpha/2}n\|_{L^2}^2\|\Delta c\|_{L^4}^2.
\end{split}
\end{equation}
Using the operator $\partial_{t}+\partial_{i}(\mathbf{v}_{i}\cdot)$ on equation $(\ref{3})_{1}$, multiplying the resulting equation by $|x|^{\alpha}\dot{n}$, integrating over $\Omega$ to get
\begin{equation}\label{B210}
\begin{split}
&\frac{1}{2}\frac{d}{dt}\int|x|^{\alpha}\dot{n}^2\, dx+\int |x|^{\alpha}|\nabla\dot{n}|^2\, dx\\
=&-\int |x|^{\alpha}\dot{n}(n\dive\mathbf{v})_{t}\, dx-\int |x|^{\alpha}\dot{n}\dive(\dot{n}\otimes\mathbf{v})\, dx-\int |x|^{\alpha}\dot{n}\dive(n\dive\mathbf{v}\otimes\mathbf{v})\, dx\\
&-\alpha\int |x|^{\alpha-2}x_{i}\cdot\nabla\dot{n}\dot{n}\, dx-\int |x|^{\alpha}\Delta(\mathbf{v}\cdot\nabla n)\dot{n}\, dx-\int |x|^{\alpha}\dot{n}\nabla\cdot(n\nabla c)_{t}\, dx\\
&+\int |x|^{\alpha}\dot{n}\dive(\Delta n\otimes\mathbf{v})\, dx-\int |x|^{\alpha}\dot{n}\dive(\nabla\cdot(n\nabla c)\otimes\mathbf{v})\, dx\\
:=&\sum_{i=1}^8G_{i}.
\end{split}
\end{equation}
The estimates of $G_{1}$ to $G_{4}$, $G_{5}+G_{7}$ are similar as $F_{1}$ to $F_{4}$ and $F_{5}+F_{7}$ in Lemma \ref{A1}. Here we omit. For the nonlinear term $G_{6}$, by H\"older's inequality, Gagliardo-Nirenberg's inequality and Young's inequality, we have
\begin{equation*}
\begin{split}
G_{6}=&\alpha\int |x|^{\alpha-2}x_{i}\cdot\nabla c\dot{n}^2\, dx-\alpha\int |x|^{\alpha-2}x_{i}\cdot\nabla c\dot{n}\mathbf{v}\cdot\nabla n\, dx+\alpha\int |x|^{\alpha-2}x_{i}\cdot\nabla c_{t}n\dot{n}\, dx\\
&+\int |x|^{\alpha}\nabla\dot{n}\cdot\nabla c\dot{n}\, dx-\int |x|^{\alpha}\nabla\dot{n}\cdot\nabla c\mathbf{v}\cdot\nabla n\, dx+\int |x|^{\alpha}\nabla\dot{n}\cdot\nabla c_{t}n\, dx\\
\leq&\||x|^{\alpha/2-1}\dot{n}\|_{L^2}\||x|^{\alpha/2}\dot{n}\|_{L^2}\|\nabla c\|_{L^{\infty}}+\||x|^{\alpha/2-1}\dot{n}\|_{L^2}\||x|^{\alpha/2}\nabla n\|_{L^2}\|\mathbf{v}\cdot\nabla c\|_{L^{\infty}}\\
&+\alpha\||x|^{\alpha/2-1}\dot{n}\|_{L^2}\||x|^{\alpha/2}n\|_{L^4}\|\nabla c_{t}\|_{L^4}+\||x|^{\alpha/2}\nabla\dot{n}\|_{L^2}\||x|^{\alpha/2}\dot{n}\|_{L^2}\|\nabla c\|_{L^{\infty}}\\
&+\||x|^{\alpha/2}\nabla\dot{n}\|_{L^2}\||x|^{\alpha/2}\nabla n\|_{L^2}\|\mathbf{v}\cdot\nabla c\|_{L^{\infty}}+\||x|^{\alpha/2}\nabla\dot{n}\|_{L^2}\||x|^{\alpha/2}n\|_{L^4}\|\nabla c_{t}\|_{L^4}\\
\leq&(\frac{\varepsilon}{5}+\frac{3}{4}\alpha^2)\||x|^{\alpha/2}\nabla\dot{n}\|_{L^2}^2+C_{\varepsilon}\||x|^{\alpha/2}\dot{n}\|_{L^2}^2\|\nabla c\|_{L^{\infty}}^2+C\||x|^{\alpha/2}\nabla n\|_{L^2}^2(\|\mathbf{v}\cdot\nabla c\|_{L^{\infty}}^2+\|\nabla c_{t}\|_{L^4}^2).
\end{split}
\end{equation*}
For the nonlinear term $G_{8}$, a straightforward computation shows that
\begin{equation*}
\begin{split}
G_{8}=&\alpha\int |x|^{\alpha-2}x_{i}\cdot\nabla n\mathbf{v}\cdot\nabla c\dot{n}\, dx+\alpha\int |x|^{\alpha-2}x_{i}\cdot\mathbf{v}\dot{n}n\Delta c\, dx+\int |x|^{\alpha}\nabla\dot{n}\nabla n\nabla c\mathbf{v}\, dx\\
&+\int |x|^{\alpha}\nabla\dot{n}n\Delta c\mathbf{v}\, dx\\
\leq&(\alpha^2/2+1)\||x|^{\alpha/2}\nabla\dot{n}\|_{L^2}(\||x|^{\alpha/2}\nabla n\|_{L^2}\|\mathbf{v}\cdot\nabla c\|_{L^{\infty}}+\||x|^{\alpha/2}n\|_{L^4}\|\Delta c\|_{L^4}\|\mathbf{v}\|_{L^{\infty}})\\
\leq&(\alpha^4/4+\varepsilon/5)\||x|^{\alpha/2}\nabla\dot{n}\|_{L^2}^2+C_{\varepsilon}\||x|^{\alpha/2}\nabla n\|_{L^2}^2(\|\mathbf{v}\cdot\nabla c\|_{L^{\infty}}^2+\|\Delta c\|_{L^4}^2\|\mathbf{v}\|_{L^{\infty}}^2).
\end{split}
\end{equation*}
Putting all these estimates of $G_{i}$ into (\ref{B210}), combining the estimate of (\ref{193}), then choosing $\varepsilon=1/2$, we have
\begin{equation}\label{194}
\begin{split}
&\frac{d}{dt}\big(\||x|^{\alpha/2}\dot{n}\|_{L^2}^2+\||x|^{\alpha/2}\nabla n\|_{L^2}^2\big)+(\||x|^{\alpha/2}\nabla\dot{n}\|_{L^2}^2+\||x|^{\alpha/2}\dot{n}\|_{L^2}^2)\\
\leq&C\big(1+\|\dive\mathbf{v}_{t}\|_{L^2}^{\frac{3}{2}}+\|\mathbf{v}\|_{W^{1, \infty}}^2+(\|\Delta c\|_{L^4}^2+\|\nabla c\|_{L^{\infty}}^2)(1+\|\mathbf{v}\|_{L^{\infty}}^2)+\|\nabla c_{t}\|_{L^4}^2\\
&+\||x|^{\alpha/2}\nabla\mathbf{v}\|_{L^2}^2\big)(\||x|^{\alpha/2}\dot{n}\|_{L^2}^2+\||x|^{\alpha/2}\nabla n\|_{L^2}^2)\\
&+C(\|\dive\mathbf{v}_{t}\|_{L^2}^{\frac{3}{2}}+\|\Delta c\|_{L^4}^2+\|\dive\mathbf{v}\|_{L^{\infty}}^2)\||x|^{\alpha/2}n\|_{L^2}^2+C\||x|^{\alpha/2}\dive\mathbf{v}\|_{L^2}^2\|n\mathbf{v}\|_{L^{\infty}}^2\\
&+C(\||x|^{\alpha/2}\nabla\dot{c}\|_{L^2}^2+\|\nabla n\|_{L^{\infty}}^2+\|n_{t}\|_{L^2}\|\nabla n_{t}\|_{L^2}\||x|^{\alpha/2}\dive\mathbf{v}\|_{L^2}^2).
\end{split}
\end{equation}
Applying Gronwall's inequality to (\ref{194}), together with Lemmas \ref{b}-\ref{d}, \ref{D1}-\ref{A1} yields (\ref{z106}).
\end{proof}

\begin{lemma}\label{j}
Let $(h, \mathbf{u})$ be as in Lemma \ref{h}. For $q=4/\alpha$, we have
\begin{equation}\label{B304}
\begin{split}
&\sup_{0\leq s\leq t}(\||x|^{\alpha/2}\nabla\mathbf{u}\|_{L^2}^2+\||x|^{\alpha/2}\sqrt{h}\dot{\mathbf{u}}\|_{L^2}^2)+\int_{0}^{t}\||x|^{\alpha/2}\nabla\dot{\mathbf{u}}\|_{L^2}^2\, ds\\
\leq&Cc_{0}^2\exp\big\{Cc_{0}^2\exp\{C\Phi^3(\mathbf{v}, m, t)t^{1/5}\}\big\}
\end{split}
\end{equation}
and
\begin{equation}\label{B305}
\int_{0}^{t}(\|\mathbf{u}_{t}\|_{L^q}^2+\|\nabla^2\mathbf{u}\|_{L^q}^2)\, ds\leq Cc_{0}^2\exp\big\{Cc_{0}^2\exp\{C\Phi^3(\mathbf{v}, m, t)t^{1/5}\}\big\}.
\end{equation}
\end{lemma}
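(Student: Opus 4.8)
The plan is to follow the pattern of Lemmas~\ref{h} and~\ref{H}: produce two weighted energy identities for the velocity equation~(\ref{4}), add them, and close a Gronwall argument, using the Caffarelli--Kohn--Nirenberg/Hardy inequality of Lemma~\ref{f} to reabsorb the singular factor $|x|^{\alpha-2}$ that appears whenever a derivative falls on the weight $|x|^{\alpha}$. Write $\dot{\mathbf u}:=\mathbf u_{t}+\mathbf v\cdot\nabla\mathbf u$ for the material derivative along the linearized flow, so that $(\ref{4})_{1}$ reads $h\dot{\mathbf u}+L\mathbf u+h^{2}\nabla n+\tfrac12(1+n)\nabla h^{2}=0$. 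First I would multiply this by $|x|^{\alpha}\dot{\mathbf u}$ and integrate over $\Omega$; integrating the viscous term by parts puts
\[
\tfrac{\mu}{2}\tfrac{d}{dt}\||x|^{\alpha/2}\nabla\mathbf u\|_{L^{2}}^{2}+\tfrac{\mu+\lambda}{2}\tfrac{d}{dt}\||x|^{\alpha/2}\dive\mathbf u\|_{L^{2}}^{2}+\||x|^{\alpha/2}\sqrt h\,\dot{\mathbf u}\|_{L^{2}}^{2}
\]
on the left, while the right consists of the viscous commutator $C\|\nabla\mathbf v\|_{L^{\infty}}\||x|^{\alpha/2}\nabla\mathbf u\|_{L^{2}}^{2}$, a weight term $\mu\alpha\int|x|^{\alpha-2}(x\otimes\dot{\mathbf u}):\nabla\mathbf u\,dx$ bounded via Lemma~\ref{f} by $C\alpha^{2}\||x|^{\alpha/2}\nabla\dot{\mathbf u}\|_{L^{2}}\||x|^{\alpha/2}\nabla\mathbf u\|_{L^{2}}$, and the two pressure-type forcings $-\int|x|^{\alpha}h^{2}\nabla n\cdot\dot{\mathbf u}\,dx-\tfrac12\int|x|^{\alpha}(1+n)\nabla h^{2}\cdot\dot{\mathbf u}\,dx$, which after integration by parts and Young's inequality contribute an absorbable $\varepsilon\||x|^{\alpha/2}\nabla\dot{\mathbf u}\|_{L^{2}}^{2}+\tfrac14\||x|^{\alpha/2}\sqrt h\,\dot{\mathbf u}\|_{L^{2}}^{2}$ plus terms built from $\|h\|_{L^{\infty}}$, $\|n\|_{L^{\infty}}$, $\||x|^{\alpha/2}(\nabla n,h^{2})\|_{L^{2}}$ and $\|\nabla h\|_{L^{q}}\|n_{t}\|_{L^{p}}$ (with $1/p+1/q=1/2$), all already controlled by Lemmas~\ref{c}, \ref{d}, \ref{i}, \ref{D1}, \ref{H}.

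Next I would apply $\partial_{t}+\partial_{i}(\mathbf v_{i}\,\cdot)$ to $(\ref{4})_{1}$, i.e.\ equation~(\ref{B211}), multiply by $|x|^{\alpha}\dot{\mathbf u}$ and integrate, obtaining
\[
\tfrac12\tfrac{d}{dt}\||x|^{\alpha/2}\sqrt h\,\dot{\mathbf u}\|_{L^{2}}^{2}+\mu\||x|^{\alpha/2}\nabla\dot{\mathbf u}\|_{L^{2}}^{2}+(\mu+\lambda)\||x|^{\alpha/2}\dive\dot{\mathbf u}\|_{L^{2}}^{2}=\sum_{i=1}^{10}\widetilde E_{i},
\]
where the $\widetilde E_{i}$ are the $|x|^{\alpha/2}$-weighted analogues of the $E_{i}$ of Lemma~\ref{h}, estimated the same way after replacing each norm by its weighted version and invoking Lemma~\ref{f} whenever the weight is differentiated; the viscous commutators $\widetilde E_{7}+\widetilde E_{9}$ and $\widetilde E_{8}+\widetilde E_{10}$ produce, beyond the displayed dissipation, the controllable pieces $C\|\nabla\mathbf v\|_{L^{\infty}}\||x|^{\alpha/2}\nabla\mathbf u\|_{L^{2}}\||x|^{\alpha/2}\nabla\dot{\mathbf u}\|_{L^{2}}$ and the extra weight terms $C\alpha(2\mu+\lambda)\||x|^{\alpha/2-1}\dot{\mathbf u}\|_{L^{2}}\||x|^{\alpha/2}\nabla\dot{\mathbf u}\|_{L^{2}}$. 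The second-order weighted norm $\||x|^{\alpha/2}\Delta n\|_{L^{2}}$ that shows up in $\widetilde E_{1}+\widetilde E_{5}$ is recovered from the $n$-equation $(\ref{3})_{1}$ together with Lemmas~\ref{b}, \ref{d}, \ref{H} and Lemma~\ref{f} applied to $n$.

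Adding the two identities, every right-hand term carrying $\||x|^{\alpha/2}\nabla\dot{\mathbf u}\|_{L^{2}}$ can, by Lemma~\ref{f} (which gives $\||x|^{\alpha/2-1}\dot{\mathbf u}\|_{L^{2}}\le\tfrac{\alpha}{2}\||x|^{\alpha/2}\nabla\dot{\mathbf u}\|_{L^{2}}$), be written as $C\alpha^{2}(2\mu+\lambda)\||x|^{\alpha/2}\nabla\dot{\mathbf u}\|_{L^{2}}^{2}$ plus a removable $\varepsilon\||x|^{\alpha/2}\nabla\dot{\mathbf u}\|_{L^{2}}^{2}$; since $\alpha=\mu/(4(2\mu+\lambda))\in(0,1/8]$ is small relative to $\mu/(2\mu+\lambda)$, choosing $\varepsilon$ small lets these be absorbed into $\mu\||x|^{\alpha/2}\nabla\dot{\mathbf u}\|_{L^{2}}^{2}$ on the left with a strictly positive remainder. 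The surviving differential inequality for $\||x|^{\alpha/2}\sqrt h\,\dot{\mathbf u}\|_{L^{2}}^{2}+\||x|^{\alpha/2}\nabla\mathbf u\|_{L^{2}}^{2}$ then carries a coefficient in $L^{1}(0,t)$ --- a combination of $\|\nabla\mathbf v\|_{L^{\infty}}$, $\|\nabla c\|_{L^{\infty}}$, $\|\Delta c\|_{L^{4}}$, $\|n_{t}\|_{L^{2}}\|\nabla n_{t}\|_{L^{2}}$, $\|\dive\mathbf v_{t}\|_{L^{2}}^{3/2}$, $\||x|^{\alpha/2}\nabla\mathbf v\|_{L^{2}}^{2}$, and the like --- so Gronwall, together with Lemmas~\ref{c}--\ref{h} and~\ref{i}--\ref{H} and $T\le1$ to convert the time integrals into a power of $t$, yields~(\ref{B304}), whose right-hand side coincides in form with that of Lemma~\ref{H}. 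I expect this absorption to be the main obstacle: each weighted integration by parts in the momentum equation feeds $|x|^{\alpha-2}$ back into the estimate, and Lemma~\ref{f} is the only device to remove it --- but at the price of reintroducing $\||x|^{\alpha/2}\nabla\dot{\mathbf u}\|_{L^{2}}$, exactly the quantity under estimate --- which is why $\alpha$ must be fixed to the small value~(\ref{200}); the remaining terms are bookkeeping on top of the earlier lemmas.

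Finally, for~(\ref{B305}): since $q=4/\alpha$ with $\alpha\in(0,2)$, the second inequality of Lemma~\ref{f} gives $\|\dot{\mathbf u}\|_{L^{q}}^{2}\le C(\alpha)\||x|^{\alpha/2}\nabla\dot{\mathbf u}\|_{L^{2}}^{2}$, so $\int_{0}^{t}\|\dot{\mathbf u}\|_{L^{q}}^{2}\,ds$ is controlled by~(\ref{B304}). Writing $\mathbf u_{t}=\dot{\mathbf u}-\mathbf v\cdot\nabla\mathbf u$ and using $\|\nabla\mathbf u\|_{L^{q}}\le C\|\nabla\mathbf u\|_{H^{1}}$ (Lemma~\ref{a} in $\mathbb R^{2}$) with Lemmas~\ref{g}--\ref{h} then bounds $\int_{0}^{t}\|\mathbf u_{t}\|_{L^{q}}^{2}\,ds$; and the standard $L^{q}$ elliptic estimate for $L\mathbf u=-h\dot{\mathbf u}-h^{2}\nabla n-\tfrac12(1+n)\nabla h^{2}$ gives
\[
\|\nabla^{2}\mathbf u\|_{L^{q}}\le C\big(\|h\|_{L^{\infty}}\|\dot{\mathbf u}\|_{L^{q}}+\|h^{2}\|_{L^{\infty}}\|\nabla n\|_{L^{q}}+(1+\|n\|_{L^{\infty}})\|\nabla h^{2}\|_{L^{q}}\big),
\]
in which $\|\dot{\mathbf u}\|_{L^{q}}$ is handled as above, $\|\nabla n\|_{L^{q}}\le C\|\nabla n\|_{H^{1}}$ by Lemma~\ref{d}, $\|\nabla h^{2}\|_{L^{q}}$ by Lemma~\ref{c} (valid for every $p\in[2,q]$), and $\|h\|_{L^{\infty}}$, $\|h^{2}\|_{L^{\infty}}$, $\|n\|_{L^{\infty}}$ by Lemmas~\ref{c} and~\ref{d}. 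Squaring and integrating in $t$ then yields~(\ref{B305}).
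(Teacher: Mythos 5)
Your overall strategy matches the paper's: test $(\ref{4})_{1}$ against $|x|^{\alpha}\dot{\mathbf{u}}$, test the $\partial_{t}+\partial_{i}(\mathbf{v}_{i}\,\cdot)$-transformed equation~(\ref{B211}) against $|x|^{\alpha}\dot{\mathbf{u}}$, add, and apply Gronwall for~(\ref{B304}); then use Lemma~\ref{f} and $L^{q}$ elliptic regularity for~(\ref{B305}). But your inventory of the terms produced by the weighted integration by parts of the viscous operator is incomplete, and the missing piece is precisely the one requiring the lemma's key new device. When $\mu\int|x|^{\alpha}\Delta\mathbf{u}\cdot\dot{\mathbf{u}}\,dx$ is integrated by parts, besides the dissipation, the commutator bounded by $C\|\nabla\mathbf{v}\|_{L^{\infty}}\||x|^{\alpha/2}\nabla\mathbf{u}\|_{L^{2}}^{2}$, and the weight term $\mu\alpha\int|x|^{\alpha-2}x_{i}\dot{u}_{j}\partial_{i}u_{j}\,dx$ you handle with Lemma~\ref{f}, there is also a fourth piece, $\tfrac{\mu\alpha}{2}\int|x|^{\alpha-2}(x\cdot\mathbf{v})\,|\nabla\mathbf{u}|^{2}\,dx$, from the weight derivative hitting the factor generated when $\mathbf{v}\cdot\nabla$ is commuted past the dissipation. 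After the step $\||x|^{\alpha/2-1}\mathbf{v}\|_{L^{2}}\leq\tfrac{\alpha}{2}\||x|^{\alpha/2}\nabla\mathbf{v}\|_{L^{2}}$, what remains carries a factor $\|\nabla\mathbf{u}\|_{L^{\infty}}$; this is a sup norm of the unknown, and neither Lemma~\ref{f} nor the earlier lemmas control it. Your general remark that $|x|^{\alpha-2}$ is removable by Lemma~\ref{f} at the cost of $\||x|^{\alpha/2}\nabla\dot{\mathbf{u}}\|_{L^{2}}$ does not apply to this term.

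The device the paper introduces at this point --- and which you would have to add --- is the chain (\ref{195})--(\ref{196}): the Gagliardo--Nirenberg inequality $\|\nabla\mathbf{u}\|_{L^{\infty}}^{2}\leq C\|\mathbf{u}\|_{L^{q}}^{1-2/q}\|\nabla^{2}\mathbf{u}\|_{L^{q}}^{1+2/q}$, followed by the $L^{q}$ elliptic estimate for $L\mathbf{u}$ expressing $\|\nabla^{2}\mathbf{u}\|_{L^{q}}$ through $\|h\dot{\mathbf{u}}\|_{L^{q}}$, $\|h^{2}\nabla n\|_{L^{q}}$, $\|(1+n)h\nabla h\|_{L^{q}}$, then Lemma~\ref{f} again to convert $\|\mathbf{u}\|_{L^{q}}$ and $\|\dot{\mathbf{u}}\|_{L^{q}}$ into $\||x|^{\alpha/2}\nabla\mathbf{u}\|_{L^{2}}$ and $\||x|^{\alpha/2}\nabla\dot{\mathbf{u}}\|_{L^{2}}$, and finally Young's inequality so that $\|\nabla\mathbf{u}\|_{L^{\infty}}^{2}\leq\varepsilon\||x|^{\alpha/2}\nabla\dot{\mathbf{u}}\|_{L^{2}}^{2}+C_{\varepsilon}\bigl(1+\|h\|_{L^{\infty}}^{4(q+2)/(q-2)}\bigr)\||x|^{\alpha/2}\nabla\mathbf{u}\|_{L^{2}}^{2}$ plus lower-order data-controlled terms. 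You invoke exactly these tools for~(\ref{B305}), but the essential point is that they must be deployed already inside the Gronwall argument for~(\ref{B304}); without this the differential inequality does not close. Aside from this omission, your treatment of the second weighted identity, the role of the smallness $\alpha=\mu/(4(2\mu+\lambda))$ in absorbing the weight-derivative dissipation pieces, and the derivation of~(\ref{B305}) from $\mathbf{u}_{t}=\dot{\mathbf{u}}-\mathbf{v}\cdot\nabla\mathbf{u}$ and the $L^{q}$ elliptic estimate all agree with the paper.
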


\begin{proof}
Multiplying the equation $(\ref{4})_{1}$ by $|x|^{\alpha}\dot{\mathbf{u}}$ and integrating over $\Omega$ to get
\begin{equation}\label{B212}
\begin{split}
&\int |x|^{\alpha}h\dot{\mathbf{u}}^2\, dx\\
=&-\frac{1}{2}\int |x|^{\alpha}\dot{\mathbf{u}}\cdot\nabla\big((1+n)h^2\big)\, dx-\frac{1}{2}\int |x|^{\alpha}h^2\dot{\mathbf{u}}\cdot\nabla n\, dx+\mu\int |x|^{\alpha}\Delta\mathbf{u}\cdot\dot{\mathbf{u}}\, dx\\
&+(\mu+\lambda)\int |x|^{\alpha}\nabla(\dive\mathbf{u})\cdot\dot{\mathbf{u}}\, dx\\
:=&\sum_{i=1}^{4}H_{i}.
\end{split}
\end{equation}
In the following, we estimate $H_{i}\ (i=1, \cdots, 4)$. First, we estimate $H_{1}$. By integrating by parts and Lemma \ref{f}, we have
\begin{equation*}
\begin{split}
H_{1}\leq&\alpha\||x|^{\alpha/2-1}\dot{\mathbf{u}}\|_{L^2}\||x|^{\alpha/2}h^2\|_{L^2}(1+\|n\|_{L^{\infty}})+\||x|^{\alpha/2}\dive\dot{\mathbf{u}}\|_{L^2}\||x|^{\alpha/2}h^2\|_{L^2}(1+\|n\|_{L^{\infty}})\\
\leq&\frac{\alpha^4}{4}\||x|^{\alpha/2}\nabla\dot{\mathbf{u}}\|_{L^2}^2+C\||x|^{\alpha/2}h^2\|_{L^2}^2(1+\|n\|_{L^{2}}\|\nabla^2 n\|_{L^2}).
\end{split}
\end{equation*}
A straightforward computation shows that $H_{2}$ satisfies
\begin{equation*}
H_{2}\leq\||x|^{\alpha/2}\sqrt{h}\dot{\mathbf{u}}\|_{L^2}\||x|^{\alpha/2}\nabla n\|_{L^2}\|h^{\frac{3}{2}}\|_{L^{\infty}}\leq\varepsilon\||x|^{\alpha/2}\sqrt{h}\dot{\mathbf{u}}\|_{L^2}^2+C_{\varepsilon}\||x|^{\alpha/2}\nabla n\|_{L^2}^2\|h\|_{L^{\infty}}^3.
\end{equation*}
For the nonlinear term $H_{3}$, using Lemma \ref{f}, we have
\begin{equation*}
\begin{split}
H_{3}\leq&-\frac{\mu}{2}\frac{d}{dt}\int |x|^{\alpha}|\nabla\mathbf{u}|^2\, dx+\mu\frac{\alpha^2}{2}\||x|^{\alpha/2}\nabla\dot{\mathbf{u}}\|_{L^2}\||x|^{\alpha/2}\nabla\mathbf{u}\|_{L^2}+\|\nabla\mathbf{v}\|_{L^{\infty}}\||x|^{\alpha/2}\nabla\mathbf{u}\|_{L^2}^2\\
&+\mu\frac{\alpha^2}{2}\||x|^{\alpha/2}\nabla\mathbf{v}\|_{L^2}\||x|^{\alpha/2}\nabla\mathbf{u}\|_{L^2}\|\nabla\mathbf{u}\|_{L^{\infty}}.
\end{split}
\end{equation*}
Similar as the calculation of $H_{3}$, we have
\begin{equation*}
\begin{split}
H_{4}\leq&-\frac{(\mu+\lambda)}{2}\frac{d}{dt}\int|x|^{\alpha}(\dive\mathbf{u})^2\, dx+(\mu+\lambda)\frac{\alpha^2}{2}\||x|^{\alpha/2}\nabla\dot{\mathbf{u}}\|_{L^2}\||x|^{\alpha/2}\nabla\mathbf{u}\|_{L^2}\\
&+\|\nabla\mathbf{v}\|_{L^{\infty}}\||x|^{\alpha/2}\nabla\mathbf{u}\|_{L^2}^2+(\mu+\lambda)\frac{\alpha^2}{2}\||x|^{\alpha/2}\nabla\mathbf{v}\|_{L^2}\||x|^{\alpha/2}\nabla\mathbf{u}\|_{L^2}\|\nabla\mathbf{u}\|_{L^{\infty}}.
\end{split}
\end{equation*}
In order to further estimate $H_{3}$ and $H_{4}$, we should first estimate $\|\nabla\mathbf{u}\|_{L^{\infty}}^2$. Through the Gagliardo-Nirenberg's inequality, we have
\begin{equation}\label{195}
\|\nabla\mathbf{u}\|_{L^{\infty}}^2\leq C\|\mathbf{u}\|_{L^q}^{1-\frac{2}{q}}\|\nabla^2\mathbf{u}\|_{L^q}^{1+\frac{2}{q}},
\end{equation}
where $q=4/\alpha$. Using the $L^p$ elliptic estimate to equation $(\ref{4})_{1}$ to get
\begin{equation*}
\begin{split}
\|\nabla^2\mathbf{u}\|_{L^q}^2\leq&C(\|h\dot{\mathbf{u}}\|_{L^q}^2+\|h^2\nabla n\|_{L^q}^2+\|h\nabla h\|_{L^q}^2+\|n h\nabla h\|_{L^q}^2)\\
\leq&C(\|h\|_{L^{\infty}}^2\|\dot{\mathbf{u}}\|_{L^q}^2+\|h\|_{L^{\infty}}^4\|\nabla n\|_{L^q}^2+\|h\|_{L^{\infty}}^2\|\nabla h\|_{L^q}^2+\|n h\|_{L^{\infty}}^2\|\nabla h\|_{L^q}^2),
\end{split}
\end{equation*}
which together with Lemma \ref{f} and (\ref{195}) to get
\begin{equation}\label{196}
\begin{split}
\|\nabla\mathbf{u}\|_{L^{\infty}}^2\leq&\||x|^{\alpha/2}\nabla\mathbf{u}\|_{L^2}^{1-\frac{2}{q}}\|h\|_{L^{\infty}}^{1+\frac{2}{q}}\||x|^{\alpha/2}\nabla\dot{\mathbf{u}}\|_{L^2}^{1+\frac{2}{q}}+\||x|^{\alpha/2}\nabla\mathbf{u}\|_{L^2}^{1-\frac{2}{q}}\|h\|_{L^{\infty}}^{2+\frac{4}{q}}\|\nabla n\|_{L^q}^{1+\frac{2}{q}}\\
&+\||x|^{\alpha/2}\nabla\mathbf{u}\|_{L^2}^{1-\frac{2}{q}}\|h\|_{L^{\infty}}^{1+\frac{2}{q}}\|\nabla h\|_{L^q}^{1+\frac{2}{q}}+\||x|^{\alpha/2}\nabla\mathbf{u}\|_{L^2}^{1-\frac{2}{q}}\|n h\|_{L^{\infty}}^{1+\frac{2}{q}}\|\nabla h\|_{L^q}^{1+\frac{2}{q}}\\
\leq&\varepsilon\||x|^{\alpha/2}\nabla\dot{\mathbf{u}}\|_{L^2}^2+C_{\varepsilon}\||x|^{\alpha/2}\nabla\mathbf{u}\|_{L^2}^2(1+\|h\|_{L^{\infty}}^{\frac{4(q+2)}{(q-2)}})\\
&+\|\nabla n\|_{L^q}^2+\|h\|_{L^{\infty}}^2\|\nabla h\|_{L^q}^2+\|n h\|_{L^{\infty}}^2\|\nabla h\|_{L^q}^2.
\end{split}
\end{equation}
Putting the estimate (\ref{196}) and all the estimates of $H_{i}\ (i=1,\cdots,4)$ into (\ref{B212}), we have
\begin{equation}\label{197}
\begin{split}
&\frac{\mu}{2}\frac{d}{dt}\||x|^{\alpha/2}\nabla\mathbf{u}\|_{L^2}^2+\frac{(\mu+\lambda)}{2}\frac{d}{dt}\||x|^{\alpha/2}\dive\mathbf{u}\|_{L^2}^2+\frac{1}{2}\||x|^{\alpha/2}\sqrt{h}\dot{\mathbf{u}}\|_{L^2}^2\\
\leq&2(2\mu+\lambda)\alpha^4\||x|^{\alpha/2}\nabla\dot{\mathbf{u}}\|_{L^2}^2+C\||x|^{\alpha/2}h^2\|_{L^2}^2(1+\|n\|_{L^2}\|\nabla^2 n\|_{L^2})\\
&+C(1+\|\nabla\mathbf{v}\|_{L^{\infty}}+\||x|^{\alpha/2}\nabla\mathbf{v}\|_{L^2}^2+\|h\|_{L^{\infty}}^{\frac{4(q+2)}{q-2}})\||x|^{\alpha/2}\nabla\mathbf{u}\|_{L^2}^2\\
&+C(\|\nabla n\|_{L^q}^2+\|h\|_{L^{\infty}}^2\|\nabla h\|_{L^q}^2+\|nh\|_{L^{\infty}}^2\|\nabla h\|_{L^q}^2+\|h\|_{L^{\infty}}^3\||x|^{\alpha/2}\nabla n\|_{L^2}^2).
\end{split}
\end{equation}
Multiplying the equation (\ref{B211}) by $|x|^{\alpha}\dot{\mathbf{u}}$, integrating over $\Omega$ to get
\begin{equation*}
\begin{split}
&\frac{1}{2}\frac{d}{dt}\int |x|^{\alpha}h\dot{\mathbf{u}}^2\, dx+\mu\int |x|^{\alpha}|\nabla\dot{\mathbf{u}}|^2\, dx+(\mu+\lambda)\int |x|^{\alpha}(\dive\dot{\mathbf{u}})^2\, dx\\
=&-\int |x|^{\alpha}h_{t}^2\dot{\mathbf{u}}\cdot\nabla n\, dx-\int |x|^{\alpha}\dot{\mathbf{u}}\cdot\nabla n_{t}h^2\, dx-\frac{1}{2}\int |x|^{\alpha}\dot{\mathbf{u}}\cdot\nabla h^2 n_{t}\, dx-\frac{1}{2}\int |x|^{\alpha}\dot{\mathbf{u}}\cdot\nabla h_{t}^2(1+n)\, dx\\
&-\int |x|^{\alpha}\dot{\mathbf{u}}\dive(h^2\nabla n\otimes\mathbf{v})\, dx-\frac{1}{2}\int |x|^{\alpha}\dot{\mathbf{u}}\dive\big((1+n)\nabla h^2\otimes\mathbf{v}\big)\, dx-\mu\int |x|^{\alpha}\dot{\mathbf{u}}\Delta(\mathbf{v}\cdot\nabla\mathbf{u})\, dx\\
&-(\mu+\lambda)\int |x|^{\alpha}\dot{\mathbf{u}}\nabla(\dive(\mathbf{v}\cdot\nabla\mathbf{u}))\, dx+\mu\int |x|^{\alpha}\dot{\mathbf{u}}\dive(\Delta\mathbf{u}\otimes\mathbf{v})\, dx\\
&+(\mu+\lambda)\int |x|^{\alpha}\dot{\mathbf{u}}\dive\big(\nabla(\dive\mathbf{u})\otimes\mathbf{v}\big)\, dx-\alpha\mu\int |x|^{\alpha-2}x_{i}\cdot\dot{\mathbf{u}}\nabla\dot{\mathbf{u}}\, dx\\
&-\alpha(\mu+\lambda)\int |x|^{\alpha-2}x_{i}\cdot\dot{\mathbf{u}}\dive\dot{\mathbf{u}}\, dx-\alpha^2\int|x|^{\alpha-2}x_{i}\cdot\mathbf{v}h|\dot{\mathbf{u}}|^2\, dx\\
:=&\sum_{i=1}^{13}I_{i}.
\end{split}
\end{equation*}
In the following, we establish the estimates of $I_{i}\ (i=1,\cdots,13)$. First we consider $I_{1}+I_{2}+I_{5}$. By integrating by parts, equation (\ref{5}), H\"older's inequality, Gagliardo-Nirenberg's inequality and Young's inequality, we have
\begin{equation*}
\begin{split}
&I_{1}+I_{2}+I_{5}\\
\leq&\||x|^{\alpha/2}\sqrt{h}\dot{\mathbf{u}}\|_{L^2}\||x|^{\alpha/2}\nabla n\|_{L^2}\|\dive\mathbf{v}\|_{L^{\infty}}\|h\|_{L^{\infty}}^{\frac{3}{2}}+\||x|^{\alpha/2}\nabla\dot{n}\|_{L^2}\||x|^{\alpha/2}\sqrt{h}\dot{\mathbf{u}}\|_{L^2}\|h\|_{L^{\infty}}^{\frac{3}{2}}\\
\leq&C\||x|^{\alpha/2}\sqrt{h}\dot{\mathbf{u}}\|_{L^2}^2(1+\|\dive\mathbf{v}\|_{L^{\infty}})+C(\|\dive\mathbf{v}\|_{L^{\infty}}\||x|^{\alpha/2}\nabla n\|_{L^2}^2\|h\|_{L^{\infty}}^3+\||x|^{\alpha/2}\nabla\dot{n}\|_{L^2}^2\|h\|_{L^{\infty}}^3).
\end{split}
\end{equation*}
Then we consider $I_{3}+I_{4}+I_{6}$. Through Lemma \ref{f}, equation (\ref{5}), integration by parts, H\"older's inequality and Young's inequality, we have
\begin{equation*}
\begin{split}
&2(I_{3}+I_{4}+I_{6})\\
=&\int |x|^{\alpha}\dot{\mathbf{u}}\cdot\nabla n_{t} h^2\, dx+\int n_{t}h^2\dive(|x|^{\alpha}\dot{\mathbf{u}})\, dx+\int|x|^{\alpha}\dot{\mathbf{u}}\cdot\nabla n h_{t}^2\, dx\\
&+\int(1+n)h_{t}^2\dive(|x|^{\alpha}\dot{\mathbf{u}})\, dx+\int(1+n)\mathbf{v}\cdot\nabla h^2\dive(|x|^{\alpha}\dot{\mathbf{u}})\, dx\\
=&\int|x|^{\alpha}\dot{\mathbf{u}}\cdot\nabla\dot{n}h^2\, dx+\int |x|^{\alpha}h^2\dot{n}\dive\dot{\mathbf{u}}\, dx+\alpha\int |x|^{\alpha-2}x_{i}\cdot\dot{\mathbf{u}}h^2\dot{n}\, dx-2\int |x|^{\alpha}h^2\dot{\mathbf{u}}\cdot\nabla n\dive\mathbf{v}\, dx\\
&-2\int |x|^{\alpha}(1+n)h^2\dive\mathbf{v}\dive\dot{\mathbf{u}}\, dx-2\alpha\int |x|^{\alpha-2}x_{i}\cdot\dot{\mathbf{u}}(1+n)h^2\dive\mathbf{v}\, dx\\
\leq&\||x|^{\alpha/2}\sqrt{h}\dot{\mathbf{u}}\|_{L^2}\||x|^{\alpha/2}\nabla\dot{n}\|_{L^2}\|h\|_{L^{\infty}}^{\frac{3}{2}}+\||x|^{\alpha/2}\nabla\dot{\mathbf{u}}\|_{L^2}\||x|^{\alpha/2}\dot{n}\|_{L^{2}}\|h\|_{L^{\infty}}^2\\
&+\||x|^{\alpha/2}\sqrt{h}\dot{\mathbf{u}}\|_{L^2}\||x|^{\alpha/2-1}\dot{n}\|_{L^2}\|h\|_{L^{\infty}}^{\frac{3}{2}}+\||x|^{\alpha/2}\sqrt{h}\dot{\mathbf{u}}\|_{L^2}\||x|^{\alpha/2}\nabla\mathbf{v}\|_{L^{2}}\|\nabla n\|_{L^{\infty}}\|h\|_{L^{\infty}}^{\frac{3}{2}}\\
&+\||x|^{\alpha/2}\nabla\dot{\mathbf{u}}\|_{L^2}\||x|^{\alpha/2}\dive\mathbf{v}\|_{L^2}\|h^2(1+n)\|_{L^{\infty}}+\||x|^{\alpha/2}\dive\mathbf{v}\|_{L^2}\||x|^{\alpha/2-1}\dot{\mathbf{u}}\|_{L^2}\|h^2(1+n)\|_{L^{\infty}}\\
\leq&\varepsilon\||x|^{\alpha/2}\nabla\dot{\mathbf{u}}\|_{L^2}^2+C_{\varepsilon}(\||x|^{\alpha/2}\dot{n}\|_{L^2}^2\|h\|_{L^{\infty}}^4+\||x|^{\alpha/2}\dive\mathbf{v}\|_{L^2}^2\|h^2(1+n)\|_{L^{\infty}}^2)\\
&+C\big(\||x|^{\alpha/2}\nabla\dot{n}\|_{L^2}^2\|h\|_{L^{\infty}}^3+\|\nabla n\|_{L^{\infty}}^2\|h\|_{L^{\infty}}^3+\||x|^{\alpha/2}\sqrt{h}\dot{\mathbf{u}}\|_{L^2}^2(1+\||x|^{\alpha/2}\nabla\mathbf{v}\|_{L^2}^2)\big).
\end{split}
\end{equation*}
By integrating by parts, it is easy to get
\begin{equation*}
I_{7}+I_{9}\leq\||x|^{\alpha/2}\nabla\dot{\mathbf{u}}\|_{L^2}\||x|^{\alpha/2}\nabla\mathbf{u}\|_{L^{2}}\|\nabla\mathbf{v}\|_{L^{\infty}}\leq\varepsilon\||x|^{\alpha/2}\nabla\dot{\mathbf{u}}\|_{L^2}^2+C_{\varepsilon}\||x|^{\alpha/2}\nabla\mathbf{u}\|_{L^2}^2\|\nabla\mathbf{v}\|_{L^2}^2
\end{equation*}
and
\begin{equation*}
I_{8}+I_{10}\leq\varepsilon\||x|^{\alpha/2}\dive\dot{\mathbf{u}}\|_{L^2}^2+C_{\varepsilon}\||x|^{\alpha/2}\nabla\mathbf{u}\|_{L^2}^2\|\nabla\mathbf{v}\|_{L^2}^2.
\end{equation*}
For the nonlinear terms $I_{11}$ to $I_{13}$, using Lemma \ref{f} to get
\begin{equation*}
\begin{split}
I_{11}\leq&\alpha\mu\||x|^{\alpha/2-1}\dot{\mathbf{u}}\|_{L^2}\||x|^{\alpha/2}\nabla\dot{\mathbf{u}}\|_{L^2}\leq\mu\frac{\alpha^2}{2}\||x|^{\alpha/2}\nabla\dot{\mathbf{u}}\|_{L^2}^2,\\
I_{12}\leq&\alpha(\mu+\lambda)\||x|^{\alpha/2-1}\dot{\mathbf{u}}\|_{L^2}\||x|^{\alpha/2}\dive\dot{\mathbf{u}}\|_{L^2}\leq (\mu+\lambda)\frac{\alpha^2}{2}\||x|^{\alpha/2}\nabla\dot{\mathbf{u}}\|_{L^2}^2,\\
I_{13}\leq&\alpha\||x|^{\alpha/2}\sqrt{h}\dot{\mathbf{u}}\|_{L^2}\||x|^{\alpha/2-1}\mathbf{v}\|_{L^2}\|h\|_{L^{\infty}}^{\frac{3}{2}}\leq\||x|^{\alpha/2}\sqrt{h}\dot{\mathbf{u}}\|_{L^2}^2+\frac{\alpha^3}{4}\||x|^{\alpha/2}\nabla\mathbf{v}\|_{L^2}^2\|h\|_{L^{\infty}}^3.
\end{split}
\end{equation*}
Combining all these estimates of $I_{i}\, (i=1\cdots, 13)$, we have
\begin{equation}\label{198}
\begin{split}
&\frac{d}{dt}\||x|^{\alpha/2}\sqrt{h}\dot{\mathbf{u}}\|_{L^2}^2+\mu\||x|^{\alpha/2}\nabla\dot{\mathbf{u}}\|_{L^2}^2+(\mu+\lambda)\||x|^{\alpha/2}\dive\dot{\mathbf{u}}\|_{L^2}^2\\
\leq&C(1+\|\dive\mathbf{v}\|_{L^{\infty}}+\||x|^{\alpha/2}\nabla\mathbf{v}\|_{L^2}^2)\||x|^{\alpha/2}\sqrt{h}\dot{\mathbf{u}}\|_{L^2}^2+\big(3\varepsilon+(2\mu+\lambda)\frac{\alpha^2}{2}\big)\||x|^{\alpha/2}\nabla\dot{\mathbf{u}}\|_{L^2}^2\\
&+C_{\varepsilon}(\||x|^{\alpha/2}\dot{n}\|_{L^2}^2\|h\|_{L^{\infty}}^4+\||x|^{\alpha/2}\dive\mathbf{v}\|_{L^2}^2\|h^2(1+n)\|_{L^{\infty}}^2+\||x|^{\alpha/2}\nabla\mathbf{u}\|_{L^2}^2\|\nabla\mathbf{v}\|_{L^2}^2)\\
&+C(\|\dive\mathbf{v}\|_{L^{\infty}}\||x|^{\alpha/2}\nabla n\|_{L^2}^2+\||x|^{\alpha/2}\nabla\dot{n}\|_{L^2}^2+\|\nabla n\|_{L^{\infty}}^2+\||x|^{\alpha/2}\dive\mathbf{v}\|_{L^2}^2)\|h\|_{L^{\infty}}^3.
\end{split}
\end{equation}
Adding (\ref{197}) and (\ref{198}), we obtain
\begin{equation}\label{199}
\begin{split}
&\frac{d}{dt}\big(\||x|^{\alpha/2}\nabla\mathbf{u}\|_{L^2}^2+\||x|^{\alpha/2}\dive\mathbf{u}\|_{L^2}^2+\||x|^{\alpha/2}\sqrt{h}\dot{\mathbf{u}}\|_{L^2}^2\big)\\
&+\big(\||x|^{\alpha/2}\sqrt{h}\dot{\mathbf{u}}\|_{L^2}^2+\||x|^{\alpha/2}\nabla\dot{\mathbf{u}}\|_{L^2}^2+\||x|^{\alpha/2}\dive\dot{\mathbf{u}}\|_{L^2}^2\big)\\
\leq&C(1+\|\nabla\mathbf{v}\|_{L^2\cap L^{\infty}}^2+\|h\|_{L^{\infty}}^{4(q+2)/(q-2)}+\||x|^{\alpha/2}\nabla\mathbf{v}\|_{L^2}^2)(\||x|^{\alpha/2}\nabla\mathbf{u}\|_{L^2}^2+\||x|^{\alpha/2}\sqrt{h}\dot{\mathbf{u}}\|_{L^2}^2)\\
&+C(\|\dive\mathbf{v}\|_{L^{\infty}}\||x|^{\alpha/2}\nabla n\|_{L^2}^2+\|\nabla n\|_{L^{\infty}}^2+\||x|^{\alpha/2}\nabla\dot{n}\|_{L^2}^2+\||x|^{\alpha/2}\nabla n\|_{L^{2}}^2)\|h\|_{L^{\infty}}^3\\
&+C\big(\||x|^{\alpha/2}\dot{n}\|_{L^2}^2\|h\|_{L^{\infty}}^4+\||x|^{\alpha/2}\dive\mathbf{v}\|_{L^2}^2(\|h^2(1+n)\|_{L^{\infty}}^2+\|h\|_{L^{\infty}}^3)+\|\nabla n\|_{L^q}^2\\
&+\||x|^{\alpha/2}h^2\|_{L^2}^2(1+\|n\|_{L^2}\|\nabla^2 n\|_{L^2})+\|h\|_{L^{\infty}}^2\|\nabla h\|_{L^q}^2(1+\|n\|_{L^{\infty}}^2)\big).
\end{split}
\end{equation}
Using the Gronwall's inequality to (\ref{199}), together with Lemma \ref{c}, \ref{d}, \ref{i}, and \ref{H} gives (\ref{B304}). According to the definition of the material derivative and elliptic estimate, we have
\begin{equation*}
\int_{0}^{t}\|\mathbf{u}_{t}\|_{L^q}^2\, ds\leq \int_{0}^{t}\|\dot{\mathbf{u}}\|_{L^q}^2\, ds+\int_{0}^{t}\|\mathbf{v}\cdot\nabla\mathbf{u}\|_{L^q}^2\, ds
\end{equation*}
and
\begin{equation*}
\begin{split}
\int_{0}^{t} \|\nabla^2\mathbf{u}\|_{L^q}^2 \, ds\leq &C\int_{0}^{t}(\|h\dot{\mathbf{u}}\|_{L^q}^2+\|h^2\nabla n\|_{L^q}^2+\|(1+n)h\nabla h\|_{L^q}^2)\, ds\\
\leq&C\int_{0}^{t} (\|h\|_{L^{\infty}}^2\|\dot{\mathbf{u}}\|_{L^q}^2+\|h\|_{L^{\infty}}^4\|\nabla n\|_{L^q}^2+\|(1+n)h\|_{L^{\infty}}^2\|\nabla h\|_{L^q}^2)\, ds.
\end{split}
\end{equation*}
The estimate (\ref{B304}), Lemma \ref{f}, \ref{c}, \ref{d}-\ref{h} yield (\ref{B305}).
\end{proof}

\begin{lemma}\label{k}
Assume that $(h, c, n, \mathbf{u})$ is the solution of equation (\ref{1})-(\ref{4}). Then there exists a time $T^*\in (0, 1]$ depending only on $c_{0}, \mu, \lambda, \alpha$ such that
\begin{equation*}
\Phi(\mathbf{u}, T^*)\leq M,
\end{equation*}
provided $\Phi(\mathbf{v}, T^*)\leq M$ with some given $M=M(\mu, \lambda, c_{0})>1$.
\end{lemma}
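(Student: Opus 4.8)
\emph{Proof proposal.} The plan is to show that every quantity appearing in the definition (\ref{103}) of $\Phi$ evaluated at the new iterate $(\mathbf{u},n)$ (this is what the statement's $\Phi(\mathbf{u},T^*)$ abbreviates, since the fixed point map sends $(\mathbf{v},m)\mapsto(\mathbf{u},n)$) has already been controlled by Lemmas \ref{c}--\ref{j}, plus two elementary interpolations, once the input satisfies $\Phi(\mathbf{v},m,t)\le M$. Concretely: $\sup_{[0,t]}\|\nabla\mathbf{u}\|_{H^1}^2$ splits into $\sup\|\nabla\mathbf{u}\|_{L^2}^2$ (Lemma \ref{g}) and $\sup\|\nabla^2\mathbf{u}\|_{L^2}^2$ ((\ref{B301})); $\sup\||x|^{\alpha/2}\nabla\mathbf{u}\|_{L^2}^2$ is (\ref{B304}); $\sup(\|n\|_{H^1}^2+\|n_t\|_{L^2}^2)$ and $\int_0^t(\|\nabla n\|_{H^1}^2+\|\nabla n_t\|_{L^2}^2)\,ds$ are Lemma \ref{d}; $\int_0^t(\|\mathbf{u}_t\|_{L^{4/\alpha}}^2+\|\nabla^2\mathbf{u}\|_{L^{4/\alpha}}^2)\,ds$ is (\ref{B305}); $\int_0^t\|\nabla\mathbf{u}_t\|_{L^2}^2\,ds$ is part of (\ref{B301}); and $\int_0^t(\|\nabla\mathbf{u}\|_{L^{4/\alpha}}^2+\|\nabla\mathbf{u}\|_{H^1}^2)\,ds$ follows from $\|\nabla\mathbf{u}\|_{L^{4/\alpha}}\le C\|\nabla\mathbf{u}\|_{L^2}^{\alpha/2}\|\nabla^2\mathbf{u}\|_{L^2}^{1-\alpha/2}$ together with $\int_0^t(\cdot)\,ds\le t\sup_{[0,t]}(\cdot)$ applied to the two sup-bounds just quoted. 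The decisive structural observation — and the reason (\ref{103}) was set up with exactly these norms — is that each of these right-hand sides carries only a power of $c_0$ as prefactor and \emph{never} a power of $\Phi(\mathbf{v},m,t)$; the estimates with a genuine $\Phi^2$ prefactor, such as (\ref{130}), (\ref{126}), (\ref{116}), are deliberately not part of $\Phi$.

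Next I would homogenise these bounds. Since $0\le t\le T\le 1$ and $\Phi\ge 1$ one has $t^{1/2},t^{1/4}\le t^{1/5}$ and $x\le e^x$, so after enlarging generic constants every estimate above takes the uniform form
\begin{equation*}
(\text{term }i)\ \le\ P_i(c_0)\,\exp\Big\{Q_i(c_0)\exp\big\{R_i\,\Phi(\mathbf{v},m,t)^{k_i}\,t^{1/5}\big\}\Big\},\qquad i=1,\dots,N,
\end{equation*}
where $N$ is the fixed finite number of terms in (\ref{103}), the $P_i,Q_i$ are nondecreasing functions of $c_0$, and $R_i,k_i>0$ depend only on $\mu,\lambda,\alpha$. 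Under the hypothesis $\Phi(\mathbf{v},m,T^*)\le M$ this yields, for each $i$ and every $t\in[0,T^*]$,
\begin{equation*}
(\text{term }i)\ \le\ P_i(c_0)\,\exp\Big\{Q_i(c_0)\exp\big\{R_i\,M^{k_i}\,(T^*)^{1/5}\big\}\Big\}.
\end{equation*}

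Now I would fix the constants in the correct order. First set
\begin{equation*}
M\ :=\ 1+\sum_{i=1}^{N}2\,P_i(c_0)\,e^{Q_i(c_0)}\ =:\ M(\mu,\lambda,c_0)\ >\ 1,
\end{equation*}
which depends only on $\mu,\lambda,c_0$ (recall $\alpha$ is determined by $\mu,\lambda$). With this $M$ now frozen, $\exp\{Q_i(c_0)\exp\{R_iM^{k_i}(T^*)^{1/5}\}\}\to e^{Q_i(c_0)}$ as $T^*\downarrow 0$, so there is $T^*\in(0,1]$, depending only on $c_0,\mu,\lambda,\alpha$, with $\exp\{Q_i(c_0)\exp\{R_iM^{k_i}(T^*)^{1/5}\}\}\le 2e^{Q_i(c_0)}$ for all $i$. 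Summing the resulting $N$ bounds and adding the constant $1$ from (\ref{103}) gives $\Phi(\mathbf{u},n,T^*)\le 1+\sum_{i=1}^N 2P_i(c_0)e^{Q_i(c_0)}=M$.

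The one genuine subtlety — the point to be careful about rather than a computational obstacle — is precisely this order of quantifiers: the right-hand sides cannot be made small by shrinking $T^*$, because the leading factors $P_i(c_0)e^{Q_i(c_0)}$ survive the limit $T^*\to 0$; they must be absorbed into $M$ \emph{first}, and only afterwards is $T^*$ chosen small enough that the (now $M$-dependent) double exponentials stay within a factor $2$ of their $T^*=0$ values. The remaining work is the bookkeeping of matching each norm in (\ref{103}) to the lemma bounding it, which was arranged in Lemmas \ref{c}--\ref{j}.
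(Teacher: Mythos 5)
Your proposal is correct and takes essentially the same route as the paper. The paper first collapses Lemmas \ref{d}--\ref{h} and \ref{j} into the single estimate $\Phi(\mathbf{u},n,t)\leq Cc_0^2\exp\{Cc_0^2\exp\{C\Phi^3(\mathbf{v},m,t)t^{1/5}\}\}$, then chooses $M=Cc_0^2\exp\{Cc_0^2\exp\{C\}\}$ and $T^*=\min\{M^{-15},1\}$ so that $\Phi^3(T^*)^{1/5}\leq 1$ — exactly your ``fix $M$ first at its $T^*\to 0$ value, then shrink $T^*$'' order of quantifiers, just written more tersely with a unified exponent $k=3$ and an explicit formula for $T^*$ in place of your ``small enough.'' Your term-by-term bookkeeping, the observation that the norms appearing in (\ref{103}) were chosen precisely to avoid $\Phi$-prefactors, and the interpolation step for $\int_0^t\|\nabla\mathbf{u}\|_{L^{4/\alpha}}^2\,ds$ are all implicit in the paper's one-line summary ``Through Lemmas \ref{d}--\ref{h} and Lemma \ref{j}.''
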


\begin{proof}
Through Lemmas \ref{d}-\ref{h} and Lemma \ref{j}, we have
\begin{equation*}
\Phi(\mathbf{u}, n, t)\leq Cc_{0}^2\exp\{Cc_{0}^2\exp\{C\Phi^3(\mathbf{v}, m, t)t^{1/5}\}\},
\end{equation*}
which yields that
\begin{equation*}
\Phi(\mathbf{u}, n, T^*)\leq M
\end{equation*}
by choosing $M=Cc_{0}^2\exp\{Cc_{0}^2\exp\{C\}\}$ and
\begin{equation*}
T^*=\min\{M^{-15}, 1\}.
\end{equation*}
\end{proof}

\subsection{The local existence of strong solution}

In the following, based on the uniform estimates obtained in the previous subsection of solutions to the linearized problem (\ref{1})-(\ref{4}), we use the Schauder fixed point theory to establish the unique local strong solution of the problem (\ref{A})-(\ref{A2}).

\begin{lemma}\label{x}
Let $\Omega\subset\mathbb{R}^2$ be a bounded domain and the far field state $\widetilde{h}=0$. Suppose that the initial data $(n_{0}\geq 0, c_{0}\geq 0, h_{0}\geq\delta, \mathbf{u}_{0})$ satisfy (\ref{A3})-(\ref{304}), where $\delta>0$. Then for $T^*$ as in Lemma \ref{k}, there exists a unique strong solution $(n, c, h, \mathbf{u})$ to the problem (\ref{A}) on $\Omega\times[0, T^*]$ with 
\begin{equation}\label{z600}
(n, c, h, \mathbf{u})|_{t=0}=(n_{0}, c_{0}, h_{0}, \mathbf{u}_{0}),\ (n, c, \mathbf{u})|_{\partial\Omega}=(0,0, \mathbf{0}),
\end{equation}
satisfying
\begin{equation}\label{201}
\begin{cases}
n\in C(0, T^*; H_{0}^2),\ n_{t}\in L^{\infty}(0, T^*; L^2)\cap L^{2}(0, T^*; H^1),\\
c\in C(0, T^*; W^{2, 4}),\ c_{t}\in L^{\infty}(0, T^*; H^1)\cap L^2(0, T^*; H^2),\\
(h, h^2)\in C(0, T^*; W^{1, p}),\ (h, h^2)_{t}\in L^{\infty}(0, T^*; L^p),\\
\mathbf{u}\in C(0, T^*; L^q\cap D^1\cap D^2)\cap L^2(0, T^*; D^{2, q}),\\
\mathbf{u}_{t}\in L^2(0, T^*; L^q\cap D^1),\ \sqrt{h}\mathbf{u}_{t}\in L^{\infty}(0, T^*; L^2),
\end{cases}
\end{equation}
for $q=4/\alpha$ and any $p\in[2, q]$. Moreover, the following estimate holds:
\begin{equation}\label{202}
\begin{split}
&\sup_{0\leq t\leq T^*}(\|n\|_{H^2}^2+\|n_{t}\|_{L^2}^2+\|c\|_{H^2}^2+\|\nabla^2 c\|_{L^4}^2+\|c_{t}\|_{H^1}^2+\|\sqrt{h}\mathbf{u}\|_{L^2}^2+\|\nabla^2\mathbf{u}\|_{L^2}^2)\\
&+\sup_{0\leq t\leq T^*}(\|(h, h^2)\|_{W^{1, p}}^2+\|(h, h^2)_{t}\|_{L^p}^2)+\int_{0}^{T^*}(\|\nabla^2 n\|_{L^4}^2+\|\nabla n_{t}\|_{L^2}^2+\|\nabla^2 c_{t}\|_{L^2}^2)\, dt\\
&+\sup_{0\leq t\leq T^*}\int_{\Omega}|x|^{\alpha}(n^2+c^2+|\nabla n|^2+|\nabla c|^2+|\dot{n}|^2+|\dot{c}|^2)\, dx\\
&+\sup_{0\leq t\leq T^*}\int_{\Omega}(1+|x|^{\alpha})(|\nabla\mathbf{u}|^2+h^4+h|\dot{\mathbf{u}}|^2+h|\mathbf{u}|^2)\, dx+\int_{0}^{T^*}\int_{\Omega}(1+|x|^{\alpha})|\nabla\dot{\mathbf{u}}|^2\, dx\, dt\\
&+\int_{0}^{T^*}\|\nabla^2\mathbf{u}\|_{L^q}^2\, dt+\int_{0}^{T^*}\int_{\Omega}|x|^{\alpha}(|\nabla\dot{n}|^2+|\nabla\dot{c}|^2)\, dxdt\\
\leq&C(\mu, \lambda, c_{0}).
\end{split}
\end{equation}
\end{lemma}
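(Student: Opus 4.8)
The plan is to apply the Schauder fixed point theorem (recalled in Section~2) to the solution operator of the linearized cascade (\ref{1})--(\ref{4}) and then to read off (\ref{201})--(\ref{202}) from the a priori estimates of the preceding subsection. Fix the bounded domain $\Omega$ and $\delta>0$, and let $M=M(\mu,\lambda,c_0)>1$ and $T^*\in(0,1]$ be as in Lemma~\ref{k}. I would work in the Banach space $\mathcal B:=C([0,T^*];H^2(\Omega))\times C([0,T^*];H^2(\Omega))$ and take $\mathfrak R\subset\mathcal B$ to be the set of pairs $(\mathbf v,m)$ that satisfy the regularity (\ref{100})--(\ref{B1011}) with all the norms appearing there bounded by a constant $K=K(\delta,\Omega,c_0)$ still to be fixed, together with $\Phi(\mathbf v,m,T^*)\le M$. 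No initial condition need be imposed on $\mathfrak R$: it is automatically nonempty (it contains $(\mathbf 0,0)$), and the correct initial values come out of the construction via Lemma~\ref{G2}. All the defining constraints are convex, so $\mathfrak R$ is convex; the $K$-bound forces $\mathbf v,m$ bounded in $L^\infty(0,T^*;H^3)$ and $\mathbf v_t,m_t$ bounded in $L^2(0,T^*;H^1)$, so by the Aubin--Lions Lemma~\ref{G1}(ii) with $H^3\Subset H^2\subset H^1$ the set $\mathfrak R$ is a compact convex subset of $\mathcal B$.

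For $(\mathbf v,m)\in\mathfrak R$, Lemma~\ref{G2} yields a unique solution $(h,c,n,\mathbf u)$ of (\ref{1})--(\ref{4}) with data $(n_0,c_0,h_0,\mathbf u_0)$; by the representation formula (\ref{104}) and the bound on $\dive\mathbf v$ one has $h\ge\delta'>0$ on $\Omega\times[0,T^*]$, so no vacuum is present here. Define $\mathcal T(\mathbf v,m):=(\mathbf u,n)$. Then $\mathcal T(\mathfrak R)\subset\mathfrak R$: the bound $\Phi(\mathbf u,n,T^*)\le M$ is exactly Lemma~\ref{k}, and the $K$-bound is reproduced by standard $H^3$/$H^4$ energy estimates for the linear parabolic, transport and elliptic problems (\ref{1})--(\ref{4}), which are routine precisely because $h$ is bounded away from zero, even though their constants depend on $\delta,\Omega$; one first fixes $K$ large (depending only on $\delta,\Omega,c_0$) and then shrinks $T^*$ if necessary. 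Continuity of $\mathcal T$ on $\mathfrak R$ follows in the usual way: if $(\mathbf v_k,m_k)\to(\mathbf v,m)$ in $\mathcal B$, the corresponding solutions obey the uniform bounds of Lemmas~\ref{c}--\ref{j} and the $\delta$-dependent higher bounds, hence converge along a subsequence weakly-$*$ in the natural spaces and strongly in suitable spaces by Aubin--Lions; since $\mathbf v_k\to\mathbf v$, $m_k\to m$ in $\mathcal B\hookrightarrow C([0,T^*];C(\overline\Omega))$, one may pass to the limit in all the products and in $\exp\{-\int_0^t\dive\mathbf v_k\}$, so the limit solves (\ref{1})--(\ref{4}) with data $(\mathbf v,m)$; uniqueness in Lemma~\ref{G2} identifies it, and the usual subsequence argument gives $\mathcal T(\mathbf v_k,m_k)\to\mathcal T(\mathbf v,m)$ in $\mathcal B$. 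I expect this self-mapping/compactness balance to be the main point requiring care: $\mathcal B$ must be coarse enough for the Aubin--Lions compactness, yet $\mathfrak R$ must be regular enough that Lemma~\ref{G2} applies to each of its elements and that $\mathcal T$ lands back in $\mathfrak R$, which is exactly where the vacuum-free (but $\delta$-dependent) higher-order linear estimates enter.

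By the Schauder fixed point theorem there is $(\mathbf v,m)\in\mathfrak R$ with $\mathcal T(\mathbf v,m)=(\mathbf v,m)$, i.e. $\mathbf v=\mathbf u$ and $m=n$; substituting $\mathbf v=\mathbf u$ and $m=n$ into (\ref{1})--(\ref{4}) shows that $(n,c,h,\mathbf u)$ solves (\ref{A}) on $\Omega\times[0,T^*]$ together with (\ref{z600}). Since the fixed point satisfies $\Phi(\mathbf u,n,T^*)\le M=M(\mu,\lambda,c_0)$ with $T^*\le1$, Lemmas~\ref{c}--\ref{j} applied with $(\mathbf v,m)=(\mathbf u,n)$ turn every right-hand side there into a constant $C(\mu,\lambda,c_0)$; collecting these bounds gives (\ref{202}), and the regularity class (\ref{201}) then follows from them together with the time continuity in (\ref{B101}) and the definition of the material derivative.

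It remains to prove uniqueness. Let $(h_i,c_i,n_i,\mathbf u_i)$, $i=1,2$, be two solutions on $\Omega\times[0,T^*]$ with the same data and the regularity (\ref{201}); set $\bar h=h_1-h_2$, $\bar c=c_1-c_2$, $\bar n=n_1-n_2$, $\bar{\mathbf u}=\mathbf u_1-\mathbf u_2$, subtract the corresponding equations, and test the differences against $\bar h$, $\bar c$, $\bar n$ and $\bar{\mathbf u}$ respectively, processing the equations in cascade order so that each estimate feeds the next. Using the bounds (\ref{202}) --- in particular $\nabla\mathbf u_1\in L^2(0,T^*;L^\infty)$, which holds because $\nabla^2\mathbf u_1\in L^2(0,T^*;L^{4/\alpha})$, together with $h_1\ge\delta'>0$ --- one arrives at a differential inequality $\frac{d}{dt}\mathcal E(t)\le A(t)\,\mathcal E(t)$ with $\mathcal E(t):=\|\bar h\|_{L^2}^2+\|\bar c\|_{H^1}^2+\|\bar n\|_{L^2}^2+\|\sqrt{h_1}\,\bar{\mathbf u}\|_{L^2}^2$ and $\int_0^{T^*}A(t)\,dt<\infty$. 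Since $\mathcal E(0)=0$, Gronwall's inequality forces $\mathcal E\equiv0$ on $[0,T^*]$, so the two solutions coincide. This completes the proof.
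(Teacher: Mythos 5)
The paper takes a deliberately coarse Banach space, $\mathcal B=L^2(0,T^*;H_0^1)\times L^2(0,T^*;H_0^1)$, and defines $\mathfrak R$ using \emph{only} the functional $\Phi$ from (\ref{103}) together with the regularity needed for the maps to make sense. Because $\Phi\le M$ already controls $L^2(0,T^*;H^2)$ and $L^2(0,T^*;L^2)$ for $(m,\mathbf v)$ and $(m_t,\mathbf v_t)$, Aubin--Lions gives compactness of $\mathfrak R$ in $\mathcal B$ without any further quantitative hypothesis, and the self-mapping property of $\mathcal T$ reduces exactly to Lemma~\ref{k}. Crucially, nothing in $\mathfrak R$ or in the self-mapping argument depends on $\delta$ or $\Omega$, so the existence time is the $T^*$ of Lemma~\ref{k} verbatim, uniformly in $\delta$ and $\Omega$.

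Your proposal departs from this by working in the much stronger space $\mathcal B=C([0,T^*];H^2)^2$. To make $\mathfrak R$ compact in this $\mathcal B$, you are forced to add to $\mathfrak R$ an $L^\infty(0,T^*;H^3)$ bound of size $K$, and $K$ must depend on $\delta$ and $\Omega$ because only the $\delta$-dependent higher-order linear estimates furnish such bounds. This is where the gap opens: the $K$-bound is \emph{not} controlled by $\Phi$, and as you yourself note, you can only restore the invariance $\mathcal T(\mathfrak R)\subset\mathfrak R$ by ``shrinking $T^*$ if necessary.'' Concretely, the transport equation (\ref{1}) already gives $\|\nabla^3 h(t)\|_{L^2}\lesssim\|\nabla^3 h_0\|_{L^2}\exp\{C\int_0^t\|\nabla\mathbf v\|_{H^3}\}$, so on the fixed $T^*$ the output $H^3$-norm can exceed $K$ once $K$ is large; there is no choice of $K$ that closes the self-mapping without cutting the time. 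But the statement you are proving asserts existence on $[0,T^*]$ with $T^*$ \emph{as in Lemma~\ref{k}}, i.e.\ depending only on $c_0,\mu,\lambda,\alpha$; more importantly, the whole point of Lemma~\ref{x} in the paper's architecture is to produce a time of existence that is \emph{uniform in $\delta$ and $\Omega$} so that the limits $\delta\to0$, $\Omega\to\mathbb R^2$ in Proposition~\ref{l} are nondegenerate. A $\delta$- and $\Omega$-dependent shrinking of $T^*$ collapses precisely when you take those limits, so the scheme as written does not establish the lemma. To repair it along your lines you would need either (a) to drop the $K$-bound from $\mathfrak R$ and retreat to an $L^2$-in-time topology for $\mathcal B$ (which is what the paper does), or (b) to supply a separate continuation argument --- using the $\delta,\Omega$-uniform bounds $\Phi\le M$ from Lemma~\ref{k} --- extending the solution from the shrunken interval back to $[0,T^*]$; neither step appears in the proposal.

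Two further remarks. First, your derivation of (\ref{202}) from Lemmas~\ref{c}--\ref{j} with $(\mathbf v,m)=(\mathbf u,n)$ at the fixed point, and the subsequent reading-off of (\ref{201}), is correct and matches the paper. Second, your uniqueness argument via the cascade Gronwall estimate in $\mathcal E(t)=\|\bar h\|_{L^2}^2+\|\bar c\|_{H^1}^2+\|\bar n\|_{L^2}^2+\|\sqrt{h_1}\bar{\mathbf u}\|_{L^2}^2$, with the observation that $\nabla^2\mathbf u_1\in L^2(0,T^*;L^{4/\alpha})$ furnishes $\nabla\mathbf u_1\in L^2(0,T^*;L^\infty)$, is sound and more explicit than the paper, which only gestures at uniqueness.
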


\begin{proof}
Setting
\begin{equation*}
\mathcal{B}:=L^2(0, T^*; H_{0}^1)\times L^2(0, T^*; H_{0}^1)
\end{equation*}
and
\begin{equation*}
\begin{split}
\mathfrak{R}:=\big\{&(m, \mathbf{v})| (m, \mathbf{v})\in L^{\infty}(0, T^*; H_{0}^1)\cap L^2(0, T^*; D^2)\times L^{\infty}(0, T^*; H_{0}^1\cap D^2)\cap L^2(0, T^*; D^{2, q}),\\
&(m_{t}, \mathbf{v}_{t})\in L^{\infty}(0, T^*; L^2)\cap L^2(0, T^*; H_{0}^1)\times L^2(0, T^*; H_{0}^1), \Phi(m, \mathbf{v}, T^*)< M\big\}.
\end{split}
\end{equation*}
By Lemma \ref{G1}, it is easy to get that $\mathfrak{R}$ is a convex and compact subset of the Banach space $\mathcal{B}$. For any $(m, \mathbf{v})\in\mathfrak{R}$, through Lemma \ref{G2}, there exists a unique solution $h=h(\mathbf{v})$ of the linearized equation (\ref{1}) on $\Omega\times[0, T^*]$. Besides, the linearized equation (\ref{2}) admits a unique solution $c=c(m, \mathbf{v})$ on $\Omega\times [0, T^*]$ as well. Moreover, a unique $n=\mathcal{T}_{1}(\mathbf{v}, c(m, \mathbf{v}))$ solves the linearized equation (\ref{3}) on $\Omega\times [0, T^*]$ and thus the linearized equation (\ref{4}) has a unique solution $\mathbf{u}=\mathcal{T}_{2}\big(\mathbf{v}, h(\mathbf{v}), \mathcal{T}_{1}(\mathbf{v}, c(m, \mathbf{v}))\big)$ on $\Omega\times [0, T^*]$. Therefore, we write $(n, \mathbf{u}):=\mathcal{T}(m, \mathbf{v})=(\mathcal{T}_{1}, \mathcal{T}_{2})$ with $\mathcal{T}$ mapping from $\mathfrak{R}$ to $\mathfrak{R}$. Next, we will show $\mathcal{T}$ is a continuous operator in $\mathcal{B}$. First, using the Lemma \ref{c}, we have
\begin{equation}\label{C5}
\sup_{0\leq t\leq T^*}(\|h(\mathbf{v})\|_{W^{1, q}}+\|h(\mathbf{v})_{t}\|_{L^q})\leq C.
\end{equation}
Let $\{(m_{k}, \mathbf{v}_{k})\}_{k=1}^{\infty}\in\mathfrak{R}$ and $(m_{k}, \mathbf{v}_{k})\to (m, \mathbf{v})$ in $\mathcal{B}$ as $k\to\infty$. Thus
\begin{equation*}
m_{k}\to m\ in\ L^2(0, T^*; H_{0}^1)\ as\ k\to+\infty
\end{equation*}
and
\begin{equation*}
\mathbf{v}_{k}\to\mathbf{v}\ in\ L^2(0, T^*; H_{0}^1)\ as\ k\to+\infty,
\end{equation*}
which together with $m_{k},\ \mathbf{v}_{k}\in\mathfrak{R}$ implies
\begin{equation}\label{z500}
m_{k}\rightharpoonup m\quad\omega^*\ in\ L^{\infty}(0, T^*; H_{0}^1)\cap L^2(0, T^*; D^{2})\ as\ k\to+\infty
\end{equation}
and
\begin{equation}\label{6}
\mathbf{v}_{k}\rightharpoonup\mathbf{v}\quad\omega^*\ in\ L^{\infty}(0, T^*; H_{0}^1\cap D^2)\cap L^2(0, T^*; D^{2, q})\ as\ k\to+\infty.
\end{equation}
Thus, it follows from Lemma \ref{G1} and (\ref{C5}) that up to a subsequence,
\begin{equation}\label{z400}
h(\mathbf{v}_{k_{j}})\to h\ in\ C(\overline{\Omega}\times [0, T^*])\ as\ k_{j}\to+\infty.
\end{equation}
Taking the limits in (\ref{1}) where $h, \mathbf{v}$ are replaced by $h(\mathbf{v}_{k_{j}})$ and $\mathbf{v}_{k_{j}}$ respectively, we obtain that $h$ is a weak solution to (\ref{1}). Then by the uniqueness of the weak solutions due to (\ref{6}) and (\ref{z400}), we have $h=h(\mathbf{v})$, which again together with (\ref{C5}) and (\ref{z400}) implies
\begin{equation*}
h(\mathbf{v}_{k})\to h\ in\ C(\overline{\Omega}\times [0, T^*])\ as\ k\to+\infty
\end{equation*}
and
\begin{equation*}
h(\mathbf{v}_{k})\rightharpoonup h\quad\omega^*\ in\ L^{\infty}(0, T^*; W^{1, q})\ as\ k\to+\infty.
\end{equation*}
By the virtue of Lemmas \ref{G1} and \ref{b}, we get that up to a subsequence, 
\begin{equation}\label{9}
c(m_{k_{j}}, \mathbf{v}_{k_{j}})\to c\ in\ C(0, T^*; H_{0}^1)\cap L^2(0, T^*; H_{0}^1)\ as\ k_{j}\to+\infty.
\end{equation}
Taking the limits in equation (\ref{2}), where $m, c, \mathbf{v}$ are replaced by $m_{k_{j}}, c(m_{k_{j}}, \mathbf{v}_{k_{j}}), \mathbf{v}_{k_{j}}$, we obtain that $c$ is a weak solution of (\ref{2}). By the uniqueness of the weak solution, due to (\ref{z500}), (\ref{6}), and (\ref{9}), we have $c=c(m, \mathbf{v})$, which again together with (\ref{9}) and Lemma \ref{b} obtains
\begin{equation}\label{11}
c(m_{k}, \mathbf{v}_{k})\to c\ in\ C(0, T^*; H_{0}^1)\cap L^2(0, T^*; H_{0}^1)\ as\ k\to+\infty
\end{equation}
and
\begin{equation}\label{12}
c(m_{k}, \mathbf{v}_{k})\rightharpoonup c\quad\omega^*\ in\ L^{\infty}(0, T^*; H_{0}^2)\cap L^2(0, T^*; W^{2, 4})\ as\ k\to+\infty.
\end{equation}
After that, let $n_{k}=\mathcal{T}_{1}(\mathbf{v}_{k}, c(m_{k}, \mathbf{v}_{k}))$. By Lemmas \ref{G1} and \ref{d}, we have that up to a subsequence,
\begin{equation}\label{14}
n_{k_{j}}\to n\ in\ C(0, T^*; H_{0}^1)\cap L^2(0, T^*; H_{0}^1)\ as\ k_{j}\to+\infty.
\end{equation}
Taking the limits in equation (\ref{3}), where $n, c, \mathbf{v}$ are replaced by $n_{k_{j}}, c_{k_{j}}, \mathbf{v}_{k_{j}}$, we have $n$ is a weak solution to (\ref{3}). By the uniqueness of the weak solution, due to (\ref{6}) and (\ref{12}), we have $n=\mathcal{T}_{1}(\mathbf{v}, c)$, which together with (\ref{14}) and Lemma \ref{d} again to get
\begin{equation*}
\mathcal{T}_{1}(\mathbf{v}_{k}, c(m_{k}, \mathbf{v}_{k}))\to n\ in\ C(0, T^*; H_{0}^1)\cap L^2(0, T^*; H_{0}^1)\ as\ k\to+\infty
\end{equation*}
and
\begin{equation*}
\mathcal{T}_{1}(\mathbf{v}_{k}, c(m_{k}, \mathbf{v}_{k}))\rightharpoonup n\ in\ L^{\infty}(0, T^*; H_{0}^2)\cap L^2(0, T^*; W^{2, 4})\ as\ k\to+\infty.
\end{equation*}
At last, let $\mathbf{u}_{k}=\mathcal{T}_{2}\big(\mathbf{v}_{k}, h(\mathbf{v}_{k}), \mathcal{T}_{1}(\mathbf{v}_{k}, c(m_{k}, \mathbf{v}_{k}))\big)$. By Lemma \ref{G1}, \ref{g}, \ref{h}, and \ref{j}, up to a subsequence, we have
\begin{equation*}
\mathbf{u}_{k_{j}}\to\mathbf{u}\ in\ L^{2}(0, T^*; H_{0}^1)\ as\ k_{j}\to+\infty
\end{equation*}
and
\begin{equation*}
\mathbf{u}_{k_{j}}\rightharpoonup\mathbf{u}\quad\omega^*\ in\ L^{\infty}(0, T^*; H_{0}^1\cap D^2)\cap L^2(0, T^*; D^{2, q})\ as\ k\to+\infty.
\end{equation*}
Taking the limits in equation (\ref{4}), where $h,\mathbf{u}, n, \mathbf{v}$ are replaced by $h(\mathbf{v}_{k_{j}}), \mathbf{u}_{k_{j}}, n_{k_{j}}$, and $\mathbf{v}_{k_{j}}$, we obtain that $\mathbf{u}=\mathcal{T}_{2}(\mathbf{v}, h(\mathbf{v}), n)=\mathcal{T}_{2}\big(\mathbf{v}, h(\mathbf{v}), \mathcal{T}_{1}(\mathbf{v}, c(\mathbf{v}, m))\big)$ is a weak solution to (\ref{4}) with
\begin{equation*}
\mathbf{u}_{k}\to\mathbf{u}\ in\ L^2(0, T^*; H_{0}^1)\ as\ k\to+\infty.
\end{equation*}
Therefore $(n, \mathbf{u})=\mathcal{T}(m, \mathbf{v})$ is a continuous operator in $\mathcal{B}=\big(L^2(0, T^*; H_{0}^1)\big)^2$. By Schauder fixed point theorem, there exists $(n, \mathbf{u})\in\mathfrak{R}$ such that
\begin{equation*}
\big(\mathcal{T}_{2}(\mathbf{u}, h(\mathbf{u}), \mathcal{T}_{1}(\mathbf{u}, c(\mathbf{u}, n))), \mathcal{T}_{1}(\mathbf{u}, c(\mathbf{u}, n))\big)=(\mathbf{u}, n),
\end{equation*}
which together with Lemmas \ref{c}-\ref{j} implies (\ref{202}). Then the uniqueness of the solution $(n, \mathbf{u})$ and (\ref{201}) follows from (\ref{202}), Sobolev embedding theory and Lemma \ref{f}.
\end{proof}

Now, we can prove the local existence of unique strong solution to the Cauchy problem (\ref{A})-(\ref{A2}) on $\mathbb{R}^2\times[0, T^*]$.

\begin{proposition}\label{l}
For $\widetilde{h}=0$ and $\Omega=\mathbb{R}^2$, assume that $(n_{0}\geq 0, c_{0}\geq 0, h_{0}\geq0, \mathbf{u}_{0})$ satisfies (\ref{A3})-(\ref{304}). Then for $T^*$ as in Lemma \ref{k}, there exists a unique strong solution $(n, c, h, \mathbf{u})$ to the problem (\ref{A})-(\ref{A2}) on $\mathbb{R}^2\times[0, T^*]$ satisfying (\ref{201}) and (\ref{202}).
\end{proposition}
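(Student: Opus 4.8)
The plan is to obtain the solution on $\mathbb{R}^2$ as a limit of the bounded-domain solutions of Lemma~\ref{x}, sending the domain size to infinity and the artificial lower bound of the height to zero, and then to establish uniqueness directly on $\mathbb{R}^2$.

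\emph{Step 1: Approximation of the domain and of the initial data.} For $R>1$ set $\Omega_R:=B_R(0)\subset\mathbb{R}^2$, and fix an exhaustion $\mathbb{R}^2=\bigcup_k B_k$. Given data $(n_0,c_0,h_0,\mathbf{u}_0)$ satisfying (\ref{A3})--(\ref{304}), I would construct for each $R$ and each $\delta\in(0,1)$ approximate data $(n_0^{R,\delta},c_0^{R,\delta},h_0^{R,\delta},\mathbf{u}_0^{R,\delta})$ on $\Omega_R$ by mollification and cut-off, with $n_0^{R,\delta}\geq 0$, $c_0^{R,\delta}\geq 0$, $h_0^{R,\delta}\geq\delta$, and such that $c_0^{R,\delta}\to c_0$, etc. The delicate point is the compatibility condition (\ref{A3}): having fixed $h_0^{R,\delta}$ and $n_0^{R,\delta}$, I would let $\mathbf{u}_0^{R,\delta}\in H_0^1(\Omega_R)\cap H^3(\Omega_R)$ solve the elliptic equation $L\mathbf{u}_0^{R,\delta}=-h_0^{R,\delta}g^{R,\delta}+(h_0^{R,\delta})^2\nabla n_0^{R,\delta}+\tfrac12(1+n_0^{R,\delta})\nabla(h_0^{R,\delta})^2$ for a suitable $g^{R,\delta}$ approximating $g$; elliptic regularity together with (\ref{204})--(\ref{304}) yields $\sqrt{h_0^{R,\delta}}\,\mathbf{u}_0^{R,\delta}\in L^2$ and all the required weighted norms, \emph{uniformly in $R$ and $\delta$}. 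Consequently the quantity $c_0$ of (\ref{102}), evaluated on the approximate data, is bounded by a constant depending only on the original data, so that the constants $M$ and $T^*$ of Lemma~\ref{k} are independent of $R$ and $\delta$.

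\emph{Step 2: Solving on $\Omega_R$ and passing to the limit.} Lemma~\ref{x} produces, for every $R$ and $\delta$, a unique strong solution $(n^{R,\delta},c^{R,\delta},h^{R,\delta},\mathbf{u}^{R,\delta})$ on $\Omega_R\times[0,T^*]$ enjoying the regularity (\ref{201}) and the estimate (\ref{202}) with a constant $C(\mu,\lambda,c_0)$ \emph{independent of $R$ and $\delta$} — precisely the uniformity that subsection~3.1 was designed to provide. For each fixed $k$ and all $R>k$, the bound (\ref{202}) restricted to $B_k\times[0,T^*]$ together with the Aubin--Lions lemma (Lemma~\ref{G1}) gives compactness, and a diagonal extraction over $k$ yields a subsequence converging as $R\to\infty$, $\delta\to0$ to a limit $(n,c,h,\mathbf{u})$: strongly in $C([0,T^*];L^2_{loc})\cap L^2(0,T^*;H^1_{loc})$ for $n,c,\mathbf{u}$, locally uniformly for $h$, and weakly-$*$ in the spaces dictated by (\ref{202}). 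These convergences suffice to pass to the limit in the nonlinear terms of (\ref{A}), so $(n,c,h,\mathbf{u})$ is a strong solution of the Cauchy problem on $\mathbb{R}^2\times[0,T^*]$; weak lower semicontinuity of the (weighted) norms transfers (\ref{202}), and hence (\ref{201}), to the limit. The weighted $L^2$ bounds on $n$, $c$, $h^2$, $\nabla\mathbf{u}$ force the far-field behavior (\ref{A100B}) with $\widetilde h=0$, and the strong $L^2_{loc}$ convergence at $t=0$ together with the convergence of the approximate data gives the initial condition (\ref{A2}).

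\emph{Step 3: Uniqueness.} Since $\widetilde h=0$, Lemma~\ref{Q1} is unavailable, so uniqueness is obtained by a weighted energy estimate. Given two solutions with regularity (\ref{201}) and the same data, I would write the equations for the differences $(\bar n,\bar c,\bar h,\bar{\mathbf{u}})$, test against the natural multipliers, and control $\bar{\mathbf{u}}$ via the Caffarelli--Kohn--Nirenberg inequality of Lemma~\ref{f}, using $\int|x|^{\alpha-2}|\bar{\mathbf{u}}|^2\,dx\leq\tfrac{\alpha^2}{4}\int|x|^{\alpha}|\nabla\bar{\mathbf{u}}|^2\,dx$ and $\|\bar{\mathbf{u}}\|_{L^{4/\alpha}}^2\leq C\int|x|^{\alpha}|\nabla\bar{\mathbf{u}}|^2\,dx$ in place of an $L^2$ bound; the difference of the transport equations for $h$ is handled as usual using $\nabla\mathbf{v}\in L^1(0,T^*;L^\infty)$. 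A Grönwall argument in a norm mixing $\|\bar h\|_{L^2}$, $\|\bar c\|_{H^1}$, $\|\bar n\|_{L^2}$, $\|\sqrt h\,\bar{\mathbf{u}}\|_{L^2}$ and a weighted norm of $\nabla\bar{\mathbf{u}}$ then forces $(\bar n,\bar c,\bar h,\bar{\mathbf{u}})\equiv0$. The main obstacle I anticipate is in Step~2: ensuring that every estimate of subsection~3.1 — in particular the weighted ones and those involving $\sqrt h\,\dot{\mathbf{u}}$ and $\sqrt h\,\mathbf{u}$ — is genuinely uniform in $\delta$ and $R$, and organizing the two limits so that the vacuum set of $h_0$ is faithfully reproduced; the construction in Step~1 of approximate data meeting (\ref{A3}) with uniformly bounded $g$ and weighted norms is the other technical hot spot.
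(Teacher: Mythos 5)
Your proposal follows essentially the same route as the paper: regularize $h_0$ away from vacuum, solve on bounded domains $B_R$ via Lemma~\ref{x} with constants independent of the regularization parameter and the domain size, extract a limit, and argue uniqueness. The paper couples the two approximation parameters by taking $h_0^R=h_0+R^{-2}$ (so a single limit $R\to\infty$ suffices, avoiding your diagonal extraction over $R$ and $\delta$) and then defers the compactness and uniqueness details to \cite{D2012} and \cite{L2012}, details you sketch directly — in particular your use of Lemma~\ref{f} to replace the unavailable $L^2$ bound on $\bar{\mathbf{u}}$ in the uniqueness step is the expected fix when $\widetilde h=0$.
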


\begin{proof}
Define $B_{R}=\{x||x|<R\}$ and
\begin{equation*}
\psi^{R}(x)=\psi(x/R),\ g^{R}(x)=\psi^R(x)g(x),\ h_{0}^{R}=h_{0}+R^{-2},
\end{equation*}
for $(x, t)\in\mathbb{R}^2\times[0, T^*]$, where $0\leq\psi\in C_{0}^{\infty}(B_{1})$ is a smooth cut-off function such that $\psi=1$ in $B_{1/2}$. Let $\mathbf{u}_{0}^{R}\in H_{0}^1(B_{R})\cap H^3(B_{R})$ be the unique solution to the elliptic boundary value problem
\begin{equation*}
L\mathbf{u}_{0}^{R}=F_{0}^{R}\ in\ \ B_{R}\ and\ \mathbf{u}_{0}^{R}|_{\partial B_{R}}=\mathbf{0},
\end{equation*}
where
\begin{equation*}
F_{0}^{R}=-(h_{0}^{R})^2\nabla n_{0}^{R}-\frac{1}{2}(1+n_{0}^R)\nabla (h_{0}^{R})^2+h_{0}^{R}g^{R}.
\end{equation*}
By defining $\mathbf{u}_{0}^{R}$ to be zero outside $B_{R}$, we can extend $\mathbf{u}_{0}^{R}$ to $\mathbb{R}^2$, i.e.
\begin{equation*}
\mathbf{u}_{0}^{R}\to \mathbf{u}_{0}\ in\ D^{1}(B_{R})\ as\ R\to\infty.
\end{equation*}
The proof is similar as in \cite{c2006}, so we omit here. By Lemma \ref{x}, there exists some $0<T^*\leq 1$ such that for each $R>1$, the initial-boundary-value problem (\ref{A}) and (\ref{z600}) with $\Omega=B_{R}$ and $(n_{0}, c_{0}, h_{0}, \mathbf{u}_{0})=(n_{0}^{R}, c_{0}^{R}, h_{0}^{R}, \mathbf{u}_{0}^{R})$ for $n_{0}^{R}, c_{0}^{R}, h_{0}^{R}, \mathbf{u}_{0}^{R}$ defined as above, has a unique strong solution $(n, c, h, \mathbf{u})$ satisfying (\ref{201}) and (\ref{202}). We denote $(n, c, h, \mathbf{u})=(n^R, c^R, h^R, \mathbf{u}^R)$. By defining zero outside $B_{R}$, we extend $(n^R, c^R, h^R, \mathbf{u}^R)$ to $\mathbb{R}^2$. Then let $R\to\infty$, a direct computation shows that there exists a unique strong solution $(n, c, h, \mathbf{u})$ to problem (\ref{A})-(\ref{A2}) in $\mathbb{R}^2\times [0, T^*]$ which satisfies (\ref{201})-(\ref{202}). Please refer to \cite{D2012} and \cite{L2012} for details.
\end{proof}

\section{Regularity analysis}

In this section, we establish the higher order estimates of the unique strong solution $(n, c, h, \mathbf{u})$ obtained in Proposition \ref{l} for $\widetilde{h}=0$ and finish the proof of Theorem \ref{B}.

\begin{lemma}\label{n}
The following estimate holds
\begin{equation}\label{z107}
\sup_{0\leq t\leq T^*}(\|\nabla^2 h\|_{L^2}^2+\|\nabla^2 h^2\|_{L^2}^2)+\int_{0}^{T^*}\|\nabla^3\mathbf{u}\|_{L^2}^2\, dt \leq C.
\end{equation}
\end{lemma}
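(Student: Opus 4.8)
The plan is a single bootstrap step built on the a priori estimates of Section~3. I will derive a differential inequality for $y(t):=\|\nabla^2 h\|_{L^2}^2+\|\nabla^2 h^2\|_{L^2}^2$ from the transport equations $(\ref{1})_1$ and $(\ref{5})$ (now with $\mathbf v=\mathbf u$), in which the only quantity not already bounded in Section~3 is $\|h\|_{L^\infty}^2\|\nabla^3\mathbf u\|_{L^2}^2$, and simultaneously control $\|\nabla^3\mathbf u\|_{L^2}$ by applying the $L^2$ elliptic estimate for the Lam\'e operator $L$ to $(\ref{4})_1$. Because the right-hand side $-h\dot{\mathbf u}-h^2\nabla n-\frac12(1+n)\nabla h^2$ of $L\mathbf u=\cdots$ contains $(1+n)\nabla h^2$, differentiating it produces $(1+n)\nabla^2 h^2$, so $\|\nabla^3\mathbf u\|_{L^2}$ and $\|\nabla^2 h^2\|_{L^2}$ are genuinely coupled and the two estimates have to be closed together. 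The main obstacle, and the point on which the whole argument hinges, is that this feedback is only \emph{linear} in $\|\nabla^2 h^2\|_{L^2}$, so that after Young's inequality it is absorbed into $y(t)$ on the left-hand side of the Gronwall inequality.

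For the first estimate, apply $\nabla^2$ to $(\ref{1})_1$, multiply by $\nabla^2 h$ and integrate over $\mathbb R^2$: the convection term becomes $-\frac12\int(\dive\mathbf u)|\nabla^2 h|^2\,dx$, and the commutator $[\nabla^2,\mathbf u\cdot\nabla]h$ together with $\nabla^2(h\dive\mathbf u)$ give, by H\"older and Gagliardo--Nirenberg and after absorbing $\|\nabla h\|_{L^4}\le C$ from Lemma~\ref{c},
$$\frac{d}{dt}\|\nabla^2 h\|_{L^2}^2\le C\big(1+\|\nabla\mathbf u\|_{L^\infty}\big)\|\nabla^2 h\|_{L^2}^2+C\|\nabla^2\mathbf u\|_{L^4}^2+C\|h\|_{L^\infty}^2\|\nabla^3\mathbf u\|_{L^2}^2,$$
and the same computation on $(\ref{5})$ yields the analogous inequality for $\|\nabla^2 h^2\|_{L^2}^2$, using $\|\nabla h^2\|_{L^4},\|h^2\|_{L^\infty}\le C$. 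For the second estimate, $(\ref{4})_1$ gives $L\mathbf u=-h\dot{\mathbf u}-h^2\nabla n-\frac12(1+n)\nabla h^2$, hence $\|\nabla^3\mathbf u\|_{L^2}\le C\|\nabla(L\mathbf u)\|_{L^2}$, and the product rule yields
$$\|\nabla^3\mathbf u\|_{L^2}\le C\Big(\|\nabla h\|_{L^{4/(2-\alpha)}}\|\dot{\mathbf u}\|_{L^{4/\alpha}}+\|h\|_{L^\infty}\|\nabla\dot{\mathbf u}\|_{L^2}+\|\nabla h^2\|_{L^4}\|\nabla n\|_{L^4}+\|h\|_{L^\infty}^2\|\nabla^2 n\|_{L^2}+(1+\|n\|_{L^\infty})\|\nabla^2 h^2\|_{L^2}\Big).$$
Here the first term is the only place where the two-dimensional vacuum case is delicate: $\dot{\mathbf u}$ is controlled in $L^{4/\alpha}$ not by Sobolev embedding but by the Caffarelli--Kohn--Nirenberg inequality of Lemma~\ref{f}, $\|\dot{\mathbf u}\|_{L^{4/\alpha}}\le C\||x|^{\alpha/2}\nabla\dot{\mathbf u}\|_{L^2}$ (bounded by Lemma~\ref{j}), while $\|\nabla h\|_{L^{4/(2-\alpha)}}$ is controlled by interpolating the $L^2$ and $L^{4/\alpha}$ bounds of $\nabla h$ in Lemma~\ref{c}; with Lemmas~\ref{d} and~\ref{h} for the remaining factors this collapses to $\|\nabla^3\mathbf u\|_{L^2}^2\le C\big(1+\|\nabla\dot{\mathbf u}\|_{L^2}^2+\|\nabla^2 h^2\|_{L^2}^2\big)$.

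Substituting this into the two differential inequalities gives $y'\le A(t)y+B(t)$ with $A(t)=C(1+\|\nabla\mathbf u\|_{L^\infty})$ and $B(t)=C(1+\|\nabla\dot{\mathbf u}\|_{L^2}^2+\|\nabla^2\mathbf u\|_{L^4}^2)$. Both are integrable on $[0,T^*]$: $\int_0^{T^*}\|\nabla\dot{\mathbf u}\|_{L^2}^2\,dt\le C$ by $(\ref{B300})$; $\int_0^{T^*}\|\nabla^2\mathbf u\|_{L^4}^2\,dt\le C$ by interpolating $\sup_t\|\nabla^2\mathbf u\|_{L^2}$ from $(\ref{B301})$ with $\int_0^{T^*}\|\nabla^2\mathbf u\|_{L^{4/\alpha}}^2\,dt$ from $(\ref{B305})$ (and H\"older in time); and $\int_0^{T^*}\|\nabla\mathbf u\|_{L^\infty}\,dt\le C$ by $(\ref{195})$, H\"older in time, the same two estimates and $\sup_t\|\mathbf u\|_{L^{4/\alpha}}\le C$ from $(\ref{201})$. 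Since $h_0-\widetilde h\in H^3$ and $H^2\hookrightarrow L^\infty$ in $\mathbb R^2$, also $h_0^2-\widetilde h^2\in H^3$, so $y(0)\le C$; Gronwall's inequality then gives $\sup_{[0,T^*]}y(t)\le C$, and integrating the reduced bound for $\|\nabla^3\mathbf u\|_{L^2}$ over $[0,T^*]$ yields $\int_0^{T^*}\|\nabla^3\mathbf u\|_{L^2}^2\,dt\le C\int_0^{T^*}\big(1+\|\nabla\dot{\mathbf u}\|_{L^2}^2+\|\nabla^2 h^2\|_{L^2}^2\big)\,dt\le C$, which is $(\ref{z107})$. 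Note that because $\widetilde h=0$, $\nabla^2 h$ cannot be recovered from $\nabla^2 h^2$ — the identity $\nabla^2 h^2=2|\nabla h|^2+2h\nabla^2 h$ involves the singular factor $1/h$ — so the bound for $\|\nabla^2 h\|_{L^2}$ genuinely has to be obtained from its own transport equation, not as a by-product of the $h^2$ estimate; apart from this, and the coupling organisation described above, the argument is routine two-dimensional product estimates combined with the Section~3 bounds.
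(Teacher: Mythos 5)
Your argument follows the same route as the paper: derive a Gronwall inequality for $\|\nabla^2 h\|_{L^2}^2+\|\nabla^2 h^2\|_{L^2}^2$ from the transport equations, bound $\|\nabla^3\mathbf{u}\|_{L^2}^2$ via the Lam\'e elliptic estimate applied to $(\ref{4})_1$ (producing a term linear in $\|\nabla^2 h^2\|_{L^2}^2$ that is absorbed into the Gronwall quantity, with the delicate factor $\|\nabla h\|_{L^p}\|\dot{\mathbf u}\|_{L^{4/\alpha}}$ handled by Caffarelli--Kohn--Nirenberg through Lemma~\ref{j}), and then integrate the elliptic bound to get the time-integral of $\|\nabla^3\mathbf{u}\|_{L^2}^2$. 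This matches the paper's (\ref{147})--(\ref{150}); the one small imprecision is that $\|\dot{\mathbf u}\|_{L^{4/\alpha}}\le C\||x|^{\alpha/2}\nabla\dot{\mathbf u}\|_{L^2}$ is controlled only in $L^2(0,T^*)$, not pointwise in $t$, so the corresponding term belongs in your $B(t)$ rather than being a uniform constant, which is exactly how it enters the final argument.
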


\begin{proof}
Adding the equations $(\ref{A})_{3}$ and (\ref{5}), using the operator $\nabla^2$ on the resulting equation shows that
\begin{equation}\label{148}
\begin{split}
&\frac{d}{dt}\big(\|\nabla^2 h\|_{L^2}^2+\|\nabla^2 h^2\|_{L^2}^2\big)\\
\leq&C(1+\|\dive\mathbf{u}\|_{L^{\infty}})(\|\nabla^2 h\|_{L^2}^2+\|\nabla^2 h^2\|_{L^2}^2)+C\|\nabla^3\mathbf{u}\|_{L^2}^2\\
\leq&C(1+\|\dive\mathbf{u}\|_{L^{\infty}})(\|\nabla^2 h\|_{L^2}^2+\|\nabla^2 h^2\|_{L^2}^2)+C(1+\||x|^{\alpha/2}\nabla\dot{\mathbf{u}}\|_{L^2}^2+\|\nabla\dot{\mathbf{u}}\|_{L^2}^2),
\end{split}
\end{equation}
where we have used the following elliptic estimate
\begin{equation}\label{147}
\begin{split}
\|\nabla^2\mathbf{u}\|_{H^1}^2\leq&C(\|h\dot{\mathbf{u}}\|_{H^1}^2+\|h^2\nabla n\|_{H^1}^2+\|(1+n)\nabla h^2\|_{H^1}^2)\\
\leq&C(1+\|\dot{\mathbf{u}}\cdot\nabla h\|_{L^2}^2+\|h\nabla\dot{\mathbf{u}}\|_{L^2}^2+\|\nabla h^2\nabla n\|_{L^2}^2+\|h^2\nabla^2 n\|_{L^2}^2)\\
&+C(\|(1+n)|\nabla h|^2\|_{L^2}^2+\|(1+n)h\nabla^2 h\|_{L^2}^2)\\
\leq&C(1+\|\nabla h\|_{L^p}^2\|\dot{\mathbf{u}}\|_{L^q}^2+\|h\|_{L^{\infty}}^2\|\nabla\dot{\mathbf{u}}\|_{L^2}^2+\|\nabla h^2\|_{L^4}^2\|\nabla n\|_{L^4}^2)\\
&+C(\|h\|_{L^{\infty}}^4\|\nabla^2 n\|_{L^2}^2+(1+\|n\|_{L^{\infty}}^2)\|\nabla h\|_{L^4}^4+\|(1+n)h\|_{L^{\infty}}^2\|\nabla^2 h\|_{L^2}^2)\\
\leq&C(1+\||x|^{\alpha/2}\nabla\dot{\mathbf{u}}\|_{L^2}^2+\|\nabla\dot{\mathbf{u}}\|_{L^2}^2+\|\nabla^2 h^2\|_{L^2}^2).
\end{split}
\end{equation}
Using the Gronwall's inequality to (\ref{148}), we have
\begin{equation}\label{149}
\sup_{0\leq t\leq T^*}(\|\nabla^2 h\|_{L^2}^2+\|\nabla^2 h^2\|_{L^2}^2)\leq C.
\end{equation}
Integrating the estimate (\ref{147}) with respect to $t$, together with (\ref{149}) yields 
\begin{equation}\label{150}
\int_{0}^{T^*}\|\nabla^3\mathbf{u}\|_{L^2}^2\, dt\leq C.
\end{equation}
The estimate (\ref{z107}) can be given by (\ref{149}) and (\ref{150}).
\end{proof}

\begin{lemma}\label{s}
The following estimate holds
\begin{equation}\label{z108}
\sup_{0\leq t\leq T^*}\|\nabla n_{t}\|_{L^2}^2+\int_{0}^{T^*}\|n_{tt}\|_{L^2}^2\, dt\leq C.
\end{equation}
\end{lemma}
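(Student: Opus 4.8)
The plan is to carry out, for the $n$-equation of the full system, the same kind of second-order-in-time energy estimate that produced (\ref{117}) for $c$ in Lemma \ref{b}. Recall that in this section the unknown $\mathbf u$ replaces $\mathbf v$, so $n$ solves $n_t+\dive(n\mathbf u)=\Delta n-\nabla\cdot(n\nabla c)$ (this is $(\ref{A})_1$ with $D_n=\chi=1$). Differentiating in $t$ gives
\[
n_{tt}+\dive(n\mathbf u)_t=\Delta n_t-\nabla\cdot(n\nabla c)_t .
\]
Multiplying by $n_{tt}$, integrating over $\mathbb R^2$, and using $\int\Delta n_t\,n_{tt}\,dx=-\tfrac12\tfrac{d}{dt}\|\nabla n_t\|_{L^2}^2$, one obtains
\[
\tfrac12\tfrac{d}{dt}\|\nabla n_t\|_{L^2}^2+\|n_{tt}\|_{L^2}^2=-\int\dive(n\mathbf u)_t\,n_{tt}\,dx-\int\nabla\cdot(n\nabla c)_t\,n_{tt}\,dx .
\]
I would then expand $\dive(n\mathbf u)_t=\mathbf u\cdot\nabla n_t+n_t\dive\mathbf u+\mathbf u_t\cdot\nabla n+n\dive\mathbf u_t$ and $\nabla\cdot(n\nabla c)_t=\nabla c\cdot\nabla n_t+n_t\Delta c+\nabla n\cdot\nabla c_t+n\Delta c_t$, and bound the eight resulting integrals by H\"older, Young and the Gagliardo--Nirenberg inequality (Lemma \ref{a}), always keeping $\|n_{tt}\|_{L^2}$ to the first power so that a term $\varepsilon\|n_{tt}\|_{L^2}^2$ is absorbed on the left.

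The two transport terms give $\varepsilon\|n_{tt}\|_{L^2}^2+C_\varepsilon(\|\mathbf u\|_{L^\infty}^2+\|\nabla c\|_{L^\infty}^2)\|\nabla n_t\|_{L^2}^2$, whose coefficient is bounded on $[0,T^*]$ by (\ref{201})--(\ref{202}) together with Lemmas \ref{f} and \ref{a}; this is a Gronwall term. For $\int n_t\dive\mathbf u\,n_{tt}$ and $\int n_t\Delta c\,n_{tt}$ I would use $\|n_t\|_{L^4}^2\le C\|n_t\|_{L^2}\|\nabla n_t\|_{L^2}$ with $\dive\mathbf u\in L^2(0,T^*;L^\infty)$ and $\nabla^2c\in L^\infty(0,T^*;L^4)$, producing again a Gronwall term plus an $L^1_t$ source. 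The terms needing care are the four carrying a time derivative of $\mathbf u$ or a second derivative of $c_t$: with $q=4/\alpha$,
\[
\Big|\int\mathbf u_t\cdot\nabla n\,n_{tt}\Big|\le\|\mathbf u_t\|_{L^q}\|\nabla n\|_{L^{2q/(q-2)}}\|n_{tt}\|_{L^2},\quad
\Big|\int n\dive\mathbf u_t\,n_{tt}\Big|\le\|n\|_{L^\infty}\|\nabla\mathbf u_t\|_{L^2}\|n_{tt}\|_{L^2},
\]
\[
\Big|\int\nabla n\cdot\nabla c_t\,n_{tt}\Big|\le\|\nabla n\|_{L^4}\|\nabla c_t\|_{L^4}\|n_{tt}\|_{L^2},\quad
\Big|\int n\Delta c_t\,n_{tt}\Big|\le\|n\|_{L^\infty}\|\nabla^2c_t\|_{L^2}\|n_{tt}\|_{L^2};
\]
after an $\varepsilon$-Young step, together with $\|\nabla c_t\|_{L^4}^2\le C\|\nabla c_t\|_{L^2}\|\nabla^2c_t\|_{L^2}$, the $C_\varepsilon$-remainders are integrable in time because $\|\mathbf u_t\|_{L^q}^2$, $\|\nabla\mathbf u_t\|_{L^2}^2$ and $\|\nabla^2c_t\|_{L^2}^2$ lie in $L^1(0,T^*)$ by (\ref{B305}), (\ref{B301}) and (\ref{202}), while $\|\nabla n\|_{L^{2q/(q-2)}}$, $\|\nabla n\|_{L^4}$, $\|\nabla c_t\|_{L^2}$ and $\|n\|_{L^\infty}$ are bounded on $[0,T^*]$ (using $\nabla n\in L^\infty(0,T^*;H^1)$, which follows from $\nabla^2n\in L^\infty(0,T^*;L^2)$ in (\ref{202})).

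Collecting the estimates and choosing $\varepsilon$ small yields $\tfrac{d}{dt}\|\nabla n_t\|_{L^2}^2+\|n_{tt}\|_{L^2}^2\le A(t)\|\nabla n_t\|_{L^2}^2+B(t)$ with $\int_0^{T^*}(A+B)\,dt\le C$. Since $n_t|_{t=0}=\Delta n_0-\dive(n_0\mathbf u_0)-\nabla\cdot(n_0\nabla c_0)$ and the data obey (\ref{303}) (with $\mathbf u_0\in L^\infty$ obtained from Lemmas \ref{f} and \ref{a}), $\|\nabla n_t(0)\|_{L^2}^2$ is finite; Gronwall's inequality then gives $\sup_{[0,T^*]}\|\nabla n_t\|_{L^2}^2\le C$, and integrating the differential inequality once more gives $\int_0^{T^*}\|n_{tt}\|_{L^2}^2\,dt\le C$, which is (\ref{z108}). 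The main obstacle is exactly the bookkeeping in the previous paragraph: $\|\mathbf u_t\|_{L^q}$, $\|\nabla\mathbf u_t\|_{L^2}$ and $\|\nabla^2c_t\|_{L^2}$ are only square-integrable in time, so each must appear to the first power under the time integral multiplied by an $L^\infty_t$ factor; a Young split raising one of them to a fourth power would break the argument.
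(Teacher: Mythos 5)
Your proof follows the paper's own argument: differentiate the $n$-equation (with $\mathbf v=\mathbf u$) once in $t$, test with $n_{tt}$, reproduce the estimates of the transport terms from (\ref{176})/(\ref{117}), and then bound the extra chemotaxis contribution $-\int\nabla\cdot(n\nabla c)_{t}n_{tt}\,dx$ by H\"older, Gagliardo--Nirenberg and Young before applying Gronwall; the four terms you single out $(\nabla c\cdot\nabla n_t,\ n_t\Delta c,\ \nabla n\cdot\nabla c_t,\ n\Delta c_t)$ and the resulting bounds match (\ref{C102}) exactly. Your additional remarks on which remainders are only $L^1$ in time and on the finiteness of $\|\nabla n_t(0)\|_{L^2}$ via the data regularity (\ref{303}) are correct bookkeeping that the paper leaves implicit.
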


\begin{proof}
Setting $\mathbf{v}=\mathbf{u}$, then the method to deal with (\ref{z108}) is similar as the way to get the estimate (\ref{117}). We need to establish additionally the estimate of $-\int \nabla\cdot(n\nabla c)_{t}n_{tt}\, dx$, i.e.
\begin{equation}\label{C102}
\begin{split}
-\int \nabla\cdot(n\nabla c)_{t}n_{tt}\, dx\leq&C(\|\nabla n_{t}\|_{L^2}\|\nabla c\|_{L^{\infty}}\|n_{tt}\|_{L^2}+\|\nabla n\|_{L^4}\|\nabla c_{t}\|_{L^4}\|n_{tt}\|_{L^2})\\
&+C(\|n_{t}\|_{L^4}\|\Delta c\|_{L^4}\|n_{tt}\|_{L^2}+\|n\|_{L^{\infty}}\|\Delta c_{t}\|_{L^2}\|n_{tt}\|_{L^2})\\
\leq&\varepsilon\|n_{tt}\|_{L^2}^2+C_{\varepsilon}\|\nabla c\|_{L^{\infty}}^2\|\nabla n_{t}\|_{L^2}^2\\
&+C_{\varepsilon}(\|\nabla n\|_{L^4}^2\|\nabla c_{t}\|_{L^4}^2+\|n_{t}\|_{L^4}^2\|\Delta c\|_{L^4}^2+\|n\|_{L^{\infty}}^2\|\Delta c_{t}\|_{L^2}^2).
\end{split}
\end{equation}
Then using the Gronwall's inequality, we can get (\ref{z108}).
\end{proof}

\begin{lemma}\label{o}
The following estimate holds
\begin{equation}\label{z109}
\sup_{0\leq t\leq T^*}(\|\nabla^3 c\|_{L^2}^2+\|\nabla^3 n\|_{L^2}^2)+\int_{0}^{T^*}\|\nabla^4 c\|_{L^2}^2\, dt\leq C.
\end{equation}
\end{lemma}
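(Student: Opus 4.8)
The plan is to exploit the essentially triangular structure of the parabolic equations for $c$ and $n$: once the bounds collected in Lemmas \ref{b}, \ref{d}, \ref{h}, \ref{j}, \ref{n} and \ref{s}, together with the embeddings of Lemmas \ref{a} and \ref{f} and the uniform estimate (\ref{202}), are in hand, (\ref{z109}) should follow from elliptic (Calder\'on--Zygmund) regularity applied successively to the $c$-equation and then to the $n$-equation, with no Gronwall argument needed. The essential structural point is that $c$ must be treated \emph{before} $n$, so that the top-order coupling term appearing in the $n$-equation is matched to an already-established bound on $\nabla^3 c$.

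For the $c$-part I would rewrite $(\ref{A})_2$ as $\Delta c = c_t + \mathbf{u}\cdot\nabla c + c\dive\mathbf{u} + nc$, apply $\nabla$, and use $\|\nabla^3 c\|_{L^2}\le C\|\nabla\Delta c\|_{L^2}$. Every factor on the right is then uniformly bounded on $[0,T^*]$: $\sup_t\|\nabla c_t\|_{L^2}$ and $\sup_t\|\nabla^2 c\|_{L^2\cap L^4}$ come from (\ref{112}), $\sup_t\|\nabla^2\mathbf{u}\|_{L^2}$ from (\ref{B301}); the quantities $\|c\|_{L^\infty}$, $\|n\|_{L^\infty}$, $\|\nabla n\|_{L^4}$, $\|\nabla c\|_{L^\infty}$ are controlled via the two-dimensional embeddings $H^2\hookrightarrow L^\infty$, $H^1\hookrightarrow L^4$, $W^{1,4}\hookrightarrow L^\infty$ and (\ref{202}), while $\|\mathbf{u}\|_{L^\infty}$ and $\|\nabla\mathbf{u}\|_{L^4}$ follow from Lemma \ref{a} and (\ref{202}); this gives $\sup_{[0,T^*]}\|\nabla^3 c\|_{L^2}^2\le C$. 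For the space-time part I would apply $\nabla^2$ to the same identity and use $\|\nabla^4 c\|_{L^2}\le C\|\nabla^2\Delta c\|_{L^2}$; after integrating in $t$ the only contribution not already in $L^\infty(0,T^*)$ is $c\,\nabla^3\mathbf{u}$ (coming from $c\,\nabla^2\dive\mathbf{u}$), which is absorbed by $\int_0^{T^*}\|\nabla^3\mathbf{u}\|_{L^2}^2\,dt\le C$ from Lemma \ref{n} together with $\sup_t\|c\|_{L^\infty}\le C$, the remaining terms (built from $\nabla^2 c_t$, $\nabla^2\mathbf{u}$, $\nabla^3 c$, $\nabla^2 n$ paired against bounded factors) being handled by (\ref{112}), (\ref{130}) and the $\sup_t\|\nabla^3 c\|_{L^2}$-bound just obtained. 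This yields $\int_0^{T^*}\|\nabla^4 c\|_{L^2}^2\,dt\le C$.

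For the $n$-part I would rewrite $(\ref{A})_1$ as $\Delta n = n_t + \mathbf{u}\cdot\nabla n + n\dive\mathbf{u} + \nabla n\cdot\nabla c + n\Delta c$, apply $\nabla$, and use $\|\nabla^3 n\|_{L^2}\le C\|\nabla\Delta n\|_{L^2}$. The resulting sum is dominated by $\|\nabla n_t\|_{L^2}$, $\|\nabla\mathbf{u}\|_{L^4}\|\nabla n\|_{L^4}$, $\|\mathbf{u}\|_{L^\infty}\|\nabla^2 n\|_{L^2}$, $\|\nabla n\|_{L^4}\|\dive\mathbf{u}\|_{L^4}$, $\|n\|_{L^\infty}\|\nabla^2\mathbf{u}\|_{L^2}$, $\|\nabla^2 n\|_{L^2}\|\nabla c\|_{L^\infty}$, $\|\nabla n\|_{L^4}\|\nabla^2 c\|_{L^4}$, $\|\nabla n\|_{L^4}\|\Delta c\|_{L^4}$ and $\|n\|_{L^\infty}\|\nabla^3 c\|_{L^2}$, each of which lies in $L^\infty(0,T^*)$: $\sup_t\|\nabla n_t\|_{L^2}$ from (\ref{z108}), $\sup_t\|\nabla^2 n\|_{L^2}$ from (\ref{130}), $\sup_t\|\nabla^2 c\|_{L^2\cap L^4}$ from (\ref{112}), and, crucially, $\sup_t\|\nabla^3 c\|_{L^2}$ from the first step. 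Hence $\sup_{[0,T^*]}\|\nabla^3 n\|_{L^2}^2\le C$, which together with the $c$-estimates proves (\ref{z109}).

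The main obstacle is not a deep one: the bulk of the work is bookkeeping of which product of derivatives must be paired with which earlier estimate. The only genuinely delicate points are (i) that the top-order coupling terms $n\,\nabla^3 c$ and $c\,\nabla^3\mathbf{u}$ cannot be absorbed and must be fed, respectively, the ``$\sup_t$'' regularity of $\nabla^3 c$ (available only after the $c$-step) and the ``$L^2_t$'' regularity of $\nabla^3\mathbf{u}$ from Lemma \ref{n} --- this is exactly what forces the order $c\to n$ and confines the $\nabla^4 c$-bound to a space-time statement; and (ii) the systematic conversion of the already-known $D^1\cap D^2$-regularity of $\mathbf{u}$ (and the $H^2$-regularity of $n$, $c$) into the $L^\infty$- and $L^4$-bounds used repeatedly above, all of which are legitimate by (\ref{201})--(\ref{202}), Lemma \ref{a} and the critical two-dimensional Sobolev embeddings.
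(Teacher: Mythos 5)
Your proposal is correct and follows essentially the same route as the paper: elliptic ($L^p$) regularity applied first to $(\ref{A})_2$ to obtain $\sup_t\|\nabla^2 c\|_{H^1}$ and $\int_0^{T^*}\|\nabla^2 c\|_{H^2}^2\,dt$ (using $\int_0^{T^*}\|\nabla^3\mathbf{u}\|_{L^2}^2\,dt$ from Lemma \ref{n} for the term $c\,\nabla^3\mathbf{u}$), and then to $(\ref{A})_1$ using Lemma \ref{s} and the just-obtained bound on $\nabla^3 c$ to control $n\,\nabla^3 c$. The only cosmetic difference is that the paper works directly with $\|\nabla^2 c\|_{H^1}$ and $\|\nabla^2 n\|_{H^1}$ via the $L^p$ theory rather than writing $\|\nabla^3 c\|_{L^2}\le C\|\nabla\Delta c\|_{L^2}$, which is the same estimate.
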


\begin{proof}
Using the $L^p$ theory to equation $(\ref{A})_{2}$, it is easy to get
\begin{equation}\label{151}
\|\nabla^2 c\|_{H^1}^2\leq C(\|c_{t}\|_{H^1}^2+\|\dive(c\mathbf{u})\|_{H^1}^2+\|nc\|_{H^1}^2)\leq C
\end{equation}
and
\begin{equation}\label{z108}
\begin{split}
&\int_{0}^{T^*}\|\nabla^2 c\|_{H^2}^2\, dt\\
\leq&C\int_{0}^{T^*}(\|c_{t}\|_{H^2}^2+\|\dive(c\mathbf{u})\|_{H^2}^2+\|nc\|_{H^2}^2)\, dt\\
\leq&C\int_{0}^{T^*}(1+\|\nabla^2 c_{t}\|_{L^2}^2+\|\mathbf{u}\|_{L^{\infty}}^2\|\nabla^3 c\|_{L^2}^2+\|\nabla\mathbf{u}\|_{L^4}^2\|\nabla^2 c\|_{L^4}^2+\|c\|_{L^{\infty}}^2\|\nabla^3\mathbf{u}\|_{L^2}^2)\, dt\\
\leq&C.
\end{split}
\end{equation}
Based on the estimate (\ref{151}) and Lemma \ref{s}, using the $L^p$ theory to equation $(\ref{A})_{1}$ to obtain
\begin{equation}\label{152}
\begin{split}
\|\nabla^2 n\|_{H^1}^2\leq&C(\|n_{t}\|_{H^1}^2+\|\dive(n\mathbf{u})\|_{H^1}^2+\|\nabla\cdot(n\nabla c)\|_{H^1}^2)\\
\leq&C(1+\|\nabla n_{t}\|_{L^2}^2+\|\nabla n\|_{L^4}^2\|\nabla^2 c\|_{L^4}^2+\|n\|_{L^{\infty}}^2\|\nabla^3 c\|_{L^2}^2)\\
\leq&C.
\end{split}
\end{equation}
The estimate (\ref{z109}) can be given by (\ref{151})-(\ref{152}).
\end{proof}

\begin{lemma}\label{p}
The following estimate holds
\begin{equation}\label{207}
\int_{0}^{T^*}(\|\nabla^2 n_{t}\|_{L^2}^2+\|\nabla^4 n\|_{L^2}^2)\, dt\leq C.
\end{equation}
\end{lemma}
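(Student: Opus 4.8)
The plan is to derive both integral bounds in (\ref{207}) from elliptic regularity applied to the $n$-equation $(\ref{A})_{1}$, rewritten as the Poisson problem $\Delta n = n_{t}+\dive(n\mathbf{u})+\nabla\cdot(n\nabla c)$: differentiating once in $t$ produces $\nabla^{2}n_{t}$, differentiating twice in $x$ produces $\nabla^{4}n$. No new differential inequality is needed here, since the quantities in (\ref{207}) are already time-integrals; after each elliptic estimate it suffices to integrate in $t$ and invoke the uniform bounds collected in Lemmas \ref{s}--\ref{o} and in (\ref{202}).

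\emph{Step 1: the bound on $\int_{0}^{T^{*}}\|\nabla^{2}n_{t}\|_{L^{2}}^{2}\,dt$.} Differentiating $(\ref{A})_{1}$ in $t$ gives $\Delta n_{t}=n_{tt}+(\dive(n\mathbf{u}))_{t}+(\nabla\cdot(n\nabla c))_{t}$, whence the $L^{2}$ elliptic estimate yields
\begin{equation*}
\|\nabla^{2}n_{t}\|_{L^{2}}^{2}\le C\big(\|n_{tt}\|_{L^{2}}^{2}+\|(\dive(n\mathbf{u}))_{t}\|_{L^{2}}^{2}+\|(\nabla\cdot(n\nabla c))_{t}\|_{L^{2}}^{2}\big).
\end{equation*}
Expanding the two nonlinear time derivatives, the representative terms are $\mathbf{u}\cdot\nabla n_{t}$, $n\dive\mathbf{u}_{t}$, $\mathbf{u}_{t}\cdot\nabla n$, $n_{t}\dive\mathbf{u}$, $n\Delta c_{t}$, $\nabla n\cdot\nabla c_{t}$ and $n_{t}\Delta c$, which by H\"older and the two-dimensional pair $1/p+1/q=1/2$ are controlled by $\|\mathbf{u}\|_{L^{\infty}}\|\nabla n_{t}\|_{L^{2}}$, $\|n\|_{L^{\infty}}\|\nabla\mathbf{u}_{t}\|_{L^{2}}$, $\|\mathbf{u}_{t}\|_{L^{q}}\|\nabla n\|_{L^{p}}$, $\|n_{t}\|_{L^{4}}\|\nabla\mathbf{u}\|_{L^{4}}$, $\|n\|_{L^{\infty}}\|\nabla^{2}c_{t}\|_{L^{2}}$, $\|\nabla n\|_{L^{4}}\|\nabla c_{t}\|_{L^{4}}$ and $\|n_{t}\|_{L^{4}}\|\Delta c\|_{L^{4}}$. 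Each factor carrying a uniform-in-time bound is supplied by (\ref{202}), Lemma \ref{o} and Sobolev embedding (for $\nabla c_{t}$ in $L^{4}$ one interpolates $\|\nabla c_{t}\|_{L^{4}}^{2}\le C\|\nabla c_{t}\|_{L^{2}}\|\nabla^{2}c_{t}\|_{L^{2}}$), while $\int_{0}^{T^{*}}(\|n_{tt}\|_{L^{2}}^{2}+\|\nabla\mathbf{u}_{t}\|_{L^{2}}^{2}+\|\mathbf{u}_{t}\|_{L^{q}}^{2}+\|\nabla^{2}c_{t}\|_{L^{2}}^{2})\,dt\le C$ follows from Lemmas \ref{s}, \ref{h}, \ref{j} and (\ref{202}). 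Integrating in $t$ gives $\int_{0}^{T^{*}}\|\nabla^{2}n_{t}\|_{L^{2}}^{2}\,dt\le C$.

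\emph{Step 2: the bound on $\int_{0}^{T^{*}}\|\nabla^{4}n\|_{L^{2}}^{2}\,dt$.} Applying $\nabla^{2}$ to $\Delta n=n_{t}+\dive(n\mathbf{u})+\nabla\cdot(n\nabla c)$ and using $L^{2}$ elliptic regularity,
\begin{equation*}
\|\nabla^{4}n\|_{L^{2}}^{2}\le C\big(\|\nabla^{2}n_{t}\|_{L^{2}}^{2}+\|\nabla^{2}\dive(n\mathbf{u})\|_{L^{2}}^{2}+\|\nabla^{2}(\nabla\cdot(n\nabla c))\|_{L^{2}}^{2}\big),
\end{equation*}
the first term being time-integrable by Step 1. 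In $\|\nabla^{2}\dive(n\mathbf{u})\|_{L^{2}}$ the leading contributions are bounded by $\|n\|_{L^{\infty}}\|\nabla^{3}\mathbf{u}\|_{L^{2}}$, $\|\mathbf{u}\|_{L^{\infty}}\|\nabla^{3}n\|_{L^{2}}$ and the intermediate terms $\|\nabla^{2}n\|_{L^{4}}\|\nabla\mathbf{u}\|_{L^{4}}$, $\|\nabla n\|_{L^{\infty}}\|\nabla^{2}\mathbf{u}\|_{L^{2}}$, all handled by Lemmas \ref{n}, \ref{o}, \ref{j} and Gagliardo--Nirenberg (e.g.\ $\|\nabla^{2}\mathbf{u}\|_{L^{4}}^{2}\le C\|\nabla^{2}\mathbf{u}\|_{L^{2}}\|\nabla^{3}\mathbf{u}\|_{L^{2}}$ with $\sup_{t}\|\nabla^{2}\mathbf{u}\|_{L^{2}}^{2}+\int_{0}^{T^{*}}\|\nabla^{3}\mathbf{u}\|_{L^{2}}^{2}\,dt\le C$). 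In $\|\nabla^{2}(\nabla\cdot(n\nabla c))\|_{L^{2}}$ the leading contributions are bounded by $\|n\|_{L^{\infty}}\|\nabla^{4}c\|_{L^{2}}$, $\|\nabla^{3}n\|_{L^{2}}\|\nabla c\|_{L^{\infty}}$, $\|\nabla n\|_{L^{\infty}}\|\nabla^{3}c\|_{L^{2}}$ and $\|\nabla^{2}n\|_{L^{4}}\|\nabla^{2}c\|_{L^{4}}$, where $\int_{0}^{T^{*}}\|\nabla^{4}c\|_{L^{2}}^{2}\,dt\le C$ comes from Lemma \ref{o} and all remaining factors carry uniform bounds from Lemmas \ref{o}, \ref{b}, (\ref{202}) and Sobolev embedding. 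Integrating in $t$ yields $\int_{0}^{T^{*}}\|\nabla^{4}n\|_{L^{2}}^{2}\,dt\le C$, and together with Step 1 this proves (\ref{207}).

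The main obstacle is not a single sharp estimate but the bookkeeping of this splitting: one must verify that every intermediate-order factor ($\|\nabla^{2}n\|_{L^{4}}$, $\|\nabla^{2}\mathbf{u}\|_{L^{4}}$, $\|\nabla^{3}c\|_{L^{4}}$, $\|\nabla c_{t}\|_{L^{4}}$, and the like) is either uniformly bounded in time or can be Gagliardo--Nirenberg-interpolated into a uniform factor times a factor that is square-integrable in time, and that each genuinely top-order factor is matched with a bounded one. The tight point is the chemotactic coupling $\nabla\cdot(n\nabla c)$, whose second spatial derivative is third order in $n$ and fourth order in $c$: it is exactly to absorb $n\nabla^{4}c$ that one needs $\int_{0}^{T^{*}}\|\nabla^{4}c\|_{L^{2}}^{2}\,dt\le C$ from Lemma \ref{o}, and to absorb $n\nabla^{3}\mathbf{u}$ in the convective term that one needs $\int_{0}^{T^{*}}\|\nabla^{3}\mathbf{u}\|_{L^{2}}^{2}\,dt\le C$ from Lemma \ref{n} — which is precisely why those two lemmas are placed before this one.
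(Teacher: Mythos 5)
Your proposal is correct and follows essentially the same route as the paper: for $\int_0^{T^*}\|\nabla^2 n_t\|_{L^2}^2\,dt$ the paper also applies parabolic $L^p$ regularity to the $t$-differentiated $n$-equation (it simply points back to (\ref{178}) with $\mathbf{v}=\mathbf{u}$ and then checks the extra chemotactic contribution $\int_0^{T^*}\|\nabla\cdot(n\nabla c)_t\|_{L^2}^2\,dt$ as you do), and for $\int_0^{T^*}\|\nabla^4 n\|_{L^2}^2\,dt$ it uses $L^p$ theory on $(\ref{A})_1$ with the same splitting you describe, invoking Lemmas \ref{n} and \ref{o} for the $\nabla^3\mathbf{u}$ and $\nabla^4 c$ contributions. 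Your bookkeeping of the intermediate-order terms and the required interpolations is consistent with the paper's bounds (\ref{202}) and Lemmas \ref{s}, \ref{o}, so the argument closes as stated.
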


\begin{proof}
Setting $\mathbf{v}=\mathbf{u}$ in (\ref{178}), then the estimate of $\int_{0}^{T^*}\|\nabla^2 n_{t}\|_{L^2}^2\, dt$ is the same as (\ref{178}). We need to check $\int_{0}^{T^*}\|\nabla\cdot(n\nabla c)_{t}\|_{L^2}^2\, dt$. Through Lemma \ref{o}, we have
\begin{equation*}
\begin{split}
&\int_{0}^{T^*}\|\nabla\cdot(n\nabla c)_{t}\|_{L^2}^2\, dt\\
\leq&C\int_{0}^{T^*}(\|\nabla n_{t}\|_{L^2}^2\|\nabla c\|_{L^{\infty}}^2+\|n_{t}\|_{L^4}^2\|\Delta c\|_{L^4}^2+\|\nabla n\|_{L^{\infty}}^2\|\nabla c_{t}\|_{L^2}^2+\|n\|_{L^{\infty}}^2\|\Delta c_{t}\|_{L^2}^2)\, dt\\
\leq&C.
\end{split}
\end{equation*}
Combining the above estimate and (\ref{178}), we get $\int_{0}^{T^*}\|\nabla^2 n_{t}\|_{L^2}^2\, dt\leq C$. Using the $L^p$ theory to equation $(\ref{A})_{1}$, together with Lemmas \ref{n} and \ref{o} to get
\begin{equation}\label{B400}
\begin{split}
&\int_{0}^{T^*}\|\nabla^2 n\|_{H^2}^2\, dt\\
\leq&C\int_{0}^{T^*}(\|n_{t}\|_{H^2}^2+\|\dive(n\mathbf{u})\|_{H^2}^2+\|\nabla\cdot(n\nabla c)\|_{H^2}^2)\, dt\\
\leq&C\int_{0}^{T^*}(1+\|\nabla^2 n_{t}\|_{L^2}^2+\|\mathbf{u}\|_{L^{\infty}}^2\|\nabla^3 n\|_{L^2}^2+\|\nabla\mathbf{u}\|_{L^4}^2\|\nabla^2 n\|_{L^4}^2+\|\nabla n\|_{L^{\infty}}^2\|\nabla^2\mathbf{u}\|_{L^2}^2)\, dt\\
&+C\int_{0}^{T^*}(\|n\|_{L^{\infty}}^2\|\nabla^3\mathbf{u}\|_{L^2}^2+\|\nabla^3 n\|_{L^2}^2\|\nabla c\|_{L^{\infty}}^2+\|\nabla^2 n\|_{L^4}^2\|\nabla^2 c\|_{L^4}^2)\, dt\\
&+C\int_{0}^{T^*}(\|\nabla n\|_{L^{\infty}}^2\|\nabla^3 c\|_{L^2}^2+\|n\|_{L^{\infty}}^2\|\nabla^4 c\|_{L^2}^2)\, dt\\
\leq&C.
\end{split}
\end{equation}
\end{proof}

\begin{lemma}\label{q}
The following estimate holds
\begin{equation}\label{210}
\sup_{0\leq t\leq T^*}\|\nabla h_{t}\|_{L^2}^2+\int_{0}^{T^*}\|h_{tt}\|_{L^2}^2\, dt\leq C.
\end{equation}
\end{lemma}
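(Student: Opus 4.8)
The plan is to use the transport structure of the continuity equation $(\ref{A})_{3}$ together with the higher-order bounds already established in Lemmas \ref{n}--\ref{p} (with $\mathbf{v}=\mathbf{u}$, so that all quantities controlled through $\Phi$ are in fact bounded by $C$), so that the two quantities in (\ref{210}) can be estimated directly, with no Gronwall argument. The bookkeeping point is to keep track of which norms are available uniformly on $[0,T^*]$ and which only in $L^2(0,T^*)$.

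First I would bound $\sup_{0\le t\le T^*}\|\nabla h_{t}\|_{L^2}^2$. From $(\ref{A})_{3}$, $h_{t}=-\mathbf{u}\cdot\nabla h-h\dive\mathbf{u}$, so
\begin{equation*}
\nabla h_{t}=-\nabla\mathbf{u}\cdot\nabla h-\mathbf{u}\cdot\nabla^{2}h-\dive\mathbf{u}\,\nabla h-h\,\nabla\dive\mathbf{u}.
\end{equation*}
Taking the $L^2$ norm, using H\"older's inequality, the Gagliardo--Nirenberg inequality to control $\|\nabla\mathbf{u}\|_{L^4}$ by $\|\nabla\mathbf{u}\|_{L^2}^{1/2}\|\nabla^{2}\mathbf{u}\|_{L^2}^{1/2}$, and Lemma \ref{a} to control $\|\mathbf{u}\|_{L^\infty}$ by $\|\mathbf{u}\|_{L^q}$ and $\|\nabla\mathbf{u}\|_{L^4}$, I get
\begin{equation*}
\|\nabla h_{t}\|_{L^2}\le C\big(\|\nabla\mathbf{u}\|_{L^4}+\|\dive\mathbf{u}\|_{L^4}\big)\|\nabla h\|_{L^4}+C\|\mathbf{u}\|_{L^\infty}\|\nabla^{2}h\|_{L^2}+C\|h\|_{L^\infty}\|\nabla^{2}\mathbf{u}\|_{L^2}.
\end{equation*}
Every factor on the right-hand side is bounded uniformly on $[0,T^*]$: $\|\nabla h\|_{L^4}$, $\|h\|_{L^\infty}$ and $\|\nabla^{2}h\|_{L^2}$ by (\ref{202}) and Lemma \ref{n}; $\|\nabla\mathbf{u}\|_{L^4}$, $\|\dive\mathbf{u}\|_{L^4}$, $\|\nabla^{2}\mathbf{u}\|_{L^2}$ and $\|\mathbf{u}\|_{L^\infty}$ through the $L^\infty(0,T^*;L^q\cap D^1\cap D^2)$ bound on $\mathbf{u}$ in (\ref{201})--(\ref{202}). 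Hence $\sup_{0\le t\le T^*}\|\nabla h_{t}\|_{L^2}^2\le C$; note that this automatically includes the value at $t=0$, which is finite by the initial regularity (\ref{303})--(\ref{304}).

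Second, differentiating $(\ref{A})_{3}$ in time gives $h_{tt}=-\dive(h_{t}\mathbf{u})-\dive(h\mathbf{u}_{t})=-\mathbf{u}\cdot\nabla h_{t}-h_{t}\dive\mathbf{u}-\mathbf{u}_{t}\cdot\nabla h-h\dive\mathbf{u}_{t}$, so that, with $q=4/\alpha$ and $1/q+1/q'=1/2$,
\begin{equation*}
\|h_{tt}\|_{L^2}\le\|\mathbf{u}\|_{L^\infty}\|\nabla h_{t}\|_{L^2}+\|\dive\mathbf{u}\|_{L^4}\|h_{t}\|_{L^4}+\|\mathbf{u}_{t}\|_{L^q}\|\nabla h\|_{L^{q'}}+\|h\|_{L^\infty}\|\dive\mathbf{u}_{t}\|_{L^2}.
\end{equation*}
Squaring and integrating over $[0,T^*]$: the first two terms are controlled by the uniform bounds from Step 1, Lemma \ref{c} and (\ref{202}), and for the last two I would use $\int_{0}^{T^*}\|\mathbf{u}_{t}\|_{L^q}^2\,dt\le C$ from (\ref{B305}) and $\int_{0}^{T^*}\|\nabla\mathbf{u}_{t}\|_{L^2}^2\,dt\le C$ from (\ref{B301}), together with the uniform bounds on $\|\nabla h\|_{L^{q'}}$ (note $q'\in[2,q]$) and $\|h\|_{L^\infty}$. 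This yields $\int_{0}^{T^*}\|h_{tt}\|_{L^2}^2\,dt\le C$, and combining with Step 1 gives (\ref{210}).

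The only delicate point is the treatment of the term $h\,\nabla\dive\mathbf{u}$ in $\nabla h_{t}$: it must be absorbed into $\sup_{t}\|\nabla^{2}\mathbf{u}\|_{L^2}$ from (\ref{202}) rather than into $\nabla^{2}\mathbf{u}\in L^2(0,T^*;L^q)$ or $\nabla^{3}\mathbf{u}\in L^2(0,T^*;L^2)$, which are only time-integrable; and the terms involving $\mathbf{u}_{t}$ are used only inside the $L^2(0,T^*)$ estimate of $h_{tt}$, which matches the regularity $\mathbf{u}_{t}\in L^2(0,T^*;L^q\cap D^1)$ already available. No smallness assumption or Gronwall argument is required.
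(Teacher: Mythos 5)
Your proposal is correct and follows essentially the same route as the paper: bound $\nabla h_t$ pointwise in time by applying $\nabla$ to the continuity equation and using the uniform $H^2$ bounds on $h$ and $\mathbf{u}$, then bound $h_{tt}$ in $L^2(0,T^*;L^2)$ from the time-differentiated continuity equation using the $L^2(0,T^*;L^q)$ and $L^2(0,T^*;D^1)$ bounds on $\mathbf{u}_t$. The only cosmetic difference is that you pair the $h_t\dive\mathbf{u}$ term in $L^4\times L^4$, where the paper uses $L^2\times L^\infty$ (and the paper's displayed estimate contains an evident typo $\|\mathbf{u}\|_{L^q}$ for $\|\mathbf{u}_t\|_{L^q}$ which you have correctly repaired); both choices are harmless since the estimate is only integrated in time.
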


\begin{proof}
Taking the operator $\nabla$ on equation $(\ref{A})_{3}$, together with Lemma \ref{n} to get
\begin{equation}\label{156}
\|\nabla h_{t}\|_{L^2}^2\leq C(\|\mathbf{u}\|_{L^{\infty}}^2\|\nabla^2 h\|_{L^2}^2+\|\nabla\mathbf{u}\|_{L^4}^2\|\nabla h\|_{L^4}^2+\|h\|_{L^{\infty}}^2\|\nabla^2\mathbf{u}\|_{L^2}^2)\leq C.
\end{equation}
Taking the derivative with respective to $t$ on equation $(\ref{A})_{3}$, together with (\ref{156}) and Lemma \ref{n}, we have
\begin{equation}\label{157}
\begin{split}
&\int_{0}^{T^*}\|h_{tt}\|_{L^2}^2\, dt\\
\leq&C\int_{0}^{T^*}(\|\mathbf{u}\|_{L^{\infty}}^2\|\nabla h_{t}\|_{L^2}^2+\|h_{t}\|_{L^2}^2\|\dive\mathbf{u}\|_{L^{\infty}}^2+\|\mathbf{u}\|_{L^q}^2\|\nabla h\|_{L^p}^2+\|h\|_{L^{\infty}}^2\|\dive\mathbf{u}_{t}\|_{L^2}^2)\, dt\\
\leq&C,
\end{split}
\end{equation}
where $q=4/\alpha, 1/q+1/p=1/2$. The estimates (\ref{156}) and (\ref{157}) yield (\ref{210}).
\end{proof}

\begin{lemma}\label{r}
The following estimate holds
\begin{equation}\label{z211}
\sup_{0\leq t\leq T^*}\|\nabla\mathbf{u}_{t}\|_{L^2}^2+\int_{0}^{T^*}(\|\sqrt{h}\mathbf{u}_{tt}\|_{L^2}^2+\|\nabla^2\mathbf{u}_{t}\|_{L^2}^2)\, dt\leq C.
\end{equation}
\end{lemma}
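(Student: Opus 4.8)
The plan is to push the energy scheme of Lemma~\ref{h} one derivative higher in time. Set $\mathbf{v}=\mathbf{u}$ in $(\ref{4})_1$ (so that we work with the genuine system $(\ref{A})$), differentiate the momentum equation in $t$,
\begin{equation*}
h\mathbf{u}_{tt}+h_t\mathbf{u}_t+\partial_t(h\mathbf{u}\cdot\nabla\mathbf{u})+L\mathbf{u}_t+\partial_t(h^2\nabla n)+\tfrac12\partial_t\big((1+n)\nabla h^2\big)=0,
\end{equation*}
multiply by $\mathbf{u}_{tt}$ and integrate over $\mathbb{R}^2$; integrating the operator $L$ by parts gives
\begin{equation*}
\int h|\mathbf{u}_{tt}|^2\,dx+\frac{\mu}{2}\frac{d}{dt}\|\nabla\mathbf{u}_t\|_{L^2}^2+\frac{\mu+\lambda}{2}\frac{d}{dt}\|\dive\mathbf{u}_t\|_{L^2}^2=-\int\Big(h_t\mathbf{u}_t+\partial_t(h\mathbf{u}\cdot\nabla\mathbf{u})+\partial_t(h^2\nabla n)+\tfrac12\partial_t\big((1+n)\nabla h^2\big)\Big)\cdot\mathbf{u}_{tt}\,dx.
\end{equation*}

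On the right-hand side I would expand each time derivative and, wherever possible, integrate spatial derivatives by parts so that every factor $\mathbf{u}_{tt}$ is paired with $\sqrt h$; the resulting $\|\sqrt h\mathbf{u}_{tt}\|_{L^2}$ is then absorbed by the left with a small constant, and the remaining factors are controlled by H\"older's, the Gagliardo--Nirenberg (Lemma~\ref{a}) and the Caffarelli--Kohn--Nirenberg (Lemma~\ref{f}) inequalities, the last to pass to $L^{4/\alpha}$. All inputs are already available: $\|(h,h^2)\|_{L^\infty},\|\nabla h\|_{L^p},\|h_t\|_{L^p}$ (Lemma~\ref{c}); $\|\nabla^2 h\|_{L^2}$ and $\int_0^{T^*}\|\nabla^3\mathbf{u}\|_{L^2}^2$ (Lemma~\ref{n}); $\|\nabla h_t\|_{L^2}$ and $\int_0^{T^*}\|h_{tt}\|_{L^2}^2$ (Lemma~\ref{q}); $\|n\|_{H^3},\|\nabla n_t\|_{L^2}$ and $\int_0^{T^*}(\|n_{tt}\|_{L^2}^2+\|\nabla^2 n_t\|_{L^2}^2)$ (Lemmas~\ref{s}, \ref{o}, \ref{p}); and the zeroth/first level bounds on $\mathbf{u}$ and the weighted quantities of (\ref{202}), Lemmas~\ref{j} and~\ref{H}. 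This produces a differential inequality $\frac{d}{dt}\|\nabla\mathbf{u}_t\|_{L^2}^2+\|\sqrt h\mathbf{u}_{tt}\|_{L^2}^2\le C\,a(t)\|\nabla\mathbf{u}_t\|_{L^2}^2+C\,b(t)$ with $a,b\in L^1(0,T^*)$; Gronwall's inequality then yields $\sup_{[0,T^*]}\|\nabla\mathbf{u}_t\|_{L^2}^2+\int_0^{T^*}\|\sqrt h\mathbf{u}_{tt}\|_{L^2}^2\,dt\le C$, the finiteness of $\|\nabla\mathbf{u}_t(0)\|_{L^2}$ being precisely what the compatibility condition (\ref{A3}) ensures (it identifies $\mathbf{u}_t(0)=-\mathbf{u}_0\cdot\nabla\mathbf{u}_0-g$ where $h_0>0$, with the bound uniform along the approximating domains).

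The remaining bound on $\|\nabla^2\mathbf{u}_t\|_{L^2}$ follows from the $L^2$ elliptic estimate for $L$ applied to
\begin{equation*}
L\mathbf{u}_t=-h\mathbf{u}_{tt}-h_t\mathbf{u}_t-\partial_t(h\mathbf{u}\cdot\nabla\mathbf{u})-\partial_t(h^2\nabla n)-\tfrac12\partial_t\big((1+n)\nabla h^2\big),
\end{equation*}
which bounds $\|\nabla^2\mathbf{u}_t\|_{L^2}$ by the $L^2$ norm of the right-hand side; estimating that by H\"older and Gagliardo--Nirenberg, in particular $\|h\mathbf{u}_{tt}\|_{L^2}\le\|h\|_{L^\infty}^{1/2}\|\sqrt h\mathbf{u}_{tt}\|_{L^2}$, and integrating in $t$ gives $\int_0^{T^*}\|\nabla^2\mathbf{u}_t\|_{L^2}^2\,dt\le C$, so that (\ref{z211}) follows.

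The main obstacle is the vacuum: since $h$ may vanish, $\mathbf{u}_{tt}$ is controlled only through $\|\sqrt h\mathbf{u}_{tt}\|_{L^2}$, while $h_t=-\dive(h\mathbf{u})$ is not of the form $\sqrt h\times(\text{bounded})$, so the terms $-\int h_t\mathbf{u}_t\cdot\mathbf{u}_{tt}\,dx$ and $-\int h_t\,\mathbf{u}\cdot\nabla\mathbf{u}\cdot\mathbf{u}_{tt}\,dx$ (which combine into $-\int h_t\dot{\mathbf{u}}\cdot\mathbf{u}_{tt}\,dx$) cannot be treated by putting $\mathbf{u}_{tt}$ in an unweighted space, and a straightforward integration by parts in $x$ merely moves the difficulty around. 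I expect that handling them requires combining the continuity equation with a time integration by parts — using $\mathbf{u}_t\cdot\mathbf{u}_{tt}=\tfrac12\partial_t|\mathbf{u}_t|^2$ to trade $h_t$ for $h_{tt}$, bounded in $L^2(0,T^*;L^2)$ by Lemma~\ref{q} — together with the $|x|^{\alpha/2}$-weighted a priori estimates of Lemmas~\ref{j} and~\ref{H}, in the same spirit as their proofs. Getting this reorganization to close without invoking any unweighted norm of $\mathbf{u}_{tt}$ is where the real work lies; the other terms on the right-hand side are routine.
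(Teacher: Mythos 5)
Your plan coincides with the paper's proof: after differentiating the momentum equation in $t$, testing against $\mathbf{u}_{tt}$, and integrating by parts the elliptic operator, the paper handles the problematic $h_t$-terms exactly by the time integration by parts you flag as ``where the real work lies'' --- it collects $-\frac12\int h_t|\mathbf{u}_t|^2\,dx - \int h_t\mathbf{u}\cdot\nabla\mathbf{u}\cdot\mathbf{u}_t\,dx + \frac12\int((1+n)h^2)_t\dive\mathbf{u}_t\,dx$ into a total time derivative $\frac{d}{dt}J_1$ (absorbed using $h_t=-\dive(h\mathbf{u})$ and a spatial integration by parts, so that $J_1\le\eta\|\nabla\mathbf{u}_t\|_{L^2}^2+C_\eta$) and treats the by-products $\frac12\int h_{tt}|\mathbf{u}_t|^2\,dx$, $\int(h_t\mathbf{u}\cdot\nabla\mathbf{u})_t\cdot\mathbf{u}_t\,dx$, etc.\ with the $L^2(0,T^*)$ bound on $h_{tt}$ from Lemma \ref{q}, then Gronwall and the $L^2$-elliptic estimate for $L\mathbf{u}_t$, exactly as you outline. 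The only cosmetic difference is a sign convention for $\dot{\mathbf{u}}(0)$ versus $g$, which is immaterial for the $L^2$-norm of $\nabla g$.
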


\begin{proof}
Taking the equation $(\ref{A})_{4}$ with respect to $t$, multiplying the resulting equation by $\mathbf{u}_{tt}$ and integrating over $\mathbb{R}^2$ to get
\begin{equation*}
\begin{split}
&\frac{\mu}{2}\frac{d}{dt}\int |\nabla\mathbf{u}_{t}|^2\, dx+\frac{(\mu+\lambda)}{2}{\frac{d}{dt}}\int(\dive\mathbf{u}_{t})^2\, dx+\int h\mathbf{u}_{tt}^2\, dx\\
=&\frac{d}{dt}\big(-\frac{1}{2}\int h_{t}|\mathbf{u}_{t}|^2\, dx-\int h_{t}\mathbf{u}\cdot\nabla\mathbf{u}\cdot\mathbf{u}_{t}\, dx+\frac{1}{2}\int\big((1+n)h^2\big)_{t}\dive\mathbf{u}_{t}\, dx\big)\\
&+\frac{1}{2}\int h_{tt}|\mathbf{u}_{t}|^2\, dx+\int (h_{t}\mathbf{u}\cdot\nabla\mathbf{u})_{t}\cdot\mathbf{u}_{t}\, dx-\int h\mathbf{u}_{t}\cdot\nabla\mathbf{u}\cdot\mathbf{u}_{tt}\, dx-\int h\mathbf{u}\cdot\nabla\mathbf{u}_{t}\cdot\mathbf{u}_{tt}\, dx\\
&-\frac{1}{2}\int\big((1+n)h^2\big)_{tt}\dive\mathbf{u}_{t}\, dx-\frac{1}{2}\int (h^2\nabla n)_{t}\cdot\mathbf{u}_{tt}\, dx\\
:=&\frac{d}{dt}J_{1}+\sum_{i=2}^{7}J_{i}.
\end{split}
\end{equation*}
In the following, we estimate $J_{i}\ (i=1,\cdots, 7)$ respectively. First we consider $J_{1}$. Through the equation $(\ref{A})_{3}$ and integrating by parts, we have
\begin{equation*}
\begin{split}
J_{1}\leq&\|h\|_{L^{\infty}}^{\frac{1}{2}}\|\mathbf{u}\|_{L^{\infty}}\|\sqrt{h}\mathbf{u}_{t}\|_{L^2}\|\nabla\mathbf{u}_{t}\|_{L^2}+\|\sqrt{h}\mathbf{u}\|_{L^{\infty}}\|\sqrt{h}\mathbf{u}_{t}\|_{L^2}\|\nabla\mathbf{u}\|_{L^4}^2\\
&+\|h\|_{L^{\infty}}^{\frac{1}{2}}\|\sqrt{h}\mathbf{u}_{t}\|_{L^2}\|\nabla^2\mathbf{u}\|_{L^2}\|\mathbf{u}\|_{L^{\infty}}^2+\|h\mathbf{u}^2\|_{L^{\infty}}\|\nabla\mathbf{u}\|_{L^2}\|\nabla\mathbf{u}_{t}\|_{L^2}\\
&+\|n_{t}\|_{L^2}\|h\|_{L^{\infty}}^2\|\nabla\mathbf{u}_{t}\|_{L^2}+\|(1+n)h\|_{L^{\infty}}\|h_{t}\|_{L^2}\|\dive\mathbf{u}_{t}\|_{L^2}\\
\leq&\eta\|\nabla\mathbf{u}_{t}\|_{L^2}^2+C_{\eta}.
\end{split}
\end{equation*}
The estimates of $J_{2}$ to $J_{5}$ can refer to \cite{L2012}. Here we only give the result. i.e.
\begin{equation*}
J_{2}+J_{3}+J_{4}+J_{5}\leq\varepsilon\|\sqrt{h}\mathbf{u}_{tt}\|_{L^2}^2+C_{\varepsilon}(\|\mathbf{u}_{t}\|_{L^q}^2+\|\nabla\mathbf{u}_{t}\|_{L^2}^4+\|h_{tt}\|_{L^2}^2+1).
\end{equation*}
Then we estimate $J_{6}$. By Lemmas \ref{s} and \ref{q}, we have
\begin{equation*}
\begin{split}
J_{6}\leq&C(\|n_{tt}\|_{L^2}\|h\|_{L^{\infty}}^2\|\dive\mathbf{u}_{t}\|_{L^2}+\|n_{t}\|_{L^4}\|h\|_{L^{\infty}}\|h_{t}\|_{L^4}\|\dive\mathbf{u}_{t}\|_{L^2}\\
&+\|(1+n)\|_{L^{\infty}}\|h_{t}\|_{L^4}^2\|\dive\mathbf{u}_{t}\|_{L^2}+\|(1+n)h\|_{L^{\infty}}\|h_{tt}\|_{L^2}\|\dive\mathbf{u}_{t}\|_{L^2})\\
\leq&C(1+\|\dive\mathbf{u}_{t}\|_{L^2}^2+\|n_{tt}\|_{L^2}^2+\|h_{tt}\|_{L^2}^2).
\end{split}
\end{equation*}
Finally, we estimate $J_{7}$. Using Lemmas \ref{s} and \ref{q} shows that
\begin{equation*}
\begin{split}
J_{7}\leq&C(\|\sqrt{h}\mathbf{u}_{tt}\|_{L^2}\|h_{t}\|_{L^4}\|\nabla n\|_{L^4}\|\sqrt{h}\|_{L^{\infty}}+\|\sqrt{h}\mathbf{u}_{tt}\|_{L^2}\|\nabla n_{t}\|_{L^2}\|\sqrt{h}\|_{L^{\infty}}^3)\\
\leq&\varepsilon\|\sqrt{h}\mathbf{u}_{tt}\|_{L^2}^2+C_{\varepsilon}.
\end{split}
\end{equation*}
Combining all these estimates of $J_{i}\ (i=1,\cdots, 7)$, choosing $\eta=\mu/4$ and $\varepsilon=1/4$, we have
\begin{equation}\label{211}
\begin{split}
&\frac{d}{dt}\int |\nabla\mathbf{u}_{t}|^2\, dx+\frac{d}{dt}\int(\dive\mathbf{u}_{t})^2\, dx+\int h\mathbf{u}_{tt}^2\, dx\\
\leq&C(\|\nabla\mathbf{u}_{t}\|_{L^2}^4+\|\mathbf{u}_{t}\|_{L^q}^2+\|h_{tt}\|_{L^2}^2+\|n_{tt}\|_{L^2}^2+1).
\end{split}
\end{equation}
Taking into account the compatibility condition (\ref{A3}), we can define
\begin{equation*}
\dot{\mathbf{u}}(x, t=0)=g,
\end{equation*}
which implies
\begin{equation*}
\begin{split}
\|\nabla\mathbf{u}_{t}(\cdot, 0)\|_{L^2}^2=&\|\nabla\dot{\mathbf{u}}(\cdot, 0)-\nabla(\mathbf{u}_{0}\cdot\nabla\mathbf{u}_{0})\|_{L^2}^2\\
\leq&\|\nabla g\|_{L^2}^2+C(\|\nabla\mathbf{u}_{0}\|_{L^2}^2+\|\mathbf{u}_{0}\|_{L^{\infty}}^2)\|\nabla^2\mathbf{u}_{0}\|_{L^2}^2\\
\leq&\|\nabla g\|_{L^2}^2+C.
\end{split}
\end{equation*}
Using the Gronwall's inequality to inequality (\ref{211}), together with Lemmas \ref{s} and \ref{q}, we have
\begin{equation}\label{158}
\sup_{0\leq t\leq T^*}\|\nabla\mathbf{u}_{t}\|_{L^2}^2+\int_{0}^{T^*}\|\sqrt{h}\mathbf{u}_{tt}\|_{L^2}^2\, dt\leq C.
\end{equation}
Differentiating the equation $(\ref{A})_{4}$ with respect to $t$, using the $L^p$ estimate to the resulting equation, together with Lemma \ref{n}, \ref{q}, and (\ref{158}) to get
\begin{equation*}
\begin{split}
&\int_{0}^{T^*}\|\nabla^2\mathbf{u}_{t}\|_{L^2}^2\, dt\\
\leq&C\int_{0}^{T^*}(\|h_{t}\|_{L^p}^2\|\mathbf{u}_{t}\|_{L^q}^2+\|h\|_{L^{\infty}}\|\sqrt{h}\mathbf{u}_{tt}\|_{L^2}^2+\|h_{t}\|_{L^2}^2\|\mathbf{u}\|_{L^{\infty}}^2\|\nabla\mathbf{u}\|_{L^2}^2+\|h\|_{L^{\infty}}^2\|\mathbf{u}_{t}\|_{L^q}^2\|\nabla\mathbf{u}\|_{L^p}^2)\, dt\\
&+C\int_{0}^{T^*}(\|h\|_{L^{\infty}}^2\|\mathbf{u}\|_{L^{\infty}}^2\|\nabla\mathbf{u}_{t}\|_{L^2}^2+\|h\|_{L^{\infty}}^2\|h_{t}\|_{L^4}^2\|\nabla n\|_{L^4}^2+\|h\|_{L^{\infty}}^4\|\nabla n_{t}\|_{L^2}^2)\, dt\\
&+C\int_{0}^{T^*}(\|n_{t}\|_{L^4}^2\|h\|_{L^{\infty}}^2\|\nabla h\|_{L^4}^2+\|(1+n)\|_{L^{\infty}}^2\|h_{t}\|_{L^2}^2\|\nabla h\|_{L^2}^2+\|(1+n)h\|_{L^{\infty}}^2\|\nabla h_{t}\|_{L^2}^2)\, dt\\
\leq&C,
\end{split}
\end{equation*}
where $q=4/\alpha, 1/q+1/p=1/2$. The above estimate combined with (\ref{158}) gives (\ref{z211}).
\end{proof}

\begin{lemma}\label{t}
The following estimate holds
\begin{equation}\label{219}
\sup_{0\leq t\leq T^*}(\|\nabla^3 h\|_{L^2}^2+\|\nabla^2 h_{t}\|_{L^2}^2+\|\nabla^3\mathbf{u}\|_{L^2}^2)+\int_{0}^{T^*}\|\nabla^4\mathbf{u}\|_{L^2}^2\, dt\leq C.
\end{equation}
\end{lemma}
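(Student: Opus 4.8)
The plan is to propagate the third spatial derivatives of $h$ and $h^2$ along the flow, to close the resulting differential inequality by an elliptic estimate for $\mathbf{u}$ one order higher than (\ref{147}), and then to read off the remaining pieces; all quantities below top order are supplied by Proposition \ref{l} together with Lemmas \ref{c}--\ref{j} and \ref{n}--\ref{r}.

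First I would apply $\nabla^3$ to $(\ref{A})_{3}$ and to its companion (\ref{5}), test the results against $\nabla^3 h$ and $\nabla^3 h^2$, and integrate over $\mathbb{R}^2$. The convective terms $\mathbf{u}\cdot\nabla(\nabla^3 h)$ and $\mathbf{u}\cdot\nabla(\nabla^3 h^2)$ are absorbed by integration by parts into $\|\dive\mathbf{u}\|_{L^\infty}$ times the energy. Every remaining term is a finite sum of products $\nabla^{j}\mathbf{u}\,\nabla^{4-j}(h,h^2)$ with $1\le j\le 3$, which I would estimate by H\"older and the Gagliardo--Nirenberg inequality (Lemma \ref{a}), using $\|(h,h^2)\|_{L^\infty}$ and $\|\nabla(h,h^2)\|_{L^p}$ (Lemma \ref{c}), $\|\nabla^2(h,h^2)\|_{L^2}$ (Lemma \ref{n}), and $\|\nabla\mathbf{u}\|_{L^\infty}$, $\|\nabla^2\mathbf{u}\|_{L^2\cap L^q}$ (Lemmas \ref{h}, \ref{j} and Sobolev embedding). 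The only contributions that cannot be bounded by already-known quantities are the genuinely top-order term $\int h\,\nabla^4\mathbf{u}\,\nabla^3 h\,dx$ (and its $h^2$ analogue), which I keep on the right after bounding it by $\|h\|_{L^\infty}\|\nabla^4\mathbf{u}\|_{L^2}\|\nabla^3 h\|_{L^2}$, and the commutator $\int\nabla h\,\nabla^3\mathbf{u}\,\nabla^3 h\,dx$, controlled by $\|\nabla h\|_{L^\infty}\|\nabla^3\mathbf{u}\|_{L^2}\|\nabla^3 h\|_{L^2}$ with $\|\nabla h\|_{L^\infty}\lesssim 1+\|\nabla^3 h\|_{L^2}^{1/2}$ by interpolation. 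This yields
\begin{equation*}
\frac{d}{dt}\big(\|\nabla^3 h\|_{L^2}^2+\|\nabla^3 h^2\|_{L^2}^2\big)\le C\big(1+\|\nabla\mathbf{u}\|_{W^{1,\infty}}\big)\big(\|\nabla^3 h\|_{L^2}^2+\|\nabla^3 h^2\|_{L^2}^2\big)+C\|\nabla^4\mathbf{u}\|_{L^2}^2+C\|\nabla^3\mathbf{u}\|_{L^2}^2+C,
\end{equation*}
the coefficient in parentheses being integrable in time by Lemma \ref{j}.

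Next I would rewrite $(\ref{A})_{4}$ as $L\mathbf{u}=-h\dot{\mathbf{u}}-h^2\nabla n-\frac{1}{2}(1+n)\nabla h^2$ and apply the $H^2$ elliptic estimate for $L$ (as in (\ref{147}), one order higher), using $\dot{\mathbf{u}}=\mathbf{u}_{t}+\mathbf{u}\cdot\nabla\mathbf{u}$, to get
\begin{equation*}
\|\nabla^4\mathbf{u}\|_{L^2}^2\le C\big(1+\|\nabla^2\mathbf{u}_{t}\|_{L^2}^2+\|\nabla^3\mathbf{u}\|_{L^2}^2+\|\nabla^3 h\|_{L^2}^2+\|\nabla^3 h^2\|_{L^2}^2\big),
\end{equation*}
the worst terms being $\|h\nabla^2\mathbf{u}_{t}\|_{L^2}$, $\|h\,\mathbf{u}\cdot\nabla^3\mathbf{u}\|_{L^2}$, $\|h^2\nabla^3 n\|_{L^2}$ and $\|(1+n)h\nabla^3 h^2\|_{L^2}$, controlled by Lemmas \ref{r}, \ref{o} and \ref{n}, with all lower-order factors from Lemmas \ref{c}--\ref{j}. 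A one-order-lower elliptic estimate, $\|\nabla^3\mathbf{u}\|_{L^2}\le C\|\nabla(L\mathbf{u})\|_{L^2}$, bounds $\|\nabla^3\mathbf{u}\|_{L^2}$ at each time by $\|\nabla\dot{\mathbf{u}}\|_{L^2}$ (Lemma \ref{r} plus $\|\nabla(\mathbf{u}\cdot\nabla\mathbf{u})\|_{L^2}\le C$ from Lemma \ref{h}), $\|\nabla^2(h,h^2,n)\|_{L^2}$ (Lemmas \ref{n}, \ref{o}) and the borderline product $\|\nabla h\,\dot{\mathbf{u}}\|_{L^2}$, the last handled by H\"older together with the Caffarelli--Kohn--Nirenberg bound $\|\dot{\mathbf{u}}\|_{L^{4/\alpha}}\le C\||x|^{\alpha/2}\nabla\dot{\mathbf{u}}\|_{L^2}$ (Lemma \ref{f}) and the weighted estimates of Lemma \ref{j}. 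Feeding the $\nabla^4\mathbf{u}$ and $\nabla^3\mathbf{u}$ bounds into the differential inequality, using $\|\nabla^3\mathbf{u}\|_{L^2}^2\le C\|\nabla^2\mathbf{u}\|_{L^2}\|\nabla^4\mathbf{u}\|_{L^2}$ and $\varepsilon$-absorption to remove $\|\nabla^4\mathbf{u}\|_{L^2}^2$ from the right-hand side, and invoking $\int_0^{T^*}\|\nabla^2\mathbf{u}_{t}\|_{L^2}^2\,dt\le C$ (Lemma \ref{r}) and $\int_0^{T^*}\|\nabla\mathbf{u}\|_{W^{1,\infty}}\,dt\le C$ (Lemma \ref{j} and Sobolev), Gronwall's inequality gives $\sup_{[0,T^*]}(\|\nabla^3 h\|_{L^2}^2+\|\nabla^3 h^2\|_{L^2}^2)\le C$, the initial value being finite since $h_{0}-\widetilde h\in H^3$. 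This returns $\int_0^{T^*}\|\nabla^4\mathbf{u}\|_{L^2}^2\,dt\le C$ and, through the elliptic estimate and Lemma \ref{f}, $\sup_{[0,T^*]}\|\nabla^3\mathbf{u}\|_{L^2}^2\le C$; finally $\nabla^2 h_{t}=-\nabla^2\dive(h\mathbf{u})$ together with the bounds just obtained gives $\sup_{[0,T^*]}\|\nabla^2 h_{t}\|_{L^2}^2\le C$, completing (\ref{219}).

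The main obstacle is the closed feedback loop between $\nabla^3 h$ and $\nabla^4\mathbf{u}$: the transport estimate for the height inevitably produces $\|\nabla^4\mathbf{u}\|_{L^2}$ (and the commutator $\nabla h\cdot\nabla^3\mathbf{u}$) on its right-hand side, while the only control of $\nabla^4\mathbf{u}$ is elliptic and in turn requires $\nabla^3\mathbf{u}$ and $\nabla^2\mathbf{u}_{t}$, the latter only bounded in $L^2$ in time. Closing the argument therefore rests on keeping $\|\nabla^4\mathbf{u}\|_{L^2}^2$ strictly on the forcing side and on first securing the uniform-in-time bound for $\|\nabla^3\mathbf{u}\|_{L^2}$ via the lower-order elliptic estimate. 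The secondary difficulty, as throughout the paper, is that the critical two-dimensional embedding leaves borderline products such as $\nabla h\cdot\dot{\mathbf{u}}$, $\nabla^2\mathbf{u}\cdot\nabla^2 h$ and $\mathbf{u}\cdot\nabla^3\mathbf{u}$ uncontrolled unless one interpolates carefully with Gagliardo--Nirenberg and uses the weighted Caffarelli--Kohn--Nirenberg inequality of Lemma \ref{f}.
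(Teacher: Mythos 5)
Your opening step — the transport energy estimate for $\nabla^3 h$, closed by the $H^2$ elliptic estimate for $\mathbf{u}$ one order above (\ref{147}), followed by Gronwall and a time-integration of the elliptic bound to get $\int_0^{T^*}\|\nabla^4\mathbf{u}\|_{L^2}^2\,dt\le C$ — is exactly the paper's route to (\ref{161}) and (\ref{218}); it is correct, and the differential inequality you write is essentially (\ref{214})--(\ref{215}) (with the caveat that you should avoid $\|\nabla^2\mathbf{u}\|_{L^\infty}$ as a coefficient, since its time-integrability is not established; the commutator terms can instead be interpolated as in the paper so that only $\|\dive\mathbf{u}\|_{L^\infty}$, $\|\nabla^3\mathbf{u}\|_{L^2}^2$ and $\|\mathbf{u}_t\|_{L^q}^2$ appear as coefficients). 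The concluding step for $\|\nabla^2 h_t\|_{L^2}$ is also correct.

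The genuine gap is in the step that is supposed to give $\sup_{[0,T^*]}\|\nabla^3\mathbf{u}\|_{L^2}^2\le C$. You propose the direct elliptic estimate $\|\nabla^3\mathbf{u}\|_{L^2}\lesssim\|\nabla(L\mathbf{u})\|_{L^2}$, and correctly identify $\|\nabla h\cdot\dot{\mathbf{u}}\|_{L^2}$ as the borderline term, bounding it by $\|\nabla h\|_{L^p}\|\dot{\mathbf{u}}\|_{L^{4/\alpha}}$ with $\|\dot{\mathbf{u}}\|_{L^{4/\alpha}}\lesssim\||x|^{\alpha/2}\nabla\dot{\mathbf{u}}\|_{L^2}$. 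But Lemma \ref{j} only gives $\int_0^{T^*}\||x|^{\alpha/2}\nabla\dot{\mathbf{u}}\|_{L^2}^2\,ds\le C$, not a supremum in time; the same is true of $\|\mathbf{u}_t\|_{L^q}$. The decomposition $\dot{\mathbf{u}}=\mathbf{u}_t+\mathbf{u}\cdot\nabla\mathbf{u}$ does not help, because Lemma \ref{r} controls $\sup\|\nabla\mathbf{u}_t\|_{L^2}$ but gives no uniform $L^{4/\alpha}$ bound for $\mathbf{u}_t$ (the weighted bound $\||x|^{\alpha/2}\nabla\mathbf{u}_t\|_{L^2}$ is nowhere established, and in $\mathbb{R}^2$ the gradient alone does not control an $L^p$ norm). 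Thus the one-order-lower elliptic estimate only yields $\nabla^3\mathbf{u}\in L^2(0,T^*;L^2)$, not $L^\infty(0,T^*;L^2)$, and the argument as written does not close. The paper avoids this by an energy method: after securing $\int_0^{T^*}\|\nabla^4\mathbf{u}\|_{L^2}^2\,dt\le C$, it differentiates $(\ref{A})_4$ in $t$, tests against $\nabla^4\mathbf{u}$, and obtains an inequality of the form $\frac{d}{dt}\|\nabla^3\mathbf{u}\|_{L^2}^2\le C(\|\nabla^4\mathbf{u}\|_{L^2}^2+\text{$L^1$-in-time data})$ (see (\ref{220})), which upon time-integration gives the desired uniform bound (\ref{217}). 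To repair your proof you should replace the elliptic step by this energy identity, or otherwise supply a genuine uniform-in-time bound for $\|\dot{\mathbf{u}}\|_{L^{4/\alpha}}$, which the preceding lemmas do not provide.
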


\begin{proof}
Taking the operator $\nabla^3$ on equation $(\ref{A})_{3}$, using Lemma \ref{n}, it is easy to get
\begin{equation}\label{214}
\begin{split}
\frac{d}{dt}\|\nabla^3 h\|_{L^2}^2\leq&C(\|\nabla^3\mathbf{u}\|_{L^2}^2+\|\nabla\mathbf{u}\|_{L^{\infty}}+1)\|\nabla^3 h\|_{L^2}^2+C(\|\nabla^3\mathbf{u}\|_{L^2}^2+\|\nabla^4\mathbf{u}\|_{L^2}^2+1)\\
\leq&C(\|\nabla^3\mathbf{u}\|_{L^2}^2+\|\nabla\mathbf{u}\|_{L^{\infty}}+\|\mathbf{u}_{t}\|_{L^q}^2+1)\|\nabla^3 h\|_{L^2}^2\\
&+C(\|\nabla^3\mathbf{u}\|_{L^2}^2+\|\mathbf{u}_{t}\|_{L^q}^2+\|\nabla^2\mathbf{u}_{t}\|_{L^2}^2+1),
\end{split}
\end{equation}
where we have used the following fact that
\begin{equation}\label{215}
\begin{split}
&\|\nabla^4\mathbf{u}\|_{L^2}^2\\
\leq&C(\|\mathbf{u}_{t}\|_{L^q}^2\|\nabla^2 h\|_{L^p}^2+\|\nabla h\|_{L^4}^2\|\nabla\mathbf{u}_{t}\|_{L^4}^2+\|h\|_{L^{\infty}}^2\|\nabla^2\mathbf{u}_{t}\|_{L^2}^2+\|\nabla^2 h\|_{L^2}^2\|\mathbf{u}\|_{L^{\infty}}^2\|\nabla\mathbf{u}\|_{L^{\infty}}^2)\\
&+C(\|\nabla h\|_{L^4}^2\|\nabla\mathbf{u}\|_{L^8}^4+\|\nabla h\|_{L^4}^2\|\mathbf{u}\|_{L^{\infty}}^2\|\nabla^2\mathbf{u}\|_{L^4}^2+\|h\|_{L^{\infty}}^2\|\nabla\mathbf{u}\|_{L^{\infty}}^2\|\nabla^2\mathbf{u}\|_{L^2}^2)\\
&+C(\|h\mathbf{u}\|_{L^{\infty}}^2\|\nabla^3\mathbf{u}\|_{L^2}^2+\|\nabla^2 h^2\|_{L^2}^2\|\nabla n\|_{L^{\infty}}^2+\|\nabla h^2\|_{L^4}^2\|\nabla^2 n\|_{L^4}^2)\\
&+C(\|h\|_{L^{\infty}}^4\|\nabla^3 n\|_{L^2}^2+\|\nabla^2 n\|_{L^4}^2\|h\|_{L^{\infty}}^2\|\nabla h\|_{L^4}^2+\|\nabla n\|_{L^{\infty}}^2\|\nabla h\|_{L^4}^4)\\
&+C(\|(1+n)h\|_{L^{\infty}}^2\|\nabla^2 h\|_{L^2}^2)\\
\leq&C(\|\mathbf{u}_{t}\|_{L^q}^2+\|\nabla^2\mathbf{u}_{t}\|_{L^2}^2+\|\mathbf{u}_{t}\|_{L^q}^2\|\nabla^3 h\|_{L^2}^2+\|\nabla^3\mathbf{u}\|_{L^2}^2+1).
\end{split}
\end{equation}
Using the Gronwall's inequality to (\ref{214}), together with Lemmas \ref{n} and \ref{r} to get
\begin{equation}\label{161}
\sup_{0\leq t\leq T^*}\|\nabla^3 h\|_{L^2}^2\leq C.
\end{equation}
Furthermore, integrating (\ref{215}) with respect to $t$, combining with Lemma \ref{n}, \ref{r}, and (\ref{161}) to get
\begin{equation}\label{218}
\int_{0}^{T^*}\|\nabla^4\mathbf{u}\|_{L^2}^2\, dt\leq C.
\end{equation}
Differentiating the equation $(\ref{A})_{4}$ with respect to $t$, multiplying the resulting equation by $\nabla^4\mathbf{u}$ and integrating over $\mathbb{R}^2$ to get
\begin{equation}\label{220}
\begin{split}
&\frac{\mu}{2}\frac{d}{dt}\int |\nabla^3\mathbf{u}|^2\, dx+\frac{(\mu+\lambda)}{2}\frac{d}{dt}\int |\nabla^2(\dive\mathbf{u})|^2\, dx\\
\leq&C(\|\nabla^4\mathbf{u}\|_{L^2}^2+\|h_{t}\|_{L^p}^2\|\mathbf{u}_{t}\|_{L^q}^2+\|h\|_{L^{\infty}}\|\sqrt{h}\mathbf{u}_{tt}\|_{L^2}^2+\|h_{t}\|_{L^4}^2\|\mathbf{u}\|_{L^{\infty}}^2\|\nabla\mathbf{u}\|_{L^4}^2)\\
&+C(\|h\|_{L^{\infty}}\|\sqrt{h}\mathbf{u}_{t}\|_{L^2}^2\|\nabla\mathbf{u}\|_{L^{\infty}}^2+\|h\mathbf{u}\|_{L^{\infty}}^2\|\nabla\mathbf{u}_{t}\|_{L^2}^2+\|h\|_{L^{\infty}}^2\|h_{t}\|_{L^4}^2\|\nabla n\|_{L^4}^2)\\
&+C(\|h\|_{L^{\infty}}^4\|\nabla n_{t}\|_{L^2}^2+\|n_{t}\|_{L^4}^2\|h\|_{L^{\infty}}^2\|\nabla h\|_{L^4}^2+\|(1+n)\|_{L^{\infty}}^2\|h_{t}\|_{L^4}^2\|\nabla h\|_{L^4}^2)\\
&+C\|(1+n)h\|_{L^{\infty}}^2\|\nabla h_{t}\|_{L^2}^2,
\end{split}
\end{equation}
 where $q=4/\alpha, 1/q+1/p=1/2$. According to (\ref{158}), (\ref{218}), Lemma \ref{n}, \ref{s}, and \ref{q}, we have
\begin{equation}\label{217}
\sup_{0\leq t\leq T^*}\|\nabla^3\mathbf{u}\|_{L^2}^2\leq C.
\end{equation}
Taking the operator $\nabla^2$ on equation $(\ref{A})_{3}$, combining with (\ref{161}), (\ref{217}), and Lemma \ref{n} to get
\begin{equation}\label{162}
\begin{split}
&\sup_{0\leq t\leq T^*}\|\nabla^2 h_{t}\|_{L^2}^2\\
\leq&C(\|\nabla^2 h\|_{L^4}^2\|\nabla\mathbf{u}\|_{L^4}^2+\|\nabla h\|_{L^{\infty}}^2\|\nabla^2\mathbf{u}\|_{L^2}^2+\|h\|_{L^{\infty}}^2\|\nabla^3\mathbf{u}\|_{L^2}^2+\|\mathbf{u}\|_{L^{\infty}}^2\|\nabla^3 h\|_{L^2}^2)\\
\leq&C,
\end{split}
\end{equation}
which together with (\ref{161}), (\ref{218}), and (\ref{217}) yield the estimate (\ref{219}).
\end{proof}

\begin{lemma}\label{u}
The following estimate holds
\begin{equation}\label{z201}
\sup_{0\leq t\leq T^*} t(\|\sqrt{h}\mathbf{u}_{tt}\|_{L^2}^2+\|\mathbf{u}_{t}\|_{L^{8/\alpha}\cap D^2}^2+\|\nabla^4\mathbf{u}\|_{L^2}^2)+\int_{0}^{T^*}t\|\nabla\mathbf{u}_{tt}\|_{L^2}^2\, dt\leq C.
\end{equation}
\end{lemma}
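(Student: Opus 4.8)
The plan is to obtain the weighted-in-time bounds in Lemma \ref{u} by revisiting the two differentiated-equation estimates from Lemmas \ref{h} and \ref{r}, but this time multiplying through by $t$ before integrating. The point is that the quantities $\|\sqrt{h}\mathbf{u}_{tt}\|_{L^2}^2$, $\|\mathbf{u}_t\|_{D^2}^2$, $\|\mathbf{u}_t\|_{L^{8/\alpha}}^2$ and $\|\nabla^4\mathbf{u}\|_{L^2}^2$ involve one more derivative (or one more time-derivative) than what we have controlled uniformly up to $t=0$; near $t=0$ these can blow up because the initial data only lie in $D^3$, not $D^4$, and only satisfy the first-order compatibility condition (\ref{A3}), not a second-order one. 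Inserting a factor of $t$ absorbs this singularity in the classical parabolic-smoothing fashion.

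Concretely, first I would differentiate $(\ref{A})_4$ in $t$, multiply by $\mathbf{u}_{tt}$, and repeat the argument that produced (\ref{211}) in Lemma \ref{r}, but now multiply the resulting differential inequality by $t$. This gives
\begin{equation*}
\frac{d}{dt}\big(t\|\nabla\mathbf{u}_t\|_{L^2}^2 + t\|\dive\mathbf{u}_t\|_{L^2}^2\big) + t\int h\mathbf{u}_{tt}^2\,dx \leq \|\nabla\mathbf{u}_t\|_{L^2}^2 + Ct(\|\nabla\mathbf{u}_t\|_{L^2}^4 + \|\mathbf{u}_t\|_{L^q}^2 + \|h_{tt}\|_{L^2}^2 + \|n_{tt}\|_{L^2}^2 + 1),
\end{equation*}
and now every term on the right is integrable on $(0,T^*)$ by Lemmas \ref{s}, \ref{q}, and \ref{r} (the extra $\|\nabla\mathbf{u}_t\|_{L^2}^2$ term comes from $\frac{d}{dt}t = 1$ and is bounded by Lemma \ref{r}), so Gronwall yields $\sup_t t\|\nabla\mathbf{u}_t\|_{L^2}^2 + \int_0^{T^*} t\|\sqrt{h}\mathbf{u}_{tt}\|_{L^2}^2\,dt \leq C$. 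Then, differentiating $(\ref{A})_4$ in $t$ and applying the $L^p$-elliptic estimate exactly as at the end of Lemma \ref{r} but keeping the weight $t$, I get $t\|\nabla^2\mathbf{u}_t\|_{L^2}^2 \leq Ct(\|\sqrt{h}\mathbf{u}_{tt}\|_{L^2}^2 + \text{lower order})$, whence $\sup_t t\|\mathbf{u}_t\|_{D^2}^2 \leq C$ using the already established bounds; the $L^{8/\alpha}$ bound on $\mathbf{u}_t$ then follows from Gagliardo–Nirenberg (Lemma \ref{a}) interpolating $\|\mathbf{u}_t\|_{L^q}$ from Lemma \ref{j} with $\|\nabla\mathbf{u}_t\|_{L^2}$ (no weight needed) — or, if the weight is genuinely needed at this power, from $t^{1/2}\|\mathbf{u}_t\|_{L^{8/\alpha}} \lesssim t^{1/2}\|\mathbf{u}_t\|_{D^2}^{\theta}\|\mathbf{u}_t\|_{L^q}^{1-\theta}$. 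Finally, for the term involving $\nabla\mathbf{u}_{tt}$, I would differentiate $(\ref{A})_4$ twice in $t$ (or once more and use the elliptic estimate on $\mathbf{u}_{tt}$), multiply by $t\mathbf{u}_{tt}$ — wait, more carefully: multiply the once-$t$-differentiated momentum equation's $\mathbf{u}_{tt}$-test identity by $t$ and differentiate, so that $\int_0^{T^*} t\|\nabla\mathbf{u}_{tt}\|_{L^2}^2\,dt$ appears with coefficient from the viscous term and everything on the right is controlled by the bounds just obtained plus Lemma \ref{t}. The $\sup_t t\|\nabla^4\mathbf{u}\|_{L^2}^2$ bound then comes by inserting $t$ into the elliptic estimate (\ref{215}): $t\|\nabla^4\mathbf{u}\|_{L^2}^2 \leq C(t\|\mathbf{u}_t\|_{L^q}^2 + t\|\nabla^2\mathbf{u}_t\|_{L^2}^2 + \cdots) \leq C$.

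The main obstacle I expect is the $\nabla\mathbf{u}_{tt}$ estimate, because it requires either a second time-differentiation of the momentum equation — introducing $\mathbf{u}_{ttt}$ and $n_{ttt}$, $h_{ttt}$-type terms that are not obviously controlled — or a subtler integration-by-parts/elliptic-regularity argument that trades the extra spatial derivative on $\mathbf{u}_{tt}$ against the weight $t$ and the bound $\int_0^{T^*} t\|\sqrt{h}\mathbf{u}_{tt}\|_{L^2}^2\,dt$ just obtained (together with $\sup_t t\|\sqrt{h}\mathbf{u}_{tt}\|_{L^2}^2$, which I would actually have to establish first as part of the same Gronwall step). The cleanest route is: from the $t$-differentiated momentum equation, view $L\mathbf{u}_t = -(\text{stuff}) - h\mathbf{u}_{tt} - \ldots$, and apply the $H^1$-elliptic estimate, so $\|\nabla\mathbf{u}_{tt}\|_{L^2}$ is controlled by $\|\nabla(h\mathbf{u}_{tt})\|_{L^2}$ and lower-order terms after one further $t$-differentiation of that relation — this forces a genuine $\mathbf{u}_{ttt}$ energy estimate weighted by $t$, whose sole delicate point is controlling $\int t\, h_{tt}$-type source terms, available from Lemmas \ref{q} and \ref{s}. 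Throughout, the key structural fact making everything work is that each extra derivative/time-derivative costs exactly one power of $t$, matching the regularity gap between the $D^3$ / first-order-compatibility data and the $D^4$ / second-order-compatibility class; no new smallness or structural assumption is needed, only the bounds of Lemmas \ref{n}--\ref{t}.
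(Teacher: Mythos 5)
Your broad strategy (insert a factor of $t$ to trade the missing order of initial regularity for parabolic smoothing) is the right idea, and your second guess about needing a second time-differentiation is what the paper actually does. But there are two concrete gaps.

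First, the opening step is both redundant and misdirected: multiplying (\ref{211}) by $t$ only yields $\sup_t t\|\nabla\mathbf{u}_t\|_{L^2}^2 + \int_0^{T^*} t\|\sqrt h\mathbf{u}_{tt}\|_{L^2}^2\,dt$, which Lemma~\ref{r} already gives \emph{without} the $t$-weight. What Lemma~\ref{u} actually requires is the dual pair $\sup_t t\|\sqrt h\mathbf{u}_{tt}\|_{L^2}^2$ and $\int_0^{T^*} t\|\nabla\mathbf{u}_{tt}\|_{L^2}^2\,dt$, and for that you must differentiate $(\ref{A})_4$ in $t$ \emph{twice} and test against $\mathbf{u}_{tt}$ (this is the paper's equation (\ref{163})). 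Your hesitation about $\mathbf{u}_{ttt}$, $n_{ttt}$, $h_{ttt}$ is a false alarm: the pressure $h^2\nabla n + \tfrac12(1+n)\nabla h^2$ carries only second time-derivatives of $h,n$ after two differentiations, and $h\mathbf{u}_{ttt}\cdot\mathbf{u}_{tt}$ becomes $\tfrac12\tfrac{d}{dt}\int h|\mathbf{u}_{tt}|^2$ plus a commutator absorbed by the continuity equation; so all source terms on the right of (\ref{163}) involve at most $n_{tt},h_{tt}$, which Lemmas~\ref{s} and~\ref{q} handle. Multiplying the resulting inequality (\ref{221}) by $t$, Gronwall gives the desired (\ref{164}) directly.

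Second, and more seriously, your proposed route to $\sup_t t\|\mathbf{u}_t\|_{L^{8/\alpha}}^2$ does not close. Lemma~\ref{j} only gives $\int_0^{T^*}\|\mathbf{u}_t\|_{L^q}^2\,dt\le C$ ($q=4/\alpha$), not $\sup_t\|\mathbf{u}_t\|_{L^q}^2\le C$, so the interpolation $\|\mathbf{u}_t\|_{L^{8/\alpha}}\lesssim\|\mathbf{u}_t\|_{L^q}^{1/2}\|\nabla\mathbf{u}_t\|_{L^2}^{1/2}$ cannot produce a $\sup_t$ bound with only one factor of $t$; and since $\widetilde h=0$ there is no surrogate $L^p_x$ anchor available in $L^\infty_t$ (the usual $\|\mathbf{u}_t\|_{L^2}$ route is unavailable because $h$ vanishes at infinity). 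The paper supplies the missing ingredient via a \emph{separate weighted energy estimate}: setting $\mathbf{v}=\mathbf{u}$ in (\ref{B211}) and testing against $|x|^{\alpha/2}\dot{\mathbf{u}}_t$, it derives (\ref{222}), and multiplying by $t$ gives $\sup_t t\||x|^{\alpha/4}\nabla\dot{\mathbf{u}}\|_{L^2}^2\le C$ (\ref{223}). The Caffarelli--Kohn--Nirenberg inequality (Lemma~\ref{f}, with $\alpha/2$ in place of $\alpha$) then yields $\sup_t t\|\dot{\mathbf{u}}\|_{L^{8/\alpha}}^2\le C$, and $\mathbf{u}_t=\dot{\mathbf{u}}-\mathbf{u}\cdot\nabla\mathbf{u}$ finishes the $L^{8/\alpha}$ bound. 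This weighted step is not optional in the vacuum-at-infinity regime, and your proposal has no replacement for it. The remaining $D^2$ and $\nabla^4\mathbf{u}$ bounds do follow by $L^p$ elliptic theory as you describe, but only once $\sup_t t\|\sqrt h\mathbf{u}_{tt}\|_{L^2}^2$ (from the twice-differentiated energy) and $\sup_t t\|\mathbf{u}_t\|_{L^{8/\alpha}}^2$ (from the weighted estimate) are in hand.
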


\begin{proof}
Differentiating the equation $(\ref{A})_{4}$ with respect to $t$ twice to get 
\begin{equation}\label{163}
\begin{split}
&h\mathbf{u}_{ttt}+h\mathbf{u}\cdot\nabla\mathbf{u}_{tt}-\mu\Delta\mathbf{u}_{tt}-(\mu+\lambda)\nabla(\dive\mathbf{u})_{tt}\\
=&2\dive(h\mathbf{u})\mathbf{u}_{tt}+\dive(h\mathbf{u})_{t}\mathbf{u}_{t}-(h_{tt}\mathbf{u}+2h_{t}\mathbf{u}_{t})\cdot\nabla\mathbf{u}-2(h\mathbf{u})_{t}\cdot\nabla\mathbf{u}_{t}-h\mathbf{u}_{tt}\cdot\nabla\mathbf{u}\\
&-2h_{t}^2\nabla n-2hh_{tt}\nabla n-4hh_{t}\nabla n_{t}-h^2\nabla n_{tt}-n_{tt}h\nabla h-2n_{t}h_{t}\nabla h-2n_{t}h\nabla h_{t}\\
&-(1+n)h_{tt}\nabla h-2(1+n)h_{t}\nabla h_{t}-(1+n)h\nabla h_{tt}.
\end{split}
\end{equation}
Multiplying the equation (\ref{163}) by $\mathbf{u}_{tt}$ and integrating over $\mathbb{R}^2$ to get
\begin{equation*}
\begin{split}
&\frac{1}{2}\frac{d}{dt}\int h\mathbf{u}_{tt}^2\, dx+\mu\int|\nabla\mathbf{u}_{tt}|^2\, dx+(\mu+\lambda)\int(\dive\mathbf{u}_{tt})^2\, dx\\
=&2\int\dive(h\mathbf{u})\mathbf{u}_{tt}^2\, dx+\int\dive(h\mathbf{u})_{t}\mathbf{u}_{t}\mathbf{u}_{tt}\, dx-\int h_{tt}\mathbf{u}\cdot\nabla\mathbf{u}\mathbf{u}_{tt}\, dx-2\int h_{t}\mathbf{u}_{t}\cdot\nabla\mathbf{u}\mathbf{u}_{tt}\, dx\\
&-2\int(h\mathbf{u})_{t}\cdot\nabla\mathbf{u}_{t}\mathbf{u}_{tt}\, dx-\int h\mathbf{u}_{tt}^2\cdot\nabla\mathbf{u}\, dx-2\int h_{t}^2\mathbf{u}_{tt}\cdot\nabla n\, dx-2\int hh_{tt}\mathbf{u}_{tt}\cdot\nabla n\, dx\\
&-4\int hh_{t}\mathbf{u}_{tt}\cdot\nabla n_{t}\, dx-\int h^2\mathbf{u}_{tt}\cdot\nabla n_{tt}\, dx-\int n_{tt}h\mathbf{u}_{tt}\cdot\nabla h\, dx-2\int n_{t}h_{t}\mathbf{u}_{tt}\cdot\nabla h\, dx\\
&-2\int n_{t}h\mathbf{u}_{tt}\cdot\nabla h_{t}\, dx-\int (1+n)h_{tt}\mathbf{u}_{tt}\cdot\nabla h\, dx-2\int(1+n)h_{t}\mathbf{u}_{tt}\cdot\nabla h_{t}\, dx\\
&-\int (1+n)h\mathbf{u}_{tt}\cdot\nabla h_{tt}\, dx\\
:=&\sum_{i=1}^{16}K_{i}.
\end{split}
\end{equation*}
The estimates of $K_{1}$ to $K_{6}$ can refer to Lemma 4.6 in \cite{L2012}. Here we only list the result. We have
\begin{equation*}
\begin{split}
&K_{1}+K_{2}+K_{3}+K_{4}+K_{5}+K_{6}\\
\leq&\varepsilon\|\nabla\mathbf{u}_{tt}\|_{L^2}^2+C_{\varepsilon}\|\sqrt{h}\mathbf{u}_{tt}\|_{L^2}^2+C_{\varepsilon}(\|\mathbf{u}_{t}\|_{L^q}^2+\|\nabla\mathbf{u}_{t}\|_{H^1}^2+1).
\end{split}
\end{equation*}
Using Lemma \ref{s}, \ref{o}, and \ref{q}, a straightforward computation shows that
\begin{equation*}
K_{7}\leq\|\sqrt{h}\mathbf{u}_{tt}\|_{L^2}\|\sqrt{h}\|_{L^{\infty}}\|h_{t}\|_{L^4}\|\nabla n\|_{L^4}\leq C(\|\sqrt{h}\mathbf{u}_{tt}\|_{L^2}^2+1),
\end{equation*}
\begin{equation*}
K_{8}\leq\|\sqrt{h}\mathbf{u}_{tt}\|_{L^2}\|\sqrt{h}\|_{L^{\infty}}\|h_{tt}\|_{L^2}\|\nabla n\|_{L^{\infty}}\leq C(\|\sqrt{h}\mathbf{u}_{tt}\|_{L^2}^2+\|h_{tt}\|_{L^2}^2),
\end{equation*}
and
\begin{equation*}
K_{9}\leq\|\sqrt{h}\mathbf{u}_{tt}\|_{L^2}\|h_{t}\|_{L^4}\|\nabla n_{t}\|_{L^4}\|\sqrt{h}\|_{L^{\infty}}\leq C(\|\sqrt{h}\mathbf{u}_{tt}\|_{L^2}^2+\|\nabla^2 n_{t}\|_{L^2}^2).
\end{equation*}
Then we estimate $K_{10}+K_{11}$. Through the integrating by parts and Lemma \ref{t}, we have
\begin{equation*}
\begin{split}
K_{10}+K_{11}=&\int h\mathbf{u}_{tt}\cdot\nabla h n_{tt}\, dx+\int h^2\dive\mathbf{u}_{tt} n_{tt}\, dx\\
\leq&\|\sqrt{h}\mathbf{u}_{tt}\|_{L^2}\|\sqrt{h}\|_{L^{\infty}}\|\nabla h\|_{L^{\infty}}\|n_{tt}\|_{L^2}+\|h\|_{L^{\infty}}^2\|\dive\mathbf{u}_{tt}\|_{L^2}\|n_{tt}\|_{L^2}\\
\leq&\varepsilon\|\dive\mathbf{u}_{tt}\|_{L^2}^2+C_{\varepsilon}(\|\sqrt{h}\mathbf{u}_{tt}\|_{L^2}^2+\|n_{tt}\|_{L^2}^2).
\end{split}
\end{equation*}
Then we estimate $K_{12}$. By Lemma \ref{s}, \ref{q}, and \ref{t}, we have
\begin{equation*}
\begin{split}
K_{12}=&2\int\mathbf{u}_{tt}\cdot\nabla n_{t} h_{t}h\, dx+2\int \mathbf{u}_{tt}\cdot\nabla h_{t}n_{t}h\, dx+2\int n_{t}h_{t}\dive\mathbf{u}_{tt}h\, dx\\
\leq&C(\|\sqrt{h}\mathbf{u}_{tt}\|_{L^2}\|h_{t}\|_{L^4}\|\nabla n_{t}\|_{L^4}\|\sqrt{h}\|_{L^{\infty}}+\|\sqrt{h}\mathbf{u}_{tt}\|_{L^2}\|\sqrt{h}\|_{L^{\infty}}\|n_{t}\|_{L^4}\|\nabla h_{t}\|_{L^4}\\
&+\|\dive\mathbf{u}_{tt}\|_{L^2}\|n_{t}\|_{L^4}\|h_{t}\|_{L^4}\|h\|_{L^{\infty}})\\
\leq&\varepsilon\|\dive\mathbf{u}_{tt}\|_{L^2}^2+C_{\varepsilon}(\|\sqrt{h}\mathbf{u}_{tt}\|_{L^2}^2+\|\nabla^2 n_{t}\|_{L^2}^2+1)
\end{split}
\end{equation*}
and
\begin{equation*}
K_{13}\leq\|\sqrt{h}\mathbf{u}_{tt}\|_{L^2}\|n_{t}\|_{L^4}\|\nabla h_{t}\|_{L^4}\|\sqrt{h}\|_{L^{\infty}}\leq C(\|\sqrt{h}\mathbf{u}_{tt}\|_{L^2}^2+1).
\end{equation*}
Then we estimate $K_{14}$. Using Lemmas \ref{o} and \ref{t}, we have
\begin{equation*}
\begin{split}
K_{14}\leq&\int h|\mathbf{u}||\nabla\mathbf{u}||\nabla n||\nabla h||\mathbf{u}_{tt}|\, dx+\int h\mathbf{u}^2|\nabla^2 n||\nabla h||\mathbf{u}_{tt}|\, dx+2\int h\mathbf{u}^2|\nabla n||\nabla^2 h||\mathbf{u}_{tt}|\, dx\\
&+\int h\mathbf{u}^2|\nabla n||\nabla h||\nabla\mathbf{u}_{tt}|\, dx+\int h|\mathbf{u}||\nabla\mathbf{u}|(1+n)|\nabla^2 h||\mathbf{u}_{tt}|\, dx+\int h\mathbf{u}^2(1+n)|\nabla^3 h||\mathbf{u}_{tt}|\, dx\\
&+\int h\mathbf{u}^2(1+n)|\nabla^2 h||\nabla\mathbf{u}_{tt}|\, dx+\int h_{t}|\mathbf{u}|(1+n)|\nabla h||\nabla\mathbf{u}_{tt}|\, dx+\int h|\mathbf{u}_{t}||\nabla n||\nabla h||\mathbf{u}_{tt}|\, dx\\
&+\int h|\mathbf{u}_{t}|(1+n)|\nabla^2 h||\mathbf{u}_{tt}|\, dx+\int h|\mathbf{u}_{t}|(1+n)|\nabla h||\nabla\mathbf{u}_{tt}|\, dx\\
\leq&\varepsilon\|\nabla\mathbf{u}_{tt}\|_{L^2}^2+C_{\varepsilon}(\|\sqrt{h}\mathbf{u}_{tt}\|_{L^2}^2+\|\mathbf{u}_{t}\|_{L^q}^2+1).
\end{split}
\end{equation*}
Then we estimate $K_{15}$. By Lemma \ref{o}, \ref{q}, and \ref{t}, we have
\begin{equation*}
\begin{split}
K_{15}\leq&\int h|\mathbf{u}||\nabla n||\nabla h_{t}||\mathbf{u}_{tt}|\, dx+\int h|\mathbf{u}|(1+n)|\nabla^2 h_{t}||\mathbf{u}_{tt}|\, dx+\int h|\mathbf{u}|(1+n)|\nabla h_{t}||\nabla\mathbf{u}_{tt}|\, dx\\
\leq&\|\sqrt{h}\mathbf{u}_{tt}\|_{L^2}(\|\nabla h_{t}\|_{L^2}\|\sqrt{h}\mathbf{u}\nabla n\|_{L^{\infty}}+\|\nabla^2 h_{t}\|_{L^2}\|\sqrt{h}\mathbf{u}(1+n)\|_{L^{\infty}})\\
&+\|\nabla\mathbf{u}_{tt}\|_{L^2}\|\nabla h_{t}\|_{L^2}\|h\mathbf{u}(1+n)\|_{L^{\infty}}\\
\leq&\varepsilon\|\nabla\mathbf{u}_{tt}\|_{L^2}^2+C_{\varepsilon}(\|\sqrt{h}\mathbf{u}_{tt}\|_{L^2}^2+1).
\end{split}
\end{equation*}
Finally, we estimate $K_{16}$. Using Lemma \ref{n}, \ref{o}, and \ref{t}, we have
\begin{equation*}
\begin{split}
K_{16}\leq&\int h|h_{tt}||\mathbf{u}_{tt}||\nabla n|\, dx+\int h|\mathbf{u}||\nabla\mathbf{u}||\nabla n||\nabla h||\mathbf{u}_{tt}|\, dx+\int h|\mathbf{u}|^2|\nabla^2 n||\nabla h||\mathbf{u}_{tt}|\, dx\\
&+\int h|\mathbf{u}|^2|\nabla n||\nabla^2 h||\mathbf{u}_{tt}|\, dx+\int h|\mathbf{u}|^2|\nabla n||\nabla h||\nabla\mathbf{u}_{tt}|\, dx\\
&+\int h|\mathbf{u}||\nabla\mathbf{u}|(1+n)|\nabla^2 h||\mathbf{u}_{tt}|\, dx+\int h|\mathbf{u}|^2|\nabla n||\nabla^2 h||\mathbf{u}_{tt}|\, dx\\
&+\int h|\mathbf{u}|^2(1+n)|\nabla^3 h||\mathbf{u}_{tt}|\, dx+\int h|\mathbf{u}|^2(1+n)|\nabla^2 h||\nabla\mathbf{u}_{tt}|\, dx\\
&+\int |h_{t}||\mathbf{u}|(1+n)|\nabla h||\nabla\mathbf{u}_{tt}|\, dx+\int h|\mathbf{u}_{t}||\nabla n||\nabla h||\mathbf{u}_{tt}|\, dx\\
&+\int h|\mathbf{u}_{t}|(1+n)|\nabla^2 h||\mathbf{u}_{tt}|\, dx+\int h|\mathbf{u}_{t}|(1+n)|\nabla h||\nabla\mathbf{u}_{tt}|\, dx+\int (1+n)h|h_{tt}||\nabla\mathbf{u}_{tt}|\, dx\\
\leq&\varepsilon\|\nabla\mathbf{u}_{tt}\|_{L^2}^2+C_{\varepsilon}(\|\sqrt{h}\mathbf{u}_{tt}\|_{L^2}^2+\|h_{tt}\|_{L^2}^2+\|\mathbf{u}_{t}\|_{L^q}^2+1).
\end{split}
\end{equation*}
Choosing $\varepsilon=1/6$, and then combining all these estimates about $K_{i}\ (i=1,\cdots, 16)$ to get
\begin{equation}\label{221}
\begin{split}
&\frac{d}{dt}\|\sqrt{h}\mathbf{u}_{tt}\|_{L^2}^2+\|\nabla\mathbf{u}_{tt}\|_{L^2}^2\\
\leq&C(\|\sqrt{h}\mathbf{u}_{tt}\|_{L^2}^2+\|\mathbf{u}_{t}\|_{L^q}^2+\|\nabla^2\mathbf{u}_{t}\|_{L^2}^2+\|\nabla^2 n_{t}\|_{L^2}^2+\|n_{tt}\|_{L^2}^2+\|h_{tt}\|_{L^2}^2+1).
\end{split}
\end{equation}
Multiplying (\ref{221}) by $t$, integrating the resulting equation with respect to $t$, using Lemma \ref{s}, \ref{p}, \ref{q}, and \ref{r} to get
\begin{equation}\label{164}
\sup_{0\leq t\leq T^*}t\|\sqrt{h}\mathbf{u}_{tt}\|_{L^2}^2+\int_{0}^{T^*} t\|\nabla\mathbf{u}_{tt}\|_{L^2}^2\, dt\leq C.
\end{equation}
Setting $\mathbf{u}=\mathbf{v}$ in equation (\ref{B211}), multiplying the resulting equation by $|x|^{\alpha/2}\dot{\mathbf{u}}_{t}$, and then integrating over $\mathbb{R}^2$ to get
\begin{equation*}
\begin{split}
&\frac{\mu}{2}\frac{d}{dt}\int |x|^{\alpha/2}|\nabla\dot{\mathbf{u}}|^2\, dx+\frac{(\mu+\lambda)}{2}\frac{d}{dt}\int |x|^{\alpha/2}(\dive\dot{\mathbf{u}})^2\, dx+\int |x|^{\alpha/2}h|\dot{\mathbf{u}}_{t}|^2\, dx\\
=&-\int |x|^{\alpha/2}h_{t}\dot{\mathbf{u}}\dot{\mathbf{u}}_{t}\, dx-\int |x|^{\alpha/2} h_{t}^2\dot{\mathbf{u}}_{t}\cdot\nabla n\, dx-\int |x|^{\alpha/2} h^2\dot{\mathbf{u}}_{t}\cdot\nabla n_{t}\, dx\\
&-\frac{1}{2}\int |x|^{\alpha/2}n_{t}\dot{\mathbf{u}}_{t}\cdot\nabla h^2\, dx-\frac{1}{2}\int |x|^{\alpha/2}(1+n)\dot{\mathbf{u}}_{t}\cdot\nabla h_{t}^2\, dx-\int |x|^{\alpha/2}\dot{\mathbf{u}}_{t}\dive(h\dot{\mathbf{u}}\otimes\mathbf{u})\, dx\\
&-\int |x|^{\alpha/2}\dot{\mathbf{u}}_{t}\dive(h^2\nabla n\otimes\mathbf{u})\, dx-\frac{1}{2}\int |x|^{\alpha/2}\dot{\mathbf{u}}_{t}\dive\big((1+n)\mathbf{u}\otimes\nabla h^2\big)\, dx\\
&-\mu\int |x|^{\alpha/2}\dot{\mathbf{u}}_{t}\Delta(\mathbf{u}\cdot\nabla\mathbf{u})\, dx-\mu\int\nabla(|x|^{\alpha/2})\dot{\mathbf{u}}_{t}\cdot\nabla\dot{\mathbf{u}}\, dx-(\mu+\lambda)\int\nabla(|x|^{\alpha/2})\dot{\mathbf{u}}_{t}\dive\dot{\mathbf{u}}\, dx\\
&-(\mu+\lambda)\int |x|^{\alpha/2}\nabla\big(\dive(\mathbf{u}\cdot\nabla\mathbf{u})\big)\dot{\mathbf{u}}_{t}\, dx+\mu\int |x|^{\alpha/2}\dive(\mathbf{u}\otimes\Delta\mathbf{u})\dot{\mathbf{u}}_{t}\, dx\\
&+(\mu+\lambda)\int |x|^{\alpha/2}\dot{\mathbf{u}}_{t}\dive\big(\nabla(\dive\mathbf{u})\otimes\mathbf{u}\big)\, dx\\
:=&\sum_{i=1}^{14}L_{i}.
\end{split}
\end{equation*}
The estimates of $L_{1}+L_{6}, L_{9}$ to $L_{14}$ can refer to Lemma 4.6 in \cite{L2012}. Here we only list the result. We have
\begin{equation*}
\begin{split}
&L_{1}+L_{6}+L_{9}+L_{10}+L_{11}+L_{12}+L_{13}\\
\leq&\frac{d}{dt}\Psi(t)+C(\|\nabla\dot{\mathbf{u}}\|_{L^2}^2+\||x|^{\alpha/2}\nabla\dot{\mathbf{u}}\|_{L^2}^2+\|\nabla\mathbf{u}_{tt}\|_{L^2}^2+\|\mathbf{u}_{t}\|_{L^{q}}^2+\|\nabla^2\mathbf{u}_{t}\|_{L^2}^2+1),
\end{split}
\end{equation*}
where
\begin{equation*}
|\Psi(t)|\leq 2(2\mu+\lambda)\alpha^2\||x|^{\alpha/4}\nabla\dot{\mathbf{u}}\|_{L^2}^2+C\||x|^{\alpha/2}\nabla\mathbf{u}\|_{L^2}^2+C\|\nabla\mathbf{u}\|_{L^2}^2.
\end{equation*}
Then we consider $L_{2}+L_{7}$. By Lemmas \ref{o} and \ref{t}, we have
\begin{equation*}
\begin{split}
L_{2}+L_{7}=&\int |x|^{\alpha/2}\mathbf{u}\cdot\nabla h^2\dot{\mathbf{u}}_{t}\cdot\nabla n\, dx+2\int |x|^{\alpha/2}h^2\dive\mathbf{u}\dot{\mathbf{u}}_{t}\cdot\nabla n\, dx\\
&-\int |x|^{\alpha/2}\dot{\mathbf{u}}_{t}\cdot\nabla n\dive(h^2\mathbf{u})\, dx-\int |x|^{\alpha/2}\dot{\mathbf{u}}_{t}h^2\mathbf{u}\Delta n\, dx\\
=&\int |x|^{\alpha/2}h^2\dive\mathbf{u}\dot{\mathbf{u}}_{t}\cdot\nabla n dx-\int |x|^{\alpha/2}\dot{\mathbf{u}}_{t}h^2\mathbf{u}\Delta n\, dx\\
\leq&\||x|^{\alpha/4}\sqrt{h}\dot{\mathbf{u}}_{t}\|_{L^2}(\||x|^{\alpha/4}h^{\frac{3}{2}}\|_{L^2}\|\dive\mathbf{u}\nabla n\|_{L^{\infty}}+\||x|^{\alpha/4}h^{\frac{3}{2}}\|_{L^2}\|\mathbf{u}\Delta n\|_{L^{\infty}})\\
\leq&\varepsilon\||x|^{\alpha/4}\sqrt{h}\dot{\mathbf{u}}_{t}\|_{L^2}^2+C_{\varepsilon}(\||x|^{\alpha/2}h^2\|_{L^2}^2\|h\|_{L^2}+\|\nabla^4 n\|_{L^2}^2).
\end{split}
\end{equation*}
Then we estimate $L_{3}$. By Lemma \ref{s} and the definition of material derivative, we have
\begin{equation*}
\begin{split}
L_{3}=&-\int |x|^{\alpha/2}h^2\dot{\mathbf{u}}_{t}\cdot\nabla\dot{n}\, dx+\int |x|^{\alpha/2}\dot{\mathbf{u}}_{t}\cdot\nabla(\mathbf{u}\cdot\nabla n)h^2\, dx\\
\leq&\||x|^{\alpha/4}\sqrt{h}\dot{\mathbf{u}}_{t}\|_{L^2}(\||x|^{\alpha/4}\nabla\dot{n}\|_{L^2}\|h\|_{L^{\infty}}^{\frac{3}{2}}+\||x|^{\alpha/4}\nabla\mathbf{u}\|_{L^2}\|h^{\frac{3}{2}}\nabla n\|_{L^{\infty}}+\||x|^{\alpha/4}h^{\frac{3}{2}}\|_{L^2}\|\mathbf{u}\Delta n\|_{L^{\infty}})\\
\leq&\varepsilon\||x|^{\alpha/4}\sqrt{h}\dot{\mathbf{u}}_{t}\|_{L^2}^2+C_{\varepsilon}(\||x|^{\alpha/2}\nabla\dot{n}\|_{L^2}^2+\||x|^{\alpha/2}\nabla\mathbf{u}\|_{L^2}^2+\|\nabla^4 n\|_{L^2}^2+1).
\end{split}
\end{equation*}
Then we estimate $L_{4}$. By Lemma \ref{t}, we have
\begin{equation*}
\begin{split}
L_{4}=&-\int |x|^{\alpha/2}\dot{n}\dot{\mathbf{u}}_{t}\cdot\nabla h h\, dx+\int |x|^{\alpha/2}\mathbf{u}\cdot\nabla nh\dot{\mathbf{u}}_{t}\cdot\nabla h\, dx\\
\leq&\||x|^{\alpha/4}\sqrt{h}\dot{\mathbf{u}}_{t}\|_{L^2}\||x|^{\alpha/4}\dot{n}\|_{L^2}\|\sqrt{h}\nabla h\|_{L^{\infty}}+\||x|^{\alpha/4}\sqrt{h}\dot{\mathbf{u}}_{t}\|_{L^2}\||x|^{\alpha/4}\nabla n\|_{L^2}\|\sqrt{h}\mathbf{u}\cdot\nabla h\|_{L^{\infty}}\\
\leq&\varepsilon\||x|^{\alpha/4}\sqrt{h}\dot{\mathbf{u}}_{t}\|_{L^2}^2+C_{\varepsilon}(\||x|^{\alpha/2}\dot{n}\|_{L^2}^2+\||x|^{\alpha/2}\nabla n\|_{L^2}^2+1).
\end{split}
\end{equation*}
Then we estimate $L_{5}+L_{8}$. By Lemma \ref{t}, we have
\begin{equation*}
\begin{split}
&L_{5}+L_{8}\\
=&\int |x|^{\alpha/2}(1+n)\dot{\mathbf{u}}_{t}\cdot\nabla h^2\dive\mathbf{u}\, dx+\int |x|^{\alpha/2}\dot{\mathbf{u}}_{t}\cdot\nabla^2\mathbf{u}(1+n)h^2\, dx-\frac{1}{2}\int |x|^{\alpha/2}\dot{\mathbf{u}}_{t}\cdot\nabla n\mathbf{u}\cdot\nabla h^2\, dx\\
\leq&\||x|^{\alpha/4}\sqrt{h}\dot{\mathbf{u}}_{t}\|_{L^2}(\||x|^{\alpha/4}\dive\mathbf{u}\|_{L^2}\|(1+n)\sqrt{h}\nabla h\|_{L^{\infty}}+\||x|^{\alpha/4}h^{\frac{3}{2}}\|_{L^2}\|(1+n)\nabla^2\mathbf{u}\|_{L^{\infty}})\\
&+\||x|^{\alpha/4}\sqrt{h}\dot{\mathbf{u}}_{t}\|_{L^2}\||x|^{\alpha/4}\nabla n\|_{L^2}\|\sqrt{h}\mathbf{u}\cdot\nabla h\|_{L^{\infty}}\\
\leq&\varepsilon\||x|^{\alpha/4}\sqrt{h}\dot{\mathbf{u}}_{t}\|_{L^2}^2+C_{\varepsilon}(\||x|^{\alpha/2}\dive\mathbf{u}\|_{L^2}^2+\|\nabla^4\mathbf{u}\|_{L^2}^2+\||x|^{\alpha/2}\nabla n\|_{L^2}^2+1).
\end{split}
\end{equation*}
Combining all these estimates about $L_{i}\ (i=1\cdots,14)$, together with (\ref{200}), we obtain
\begin{equation}\label{222}
\begin{split}
&\frac{d}{dt}\int |x|^{\alpha/2}|\nabla\dot{\mathbf{u}}|^2\, dx+\frac{d}{dt}\int |x|^{\alpha/2}(\dive\dot{\mathbf{u}})^2\, dx+\int |x|^{\alpha/2}h|\dot{\mathbf{u}}_{t}|^2\, dx\\
\leq&C(\||x|^{\alpha/2}\nabla\dot{\mathbf{u}}\|_{L^2}^2+\|\nabla\dot{\mathbf{u}}\|_{L^2}^2+\|\nabla^4 n\|_{L^2}^2+\|\nabla\mathbf{u}_{tt}\|_{L^2}^2+\|\nabla^4\mathbf{u}\|_{L^2}^2+\||x|^{\alpha/2}\nabla\dot{n}\|_{L^2}^2)\\
&+\||x|^{\alpha/2}\nabla\mathbf{u}\|_{L^2}^2+\||x|^{\alpha/2}\dot{n}\|_{L^2}^2+\||x|^{\alpha/2}\nabla n\|_{L^2}^2+\|\mathbf{u}_{t}\|_{L^q}^2+\|\nabla^2\mathbf{u}_{t}\|_{L^2}^2+1).
\end{split}
\end{equation}
Multiplying the inequality (\ref{222}) by $t$, integrating over $\mathbb{R}^2$, combining with Lemma \ref{p}, \ref{r}, \ref{t}, and (\ref{164}) to get
\begin{equation}\label{223}
\sup_{0\leq t\leq T^*}t\||x|^{\alpha/4}|\nabla\dot{\mathbf{u}}|\|_{L^2}^2+\int_{0}^{T^*}t\||x|^{\alpha/2}\sqrt{h}\dot{\mathbf{u}}_{t}\|_{L^2}^2\, dt\leq C.
\end{equation}
It follows from Lemma \ref{f} and (\ref{223}) that
\begin{equation*}
\sup_{0\leq t\leq T^*} t\|\dot{\mathbf{u}}\|_{L^{8/\alpha}}^2\leq C\sup_{0\leq t\leq T^*} t\||x|^{\alpha/4}\nabla\dot{\mathbf{u}}\|_{L^2}^2\leq C,
\end{equation*}
which yields that
\begin{equation}\label{z200}
\sup_{0\leq t\leq T^*} t\|\mathbf{u}_{t}\|_{L^{8/\alpha}}^2\leq C\sup_{0\leq t\leq T^*} t(\|\dot{\mathbf{u}}\|_{L^{8/\alpha}}^2+\|\mathbf{u}\cdot\nabla\mathbf{u}\|_{L^{8/\alpha}}^2)\leq C.
\end{equation}
Using the $L^p$ theory to equation $(\ref{A})_{4}$, we have
\begin{equation}\label{165}
\sup_{0\leq t\leq T^*}t\|\nabla^2\mathbf{u}_{t}\|_{L^2}^2\leq C,\ \sup_{0\leq t\leq T^*} t\|\nabla^4\mathbf{u}\|_{L^2}^2\leq C.
\end{equation}
The estimate (\ref{z201}) can be given by (\ref{164}), (\ref{z200}), and (\ref{165}).
\end{proof}

\begin{lemma}\label{v}
The following estimate holds
\begin{equation}\label{z201}
\sup_{0\leq t\leq T^*} t(\|\nabla^2 c_{t}\|_{L^2}^2+\|\nabla^4 c\|_{L^2}^2)\leq C.
\end{equation}
\end{lemma}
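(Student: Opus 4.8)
The plan is to obtain the two time-weighted bounds in (\ref{z201}) by one more time-differentiation of the $c$-equation followed by a weighted energy estimate, and then to recover $\nabla^2 c_t$ and $\nabla^4 c$ by elliptic bootstrapping. After the fixed point we have $\mathbf v=\mathbf u$, so $c$ solves $c_t+\dive(c\mathbf u)=\Delta c-nc$, and all estimates of Lemmas \ref{b}--\ref{t} and Lemma \ref{u} are available; in particular $\|\mathbf u\|_{L^\infty}$, $\|\nabla\mathbf u\|_{L^\infty}$, $\|c\|_{L^\infty}$, $\|\nabla c\|_{L^\infty}$, $\|n\|_{L^\infty}$ and $\|\nabla c_t\|_{L^2}$, $\|\nabla\mathbf u_t\|_{L^2}$, $\|n_t\|_{H^1}$, $\|\nabla^3 c\|_{L^2}$ are bounded on $[0,T^*]$, and $\sup_{0\le t\le T^*}t(\|\mathbf u_t\|_{L^\infty}^2+\|\nabla^2\mathbf u_t\|_{L^2}^2)\le C$ by 2D Sobolev embeddings and Lemmas \ref{r}, \ref{t}, \ref{u}. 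I would first differentiate the $c$-equation twice in $t$ to get $c_{ttt}+\dive(c\mathbf u)_{tt}=\Delta c_{tt}-(nc)_{tt}$, multiply by $c_{tt}$, integrate over $\mathbb R^2$ and integrate by parts in the transport term to obtain $\tfrac12\frac{d}{dt}\|c_{tt}\|_{L^2}^2+\|\nabla c_{tt}\|_{L^2}^2=\int(c\mathbf u)_{tt}\cdot\nabla c_{tt}\,dx-\int(nc)_{tt}c_{tt}\,dx$. Expanding $(c\mathbf u)_{tt}=c_{tt}\mathbf u+2c_t\mathbf u_t+c\,\mathbf u_{tt}$ and $(nc)_{tt}=n_{tt}c+2n_tc_t+nc_{tt}$, one estimates each term by Hölder's and Gagliardo--Nirenberg's inequalities, absorbing small multiples of $\|\nabla c_{tt}\|_{L^2}^2$ into the parabolic dissipation, which yields a differential inequality of the schematic form $\frac{d}{dt}\|c_{tt}\|_{L^2}^2+\|\nabla c_{tt}\|_{L^2}^2\le C(1+\|\mathbf u_t\|_{L^4}^2+\|n_{tt}\|_{L^2}^2)\|c_{tt}\|_{L^2}^2+C(\|\mathbf u_{tt}\|_{L^2}^2+\|\mathbf u_t\|_{L^4}^2+\|n_{tt}\|_{L^2}^2+1)$. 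Multiplying by $t$, integrating over $(0,T^*)$, using $t\|c_{tt}\|_{L^2}^2\to0$ as $t\to0$, the bound $\int_0^{T^*}\|c_{tt}\|_{L^2}^2\,dt\le C$ of Lemma \ref{b}, and $\int_0^{T^*}(\|\mathbf u_t\|_{L^4}^2+\|n_{tt}\|_{L^2}^2)\,dt\le C$ (Lemma \ref{s} and the $L^2_tL^q_x$ bound on $\mathbf u_t$ from Lemma \ref{j}), together with Gronwall's inequality, gives $\sup_{0\le t\le T^*}t\|c_{tt}\|_{L^2}^2+\int_0^{T^*}t\|\nabla c_{tt}\|_{L^2}^2\,dt\le C$, provided $\int_0^{T^*}t\|\mathbf u_{tt}\|_{L^2}^2\,dt\le C$.

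Once $\sup_t t\|c_{tt}\|_{L^2}^2\le C$ is in hand, the remaining two steps are routine elliptic regularity. Writing $\Delta c_t=c_{tt}+\dive(c\mathbf u)_t+(nc)_t$ and using interior $L^2$ estimates gives $\|\nabla^2 c_t\|_{L^2}^2\le C(\|c_{tt}\|_{L^2}^2+\|\dive(c\mathbf u)_t\|_{L^2}^2+\|(nc)_t\|_{L^2}^2)$; multiplying by $t$ and invoking the $L^\infty$-bounds together with Lemmas \ref{b}, \ref{r} and $\sup_t t\|\mathbf u_t\|_{L^\infty}^2\le C$ yields $\sup_t t\|\nabla^2 c_t\|_{L^2}^2\le C$. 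Similarly, from $\Delta c=c_t+\dive(c\mathbf u)+nc$ and $H^2$ elliptic estimates, $\|\nabla^4 c\|_{L^2}^2\le C\|\Delta c\|_{H^2}^2\le C(\|\nabla^2 c_t\|_{L^2}^2+\|\dive(c\mathbf u)\|_{H^2}^2+\|nc\|_{H^2}^2)$, where the last two groups are already bounded on $[0,T^*]$ by Lemmas \ref{o} and \ref{t}; multiplying by $t$ and using the previous step gives $\sup_t t\|\nabla^4 c\|_{L^2}^2\le C$, which together completes (\ref{z201}).

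The main obstacle is the single genuinely new term in Step 1, namely $\int c\,\mathbf u_{tt}\cdot\nabla c_{tt}\,dx$: because of the far-field vacuum, $\mathbf u_{tt}$ is never controlled in the plain $L^2$-norm, only through the degenerate norm $\|\sqrt h\,\mathbf u_{tt}\|_{L^2}$ and the time-weighted norms of Lemma \ref{u}. I would handle it by using $\|c\|_{L^\infty}\le C$ to bound the term by $\varepsilon\|\nabla c_{tt}\|_{L^2}^2+C_\varepsilon\|\mathbf u_{tt}\|_{L^2}^2$, absorbing the first piece in the parabolic dissipation, and then controlling $\int_0^{T^*}t\|\mathbf u_{tt}\|_{L^2}^2\,dt$ via the decomposition $\mathbf u_{tt}=\dot{\mathbf u}_t-(\mathbf u\cdot\nabla\mathbf u)_t$: the convective part obeys $\int_0^{T^*}t\|(\mathbf u\cdot\nabla\mathbf u)_t\|_{L^2}^2\,dt\le C$ by the $L^\infty$-bounds on $\mathbf u,\nabla\mathbf u$ and $\int_0^{T^*}\|\mathbf u_t\|_{L^4}^2\,dt\le C$, while the material-derivative part is controlled through the weighted bound (\ref{223}) of Lemma \ref{u} (or, alternatively, by applying the Caffarelli--Kohn--Nirenberg inequality of Lemma \ref{f} together with $\int_0^{T^*}t\|\nabla\mathbf u_{tt}\|_{L^2}^2\,dt\le C$ from (\ref{164}) and the weighted decay of $c$). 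The analogous lower-order terms involving $n_{tt}$ and $\mathbf u_t$ are absorbed exactly as in Lemma \ref{b}, so no further difficulty arises.
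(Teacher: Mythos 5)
There is a genuine gap in your proposal, and it lies exactly at the point you identify as the main obstacle. Testing the twice‑differentiated equation $c_{ttt}+\dive(c\mathbf u)_{tt}=\Delta c_{tt}-(nc)_{tt}$ against $c_{tt}$ leaves you, after expanding $(c\mathbf u)_{tt}$, with the term $\int c\,\mathbf u_{tt}\cdot\nabla c_{tt}\,dx$, which (no matter how you integrate by parts) always produces either $\mathbf u_{tt}$ or $\nabla c$ paired with $\mathbf u_{tt}$ without a spatial derivative on $\mathbf u_{tt}$. In the vacuum case neither of your two patches closes this. First, the decomposition $\mathbf u_{tt}=\dot{\mathbf u}_t-(\mathbf u\cdot\nabla\mathbf u)_t$ requires $\int_0^{T^*}t\|\dot{\mathbf u}_t\|_{L^2}^2\,dt\le C$, but (\ref{223}) only supplies $\int_0^{T^*}t\||x|^{\alpha/2}\sqrt h\,\dot{\mathbf u}_t\|_{L^2}^2\,dt\le C$: the weight $|x|^{\alpha/2}\sqrt h$ vanishes both at the origin and wherever $h$ vanishes (in particular at spatial infinity since $\widetilde h=0$), so it does not dominate the plain $L^2$ norm. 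Second, the Caffarelli--Kohn--Nirenberg route needs $\||x|^{\alpha/2}\nabla\mathbf u_{tt}\|_{L^2}$, whereas (\ref{164}) gives only the unweighted $\int_0^{T^*}t\|\nabla\mathbf u_{tt}\|_{L^2}^2\,dt\le C$; since $|x|^\alpha$ grows at infinity this unweighted bound cannot control the weighted one, so Lemma~\ref{f} is inapplicable. A secondary (repairable) slip is the use of $\int_0^{T^*}\|\mathbf u_t\|_{L^4}^2\,dt\le C$: with $q=4/\alpha\ge 32$ the available bound $\int_0^{T^*}\|\mathbf u_t\|_{L^q}^2\,dt\le C$ does not yield an $L^4$ bound on $\mathbb R^2$ without an $L^2$ bound on $\mathbf u_t$, which vacuum precludes; one should simply use the $L^q$ norm together with the appropriate conjugate exponent in Hölder, as the paper does.

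The paper avoids the $\mathbf u_{tt}$ difficulty by a different choice of test function and a different splitting of the transport term: it first rewrites the equation as $\dot c+c\dive\mathbf u=\Delta c-nc$, differentiates twice in $t$ to get $\dot c_{tt}+(c\dive\mathbf u)_{tt}=\Delta c_{tt}-(nc)_{tt}$, and tests against $\dot c_t$ rather than $c_{tt}$. The convection is then only ever in the form $(c\dive\mathbf u)_{tt}$, so the worst velocity term is $\int c\,\dive\mathbf u_{tt}\,\dot c_t\,dx\le \|c\|_{L^\infty}\|\dive\mathbf u_{tt}\|_{L^2}\|\dot c_t\|_{L^2}$, which places a full spatial derivative on $\mathbf u_{tt}$; this is exactly what $\int_0^{T^*}t\|\nabla\mathbf u_{tt}\|_{L^2}^2\,dt\le C$ from (\ref{164}) controls, and the $\mathbf u\cdot\nabla c_t$ contribution to $c_{tt}$ is absorbed harmlessly into $\dot c_t$. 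With the energy in $\|\dot c_t\|_{L^2}^2$ and dissipation in $\|\nabla\dot c_t\|_{L^2}^2$, Gronwall and the subsequent elliptic bootstrap proceed as you describe. So the structural outline of your argument is right, but the specific test function $c_{tt}$ and conservative form $\dive(c\mathbf u)$ force the use of an $L^2$ norm of $\mathbf u_{tt}$ that the vacuum structure does not provide; one must work with the material derivative on both sides.
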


\begin{proof}
It is easy to get
\begin{equation*}
\begin{split}
\int_{0}^{T^*}\|\dot{c}_{t}\|_{L^2}^2\, dt\leq &\int_{0}^{T^*} \|c_{tt}\|_{L^2}^2\, dt+\int_{0}^{T^*}\|(\mathbf{u}\cdot\nabla c)_{t}\|_{L^2}^2\, dt\\
\leq&\int_{0}^{T^*}(\|c_{tt}\|_{L^2}^2+\|\mathbf{u}_{t}\|_{L^q}^2\|\nabla c\|_{L^p}^2+\|\mathbf{u}\|_{L^{\infty}}\|\nabla c_{t}\|_{L^2}^2)\, dt\\
\leq&C.
\end{split}
\end{equation*}
Differentiating the equation $(\ref{A})_{2}$ with respect to $t$ twice to get
\begin{equation}\label{225}
\dot{c}_{tt}+(c\dive\mathbf{u})_{tt}=\Delta c_{tt}-(nc)_{tt}.
\end{equation}
Multiplying the equation (\ref{225}) by $\dot{c}_{t}$ and integrating over $\mathbb{R}^2$ to get
\begin{equation*}
\begin{split}
&\frac{1}{2}\frac{d}{dt}\int \dot{c}_{t}^2\, dx+\int |\nabla c_{t}|^2\, dx\\
=&-\int (c\dive\mathbf{u})_{tt}\dot{c}_{t}\, dx+\int\nabla(\mathbf{u}\cdot\nabla c)_{t}\nabla\dot{c}_{t}\, dx-\int(nc)_{tt}\dot{c}_{t}\, dx\\
:=&\sum_{i=1}^{3}M_{i}.
\end{split}
\end{equation*}
In the following, we estimate $M_{i}\ (i=1, 2, 3)$ respectively. First, we consider $M_{1}$. By Lemma \ref{t}, it is easy to get
\begin{equation*}
\begin{split}
M_{1}=&-\int c_{tt}\dive\mathbf{u}\dot{c}_{t}\, dx-2\int c_{t}\dive\mathbf{u}_{t}\dot{c}_{t}\, dx-\int c\dive\mathbf{u}_{tt}\dot{c}_{t}\, dx\\
\leq&\|c_{tt}\|_{L^2}\|\dive\mathbf{u}\|_{L^{\infty}}\|\dot{c}_{t}\|_{L^2}+\|c_{t}\|_{L^4}\|\dive\mathbf{u}_{t}\|_{L^4}\|\dot{c}_{t}\|_{L^2}+\|c\|_{L^{\infty}}\|\dive\mathbf{u}_{tt}\|_{L^2}\|\dot{c}_{t}\|_{L^2}\\
\leq&C(\|\dot{c}_{t}\|_{L^2}^2+\|c_{tt}\|_{L^2}^2+\|\nabla^2\mathbf{u}_{t}\|_{L^2}^2+\|\dive\mathbf{u}_{tt}\|_{L^2}^2+1).
\end{split}
\end{equation*}
Then we consider $M_{2}$. Using Lemmas \ref{o} and \ref{r}, we have
\begin{equation*}
\begin{split}
M_{2}=&-\int\nabla\mathbf{u}_{t}\nabla c\nabla\dot{c}_{t}\, dx-\int\mathbf{u}_{t}\nabla^2 c\nabla\dot{c}_{t}\, dx-\int\nabla\mathbf{u}\nabla c_{t}\nabla\dot{c}_{t}\, dx-\int\mathbf{u}\nabla^2 c_{t}\nabla \dot{c}_{t}\, dx\\
\leq&\|\nabla\mathbf{u}_{t}\|_{L^2}\|\nabla c\|_{L^{\infty}}\|\nabla\dot{c}_{t}\|_{L^2}+\|\mathbf{u}_{t}\|_{L^q}\|\nabla^2 c\|_{L^p}\|\nabla\dot{c}_{t}\|_{L^2}\\
&+\|\nabla\mathbf{u}\|_{L^4}\|\nabla c_{t}\|_{L^4}\|\nabla\dot{c}_{t}\|_{L^2}+\|\mathbf{u}\|_{L^{\infty}}\|\nabla^2 c_{t}\|_{L^2}\|\nabla\dot{c}_{t}\|_{L^2}\\
\leq&\varepsilon\|\nabla\dot{c}_{t}\|_{L^2}^2+C_{\varepsilon}(\|\mathbf{u}_{t}\|_{L^q}^2+\|\nabla^2 c_{t}\|_{L^2}^2).
\end{split}
\end{equation*}
Finally, we consider the nonlinear term $M_{3}$. Through Lemma \ref{s}, we have
\begin{equation*}
\begin{split}
M_{3}=&-\int n_{tt}c\dot{c}_{t}\, dx-\int n_{t}c_{t}\dot{c}_{t}\, dx-\int nc_{tt}\dot{c}_{t}\, dx\\
\leq&\|n_{tt}\|_{L^2}\|c\|_{L^{\infty}}\|\dot{c}_{t}\|_{L^2}+\|n_{t}\|_{L^4}\|c_{t}\|_{L^4}\|\dot{c}_{t}\|_{L^2}+\|n\|_{L^{\infty}}\|c_{tt}\|_{L^2}\|\dot{c}_{t}\|_{L^2}\\
\leq&C(\|\dot{c}_{t}\|_{L^2}^2+\|n_{tt}\|_{L^2}^2+\|c_{tt}\|_{L^2}^2+1).
\end{split}
\end{equation*}
Combining all these estimates about $M_{i}$ and choosing $\varepsilon=1/2$, we have
\begin{equation}\label{226}
\begin{split}
&\frac{d}{dt}\|\dot{c}_{t}\|_{L^2}^2+\|\nabla\dot{c}_{t}\|_{L^2}^2\\
\leq&C(\|c_{tt}\|_{L^2}^2+\|\dot{c}_{t}\|_{L^2}^2+\|\nabla^2\mathbf{u}_{t}\|_{L^2}^2+\|\dive\mathbf{u}_{tt}\|_{L^2}^2+\|\mathbf{u}_{t}\|_{L^q}^2+\|\nabla^2 c_{t}\|_{L^2}^2+\|n_{tt}\|_{L^2}^2+1).
\end{split}
\end{equation}
Multiplying the inequality (\ref{226}) by $t$, then using the Gronwall's inequality, together with Lemma \ref{s}, \ref{r}, and \ref{u} to get
\begin{equation}\label{167}
\sup_{0\leq t\leq T^*} t\|\dot{c}_{t}\|_{L^2}^2+\int_{0}^{T^*} t\|\nabla\dot{c}_{t}\|_{L^2}^2\, dt\leq C.
\end{equation}
Through the elliptic estimate to equation $(\ref{A})_{2}$, together with (\ref{167}) and Lemma \ref{r}, we have
\begin{equation}\label{227}
\begin{split}
&\sup_{0\leq t\leq T^*}t\|\nabla^2 c_{t}\|_{L^2}^2\\
\leq&C\sup_{0\leq t\leq T^*} t(\|\dot{c}_{t}\|_{L^2}^2+\|c_{t}\dive\mathbf{u}\|_{L^2}^2+\|c\dive\mathbf{u}_{t}\|_{L^2}^2+\|n_{t}c\|_{L^2}^2+\|nc_{t}\|_{L^2}^2)\\
\leq&C
\end{split}
\end{equation}
and
\begin{equation}\label{168}
\sup_{0\leq t\leq T^*} t\|\nabla^4 c\|_{L^2}^2\leq\sup_{0\leq t\leq T^*} C t(\|\nabla^2 c_{t}\|_{L^2}^2+\|\nabla^3(c\mathbf{u})\|_{L^2}^2+\|\nabla^2(nc)\|_{L^2}^2)\leq C.
\end{equation}
Combining the estimates (\ref{227}) and (\ref{168}), we complete the proof of (\ref{z201}).
\end{proof}

\begin{lemma}\label{w}
The following estimate hold
\begin{equation}\label{z202}
\sup_{0\leq t\leq T^*} t(\|\nabla^2 n_{t}\|_{L^2}^2+\|\nabla^4 n\|_{L^2}^2)\leq C.
\end{equation}
\end{lemma}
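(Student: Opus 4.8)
The plan is to run, for the bacterial density $n$, the same scheme that was used for $c$ in Lemma \ref{v}, the only genuinely new ingredient being the chemotaxis coupling $\nabla\cdot(n\nabla c)$ in $(\ref{A})_{1}$. As a warm-up, since $\dot n_{t}=n_{tt}+\mathbf{u}_{t}\cdot\nabla n+\mathbf{u}\cdot\nabla n_{t}$, Lemma \ref{s} together with the bounds $\mathbf{u}_{t}\in L^{2}(0,T^{*};L^{q})$ and $\nabla n_{t}\in L^{2}(0,T^{*};L^{2})$ coming from Lemmas \ref{d} and \ref{j} gives
\[
\int_{0}^{T^{*}}\|\dot n_{t}\|_{L^{2}}^{2}\,dt\leq C.
\]
Next I rewrite $(\ref{A})_{1}$ as $\dot n+n\dive\mathbf{u}=\Delta n-\nabla\cdot(n\nabla c)$ and differentiate twice in $t$, obtaining $\dot n_{tt}+(n\dive\mathbf{u})_{tt}=\Delta n_{tt}-(\nabla\cdot(n\nabla c))_{tt}$. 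Multiplying by $\dot n_{t}$, integrating over $\mathbb{R}^{2}$, integrating the Laplacian term by parts once and substituting $n_{tt}=\dot n_{t}-(\mathbf{u}\cdot\nabla n)_{t}$, I get
\[
\frac{1}{2}\frac{d}{dt}\|\dot n_{t}\|_{L^{2}}^{2}+\|\nabla\dot n_{t}\|_{L^{2}}^{2}=-\int(n\dive\mathbf{u})_{tt}\dot n_{t}\,dx+\int\nabla(\mathbf{u}\cdot\nabla n)_{t}\cdot\nabla\dot n_{t}\,dx+\int(n\nabla c)_{tt}\cdot\nabla\dot n_{t}\,dx=:N_{1}+N_{2}+N_{3}.
\]

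The terms $N_{1}$ and $N_{2}$ are treated exactly as $M_{1}$ and $M_{2}$ in Lemma \ref{v} with $c$ replaced by $n$: expanding the time derivatives and using Lemmas \ref{s}, \ref{r}, \ref{u} together with $\|\nabla n\|_{L^{\infty}},\|\nabla^{2}n\|_{L^{p}}\leq C$ (Lemmas \ref{d}, \ref{o}), one bounds them by $\varepsilon\|\nabla\dot n_{t}\|_{L^{2}}^{2}+C_{\varepsilon}(\|\dot n_{t}\|_{L^{2}}^{2}+\|n_{tt}\|_{L^{2}}^{2}+\|\nabla^{2}\mathbf{u}_{t}\|_{L^{2}}^{2}+\|\dive\mathbf{u}_{tt}\|_{L^{2}}^{2}+\|\mathbf{u}_{t}\|_{L^{q}}^{2}+\|\nabla^{2}n_{t}\|_{L^{2}}^{2}+1)$. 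For the new term I expand $(n\nabla c)_{tt}=n_{tt}\nabla c+2n_{t}\nabla c_{t}+n\nabla c_{tt}$; the first two pieces are controlled by $\varepsilon\|\nabla\dot n_{t}\|_{L^{2}}^{2}+C_{\varepsilon}(\|n_{tt}\|_{L^{2}}^{2}+\|n_{t}\|_{L^{4}}^{2}\|\nabla c_{t}\|_{L^{4}}^{2})$ using $\|\nabla c\|_{L^{\infty}}\leq C$, and for the last I write $\nabla c_{tt}=\nabla\dot c_{t}-\nabla(\mathbf{u}\cdot\nabla c)_{t}$ and bound $\int n\,\nabla c_{tt}\cdot\nabla\dot n_{t}\,dx$ by $\varepsilon\|\nabla\dot n_{t}\|_{L^{2}}^{2}+C_{\varepsilon}\|n\|_{L^{\infty}}^{2}(\|\nabla\dot c_{t}\|_{L^{2}}^{2}+\|\nabla(\mathbf{u}\cdot\nabla c)_{t}\|_{L^{2}}^{2})$, estimating $\|\nabla(\mathbf{u}\cdot\nabla c)_{t}\|_{L^{2}}$ by $\|\nabla\mathbf{u}_{t}\|_{L^{4}}\|\nabla c\|_{L^{4}}+\|\mathbf{u}_{t}\|_{L^{q}}\|\nabla^{2}c\|_{L^{p}}+\|\nabla\mathbf{u}\|_{L^{\infty}}\|\nabla c_{t}\|_{L^{2}}+\|\mathbf{u}\|_{L^{\infty}}\|\nabla^{2}c_{t}\|_{L^{2}}$.

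Absorbing the $\varepsilon$-terms into the left-hand side, I arrive at an inequality of the form $\frac{d}{dt}\|\dot n_{t}\|_{L^{2}}^{2}+\|\nabla\dot n_{t}\|_{L^{2}}^{2}\leq C\|\dot n_{t}\|_{L^{2}}^{2}+C\mathcal{G}(t)$, where, thanks to Lemmas \ref{s}, \ref{p}, \ref{q}, \ref{r}, \ref{u}, \ref{v} --- in particular $\sup_{t}t\|\nabla^{2}c_{t}\|_{L^{2}}^{2}\leq C$, $\int_{0}^{T^{*}}t\|\nabla\dot c_{t}\|_{L^{2}}^{2}\,dt\leq C$, $\sup_{t}t\|\mathbf{u}_{t}\|_{L^{8/\alpha}\cap D^{2}}^{2}\leq C$ --- one has $\int_{0}^{T^{*}}t\,\mathcal{G}(t)\,dt\leq C$. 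Multiplying by $t$, integrating in time and invoking the warm-up bound yields $\sup_{t}t\|\dot n_{t}\|_{L^{2}}^{2}+\int_{0}^{T^{*}}t\|\nabla\dot n_{t}\|_{L^{2}}^{2}\,dt\leq C$. The bound for $\nabla^{2}n_{t}$ then follows from the elliptic estimate for the $t$-derivative of $(\ref{A})_{1}$, i.e. $-\Delta n_{t}=-n_{tt}-\dive(n\mathbf{u})_{t}-(\nabla\cdot(n\nabla c))_{t}$ with $n_{tt}=\dot n_{t}-(\mathbf{u}\cdot\nabla n)_{t}$: after multiplication by $t$ every term is controlled, the critical contributions $\Delta c_{t}$ and $\mathbf{u}_{t}\cdot(\cdot)$ being handled by $t\|\nabla^{2}c_{t}\|_{L^{2}}^{2}\leq C$ (Lemma \ref{v}) and $t\|\mathbf{u}_{t}\|_{L^{8/\alpha}}^{2}\leq C$ (Lemma \ref{u}), so that $\sup_{t}t\|\nabla^{2}n_{t}\|_{L^{2}}^{2}\leq C$. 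Finally, the elliptic estimate for $(\ref{A})_{1}$ written as $\Delta n=n_{t}+\dive(n\mathbf{u})+\nabla\cdot(n\nabla c)$ gives $\|\nabla^{4}n\|_{L^{2}}^{2}\leq C(\|\nabla^{2}n_{t}\|_{L^{2}}^{2}+\|\nabla^{3}(n\mathbf{u})\|_{L^{2}}^{2}+\|\nabla^{2}(\nabla\cdot(n\nabla c))\|_{L^{2}}^{2})$, in which the only factor not already uniformly bounded is $n\nabla^{4}c$, whose $t$-weighted $L^{2}$ norm is supplied by Lemma \ref{v}, while $\nabla^{3}\mathbf{u},\nabla^{3}n$ come from Lemmas \ref{t} and \ref{o}; multiplying by $t$ gives $\sup_{t}t\|\nabla^{4}n\|_{L^{2}}^{2}\leq C$, and (\ref{z202}) follows.

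The main obstacle is the coupling term $N_{3}$: unlike in the $c$-equation it produces $\nabla c_{tt}$, which can only be absorbed by spending the highest-order weighted and time-weighted estimates on $c$ produced in Lemma \ref{v}, and one must in addition route the $L^{p}$ norms of $\mathbf{u}_{t}$ through the Caffarelli-Kohn-Nirenberg inequality (Lemma \ref{f}), since in the far-field-vacuum regime $\mathbf{u}_{t}$ need not belong to $L^{2}$. The remaining work is purely organizational, namely keeping careful track of which quantities are bounded uniformly in $t$ and which only after multiplication by $t$, so that the time-weighted Gronwall argument closes.
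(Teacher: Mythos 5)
Your proposal mirrors the paper's proof almost exactly: you obtain the preliminary bound $\int_{0}^{T^*}\|\dot n_t\|_{L^2}^2\,dt\leq C$, differentiate $(\ref{A})_{1}$ twice in $t$, test against $\dot n_t$, handle the first two nonlinear terms as $M_1$ and $M_2$ in Lemma \ref{v}, expand $(n\nabla c)_{tt}$ and control $\nabla c_{tt}$ exactly as in the paper's estimate (\ref{230}), then multiply by $t$, apply Gronwall, and close with elliptic estimates for $\nabla^2 n_t$ and $\nabla^4 n$ using the $t$-weighted bound on $\nabla^4 c$ from Lemma \ref{v}. The argument is correct and follows the same route; the only differences are cosmetic (e.g.\ bounding $\|\nabla\mathbf{u}_t\nabla c\|_{L^2}$ by $\|\nabla\mathbf{u}_t\|_{L^4}\|\nabla c\|_{L^4}$ rather than $\|\nabla\mathbf{u}_t\|_{L^2}\|\nabla c\|_{L^\infty}$, both of which work given the available estimates).
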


\begin{proof}
According to Lemma \ref{s} and the definition of the material derivative, it is easy to get
\begin{equation*}
\int_{0}^{T^*}\|\dot{n}_{t}\|_{L^2}^2\, dt\leq C.
\end{equation*}
Differentiating the equation $(\ref{A})_{1}$ with respect to $t$ twice, then multiplying the resulting equation by $\dot{n}_{t}$ and integrating over $\mathbb{R}^2$ to get
\begin{equation}\label{z301}
\begin{split}
&\frac{1}{2}\frac{d}{dt}\int\dot{n}_{t}^2\, dx+\int |\nabla\dot{n}_{t}|^2\, dx\\
=&-\int (n\dive\mathbf{u})_{tt}\dot{n}_{t}\, dx-\int\Delta(\mathbf{u}\cdot\nabla n)_{t}\dot{n}_{t}+\int(n\cdot\nabla c)_{tt}\nabla\dot{n}_{t}\, dx.
\end{split}
\end{equation}
For the nonlinear term $\int (n\cdot\nabla c)_{tt}\nabla\dot{n}_{t}\, dx$, through Lemmas \ref{s} and \ref{o}, we have
\begin{equation*}
\begin{split}
&\int (n\cdot\nabla c)_{tt}\nabla\dot{n}_{t}\, dx\\
\leq&\|n_{tt}\|_{L^2}\|\nabla c\|_{L^{\infty}}\|\nabla\dot{n}_{t}\|_{L^2}+\|n_{t}\|_{L^4}\|\nabla c_{t}\|_{L^4}\|\nabla\dot{n}_{t}\|_{L^2}+\|n\|_{L^{\infty}}\|\nabla c_{tt}\|_{L^2}\|\nabla\dot{n}_{t}\|_{L^2}\\
\leq&\varepsilon\|\nabla\dot{n}_{t}\|_{L^2}^2+C_{\varepsilon}(\|n_{tt}\|_{L^2}^2+\|\nabla^2 c_{t}\|_{L^2}^2+\|\nabla c_{tt}\|_{L^2}^2+1)\\
\leq&\varepsilon\|\nabla\dot{n}_{t}\|_{L^2}^2+C_{\varepsilon}(\|n_{tt}\|_{L^2}^2+\|\nabla^2 c_{t}\|_{L^2}^2+\|\nabla\dot{c}_{t}\|_{L^2}^2+\|\mathbf{u}_{t}\|_{L^q}^2+1),
\end{split}
\end{equation*}
where we use the following fact
\begin{equation}\label{230}
\begin{split}
&\|\nabla c_{tt}\|_{L^2}^2\\
\leq&C(\|\nabla\dot{c}_{t}\|_{L^2}^2+\|\nabla\mathbf{u}_{t}\nabla c\|_{L^2}^2+\|\mathbf{u}_{t}\cdot\nabla^2 c\|_{L^2}^2+\|\nabla\mathbf{u}\nabla c_{t}\|_{L^2}^2+\|\mathbf{u}\cdot\nabla^2 c_{t}\|_{L^2}^2)\\
\leq&C(\|\nabla\dot{c}_{t}\|_{L^2}^2+\|\nabla\mathbf{u}_{t}\|_{L^2}^2\|\nabla c\|_{L^{\infty}}^2+\|\mathbf{u}_{t}\|_{L^q}^2\|\nabla^2 c\|_{L^p}^2+\|\nabla\mathbf{u}\|_{L^{\infty}}^2\|\nabla c_{t}\|_{L^2}^2+\|\mathbf{u}\|_{L^{\infty}}^2\|\nabla^2 c_{t}\|_{L^2}^2)\\
\leq&C(\|\nabla\dot{c}_{t}\|_{L^2}^2+\|\mathbf{u}_{t}\|_{L^q}^2+\|\nabla^2 c_{t}\|_{L^2}^2).
\end{split}
\end{equation}
The estimates of the other two nonlinear terms on the right side of (\ref{z301}) are the same as $M_{1}$ and $M_{2}$. Here we omit. (\ref{z301}) can be written as
\begin{equation}\label{229}
\begin{split}
\frac{d}{dt}\|\dot{n}_{t}\|_{L^2}^2+\|\nabla\dot{n}_{t}\|_{L^2}^2\leq&C(\|\dot{n}_{t}\|_{L^2}^2+\|n_{tt}\|_{L^2}^2+\|\nabla^2\mathbf{u}_{t}\|_{L^2}^2+\|\dive\mathbf{u}_{tt}\|_{L^2}^2+\|\mathbf{u}_{t}\|_{L^q}^2)\\
&+C(\|\nabla^2 n_{t}\|_{L^2}^2+\|\nabla^2 c_{t}\|_{L^2}^2+\|\nabla\dot{c}_{t}\|_{L^2}^2+1).
\end{split}
\end{equation}
Multiplying (\ref{229}) by $t$, together with Lemma \ref{s}, \ref{p}, \ref{r}, \ref{u}, and \ref{v}, we have
\begin{equation}\label{170}
\sup_{0\leq t\leq T^*} t\|\dot{n}_{t}\|_{L^2}^2+\int_{0}^{T^*} t\|\nabla\dot{n}_{t}\|_{L^2}^2\, dt\leq C.
\end{equation}
Through elliptic estimate, we have
\begin{equation}\label{171}
\sup_{0\leq t\leq T^*}t\|\nabla^2 n_{t}\|_{L^2}^2\leq C\sup_{0\leq t\leq T^*} t(\|\dot{n}_{t}\|_{L^2}^2+\|(n\dive\mathbf{u})_{t}\|_{L^2}^2+\|\nabla\cdot(n\nabla c)_{t}\|_{L^2}^2)\leq C
\end{equation}
and
\begin{equation}\label{172}
\sup_{0\leq t\leq T^*} t\|\nabla^4 n\|_{L^2}^2\leq C\sup_{0\leq t\leq T^*} t(\|\nabla^2 n_{t}\|_{L^2}^2+\|\nabla^3(n\mathbf{u})\|_{L^2}^2+\|\nabla^3(n\nabla c)\|_{L^2}^2)\leq C.
\end{equation}
Combining the estimates (\ref{171}) and (\ref{172}), we obtain (\ref{z202}).
\end{proof}

Now we prove our main result about Theorem \ref{B}.\\
{\textbf{Proof of Theorem \ref{B}}} If $\widetilde{h}=0$, Proposition \ref{l} and Lemmas \ref{n}-\ref{w} show Theorem \ref{B}. It remains to prove that $(n, c, h, \mathbf{u})$ becomes a classical solution for positive time, that is for any $\tau\in (0, T^*]$,
\begin{equation*}
n_{t},\ \nabla^2 n,\ c_{t},\ \nabla^2 c,\ h_{t},\ \nabla h,\ \mathbf{u}_{t},\ \nabla^2\mathbf{u}\in C(\overline{\mathbb{R}^2}\times[\tau, T^*]).
\end{equation*} 
It follows from Proposition \ref{l}, Lemma \ref{t}, and \ref{u} that for any $\tau\in (0, T^*]$,
\begin{equation}\label{233}
\sup_{\tau\leq t\leq T^*}\|\nabla^2\mathbf{u}\|_{H^2}\leq C(\tau),
\end{equation}
which together with $(\nabla^2\mathbf{u})_{t}\in L^2(\mathbb{R}^2\times(0, T^*))$ gives that for $p\in (2, +\infty)$,
\begin{equation}\label{z800}
\nabla^2\mathbf{u}\in C([\tau, T^*]; H^1\cap W^{1, p})\hookrightarrow C(\overline{\mathbb{R}^2}\times[\tau, T^*]).
\end{equation}
Lemmas \ref{r} and \ref{u} give that
\begin{equation}\label{231}
\sup_{\tau\leq t\leq T^*}(\|\mathbf{u}_{t}\|_{L^{8/\alpha}}+\|\nabla\mathbf{u}_{t}\|_{H^1})+\int_{\tau}^{T^*}\|\nabla\mathbf{u}_{tt}\|_{L^2}^2\, dt\leq C(\tau),
\end{equation}
which implies for $p\in(2, +\infty)$, we have
\begin{equation}\label{232}
\nabla\mathbf{u}_{t}\in C([\tau, T^*]; L^2\cap L^p).
\end{equation}
It follows from (\ref{231}) and (\ref{232}) that
\begin{equation}\label{z700}
\mathbf{u}_{t}\in C(\overline{\mathbb{R}^2}\times[\tau, T^*]).
\end{equation}
The proof of $n_{t}, c_{t}\in C(\overline{\mathbb{R}^2}\times[\tau, T^*])$ is the same as (\ref{z700}) and $\nabla^2 n, \nabla^2 c\in C(\overline{\mathbb{R}^2}\times[\tau, T^*])$ is the same as (\ref{z800}). Here we omit. Lemma \ref{n}, \ref{q}, and \ref{t} show that
\begin{equation*}
\nabla h\in L^{\infty}(0, T^*; H^2),\ \nabla h_{t}\in L^{\infty}(0, T^*; L^2),
\end{equation*}
which yields that for $p\in (2, +\infty)$,
\begin{equation}\label{234}
\nabla h\in C([0, T^*]; H^1\cap W^{1, p})\hookrightarrow C(\overline{\mathbb{R}^2}\times[0, T^*]).
\end{equation}
It follows from $(\ref{A})_{3}$, (\ref{z800}), and (\ref{234}) that
\begin{equation*}
h_{t}\in C(\overline{\mathbb{R}^2}\times[0, T^*]).
\end{equation*}
If $\widetilde{h}>0$, after bounding $\|\mathbf{u}\|_{L^2}$ and $\|\dot{\mathbf{u}}\|_{L^2}$ through Lemma \ref{Q1}, we obtain Theorem \ref{B} as above. Hence, the proof of Theorem \ref{B} is completed.

\section*{Acknowledgments}

The research of Zhen Luo was supported in part by National Natural Science Foundation of China (No.12271455, 12171401). The research of Yucheng Wang was supported in part by National Natural Science Foundation of China (No.12271357), Shanghai Science and Technology Innovation Action Plan (No.21JC1403600), and the CSC-DAAD Postdoc Scholarship. Yucheng Wang thanks the School of Business Informatics and Mathematics, University of Mannheim for kindly host.

\end{document}